\theoremstyle{remark}
\newtheorem{hypothesis}{Hypothesis}
\newtheorem{theorem}{Theorem}[section]
\newtheorem{proposition}[theorem]{Proposition}
\newtheorem{lemma}[theorem]{Lemma}
\newtheorem{corollary}[theorem]{Corollary}
\newtheorem{definition}[theorem]{Definition}
\newtheorem{remark}[theorem]{Remark}
\newcommand\rrd{\mathbb{R}^d}
\newcommand\rr{\mathbb{R}}
\newcommand\ssd{\mathbb{S}^{d-1}}
\newcommand\h{\hspace{0.1cm}}
\newcommand\hs{\hspace{1cm}}
\newcommand\cp{\mathcal{P}}
\newcommand\PP{\mathbb{P}}
\newcommand\PPm{\mathbb{Q}}
\newcommand\EE{\mathbb{E}}
\newcommand\EEm{\mathbb{E}_\mathbb{Q}}
\newcommand\D{\mathcal{D}}
\newcommand\K{\mathcal{K}}
\newcommand\U{\mathcal{U}}
\newcommand\V{\mathcal{V}}
\newcommand\A{\mathcal{A}}
\newcommand{\Mm}{\mathcal{M}(E)}
\newcommand{\tri}{t^{(r)}_i}
\newcommand{\trim}{t^{(r)}_{i-1}}
\begin{document}

\begin{frontmatter}
\title{Large Deviations of Kac's Conservative Particle System and Energy Non-Conserving Solutions to the Boltzmann Equation: A Counterexample to the Predicted Rate Function\thanksref{T1}}
\runtitle{Large Deviations of Kac's Conservative Particle System}
\thankstext{T1}{This work was supported by the UK Engineering and Physical Sciences Research Council (EPSRC) grant EP/L016516/1 for the University of Cambridge Centre for Doctoral Training, the Cambridge Centre for Analysis}

\begin{aug}
\author[A]{\fnms{Daniel} \snm{Heydecker}\ead[label=e1]{daniel.heydecker@mis.mpg.de}}
\address[A]{Max Planck Institut f\"ur Mathematik in den Naturwissenschaften, Inselstra{\ss}e 22, 04103 Leipzig, Germany
\printead{e1}}

\end{aug}

\begin{abstract}
We consider the dynamic large deviation behaviour of Kac's collisional process for a range of initial conditions including equilibrium. We prove an upper bound with a rate function of the type which has previously been found for kinetic large deviation problems, and a matching lower bound restricted to a class of sufficiently good paths. However, we are able to show by an explicit counterexample that the predicted rate function does not extend to a global lower bound: even though the particle system almost surely conserves energy, large deviation behaviour includes solutions to the Boltzmann equation which do not conserve energy, as found by Lu and Wennberg, and these occur strictly more rarely than predicted by the proposed rate function. At the level of the particle system, this occurs because a macroscopic proportion of energy can concentrate in $\mathfrak{o}(N)$ particles with probability $e^{-\mathcal{O}(N)}$.
\end{abstract}

\begin{keyword}[class=MSC2020]
\kwd[Primary ]{35Q20, 60F10}
\kwd{00X00}
\kwd[; secondary ]{82C22, 82C26}
\end{keyword}

\begin{keyword}
\kwd{Boltzmann Equation}
\kwd{Large Deviations}
\kwd{Kac Process}
\end{keyword}

\end{frontmatter}
\tableofcontents


\section{Introduction \& Results}
We consider a family of models, of a kind introduced by Kac \cite{kac1956foundations}, modelling a homogenous gas with elastic collisions associated to the homogeneous Boltzmann equation. We consider an $N$-particle system, where indistinguishable particles of mass $N^{-1}$ have velocities $V^1_t,...V^N_t \in \rrd$; since the particles are indistinguishable, all physical quantities are encoded in the normalised empirical measure $\mu^N_t=N^{-1}\sum_i\delta_{V^i_t}$. The velocities evolve over time by random, binary collisions, governed by a collision kernel $B:\rrd\times \ssd\to [0,\infty)$. For every (ordered) pair of particles with velocities $v, v_\star \in \text{supp}(\mu^N_t)$, the velocities change to $v'=v'(v, v_\star, \sigma)$ and $v'_\star=v'_\star(v,v_\star, \sigma)$, given by \begin{equation}\label{eq: PCV} v'(v, v_\star, \sigma)=v-((v-v_\star)\cdot \sigma)\sigma; \hspace{0.5cm} v_\star'(v, v_\star, \sigma)=v_\star+((v-v_\star)\cdot \sigma)\sigma \end{equation} at rate $2B(v-v_\star,\sigma)/N$. At the level of the empirical measure, we see the change \begin{equation} \label{eq: change of measure at collision} \mu \mapsto \mu^{N, v, v_\star, \Sigma} = \mu+\frac{1}{N}(\delta_{v'}+\delta_{v'_\star}-\delta_{v}-\delta_{v_\star})=\mu+\frac{1}{N}\Delta(v,v_\star,\sigma)  \end{equation} We will write $\Delta f(t,v,v_\star,\sigma)$ for the resulting change in a function $\Delta f(t,v,v_\star,\sigma)=f(t,v')+f(t,v_\star')-f(t,v)-f(t,v_\star)$ whenever $f:[0,T]\times\rrd\to \rr$ or $f: \rrd\to\rr$. It is straightforward to see that $\Delta f=0$ for $f=1, v_i, |v|^2$, which means that the quantities $\langle 1, \mu^N_t\rangle, \langle v, \mu^N_t\rangle$ and $\langle |v|^2, \mu^N_t\rangle $ are conserved as time runs. We will work throughout with one of the two kernels  \begin{equation} \label{eq: form of B} B(v,\sigma)=\begin{cases} 1+|v| & \text{(Regularised Hard Spheres)} \\ 1 & \text{(Maxwell Molecules)}. \end{cases}\end{equation} The first case is a `regularised' version of the (true) hard spheres case $B=|v|$, perturbed away from $0$ so that $v, \sigma \mapsto \log B(v,\sigma)$ is globally Lipschitz continuous. In both cases, since $B$ is independent of $\sigma$, we will use the abuse of notation $B(v)$, and the dynamics described informally above lead to a well-defined Markov process.  We will suppose, throughout, that the initial data  $V^1_0,\dots,V^N_0$ are given by independent samples from a reference measure $\mu_0^\star$, which we normalise so that $\langle v, \mu^\star_0\rangle=0, \langle |v|^2, \mu^\star_0\rangle=1$. We will write $\mathcal{E}_z(\mu^\star_0)$ for the Gaussian moments $$ \mathcal{E}_z(\mu^\star_0):=\int_{\rrd}e^{z|v|^2}\mu_0^\star(dv)\in [1,\infty]. $$  Kac's process is closely connected to the spatially homogeneous Boltzmann equation. Indeed, Kac introduced the collisional process as a toy model for which Boltzmann's celebrated \emph{Sto{\ss}zahlansatz} could be more easily derived: if the velocities are initially approximately uncorrelated, then this property is propagated in time. At the level of empirical measures this is equivalent to a hydrodynamic limit or a law of large numbers for the empirical measures $\mu^N_t$ towards a solution to the Boltzmann equation:  for all $t\ge 0$ and $f$ Lipschitz, \begin{equation}\tag{BE}\label{eq: BE} \langle f, \mu_t\rangle = \langle f, \mu_0\rangle + \int_0^t \langle f, Q(\mu_s)\rangle ds \end{equation} where $Q(\mu)$ is the Boltzmann collision operator, given for measures with finite first moment by \begin{align} Q(\mu)=\int_{\rrd\times\rrd\times\ssd}\Delta(v,v_\star,\sigma) B(v-v_\star)\mu(dv)\mu(dv_\star)d\sigma.  \end{align} Pathwise, one can understand the Kac process as a stochastic pertubation of the Boltzmann equation (\ref{eq: BE}): for any bounded $f$,  $$ \langle f, \mu^N_t\rangle =\langle f, \mu^N_0\rangle + \int_0^t \langle f, Q(\mu^N_s)\rangle ds + M^{N,f}_t$$ where $M^{N,f}_t$ is a martingale of quadratic variation $[M^{N,f}]_t=\mathcal{O}(N^{-1})$; see, for example, Norris \cite{norris2016consistency}. In this framework, the law of large numbers, for the kernels we consider, is well-understood; qualitative results go back to Sznitman \cite{sznitman1984equations}, and quantitative results have been obtained by Mischler and Mouhot \cite{mischler2013kac} and Norris \cite{norris2016consistency}. Following the ideas of Freidlin and Wentzel \cite{freidlin1998random}, we consider the dynamic large deviation behaviour of the Kac process on a fixed time interval $[0, T]$; throughout, we will use the subscript $_\bullet$ to denote processes indexed\footnote{This notation is chosen to minimise ambiguity between functionals which may either depend on the whole process $\mu_\bullet$, or only on a single measure $\mu$.} by this time interval $t\in [0,T]$, so that $\mu^N_\bullet=(\mu^N_t)_{0\le t\le T}$. \bigskip \\ It will be useful to consider the Kac process together with an auxiliary \emph{empirical flux}, which records the collision history of the process \cite{patterson2018large,renger2018flux}. We write $E=[0,T]\times\rrd\times\rrd\times \ssd$ for the parameter space of collisions and form measures $w^N_t$ on $E$ by setting $w^N_0=0$ and changing, at collisions, \begin{equation} w^N_t=w^N_{t-}+\frac{1}{N}\delta_{(t,v,v_\star,\sigma)}\end{equation} at times $t$ where there is a collision, choosing one possible assignment $(v,v_\star,\sigma)$ uniformly at random between the four\footnote{The collision parameters $(t,v, v_\star, \pm \sigma), (t, v_\star,v, \pm \sigma)$ all correspond to the same physical collision.} possible choices of collision parameters. In this way, the pair $(\mu^N_t, w^N_t)$ together form a Markov process, which are linked by a consistency relation (\ref{eq: CE}) below, and we write $w^N:=w^N_T$ for the final measure, containing the entire collisional history of the process. With this notation, we investigate estimates informally given by \begin{equation} \mathbb{P}\left((\mu^N_\bullet,w^N) \approx (\mu_\bullet,w) \right)\asymp  e^{-N\mathcal{I}(\mu_\bullet,w)}.\label{eq: informal LD}\end{equation} Formally, we consider the space $\cp_2$ of probability measures on $\rrd$ with finite second moment, equipped with the Monge-Kantorovich-Wasserstein distance \begin{equation} W(\mu, \nu)=\sup\left\{\langle f, \mu-\nu\rangle: f\in \mathcal{F}\right\};\end{equation}\begin{equation} \label{eq: mathcal F} \mathcal{F}=\left\{f: \mathbb{R}^d\rightarrow \mathbb{R}, \hspace{0.1cm}\|f\|_\infty \le 1, \hspace{0.1cm}\sup_{v\neq w} \frac{|f(v)-f(w)|}{|v-w|} \le 1\right\}  \end{equation} and write $\cp_2^N$ for the subspace consisting of empirical measures on $N$ points, and $\D$ for the Skorokhod space  \begin{equation}\label{eq: definition of D} \D:=\left\{\mu_\bullet \in D([0,T],(\cp_2,W)): \sup_{t\le T} \langle |v|^2, \mu_t\rangle <\infty\right\}\end{equation} which we equip with a metric inducing the Skorokhod $J_1$-topology. For the empirical fluxes, we write $\Mm$ for the space of finite Borel measures on $E$, which we equip with the Wasserstein$_1$ metric $d$ given analogously by \begin{equation} d(w, w')=\sup\left\{\langle g,w-w'\rangle: \sup_E |g|\le 1, \sup_{p, q\in E, p\neq q} \frac{|g(p)-g(q)|}{|p-q|}\le 1 \right\} \end{equation} where $| \cdot |$ is the Euclidean norm on $E\subset \mathbb{R}^{3d+1}$.  With this notation, the pair $(\mu^N_t, w^N_t)$ is a Markov process in $\cp_2^N\times\Mm$ with generator given on bounded functions by \begin{equation} \begin{split} \label{eq: generator} \mathcal{G}^NF(\mu^N, w^N)& =N\int_{\rrd\times\rrd\times \ssd} (F(\mu^{N,v,v_\star,\sigma}, w^{N,t,v,v_\star,\sigma})-F(\mu^N, w^N))\\& \hs\hs\hs \hs\hs \dots B(v-v_\star)\mu^N(dv)\mu^N(dv_\star)d\sigma\end{split}\end{equation}with \begin{equation} \mu^{N,v,v_\star,\sigma}:=\mu^N+\frac{1}{N}\Delta(v,v_\star,\sigma); \qquad  w^{N,v,v_\star,\sigma}:=w^N+ \frac{1}{N}\delta_{(t,v,v_\star,\sigma)}. \end{equation}   Formally, (\ref{eq: informal LD}) means that, for Kac processes $\mu^N_\bullet$ and associated empirical fluxes $w^N$, \begin{equation} \label{eq: formal LD UB} \limsup_N\frac{1}{N}\log \PP\left((\mu^N_\bullet, w^N) \in \A\right)\le -\inf\left\{ \mathcal{I}(\mu_\bullet,w) : (\mu_\bullet, w)\in \A\right\} \end{equation} for any $\A\subset \D\times\Mm$ closed, and \begin{equation} \label{eq: formal LD LB} \liminf_N\frac{1}{N}\log \PP\left((\mu^N_\bullet,w^N) \in \U\right)\ge -\inf\left\{\mathcal{I}(\mu_\bullet,w): (\mu_\bullet, w)\in \U\right\} \end{equation} $\U\subset \D\times \Mm$ open. It is well-known that such bounds give a precise mathematical meaning to Boltzmann's notion of entropy in terms of \emph{``volume of accessible microstates"}; see the discussion in \cite{villani2008h}. We will make the following further hypotheses on the reference measure $\mu_0^\star$: \begin{hypothesis} \label{hyp: condition on ref ms} \begin{enumerate}[label=\roman*).] \item \emph{Gaussian upper bound:} there exists $z_1>0$ such that $\mathcal{E}_{z_1}(\mu^\star_0)<\infty$. \item {Gaussian Lower Bound: }there exists $z_2<\infty$ such that $\mathcal{E}_z(\mu_0^\star)<\infty$ for $z<z_2$, and $\mathcal{E}_z(\mu_0^\star)\to \infty$ as $z\uparrow z_2$. \item \emph{Continuous Density:} $\mu_0^\star$ has a continuous density $f_0^\star$ with respect to the Lebesgue measure, and for some $z_3\in (0,\infty)$ and $c>0$, \begin{equation} f_0^\star \ge  ce^{-z_3|v|^2}.\end{equation} \end{enumerate} \end{hypothesis} Let us remark that this hypothesis allows the natural choice \begin{equation}\label{eq: Gaussian}  \mu_0^\star(dv)=\gamma(dv)=\frac{1}{(2\pi d)^{d/2}}e^{-d|v|^2/2}dv \end{equation} which is the normalised\footnote{to $0$ average velocity $\langle v, \mu\rangle$ and unit energy $\langle |v|^2, \mu\rangle$.} equilibrium for the Boltzmann equation (\ref{eq: BE}), and whose $N$-fold tensor product $\gamma^{\otimes N}$ is a normalised, reversible equilibrium for the many particle system. \bigskip \\ Under the hypothesis above, Sanov's Theorem \cite{zeitouni1998large} applies to show that the initial data satisfy a large deviation function in $(\cp_2, W)$ with rate function \begin{equation} H(\mu_0|\mu_0^\star):=\begin{cases} \int_{\rrd} \frac{d\mu_0}{d\mu_0^\star}\log \left(\frac{d\mu_0}{d\mu_0^\star}\right)\h \mu_0^\star(dv) & \text{if }\mu_0 \ll \mu_0^\star; \\ \infty & \text{else.}  \end{cases} \end{equation}   \subsection{A Proposed Rate Function} Let us review a possible rate function identified by L\'eonard\footnote{We remark that this definition differs from the works \cite{leonard1995large, basile2021large} by a factor of $\frac{1}{2}$; our definition of the Kac process rescales time by a factor of $2$ relative to these works, or equivalently summing over all ordered pairs rather than unordered pairs.} \cite{leonard1995large} for exactly this problem. For $\mu_\bullet \in \D$ we define $\overline{m}_{\mu}\in \Mm$ by \begin{equation} \overline{m}_{\mu}(dt,dv,dv_\star,d\sigma)=B(v-v_\star)dt \mu_t(dv)\mu_t(dv_\star)d\sigma.\end{equation}We say that $(\mu_\bullet, w)\in \D\times\Mm$ is a \emph{measure-flux pair} if $w\ll \overline{m}_{\mu}$ and if they solve the \emph{continuity equation}: for all $0\le t\le T$, \begin{equation} \label{eq: CE}\tag{CE} \mu_t=\mu_0+\int_E \Delta(v,v_\star,\sigma)1_{s\le t} \h w(ds,dv,dv_\star,d\sigma). \end{equation} We will use, throughout, the notation $K$ for the density $\frac{dw}{d\overline{m}_\mu}$, which we call a tilting function. With this notation, if $(\mu_\bullet, w)$ is a measure-flux pair, then $\mu_\bullet$ solves a modified Boltzmann equation \begin{equation} \label{eq: BEK} \tag{BE$_K$} \begin{split} \mu_t&=\mu_0+\int_{E} \Delta (v,v_\star,\sigma)K(s,v,v_\star,\sigma)B(v-v_\star)ds\mu_s(dv)\mu_s(dv_\star)d\sigma. \end{split} \end{equation}  Equivalently, given $\mu_\bullet$ solving (\ref{eq: BEK}) for some $K\in L^1(\overline{m}_\mu)$, one can define $w=K\overline{m}_\mu$ and $(\mu_\bullet, w)$ is a measure-flux pair. We define the dynamic cost of a trajectory $(\mu_\bullet, w)\in \mathcal{D}\times\Mm$ to be \begin{equation} \mathcal{J}(\mu_\bullet, w):=
\begin{cases}\int_E \tau\left(\frac{dw}{d\overline{m}_{\mu}}\right)\overline{m}_\mu(ds,dv,dv_\star,d\sigma) & \text{ if $(\mu_\bullet, w)$ is a measure-flux pair;} \\ \infty &\text{else} \end{cases} \end{equation} where $\tau:[0,\infty]\to[0,\infty]$ is the function $\tau(k)=k\log k-k+1$, and define the full rate function to be \begin{equation}\label{eq: leo rate function} \mathcal{I}(\mu_\bullet,w):=H(\mu_0|\mu_0^\star)+\mathcal{J}(\mu_\bullet,w).\end{equation} An analagous upper bound, which can be obtained from this rate function using the contraction principle on $(\mu_\bullet, w)\to \mu_\bullet$ is obtained by L\'eonard \cite{leonard1995large} in a different topology, and the same rate function has been found in other contexts for kinetic large deviations. Since the first version of this work, the works \cite{basile2021large',basile2022asymptotic} have introduced, for the same problem, closely related but strictly larger rate functions; these connections will be discussed in the literature review below. \subsection{Main Results} Our first result collects some useful facts on the proposed rate function $\mathcal{I}$ and on the exponential tightness.  \begin{proposition}[Exponential Tightness and Semicontinuity] \label{prop: ET + UB} Fix a probability space $(\Omega, \mathfrak{F}, \PP)$. For $N\ge 2$, let $\mu^N_\bullet$ be either regularised hard sphere or Maxwell Molecule Kac processes with initial velocities drawn independently from a measure $\mu_0^\star$ satisfying Hypothesis \ref{hyp: condition on ref ms}i). Then the following hold. \begin{enumerate}[label=\roman*).] \item  The random variables $(\mu^N_\bullet,w) \in \D\times \Mm$ are exponentially tight: for any $M>0$, there exists a compact set $\K\subset \D\times \Mm$ such that \begin{equation}\label{eq: ET} \limsup_N\frac{1}{N}\log \PP \left((\mu^N_\bullet, w^N)\not \in \K\right)\le -M. \end{equation}   \item The function $\mathcal{I}$ is lower semicontinuous on $\D\times \Mm$: the lower sub-level sets \begin{equation} \{(\mu_\bullet,w)\in \D\times\Mm: \mathcal{I}(\mu_\bullet,w)\le a\}\subset \D\times\Mm \end{equation} are closed when $\D$ has the Skorokhod topology and $\Mm$ the weak topology, metrised by $d$.\end{enumerate}\end{proposition}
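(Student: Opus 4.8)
The plan is to prove the two parts separately, with energy conservation as the structural input for exponential tightness and a Legendre-duality formula for $\mathcal{J}$ for lower semicontinuity.

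\emph{Part (i).} Since $\Delta(|v|^2)=0$, the energy $\langle|v|^2,\mu^N_t\rangle$ stays frozen at its initial value $e_N=N^{-1}\sum_i|V^i_0|^2$, and under Hypothesis \ref{hyp: condition on ref ms}i) the i.i.d.\ summands $|V^i_0|^2$ have a finite exponential moment, so Cram\'er's theorem gives, for each $M$, a radius $R$ with $\limsup_N N^{-1}\log\PP(e_N>R)\le -M$. On $\{e_N\le R\}$ one has $\sup_{t\le T}\langle|v|^2,\mu^N_t\rangle\le R$ and, by Cauchy--Schwarz, $\int B(v-v_\star)\,\mu^N_t(dv)\mu^N_t(dv_\star)\,d\sigma\le\kappa_R:=|\ssd|(1+2\sqrt R)$, so the number of collisions in any time interval is stochastically dominated by a Poisson variable with mean $N\kappa_R$ times its length. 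For $\mu^N_\bullet$: the set $\{\mu\in\cp_2:\langle|v|^2,\mu\rangle\le R\}$ is compact in $(\cp_2,W)$ --- here it matters that $W$ metrises weak convergence, so this is just tightness together with a uniform second moment bound --- giving compact containment; and as each collision displaces $\mu^N$ by at most $4/N$ in $W$, the Skorokhod modulus $w'_\delta(\mu^N_\bullet)$ is at most $4N^{-1}$ times the largest number of collisions in a cell of a fixed mesh-$\delta$ partition, whose upper tail is controlled by a Chernoff bound for the Poisson domination; taking $\delta$ small yields $\PP(w'_\delta(\mu^N_\bullet)>\varepsilon)\le e^{-NM}$. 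These estimates confine $\mu^N_\bullet$ to a fixed compact subset of $\D$ off an event of probability $\le e^{-NM}$.

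For the flux $w^N$, the total mass $w^N(E)$ is $N^{-1}$ times the number of collisions, again controlled by Poisson--Chernoff; the substantive issue is tightness of $w^N$ as a measure on the non-compact space $E$, which reduces to bounding $w^N(\{|v|>\rho\})$ (and the symmetric statement in $v_\star$). Here I would use that a recorded collision with $|v|>\rho$ forces the colliding pair to have combined energy exceeding $\rho^2$, and that energy conservation caps the number of particles with $|V^i_t|^2>\rho^2/2$ by $2NR\rho^{-2}$ at any time; a Cauchy--Schwarz estimate exploiting that this fast set is small then bounds the intensity of the increments of $w^N(\{|v|>\rho\})$ by $C_R|\ssd|N\rho^{-1}$ uniformly on $\{e_N\le R\}$, whence a further Poisson--Chernoff bound gives $\PP(w^N(\{|v|>\rho\})>\eta)\le e^{-NM}$ once $\rho$ is large. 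A union bound over the four events, with the compactness criteria in $\D$ and $(\Mm,d)$, finishes part (i). I expect the main technical obstacle to be exactly this last point: upgrading the soft fact that energy cannot escape to large speeds into an estimate at exponential scale in the regularised hard sphere case, where $B$ is unbounded and fast particles collide more often.

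\emph{Part (ii).} Since $\mu_0\mapsto H(\mu_0\,|\,\mu_0^\star)$ is lower semicontinuous for weak convergence and $(\mu_\bullet,w)\mapsto\mu_0$ is continuous in the Skorokhod topology, $\mathcal{I}=H(\mu_0\,|\,\mu_0^\star)+\mathcal{J}$ is a sum of nonnegative functions and it suffices to prove $\mathcal{J}$ is lower semicontinuous. The convex conjugate of $\tau(k)=k\log k-k+1$ is $\tau^*(\ell)=e^\ell-1$, which yields, for every $(\mu_\bullet,w)$ satisfying the continuity equation (\ref{eq: CE}),
\begin{equation*}
\mathcal{J}(\mu_\bullet,w)=\sup_{\Lambda>0}\,\sup_{g}\,\Big(\langle g,w\rangle-\langle e^g-1,\overline{m}_\mu\rangle\Big),
\end{equation*}
the inner supremum over $g\in C_b(E)$ supported in $\{|v-v_\star|<\Lambda\}$ and vanishing near its boundary: the truncation in $|v-v_\star|$ tames the unbounded kernel $B$, one recovers $\int_E\tau(dw/d\overline{m}_\mu)\,d\overline{m}_\mu$ by monotone convergence in $\Lambda$ (with a Lusin approximation to pass from bounded measurable to continuous test functions), and the supremum is $+\infty$ precisely when $w\not\ll\overline{m}_\mu$. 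For each admissible $g$ the integrand $(e^g-1)\,B\,\mathbf{1}_{\{|v-v_\star|<\Lambda\}}$ is bounded and continuous on $E$, so $\mu_\bullet\mapsto\langle e^g-1,\overline{m}_\mu\rangle=\int_0^T\!\!\int(e^g-1)B\,\mu_t(dv)\mu_t(dv_\star)\,d\sigma\,dt$ is continuous under Skorokhod convergence (the weak convergence $\mu^n_t\to\mu_t$ at continuity points and dominated convergence in $t$) and $w\mapsto\langle g,w\rangle$ is continuous, so each functional in the supremum is continuous on $\D\times\Mm$. Finally (\ref{eq: CE}) cuts out a closed set: testing it against $f\in\mathcal{F}$ and smearing in $t$ against $\phi\in C_c(0,T)$ turns it into the identity $\int_0^T\phi(t)\langle f,\mu_t\rangle\,dt=\int_0^T\phi(t)\langle f,\mu_0\rangle\,dt+\langle(\Delta f)\,\Phi,w\rangle$ with $\Phi(s)=\int_s^T\phi$, all three terms continuous in $(\mu_\bullet,w)$, and this holds for a.e.\ $t$, hence --- by right-continuity of both sides of (\ref{eq: CE}) --- for all $t$. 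So $\mathcal{J}$ coincides with a supremum of continuous functions on a closed set and equals $+\infty$ elsewhere, whence $\mathcal{J}$, and therefore $\mathcal{I}$, is lower semicontinuous.
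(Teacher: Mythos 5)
Your proposal follows the paper's architecture quite closely in both parts, and it is correct, but you have in fact anticipated and repaired a genuine omission in the paper's treatment of part (i).

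For part (i), the paper's Lemma~\ref{lemma: easy UT} establishes exactly what you call the three ``soft'' estimates --- energy confinement via Chebyshev and Hypothesis~\ref{hyp: condition on ref ms}i), control of the Skorokhod modulus by dominating collision counts with a Poisson process, and a Poisson--Chernoff bound on the total mass $w^N(E)$ --- and then invokes the Feng--Kurtz criteria for $\D$ together with the remark that $\{w\in\Mm: w(E)\le C\}$ is compact for $d$. That last remark is the weak link: $E$ has two $\rrd$ factors and is not compact, and the total-mass ball is \emph{not} compact for the bounded-Lipschitz metric $d$ on a non-compact base (a sequence $C\delta_{x_n}$ with escaping $x_n$ has pairwise $d$-distance $2C$ and no Cauchy subsequence). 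A compactness criterion in $(\Mm,d)$ must combine a mass bound with tightness of the measures in $E$, which here means precisely the bound you single out as the ``main technical obstacle'': an exponential-scale estimate on $w^N(\{|v|>\rho\})$. Your proposed argument for it --- Chebyshev caps the fraction of fast particles at $O(R/\rho^2)$, Cauchy--Schwarz converts that into a rate bound $O(N/\rho)$ for collisions recorded with $|v|>\rho$ (which survives the linear growth of the regularised hard-sphere kernel), and a Poisson--Chernoff bound finishes --- is correct and genuinely needed to produce a compact $\K\subset\D\times\Mm$. So on this point you have done something the paper does not, not merely rediscovered its proof.

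For part (ii), your route is essentially the paper's: write $\mathcal{I}=H(\mu_0|\mu_0^\star)+\mathcal{J}$, note $H(\cdot|\mu_0^\star)$ is lower semicontinuous and composes with the continuous evaluation $\mu_\bullet\mapsto\mu_0$, and express $\mathcal{J}$ as a supremum over test functions of continuous functionals via Legendre duality of $\tau$ (this is the paper's Lemmas~\ref{lemma: lagrange multiplier}--\ref{lemma: semiconntinuity}). The cosmetic differences are your slab truncation $\{|v-v_\star|<\Lambda\}$ in place of the paper's $g\in C_c(E)$ (both tame the unbounded kernel, and both yield bounded continuous integrands so the continuity argument runs), and your treatment of closedness of the continuity-equation constraint by time-smearing against $\phi\in C_c(0,T)$ in place of the paper's time-dependent test functions $f\in C^{1,1}_{0,b}$. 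One small point to add to your smearing argument: testing against $\phi\in C_c(0,T)$ gives the identity for a.e.~$t$ and hence, by right-continuity, for all $t\in[0,T)$, but this does not capture a possible discrepancy at the endpoint $t=T$ (both $\mu_t$ and $\int 1_{s\le t}w$ are right-continuous, not left-continuous, so their behaviour at $T$ is not controlled by their values on $[0,T)$). Since $\mu_\bullet\mapsto\mu_T$ and $w\mapsto\langle\Delta f,w\rangle$ are themselves continuous by Proposition~\ref{prop: Sk continuous functions}a) and the definition of $d$, you should simply intersect with the pointwise constraint at $t=T$; the resulting set is still closed and the rest of your argument goes through unchanged.
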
 We emphasise that we do not claim that $\mathcal{I}$ is `good', in that the sub-level sets are compact; indeed, Theorem \ref{thm: main} suggests that this is false. \bigskip \\ The positive result we prove on the large deviations is as follows. We rederive, in our context, the upper bound with our rate function, which reproduces the result of L\'eonard \cite{leonard1995large}, and prove a lower bound with the same rate function on a restricted set. In this way, the proposed rate function captures at least some of the correct large deviation behaviour of the Kac process. \begin{theorem}\label{thrm: main positive} Let $B$ be either the regularised hard spheres or Maxwell molecules kernel, and for $N\ge 2$ let $(\mu^N_\bullet, w^N)$ be a Kac process and its flux, with particles drawn initially from $\mu_0^\star$ satisfying Hypothesis \ref{hyp: condition on ref ms}, and let $\mathcal{I}$ be the rate function given above. Then \begin{enumerate}[label=\roman*).] \item For all $\A\subset \D\times\Mm$ closed, we have \begin{equation} \label{eq: UB} \limsup_N \h \frac{1}{N}\log\PP\left((\mu^N_\bullet, w^N)\in \A \right) \le -\inf\left\{\mathcal{I}(\mu_\bullet, w): (\mu_\bullet, w)\in \A\right\}. \end{equation}  \item For all $\U\subset \D\times\Mm$ open, we have \begin{equation} \label{eq: RLB} \liminf_N \h \frac{1}{N}\log\PP\left((\mu^N_\bullet, w^N)\in \U \right) \ge -\inf\left\{\mathcal{I}(\mu_\bullet, w): (\mu_\bullet, w)\in \U\cap\mathcal{R}\right\} \end{equation} where $\mathcal{R}=\{(\mu_\bullet, w)\in \D\times \Mm: \langle 1+|v|^2+|v_\star|^2, w\rangle <\infty\}.$ \end{enumerate}\end{theorem}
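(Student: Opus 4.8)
The two parts are proved by separate and largely classical mechanisms: part (i) is an exponential--martingale (Cram\'er--Chernoff) upper bound combined with the exponential tightness of Proposition~\ref{prop: ET + UB}, and part (ii) is a change-of-measure (tilting) lower bound, first for a class of regular trajectories and then extended by a recovery-sequence argument.

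\textbf{Upper bound.} For $g\in C_b(E)$ the generator (\ref{eq: generator}) shows that
\[
Z^{N,g}_t=\exp\!\Big(N\langle g,w^N_t\rangle-N\!\int_0^t\!\!\int_{\rrd\times\rrd\times\ssd}\!(e^{g}-1)B(v-v_\star)\,\mu^N_s(dv)\,\mu^N_s(dv_\star)\,d\sigma\,ds\Big)
\]
is a mean-one martingale, while $\exp(N\langle\phi,\mu^N_0\rangle-N\log\langle e^\phi,\mu_0^\star\rangle)$ has mean one for $\phi\in C_b(\rrd)$ (the Sanov ingredient). For $\psi$ bounded and $C^1$ in time the quantity $G^N_\psi:=\langle\psi_T,\mu^N_T\rangle-\langle\psi_0,\mu^N_0\rangle-\int_0^T\langle\partial_s\psi_s,\mu^N_s\rangle\,ds-\int_E\Delta\psi\,dw^N$ vanishes identically, being exactly (\ref{eq: CE}) tested against $\psi$, so the factor $e^{cNG^N_\psi}$ may be inserted for free. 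Multiplying, for every $(\phi,g,\psi,c)$ the expectation of $\exp(N\,\Phi_{\phi,g,\psi,c}(\mu^N_\bullet,w^N))$ equals one, where $\Phi_{\phi,g,\psi,c}(\mu_\bullet,w)=\langle\phi,\mu_0\rangle-\log\langle e^\phi,\mu_0^\star\rangle+\langle g,w\rangle-\int_0^T\langle(e^g-1)B,\mu_s\otimes\mu_s\otimes d\sigma\rangle\,ds+c\,G_\psi(\mu_\bullet,w)$. Proposition~\ref{prop: ET + UB}i) reduces the bound for closed $\A$ to the trace on compacts $\K_M$, on which $\sup_t\langle|v|^2,\mu_t\rangle$ is bounded and hence $\Phi_{\phi,g,\psi,c}$ is continuous --- the only delicate point being that for regularised hard spheres the linear growth of $B(v-v_\star)=1+|v-v_\star|$ is absorbed by uniform integrability from this second-moment bound. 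Exponential Chebyshev then gives $\limsup_N N^{-1}\log\PP((\mu^N_\bullet,w^N)\in\A\cap\K_M)\le-\inf_{\A\cap\K_M}\Phi_{\phi,g,\psi,c}$, and a finite-cover/minimax argument upgrades this to $-\inf_{\A}\sup_{\phi,g,\psi,c}\Phi_{\phi,g,\psi,c}$. It remains to identify $\sup_{\phi,g,\psi,c}\Phi_{\phi,g,\psi,c}=\mathcal I$: the supremum over $\phi$ is $H(\mu_0|\mu_0^\star)$; if (\ref{eq: CE}) fails then $G_\psi\neq0$ for some bounded $\psi$ and the supremum over $c$ is $+\infty$; if (\ref{eq: CE}) holds but $w\not\ll\overline m_\mu$ a test function concentrating on the singular support makes $\sup_g=+\infty$; and on a measure-flux pair the $c$-term vanishes and the pointwise Legendre identity $\sup_{s\in\rr}(sk-e^s+1)=\tau(k)$, together with a truncation and mollification of $\log K$ to produce an admissible near-maximiser, gives $\sup_g=\int\tau(K)\,d\overline m_\mu=\mathcal J(\mu_\bullet,w)$. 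This yields (\ref{eq: UB}).

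\textbf{Restricted lower bound.} It suffices to show that for every measure-flux pair $(\mu_\bullet,w)\in\mathcal R$ with $\mathcal I(\mu_\bullet,w)<\infty$ and every open $G\subset\D\times\Mm$ containing $(\mu_\bullet,w)$, one has $\liminf_N N^{-1}\log\PP((\mu^N_\bullet,w^N)\in G)\ge-\mathcal I(\mu_\bullet,w)$; taking the supremum over such points in $\U$ then gives (\ref{eq: RLB}). I would first prove this for \emph{regular} paths --- $w=K\overline m_\mu$ with $K$ continuous and $\varepsilon\le K\le\varepsilon^{-1}$, with $\mu_t$ having a smooth density bounded below by a Gaussian and with uniformly finite exponential moments, and $H(\mu_0|\mu_0^\star)<\infty$ --- by a change of measure. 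Let $\PPm^N$ be the law on $N$-particle path space obtained by drawing the initial velocities i.i.d.\ from $\mu_0$ and running Kac dynamics in which the configuration $(v,v_\star,\sigma)$ collides at the tilted rate $2K(t,v,v_\star,\sigma)B(v-v_\star)/N$; regularity of $K$ makes this a well-posed time-inhomogeneous jump process, and Girsanov's formula for pure-jump processes gives
\[
\frac{d\PPm^N}{d\PP}\Big|_{\mathfrak F_T}=\prod_{i=1}^N\frac{d\mu_0}{d\mu_0^\star}(V^i_0)\cdot\exp\!\Big(N\langle\log K,w^N\rangle-N\!\int_0^T\!\langle(K-1)B,\mu^N_s\otimes\mu^N_s\otimes d\sigma\rangle\,ds\Big).
\]
Under $\PPm^N$ the pair $(\mu^N_\bullet,w^N)$ obeys a law of large numbers --- a time-inhomogeneous propagation of chaos, provable by the martingale and coupling methods of Sznitman and Norris using the regularity of $K$, $B$, $\log K$ and uniform-in-$N$ moment bounds inherited from the regularity of $\mu_\bullet$ --- with limit the unique solution of (\ref{eq: BEK}) with datum $\mu_0$, namely $\mu_\bullet$; hence $\PPm^N(G)\to1$. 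Moreover the law of large numbers for the additive functionals above yields $N^{-1}\log(d\PPm^N/d\PP)\to H(\mu_0|\mu_0^\star)+\langle K\log K-K+1,\overline m_\mu\rangle=\mathcal I(\mu_\bullet,w)$ in $\PPm^N$-probability. Therefore, for any $\eta>0$, $\PP(G)\ge e^{-N(\mathcal I(\mu_\bullet,w)+\eta)}\,\PPm^N(G\cap\{N^{-1}\log(d\PPm^N/d\PP)\le\mathcal I(\mu_\bullet,w)+\eta\})$, and the $\PPm^N$-probability on the right tends to $1$, so $\liminf_N N^{-1}\log\PP(G)\ge-\mathcal I(\mu_\bullet,w)-\eta$; letting $\eta\downarrow0$ settles the regular case.

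\textbf{The main obstacle.} The crux is the passage from regular paths to a general $(\mu_\bullet,w)\in\mathcal R$ with $\mathcal I<\infty$: one must construct a recovery sequence of regular measure-flux pairs $(\mu^{(n)}_\bullet,w^{(n)})\to(\mu_\bullet,w)$ in $\D\times\Mm$ with $\limsup_n\mathcal I(\mu^{(n)}_\bullet,w^{(n)})\le\mathcal I(\mu_\bullet,w)$, the reverse inequality being automatic from the lower semicontinuity of Proposition~\ref{prop: ET + UB}ii). This requires simultaneously regularising the tilting (bounding it away from $0$ and $\infty$ and mollifying in space and time), the densities of $\mu_t$, and the initial datum, without letting either $\mathcal J$ or $H$ jump upward, and above all keeping the mass and \emph{energy} fluxes under control --- which is precisely where the hypothesis $(\mu_\bullet,w)\in\mathcal R$, i.e.\ $\langle1+|v|^2+|v_\star|^2,w\rangle<\infty$, enters and cannot be dispensed with: Theorem~\ref{thm: main} exhibits measure-flux pairs outside $\mathcal R$ for which no such approximation exists and whose true large-deviation cost strictly exceeds $\mathcal I$. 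Given the recovery sequence, applying the regular-path estimate to $(\mu^{(n)}_\bullet,w^{(n)})$ (which lies in the open set $G$ for large $n$) gives $\liminf_N N^{-1}\log\PP((\mu^N_\bullet,w^N)\in G)\ge-\mathcal I(\mu^{(n)}_\bullet,w^{(n)})$ for every such $n$, and since the left side does not depend on $n$ it is $\ge-\inf_n\mathcal I(\mu^{(n)}_\bullet,w^{(n)})\ge-\mathcal I(\mu_\bullet,w)$; this is (\ref{eq: RLB}).
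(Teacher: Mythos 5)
Your overall architecture matches the paper's: an exponential--martingale/variational upper bound combined with exponential tightness, and a Girsanov tilting plus a recovery-sequence lower bound. The upper bound sketch is essentially the paper's argument. What you call $\Phi_{\phi,g,\psi,c}$ is the functional $\Xi(\mu_\bullet,w,\varphi,f,g)$ of Section~\ref{sec: var form}; Lemma~\ref{lemma: lagrange multiplier} is exactly the Lagrange-multiplier dichotomy you phrase via the scalar $c$; the paper uses Lusin's theorem where you propose mollifying $\log K$ to get a near-maximiser; and Lemma~\ref{lemma: local upper bound} together with the covering argument executes your local-to-global step. For the lower bound at a regular path, your law of large numbers under $\PPm^N$ and the convergence of $N^{-1}\log(d\PPm^N/d\PP)$ to $\mathcal I(\mu_\bullet,w)$ are precisely Lemma~\ref{lemma: restricted LLN}, and you correctly flag that uniqueness of $(\mu_\bullet,w)$ among measure-flux pairs with given $K$ and $\mu_0$ is the decisive property the regularisation must deliver.

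The genuine gap is the recovery sequence (Lemma~\ref{lemma: approximation lemma}), which you identify as ``the main obstacle'' but do not construct; it is where nearly all the technical content of the lower bound resides (Lemmas~\ref{lemma: convolution}, \ref{lemma: approximation 1}, \ref{lemma: approximation 1.5}, \ref{lemma: approximation 2}). The key idea you would have to supply is the \emph{Gaussian} mollification of both $\mu_\bullet$ and $w$ in velocity: the Gaussian is the unique mollifier invariant under the pre/post-collision isometry $(v,v_\star)\mapsto(v',v_\star')$, so convolving with $g_\lambda$ preserves the continuity equation (\ref{eq: CE}) and hence the measure-flux structure --- a property that fails for a generic mollifier and without which ``mollifying $\mu_t$'' does not even produce a measure-flux pair. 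On top of this you need the Jensen decomposition of Lemma~\ref{lemma: convolution} giving $\mathcal J(g_\lambda\star\mu_\bullet,g_\lambda\star w)\le\mathcal J(\mu_\bullet,w)+C\vartheta(\lambda)$ with a rate $\vartheta(\lambda)\to0$ that is \emph{uniform} over paths controlled by $\langle|v|^2+|v_\star|^2,w\rangle$ (this is exactly where the hypothesis $(\mu_\bullet,w)\in\mathcal R$ enters, and the uniformity requires the five-region decomposition in Step~5 of that proof so that the diagonal choice $\lambda_n\to0$ can be made); the truncation of Lemma~\ref{lemma: approximation 1} to bound $KB$ and put $\mu^{(n)}_0$ in the form (\ref{eq: form of starting point}); and the Picard--Lindel\"of and integrating-factor argument of Lemma~\ref{lemma: approximation 2} to certify uniqueness and positivity of the approximating path. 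As it stands, your final paragraph asserts the existence of the recovery sequence rather than producing it, so the proof of~(\ref{eq: RLB}) is incomplete.
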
 As discussed in the literature review, the proposed rate function is a very natural candidate for describing the large deviations behaviour, and one might expect to be able to find a `true' lower bound (\ref{eq: formal LD LB}). In this context, the restricted lower bound presented here is somewhat dissatisfying, as it leaves open the question of which open sets $\U$ are such that $\inf_\U \mathcal{I}=\inf_{\U\cap\mathcal{R}} \mathcal{I}$, or the possibility that a better upper bound may be possible. The restriction to a set $\mathcal{R}$ of `good' paths, as here, is necessary for the paths in question to be approximated by paths which can be recovered by a Girsanov transform; see Lemma \ref{lemma: approximation lemma}. Key to this argument is that these paths should be uniquely specified by the initial data and tilting $K$, so that the path is the unique possible hydrodynamic limit of `tilted' dynamics along any subsequence. However, at the level of the Boltzmann equation (\ref{eq: BE}), this uniqueness is known \emph{not} to hold: solutions with increasing energy have been constructed by Lu and Wennberg \cite{lu1999solutions}. Since the energy $\langle |v|^2, \mu^N_t\rangle$ is almost surely conserved by the paths of the stochastic Kac process, one might hope that such solutions are spurious and can be excluded from the large deviation analysis, so that uniqueness does hold.  However, we prove the following theorem, which shows that such solutions can be reached with finite exponential cost, and so cannot be excluded from the large deviation analysis, but the occurrence of such paths is not correctly predicted by the proposed rate function. Equivalently, this example can be understood as producing explicitable open sets $\U$ such that the infima of the rate function over $\U$ and $\U\cap\mathcal{R}$ do not coincide.
\begin{theorem}\label{thm: main} Assume the notation of Proposition \ref{prop: ET + UB}. \begin{enumerate}[label=\roman*).]\item Suppose $B$ is the regularised hard spheres kernel, and the reference measure $\mu_0^\star$ satisfies Hypothesis \ref{hyp: condition on ref ms}i-ii). Let $\Theta:[0,T]\to (0,\infty)$ be nondecreasing, nonconstant and left-continuous, with $\Theta(0)=1$ and such that, for some closed set $P\subset [0,T], 0\in P, T\not \in P$ with null interior, $\Theta$ is locally constant on $[0,T]\setminus P$.  For some constant $\alpha=\alpha(\Theta(T))$, define \begin{equation}\label{eq: At} A(t):=\alpha \left(\inf_{s\in P: s\le t} (t-s)\right)^{-2} \in (0,\infty] \end{equation} and consider the set $\mathcal{A}_\Theta$ given by \begin{equation}\begin{split} \label{eq: bad set AE}\mathcal{A}_\Theta:= &\bigg\{(\mu_\bullet, w) \in \D\times \Mm: \mu_\bullet \text{ solves (\ref{eq: BE}), }w=\overline{m}_\mu, \h \mu_0=\mu^\star_0, \\ & \hspace{3cm} \text{and for all }t\ge 0, \h \langle |v|^2, \mu_t\rangle=\Theta(t) \text{ and }\langle |v|^4, \mu_t\rangle \le A(t) \bigg\}.\end{split}\end{equation} Then $\mathcal{A}_\Theta$ is compact, nonempty, and $\mathcal{I}(\mu_\bullet,w)=0$ on $\mathcal{A}_\Theta$. We have \begin{equation}\label{eq: first item of main} \inf_{\U\supset\A_\Theta}\liminf_N\frac{1}{N}\log\PP\left((\mu^N_\bullet, w^N)\in \U\right)\ge -\Theta(T)z_2 \end{equation} where the infimum runs over all open sets $\U\subset\D\times \Mm$ containing $\A_\Theta$, and there exists an open set $\V\supset \mathcal{A}_\Theta$ such that \begin{equation} \label{eq: second item of main} \liminf_N \frac{1}{N} \log \PP\left((\mu^N_\bullet, w^N) \in \V \right)<0.\end{equation}\item  Suppose instead that $B$ is the cutoff Maxwell Molecules kernel. For all $\delta>0$ and $\Theta$ as above, and with $A$ as above with $\alpha$ depending on $\delta$ as well as $\Theta(T)$, define the set \begin{equation} \label{eq: bad set AED} \begin{split} \A_{\Theta, \delta}:= &\bigg\{(\mu_\bullet,w) \in \D\times \Mm: (\mu_\bullet,w) \text{ is a measure-flux pair with }K=1+\delta|v-v_\star|,  \\ & \hspace{2cm} \mu_0=\mu^\star_0,\text{ and for all }t>0, \h \langle |v|^2, \mu_t\rangle=\Theta(t) \text{ and }\langle |v|^4, \mu_t\rangle \le A(t) \bigg\}.\end{split} \end{equation} The sets $\A_{\Theta, \delta}$ are compact and nonempty, and $\mathcal{I}(\mu_\bullet,w)\le 4\delta^2\Theta(T)T$ for all $(\mu_\bullet,w) \in \A_{\Theta, \delta}$. We have \begin{equation}\label{eq: first point of main MM} \inf_{\U\supset\A_{\Theta, \delta}}\liminf_N \frac{1}{N}\log \PP\left((\mu^N_\bullet,w^N) \in \U\right) \ge -\Theta(T)(z_2+C\delta)\end{equation} where, as above, the infimum runs over all open sets $\U$ containing $\A_{\Theta, \delta}$. However, there exist open sets $\V_\delta \supset \A_{\Theta, \delta}$ such that, for any $\Theta$,  \begin{equation} \label{eq: second point of main MM}\limsup_{\delta\downarrow 0}\h \liminf_N \frac{1}{N}\PP\left((\mu^N_\bullet,w^N) \in \mathcal{V}_{\delta}\right)<0.  \end{equation} \end{enumerate} \end{theorem}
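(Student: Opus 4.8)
The plan is to prove the two estimates in each part of Theorem~\ref{thm: main} by opposite-facing arguments. The lower bounds (\ref{eq: first item of main}) and (\ref{eq: first point of main MM}) come from an \emph{atypical initial configuration}: with probability $e^{-\mathcal{O}(N)}$ a vanishing proportion of very fast particles can carry a macroscopic amount of energy, and the honest---or suitably tilted---collision dynamics then transport it into the bulk, reproducing a Lu--Wennberg solution in the limit $N\to\infty$. The upper bounds (\ref{eq: second item of main}) and (\ref{eq: second point of main MM}) come from the fact that the Kac process conserves $\langle|v|^2,\mu^N_t\rangle$ \emph{exactly}: a particle system shadowing a path whose bulk energy has risen to $\Theta(T)>1$ must already have had energy $\gtrsim\Theta(T)$ at time $0$, which is a Cram\'er-type event of probability $e^{-\Omega(N)}$.

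\emph{The sets $\mathcal{A}_\Theta$, $\mathcal{A}_{\Theta,\delta}$.} Nonemptiness is the analytic core. For part i) I would construct a solution of (\ref{eq: BE}) with $\mu_0=\mu^\star_0$ and $\langle|v|^2,\mu_t\rangle=\Theta(t)$ following Lu and Wennberg~\cite{lu1999solutions}: approximate by conservative solutions whose initial data is $\mu^\star_0$ with a small mass re-weighted from its tail out to velocities of a scale $R_n\to\infty$, with the mass, the scale and the timing arranged so that the energy released into the bulk tracks $\Theta$; since $\Theta$ is locally constant off the closed, interior-free set $E$, the releases can be localised at the times of $E$. The bound $\langle|v|^4,\mu_t\rangle\le A(t)$ of (\ref{eq: At}) is then the hard-sphere moment-production estimate applied \emph{from the last point of $E$ before $t$}: starting from finite energy, the fourth moment is finite for positive times and decays like (elapsed time)$^{-2}$, which is the form of $A$; it survives the limit $n\to\infty$ by lower semicontinuity. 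Given nonemptiness, $\mathcal{I}\equiv0$ on $\mathcal{A}_\Theta$ is immediate from (\ref{eq: leo rate function}): $\mu_0=\mu^\star_0$ makes $H$ vanish, and $w=\overline{m}_\mu$ forces $K\equiv1$, whence $\mathcal{J}=\int\tau(1)\,\overline{m}_\mu=0$, $\overline{m}_\mu$ being finite by the second moment. For part ii) the key observation is that a measure-flux pair with $K=1+\delta|v-v_\star|$ relative to the Maxwell kernel $B\equiv1$ is exactly a solution of the Boltzmann equation for the regularised hard-sphere kernel $1+\delta|v-v_\star|$; thus $\mathcal{A}_{\Theta,\delta}$ consists of the corresponding Lu--Wennberg solutions, which gives nonemptiness and the moment bound (with $\alpha$ now absorbing the $\delta$-dependence of the moment estimates) as in part i), while $\mathcal{I}=\mathcal{J}=\int\tau(1+\delta|v-v_\star|)\,\overline{m}_\mu\le\tfrac12\delta^2\int|v-v_\star|^2\,\overline{m}_\mu\le 4\delta^2\Theta(T)T$ using $\tau(1+x)\le\tfrac12x^2$ and $\langle|v-v_\star|^2,\mu_t\otimes\mu_t\rangle\le 2\Theta(T)$. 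Compactness of $\mathcal{A}_\Theta$ (resp.\ $\mathcal{A}_{\Theta,\delta}$) in $\D\times\Mm$ follows from the moment bounds---uniform for $t$ on compacts of $[0,T]\setminus E$, and on a left-neighbourhood of $T$ where $A$ is bounded---which give tightness and equicontinuity in $\D$ and continuity of $\mu_\bullet\mapsto\overline{m}_\mu$, together with lower semicontinuity of $\mathcal{I}$ (Proposition~\ref{prop: ET + UB}ii)): any limit point has $\mathcal{I}=0$, hence is again a measure-flux pair with the right $\mu_0$ and $K$, and the energy identity and fourth-moment bound pass to the limit by the uniform integrability the latter supplies.

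\emph{The lower bounds.} For (\ref{eq: first item of main}) I would reduce to the restricted lower bound of Theorem~\ref{thrm: main positive}ii). Cap the reservoir at a large scale $R$: let $\mu^{(R)}_\bullet$ be the conservative hard-sphere solution started from $\mu^\star_0$ with a mass $\varepsilon_R$ re-weighted from the tail $\{|v|\ge R\}$ out to velocities $\sim R$, so that $\varepsilon_R R^2$ covers the energy gain the bulk must experience (which is at most $\Theta(T)$). Then $(\mu^{(R)}_\bullet,\overline{m}_{\mu^{(R)}})\in\mathcal{R}$ (all moments being finite for fixed $R$), it converges as $R\to\infty$ into $\mathcal{A}_\Theta$---this is the Lu--Wennberg limit---hence lies in any prescribed open $\U\supset\mathcal{A}_\Theta$ for $R$ large; and since $\mu^{(R)}_0\ll\mu^\star_0$, a direct computation gives $H(\mu^{(R)}_0\,|\,\mu^\star_0)=-\varepsilon_R\log\mu^\star_0(\{|v|\ge R\})+o_R(1)$, which by the sharp tail asymptotics $\liminf_{r\to\infty}r^{-2}\bigl(-\log\mu^\star_0(\{|v|\ge r\})\bigr)=z_2$ forced by Hypothesis~\ref{hyp: condition on ref ms}ii) is at most $z_2\Theta(T)+o_R(1)$ for $R$ in a suitable sequence. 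Since $\mathcal{I}(\mu^{(R)}_\bullet,\overline{m}_{\mu^{(R)}})=H(\mu^{(R)}_0\,|\,\mu^\star_0)$, Theorem~\ref{thrm: main positive}ii) gives $\liminf_N N^{-1}\log\PP\bigl((\mu^N_\bullet,w^N)\in\U\bigr)\ge-z_2\Theta(T)-o_R(1)$, and we let $R\to\infty$. For (\ref{eq: first point of main MM}) the path in $\mathcal{A}_{\Theta,\delta}$ solves the $\delta$-regularised hard-sphere equation but not the Maxwell one, so on top of the same energy-concentrated initial configuration one superimposes a Girsanov change of measure boosting the Maxwell collision rates by $1+\delta|v-v_\star|$; under the tilted law the empirical measure converges to the desired path, the relative-entropy cost of the tilt away from the reservoir is $N\mathcal{J}=\mathcal{O}(\delta^2N)$, and the interplay of the boost with the fast reservoir particles contributes the extra $C\delta\,\Theta(T)N$ in (\ref{eq: first point of main MM}).

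\emph{The upper bounds, and the main obstacle.} For (\ref{eq: second item of main}) take $\V$ to be a small $\eta$-neighbourhood of the compact set $\mathcal{A}_\Theta$. On a left-neighbourhood of $T$ the function $\Theta$ is constant equal to $\Theta(T)$ and $A$ is bounded, so $\langle|v|^4,\mu_t\rangle\le A(t)$ gives there the uniform tail control $\int_{|v|>L}|v|^2\,\mu_t\le A(t)/L^2$ over $(\mu_\bullet,w)\in\mathcal{A}_\Theta$; and $\V$ can be chosen so that $(\mu^N_\bullet,w^N)\in\V$ forces $W(\mu^N_T,\mu_T)<\eta'$ for some $(\mu_\bullet,w)\in\mathcal{A}_\Theta$, the $J_1$-topology controlling the endpoint. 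Testing $\mu^N_T-\mu_T$ against the truncation $\min(|v|^2,L^2)$, which is a multiple of an element of $\mathcal{F}$, then yields $\langle|v|^2,\mu^N_T\rangle\ge\Theta(T)-A(T)/L^2-L^2\eta'\ge\Theta(T)-\varepsilon$ once $L$ is large and $\eta$ small. Since the Kac process conserves energy, $\langle|v|^2,\mu^N_0\rangle=\langle|v|^2,\mu^N_T\rangle\ge\Theta(T)-\varepsilon>1$, and as the $|V^i_0|^2$ are i.i.d.\ with a finite exponential moment (Hypothesis~\ref{hyp: condition on ref ms}i)), Cram\'er's theorem gives $\PP\bigl(\langle|v|^2,\mu^N_0\rangle\ge\Theta(T)-\varepsilon\bigr)\le e^{-cN}$ with $c>0$; this is (\ref{eq: second item of main}). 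The same argument, carried out for each fixed small $\delta$ and using that $A(T)$---and hence the resulting energy excess and Cram\'er rate---can be controlled uniformly as $\delta\downarrow0$, gives (\ref{eq: second point of main MM}). The genuinely hard step, I expect, is the one common to nonemptiness and the dynamical half of the lower bound: producing the Lu--Wennberg solution with the \emph{prescribed} energy clock $\Theta$ and the quantitative fourth-moment bound $A(t)$, and proving that the conservative (resp.\ $\delta$-tilted) particle system started from the energy-concentrated initial data converges to it---a law of large numbers in the non-standard regime of unbounded velocities with a macroscopic amount of energy escaping to infinity. The remainder is comparatively bookkeeping.
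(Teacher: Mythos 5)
Your overall strategy is in the same spirit as the paper's: concentrate energy in the tails of the initial data and let collisions carry it into the bulk for the lower bound; use pathwise conservation plus Cram\'er for the upper bound. The Cram\'er half is essentially the paper's argument (the paper uses the time-averaged truncated second moment $\int_I\langle f_R,\mu_t\rangle\,dt$ in place of your evaluation at $t=T$, but both work since $x_\bullet\mapsto x(T)$ is $J_1$-continuous). Your account of $\mathcal I\equiv0$ on $\mathcal A_\Theta$, the $\tau(1+\delta|v-v_\star|)$ computation for part ii), and the compactness argument all match.

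The route you take for the lower bounds, however, is genuinely different from the paper's and has two concrete gaps. First, a \emph{hypothesis mismatch}: you propose to route (\ref{eq: first item of main}) and (\ref{eq: first point of main MM}) through the restricted lower bound Theorem~\ref{thrm: main positive}ii), but that theorem requires Hypothesis~\ref{hyp: condition on ref ms}iii) (continuous density bounded below), via the approximation Lemma~\ref{lemma: approximation lemma} which must be invoked because $K\equiv1$ makes $KB(v-v_\star)=1+|v-v_\star|$ unbounded, so Lemma~\ref{lemma: restricted LLN} does not apply directly. Theorem~\ref{thm: main} only assumes Hypothesis~\ref{hyp: condition on ref ms}i--ii); your route proves a strictly weaker statement. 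The paper avoids this by constructing the change of measure $\mathbb Q^N$ explicitly and proving the law of large numbers afresh (Lemmas~\ref{lemma: bad LLN main}--\ref{lemma: bad LLN 4}), never touching Theorem~\ref{thrm: main positive}ii). Second, and more seriously, there is a real obstruction to controlling the \emph{energy clock} by conservative PDE approximation. For regularised hard spheres, a fast particle at scale $R$ collides with the bulk at rate $\sim R$ and thermalises in time $\mathcal O(1/R)$; as $R\to\infty$, \emph{all} the tail energy is released instantaneously at $t=0+$. Conservative solutions $\mu^{(R)}_\bullet$ from tail-concentrated data therefore only converge to paths with a single energy jump at the origin. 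You assert that ``the mass, the scale and the timing [can be] arranged so that the energy released into the bulk tracks $\Theta$'', but with conservative dynamics there is no timing degree of freedom. This is precisely why the paper works at the particle level: the tilting $K^{M,r,N}$ \emph{freezes} the particles with $|v|\ge M_{i-1}$ until the chosen time $t^{(r)}_{i-1}$, a modification of the dynamics unavailable at the PDE level, and the staggered unfreezing is what produces the prescribed staircase $\Theta$. Under the frozen dynamics the limit is still a solution of (\ref{eq: BE}) because the fraction of frozen particles is $\mathfrak o(1)$, but the approximating paths are not solutions of the conservative PDE, so your $(\mu^{(R)}_\bullet,\overline m_{\mu^{(R)}})$ do not converge into $\mathcal A_\Theta$ for general $\Theta$. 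Relatedly, for Maxwell molecules the argument is inconsistent: you first invoke Theorem~\ref{thrm: main positive}ii) and then switch to an explicit Girsanov tilt with ``interplay with the fast reservoir particles'', which is a particle-level statement that has no meaning inside the contraction-type argument you began with. The paper's approach, carried out in Section~\ref{sec: pf of main}, handles both kernels with the single frozen-particle construction and so bypasses both obstacles.
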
 In both cases, the first point shows that such behaviour cannot be excluded by superexponential estimates, and so is a form of behaviour which must be taken into account in the large deviation theory; the second point shows that the rate function on such paths is not that predicted above. The argument we will present is a stochastic, large deviation analogue of the construction of Lu and Wennberg \cite{lu1999solutions}, keeping track of the exponential change of measure necessary. In the first case, we will construct changes of measure $\PPm^N\ll\PP$, with an exponential cost associated to changing the initial data and sub-exponential cost associated to modifying the dynamics, so that $\mathfrak{o}(N)$ particles containing $\mathcal{O}(1)$ energy are temporarily `frozen' and, under these new measures, the Kac processes concentrate on the set $\A_\Theta$ given. The argument for the Maxwell molecule case is similar, with an additional exponential cost $\mathcal{O}(e^{N\delta})$ necessary to modify the dynamics.   In these cases, the behaviour of $\mathfrak{o}(N)$ particles has a macroscopic effect on the evolution of the whole process, meaning that the large deviation behaviour is not purely captured by the empirical measure and control $K$. Possible generalisations of this phenomenon will be discussed in Section \ref{sec: lit}.2 below \bigskip \\ Let us now examine some consequences of Theorems \ref{thrm: main positive}, \ref{thm: main}. One might hope that it is possible to prove a true large deviation principle under well-chosen initial conditions where one puts in `by hand' that there is no such concentration initially. The following easy corollary, exploiting the reverseability of the particle system $\mu^N_\bullet$ in the equilibrium $\gamma^{\otimes N}$, suggests that, even under such well-chosen conditions, the same concentration of energy can occur as a result of the binary collisions. \begin{corollary}\label{cor: bad by evolution} Let us take $\mu_0^\star=\gamma$, and fix a decreasing, right-continuous function $\Theta$, $\Theta(T)=1$, which is locally constant aside from at a closed set $P\subset [0,T]$ with empty interior, $T\in P, 0\not \in P$. For either Maxwell molecules or hard spheres, there exists an explicitable function $A$ such \begin{equation} \begin{split} \mathcal{B}&=\bigg\{(\mu_\bullet,w) \in \D\times \Mm: \langle |v|^2, \mu_t\rangle = \Theta(t)\text{ for all }t\in [0,T] \text{ and }\langle |v|^4, \mu_t\rangle \le A(t)  \bigg\} \end{split} \end{equation} satisfies \begin{equation} \inf_{\U\supset \mathcal{B}} \h\liminf_N \frac{1}{N} \log \PP\left((\mu^N_\bullet,w^N) \in \U\right) > -\infty. \end{equation} where, as above, the outer infimum runs over open $\U\subset \D$ containing $\mathcal{B}$. \end{corollary}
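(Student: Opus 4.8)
The plan is to reduce the statement to Theorem \ref{thm: main} via the time-reversibility of the Kac process started from its Gaussian equilibrium, so that the energy-decreasing trajectories in $\mathcal{B}$ become the time-reversals of the energy-increasing trajectories produced there. Given $\Theta$ as in the statement I would first set $\Theta'(t):=\Theta(T-t)$ and $E':=\{T-s:s\in E\}$; then $\Theta'$ is nondecreasing, nonconstant and left-continuous with $\Theta'(0)=1$, and $E'$ is closed with $0\in E'$, $T\notin E'$ and empty interior, with $\Theta'$ locally constant on $[0,T]\setminus E'$, so $\Theta'$ satisfies the hypotheses of Theorem \ref{thm: main}(i), while $\gamma$ satisfies Hypothesis \ref{hyp: condition on ref ms}i-ii) with $z_2=d/2$. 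Writing $A_{\Theta'}$ for the function attached to $\Theta'$ in (\ref{eq: At}), I would then take $A(t):=A_{\Theta'}(T-t)$, which after reparametrisation equals $\alpha(\Theta(0))\big(\inf_{s\in E:\,s>t}(s-t)\big)^{-2}$ — finite near $t=0$ since $0\notin E$, and blowing up as $t$ approaches $E$ from the left — as the explicitable function in the corollary; in the Maxwell case one fixes any $\delta>0$ and uses $\mathcal{A}_{\Theta',\delta}$ and the corresponding $\alpha$ instead.

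The key structural input is that, started from $\mu_0^\star=\gamma$, the pair $(\mu^N_\bullet,w^N)$ is invariant in law under the reversal map $R$ on $\D\times\Mm$ sending $\mu_\bullet\mapsto(\mu_{T-t})_{t\in[0,T]}$ (right-continuous version) and $w\mapsto\Phi_\ast w$, where $\Phi(t,v,v_\star,\sigma)=(T-t,v',v'_\star,\sigma)$ is the collision-reversal map. This holds because the elastic collision map of (\ref{eq: PCV}) is a unit-Jacobian involution of $\rrd\times\rrd$ preserving $|v-v_\star|$, so the collision rate $2B(v-v_\star)/N$ is unchanged under it; since $\sum_i|V^i|^2$ is also conserved, the product Gaussian $\gamma^{\otimes N}\propto\exp(-\tfrac{d}{2}\sum_i|v^i|^2)$ is reversible for the $N$-particle dynamics, and $\Phi$ carries the four admissible representations of a collision bijectively onto those of the reversed collision (using the same $\sigma$, as $v'-((v'-v'_\star)\cdot\sigma)\sigma=v$). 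Reversibility of the particle trajectory, together with the fact that the uniformly random choice of representation in the definition of $w^N$ is sent to a uniformly random choice of reversed representation, then upgrades to invariance of the joint law of $(\mu^N_\bullet,w^N)$ under $R$.

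With these in hand the conclusion is quick. Every $(\mu_\bullet,w)\in\mathcal{A}_{\Theta'}$ has $\langle|v|^2,\mu_t\rangle=\Theta'(t)$ and $\langle|v|^4,\mu_t\rangle\le A_{\Theta'}(t)$, so $R(\mu_\bullet,w)$ has second-moment profile $t\mapsto\Theta(t)$ and fourth moment at most $A(t)$; since $\mathcal{B}$ imposes only these two conditions, $R(\mathcal{A}_{\Theta'})\subset\mathcal{B}$, so $\mathcal{B}$ is nonempty. For any open $\mathcal{U}\supset\mathcal{B}$, the set $R^{-1}(\mathcal{U})$ contains an open neighbourhood of $\mathcal{A}_{\Theta'}$ — here one uses that $R$ is continuous, or simply that the trajectories in the compact set $\mathcal{A}_{\Theta'}$ solve (\ref{eq: BE}) and are continuous in time, so the $J_1$-reflection is well behaved near them — and hence Theorem \ref{thm: main}(i) gives $\liminf_N\frac1N\log\PP((\mu^N_\bullet,w^N)\in R^{-1}(\mathcal{U}))\ge-\Theta'(T)z_2=-\Theta(0)z_2$. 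By reversibility the left-hand probability equals $\PP((\mu^N_\bullet,w^N)\in\mathcal{U})$, and taking the infimum over such $\mathcal{U}$ gives the claim for hard spheres; the Maxwell case is identical using Theorem \ref{thm: main}(ii), with bound $-\Theta(0)(z_2+C\delta)>-\infty$.

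The main point requiring care — and where I expect the real work to sit — is the reversibility of the full measure--flux pair under $R$: although the detailed-balance computation above makes the statement morally clear, one must check precisely that the empirical flux $w^N$, together with its auxiliary uniform choices, is genuinely carried to the flux of the time-reversed process, and that the $J_1$ time-reflection may be treated as a homeomorphism on the portion of $\D$ that is relevant here. Neither is deep, but both must be set up carefully; everything else is a direct appeal to Theorem \ref{thm: main}.
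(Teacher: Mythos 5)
Your proposal is correct in spirit but takes a genuinely different, and more technically demanding, route than the paper. Both approaches set $\Theta'(t)=\Theta(T-t)$, reflect the fourth-moment bound to get $A$, and reduce to Theorem \ref{thm: main} via reversibility of the equilibrium Kac process. The divergence is entirely in how the empirical flux $w^N$ is handled.

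You propose to verify joint invariance of the law of $(\mu^N_\bullet,w^N)$ under the reversal map $R=(\mathbb{T},\Phi_\ast)$, with $\Phi(t,v,v_\star,\sigma)=(T-t,v',v'_\star,\sigma)$, and then pull $\mathcal{U}$ back through $R$. You rightly flag this as the ``real work'': one must check that $\Phi$ carries the four-fold parametrisation ambiguity of a forward collision bijectively onto the four representations of the reversed collision, and that the uniformly random choice used to build $w^N$ is thereby preserved. You sketch but do not complete this, and as written the proposal has a gap exactly here.

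The paper circumvents the flux question entirely. It observes that $\mathcal{B}$ imposes no constraint on $w$, so $\mathcal{B}=\widehat{\mathcal{A}}\times\Mm$ where $\widehat{\mathcal{A}}\subset\D$ is a compact set of reversed energy-increasing paths. Given any open $\mathcal{U}\supset\mathcal{B}$, it takes a compact $\mathcal{K}\subset\Mm$ with $\PP(w^N\notin\mathcal{K})\le e^{-MN}$ for $M$ large (Lemma \ref{lemma: easy UT}c), notes $\widehat{\mathcal{A}}\times\mathcal{K}$ is compact inside $\mathcal{U}$, and extracts a product $\mathcal{U}_1\times\mathcal{U}_2\subset\mathcal{U}$ with $\widehat{\mathcal{A}}\subset\mathcal{U}_1$ and $\mathcal{K}\subset\mathcal{U}_2$. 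Reversibility is then applied to $\mathcal{U}_1$ alone, so only the standard invariance of the empirical measure path $\mu^N_\bullet$ under $\mathbb{T}$ is needed, never $w^N$; the event $\{w^N\notin\mathcal{U}_2\}$ is discarded as superexponentially rare and a max/union bound finishes. What each buys: your route would establish a stronger and possibly independently useful structural fact (joint reversibility of the measure--flux pair) but requires wrestling with the collision parametrisation; the paper's ``forget the flux'' decomposition exploits the product structure of $\mathcal{B}$ and is the more economical argument. If you pursue your route, the flux reversibility step must actually be written out; if you adopt the paper's, it disappears.
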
 As a result, it is not possible to find a superexponential estimate to prevent the accumulation of energy in $\mathfrak{o}(N)$ particles at future times. \bigskip \\  Since the stochastic processes $\mu^N_\bullet$ are exponentially tight in $\D$, it follows that one can extract subsequences satisfying a \emph{true} large deviation principle. As a consequence of Theorem \ref{thm: main}, no such subsequence can avoid the bad paths we have constructed. \begin{corollary}\label{cor: no energy conserving LDP} Let $\mu^\star_0$ be a reference measure satisfying Hypothesis \ref{hyp: condition on ref ms}i-ii), and let $\mu^N_\bullet, w^N$ be $N$-particle Kac processes, either for the regularised hard spheres or Maxwell molecules case. Suppose that $S\subset \mathbb{N}$ is an infinite subsequence such that $(\mu^N_\bullet, w^N)_{N\in S}$ satisfy a large deviation principle in $\D\times\Mm$ with some rate function $\widetilde{\mathcal{I}}$. Then there exists $(\mu_\bullet, w)$ in $\D\times\Mm$ such that $\widetilde{\mathcal{I}}(\mu_\bullet,w)<\infty$ but such that $t\mapsto \langle |v|^2, \mu_t\rangle$ is not constant. \end{corollary}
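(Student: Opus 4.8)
Here is how I would prove Corollary~\ref{cor: no energy conserving LDP}.

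The plan is to combine Theorem~\ref{thm: main} with the standard principle that an exponentially tight family satisfying a large deviation principle has a \emph{good} rate function. First observe that, since $(\mu^N_\bullet,w^N)_{N\in\mathbb{N}}$ is exponentially tight by Proposition~\ref{prop: ET + UB}(i), so is the subsequence $(\mu^N_\bullet,w^N)_{N\in S}$. Given $a<\infty$, choose $M>a$ and a compact $\K\subset\D\times\Mm$ with $\limsup_{N\in S}N^{-1}\log\PP((\mu^N_\bullet,w^N)\notin\K)\le -M$; applying the large deviation upper bound along $S$ to the closed set $\K^c$ shows $\widetilde{\mathcal{I}}>M>a$ on $\K^c$, so $\{\widetilde{\mathcal{I}}\le a\}\subset\K$, and being also closed by lower semicontinuity it is compact. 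Thus $\widetilde{\mathcal{I}}$ is good, and in particular its sub-level sets are compact in $\D\times\Mm$.

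Next I fix a single profile $\Theta$ admissible in Theorem~\ref{thm: main} --- for instance the two-valued step function equal to $1$ on $[0,T/2]$ and to $2$ on $(T/2,T]$, with $E=\{0,T/2\}$ --- and put $\A:=\mathcal{A}_\Theta$ with $L:=\Theta(T)z_2$ in the regularised hard spheres case, and $\A:=\mathcal{A}_{\Theta,1}$ with $L:=\Theta(T)(z_2+C)$ in the Maxwell molecules case. In both cases $L<\infty$ under Hypothesis~\ref{hyp: condition on ref ms}i-ii), and by Theorem~\ref{thm: main} the set $\A$ is compact, nonempty, and every $(\mu_\bullet,w)\in\A$ satisfies $\langle|v|^2,\mu_t\rangle=\Theta(t)$, which is nonconstant. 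Metrising $\D\times\Mm$, set $\U_n:=\{x:\mathrm{dist}(x,\A)<1/n\}$ and $\mathcal{C}_n:=\{x:\mathrm{dist}(x,\A)\le 1/n\}$, so that $\A\subset\U_n\subset\mathcal{C}_n$ with $\U_n$ open and $\mathcal{C}_n$ closed. Applying \eqref{eq: first item of main} (resp.\ \eqref{eq: first point of main MM}) to the open neighbourhood $\U_n$ of $\A$ gives $\liminf_{N\in\mathbb{N}}N^{-1}\log\PP((\mu^N_\bullet,w^N)\in\U_n)\ge -L$; since passing to the subsequence $S$ only increases the lower limit and $\U_n\subset\mathcal{C}_n$, we obtain $\limsup_{N\in S}N^{-1}\log\PP((\mu^N_\bullet,w^N)\in\mathcal{C}_n)\ge -L$, whence the large deviation upper bound along $S$ applied to the closed set $\mathcal{C}_n$ yields $\inf_{\mathcal{C}_n}\widetilde{\mathcal{I}}\le L$. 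Pick $x_n\in\mathcal{C}_n$ with $\widetilde{\mathcal{I}}(x_n)\le L+1$.

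Finally, all the $x_n$ lie in the sub-level set $\{\widetilde{\mathcal{I}}\le L+1\}$, which is compact by the first step, so along a further subsequence $x_{n_k}\to x_\infty$ in $\D\times\Mm$. Since $\mathrm{dist}(x_{n_k},\A)\to 0$ and $\A$ is closed, $x_\infty\in\A$, while lower semicontinuity gives $\widetilde{\mathcal{I}}(x_\infty)\le\liminf_k\widetilde{\mathcal{I}}(x_{n_k})\le L+1<\infty$. Writing $x_\infty=(\mu_\bullet,w)$ this is the required point, since $t\mapsto\langle|v|^2,\mu_t\rangle=\Theta(t)$ is nonconstant. I expect no serious obstacle here: the only genuine subtlety is the passage from the open neighbourhoods supplied by Theorem~\ref{thm: main} to the closed sets on which the large deviation upper bound bites, which is handled by the sandwich $\A\subset\U_n\subset\mathcal{C}_n$ together with the compactness of $\A$ and the goodness of $\widetilde{\mathcal{I}}$; the remainder is routine bookkeeping with the abstract large deviation formalism.
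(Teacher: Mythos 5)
Your argument is correct, and it takes a genuinely different route from the paper's own proof. The paper argues by contradiction: it supposes $\widetilde{\mathcal{I}}=\infty$ on every non-energy-conserving path, observes $\A_\Theta$ is then disjoint from each compact sublevel set $\{\widetilde{\mathcal{I}}\le a\}$, uses normality to separate them by a closed neighbourhood $\overline{\U}_a\supset\A_\Theta$, and derives a contradiction between the LDP upper bound on $\overline{\U}_a$ and the uniform lower bound~(\ref{eq: first item of main})/(\ref{eq: first point of main MM}) once $a$ exceeds $L$. You instead argue directly: from the lower bound on shrinking neighbourhoods $\mathcal{C}_n$ of $\A_\Theta$ you extract points $x_n$ with $\widetilde{\mathcal{I}}(x_n)\le L+1$, and then use compactness of the sublevel set plus lower semicontinuity to produce an actual element $x_\infty\in\A_\Theta$ with $\widetilde{\mathcal{I}}(x_\infty)\le L+1<\infty$. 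Your version is arguably more informative because it exhibits a concrete finite-rate non-conservative path rather than deriving a contradiction; both rely in the same way on goodness of $\widetilde{\mathcal{I}}$ inherited from exponential tightness. The only slip is cosmetic: in your first paragraph you write that you apply ``the large deviation upper bound along $S$ to the closed set $\K^c$'', but $\K^c$ is open; the correct statement is that the LDP \emph{lower} bound on the open set $\K^c$, combined with $\limsup_{N\in S}N^{-1}\log\PP(\K^c)\le -M$, forces $\inf_{\K^c}\widetilde{\mathcal{I}}\ge M$. The conclusion $\{\widetilde{\mathcal{I}}\le a\}\subset\K$ is nevertheless right, and the rest of the proof goes through.
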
 Our final corollary is a positive result, following from Theorem \ref{thrm: main positive}, which shows how the entropy plays the role of a \emph{quasipotential} for the Kac dynamics. Let us refer the reader to \cite[Section 3.3]{bouchet2020boltzmann} for a general discussion of such results. \begin{corollary}[Entropy as a Quasipotential]\label{cor: quasipotential} Let $B$ be either the regularised hard spheres or Maxwell molecules kernel, and fix $\mu\in \cp_2$. Then \begin{equation}\label{eq: LB of quasipotential} H(\mu|\gamma) \ge \inf\left\{H(\nu_0|\gamma)+\int_E \tau(K)d\overline{m}_{\nu}: \nu\in \D, \nu\text{ solves (\ref{eq: BEK})}, \nu_T=\mu\right\} \end{equation} and \begin{equation}\label{eq: UB of quasipotential} \begin{split} H(\mu|\gamma) & \le  \inf\bigg\{H(\nu_0|\gamma)+\int_E \tau(K)d\overline{m}_{\nu}: \nu\in \D, \nu\text{ solves (\ref{eq: BEK})}, \nu_T=\mu,\\  &\hspace{7cm}\text{ and }\int_E (|v|^2+|v_\star|^2)Kd\overline{m}_\nu<\infty\bigg\}. \end{split} \end{equation} \end{corollary}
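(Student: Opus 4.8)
The plan is to prove the two bounds (\ref{eq: LB of quasipotential}) and (\ref{eq: UB of quasipotential}) separately: the second from Theorem \ref{thrm: main positive} together with a soft probabilistic comparison, and the first by an explicit construction. The common ingredient is that $\gamma^{\otimes N}$ is invariant for the $N$-particle velocity dynamics. Indeed, for each $\sigma$ the collision map $(v,v_\star)\mapsto(v',v'_\star)$ is a Lebesgue-preserving involution of $\rrd\times\rrd$ under which $B(v-v_\star)$ and $|v|^2+|v_\star|^2$ are invariant, so $B(v-v_\star)\gamma(dv)\gamma(dv_\star)\,d\sigma$ is collision-invariant and the generator of the $N$ velocities annihilates $\gamma^{\otimes N}$. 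Since the initial velocities are i.i.d.\ from $\mu^\star_0=\gamma$ — which satisfies Hypothesis \ref{hyp: condition on ref ms}, so that Theorem \ref{thrm: main positive} is available — the velocity configuration at time $T$ is again i.i.d.\ $\gamma$, whence $\mu^N_T$ has the same law as the empirical measure of $N$ i.i.d.\ samples from $\gamma$, which by Sanov's theorem obeys a full large deviation principle in $(\cp_2,W)$ with rate $H(\cdot|\gamma)$.

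For (\ref{eq: UB of quasipotential}), let $(\nu_\bullet,w)$ be a measure-flux pair with $\nu_\bullet\in\D$, $\nu_T=\mu$ and $\int_E(|v|^2+|v_\star|^2)K\,d\overline{m}_\nu<\infty$, and assume $\mathcal{I}(\nu_\bullet,w)<\infty$, so that $H(\nu_0|\gamma)<\infty$ and $\int_E\tau(K)\,d\overline{m}_\nu<\infty$. I would first check $(\nu_\bullet,w)\in\mathcal{R}$: since $\nu_\bullet\in\D$, the measure $\overline{m}_\nu$ is finite for either kernel, and from $\tau(k)\ge k$ for $k\ge e^2$ one gets $w(E)=\int_E K\,d\overline{m}_\nu\le e^2\,\overline{m}_\nu(E)+\int_E\tau(K)\,d\overline{m}_\nu<\infty$, while the second-moment integrability is assumed. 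Applying the restricted lower bound (\ref{eq: RLB}) to the open set $\U_\varepsilon:=\{(\mu_\bullet,w')\in\D\times\Mm:W(\mu_T,\mu)<\varepsilon\}$ — open because $\mu_\bullet\mapsto\mu_T$ is continuous on $\D$ — and using $(\nu_\bullet,w)\in\U_\varepsilon\cap\mathcal{R}$, gives $\liminf_N\frac1N\log\PP(W(\mu^N_T,\mu)<\varepsilon)\ge-\mathcal{I}(\nu_\bullet,w)$, whereas Sanov's theorem applied to the closed ball $\{\mu':W(\mu',\mu)\le\varepsilon\}$ gives $\limsup_N\frac1N\log\PP(W(\mu^N_T,\mu)\le\varepsilon)\le-\inf\{H(\mu'|\gamma):W(\mu',\mu)\le\varepsilon\}$. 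Comparing and letting $\varepsilon\downarrow0$, using lower semicontinuity of $H(\cdot|\gamma)$, yields $H(\mu|\gamma)\le\mathcal{I}(\nu_\bullet,w)$; the infimum over all such $(\nu_\bullet,w)$ then gives (\ref{eq: UB of quasipotential}).

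For (\ref{eq: LB of quasipotential}) I would exhibit a minimiser attaining the value $H(\mu|\gamma)$, namely the time-reversal of the relaxation flow. Assume $H(\mu|\gamma)<\infty$. Let $g_\bullet$ solve (\ref{eq: BE}) on $[0,T]$ with $g_0=\mu$; by the classical well-posedness and smoothing theory for these kernels it exists, conserves energy (so stays in $\D$), has a density which is strictly positive for $t>0$, and satisfies the $H$-theorem $\frac{d}{dt}H(g_t|\gamma)=-D(g_t)$, where $D(g):=\frac12\int B(v-v_\star)\,(g(v')g(v'_\star)-g(v)g(v_\star))\,\log\frac{g(v')g(v'_\star)}{g(v)g(v_\star)}\,d\sigma\,dv\,dv_\star\ge0$ is the entropy dissipation. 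Put $\nu_t:=g_{T-t}$, $K_t(v,v_\star,\sigma):=\frac{g_{T-t}(v')g_{T-t}(v'_\star)}{g_{T-t}(v)g_{T-t}(v_\star)}$ and $w:=K\overline{m}_\nu$. Using the collision involution (with $B$ and $|v|^2+|v_\star|^2$ invariant), a direct computation shows that, for bounded Lipschitz $\phi$, $\int\Delta\phi\,K_t\,B(v-v_\star)\,g_{T-t}(v)g_{T-t}(v_\star)\,d\sigma\,dv\,dv_\star=-\langle\phi,Q(\nu_t)\rangle$, so $\nu_\bullet$ solves (\ref{eq: BEK}) with this $K$, $(\nu_\bullet,w)$ is a measure-flux pair, and $\nu_T=g_0=\mu$; the same involution together with $\int B(v-v_\star)(g(v)g(v_\star)-g(v')g(v'_\star))\,d\sigma\,dv\,dv_\star=0$ gives $\int\tau(K_t)\,B(v-v_\star)\,g_{T-t}(v)g_{T-t}(v_\star)\,d\sigma\,dv\,dv_\star=D(g_{T-t})$. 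Hence, by the $H$-theorem, $\mathcal{J}(\nu_\bullet,w)=\int_0^TD(g_s)\,ds=H(\mu|\gamma)-H(g_T|\gamma)$, and $\mathcal{I}(\nu_\bullet,w)=H(\nu_0|\gamma)+\mathcal{J}(\nu_\bullet,w)=H(g_T|\gamma)+\big(H(\mu|\gamma)-H(g_T|\gamma)\big)=H(\mu|\gamma)$, so the infimum in (\ref{eq: LB of quasipotential}) is at most $H(\mu|\gamma)$.

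The main obstacle I anticipate is making the construction of the third paragraph rigorous: it rests on the classical theory of the spatially homogeneous Boltzmann equation for the kernels at hand — existence of a finite-entropy, finite-moment, energy-conserving solution from $g_0=\mu$, its strict positivity for $t>0$, and the $H$-theorem \emph{with equality} and the constant matching $\tau$ — and on carrying enough integrability to justify the changes of variables and to conclude $K\in L^1(\overline{m}_\nu)$, the latter being automatic once $\int_0^TD(g_s)\,ds=H(\mu|\gamma)-H(g_T|\gamma)<\infty$ is known. I would also emphasise that the soft argument of the second paragraph cannot by itself yield (\ref{eq: LB of quasipotential}): combined with the upper bound (\ref{eq: UB}) it only gives $\inf\{\mathcal{I}(\nu_\bullet,w):W(\nu_T,\mu)\le\varepsilon\}\le H(\mu|\gamma)$ for every $\varepsilon>0$, and removing the $\varepsilon$ would require the sub-level sets of $\mathcal{I}$ to be compact; by Proposition \ref{prop: ET + UB}(ii) they are closed, but by Theorem \ref{thm: main} they fail to be compact, so the explicit minimiser is genuinely needed.
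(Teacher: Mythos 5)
Your argument for the second bound (\ref{eq: UB of quasipotential}) matches the paper's: compare the restricted lower bound (\ref{eq: RLB}) on open Wasserstein balls around $\mu$ at time $T$ with Sanov's upper bound on the corresponding closed balls, and let $\varepsilon\downarrow 0$ by lower semicontinuity of $H(\cdot|\gamma)$. The check that the relevant measure-flux pairs lie in $\mathcal{R}$ is correct and worth making explicit.

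For the first bound (\ref{eq: LB of quasipotential}), your route diverges from the paper's, and there are two issues. First, the claimed obstruction to the ``soft'' argument is wrong. You assert that removing the $\varepsilon$ from $\inf\{\mathcal{I}:W(\nu_T,\mu)\le\varepsilon\}\le H(\mu|\gamma)$ ``would require the sub-level sets of $\mathcal{I}$ to be compact'', and that this fails by Theorem \ref{thm: main}. But the paper never asks for compact sub-level sets: it uses exponential tightness (Proposition \ref{prop: ET + UB}i)) to pick a compact $\K\subset\D\times\Mm$ with $\limsup N^{-1}\log\PP((\mu^N_\bullet,w^N)\notin\K)\le -M$ for $M>H(\mu|\gamma)$, applies the LDP upper bound (\ref{eq: UB}) to the closed set $\overline{\U}_\varepsilon$, and after discarding the $\K^{\mathrm c}$ contribution reduces to $H(\mu|\gamma)\ge\inf\{\mathcal{I}:(\nu_\bullet,w)\in\overline{\U}_\varepsilon\cap\K\}$. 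Each set $\overline{\U}_\varepsilon\cap\K$ is compact regardless of $\mathcal{I}$, so nearly-optimal paths subconverge as $\varepsilon\downarrow 0$, and lower semicontinuity of $\mathcal{I}$ (Proposition \ref{prop: ET + UB}ii)) gives the limit $\inf\{\mathcal{I}:\nu_T=\mu,\ (\nu_\bullet,w)\in\K\}\ge\inf\{\mathcal{I}:\nu_T=\mu\}$. So the soft argument does yield (\ref{eq: LB of quasipotential}), and it is the intended route: exponential tightness is precisely the standard substitute for goodness of the rate function in such contraction arguments.

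Second, your alternative construction — exhibiting the time-reversed relaxation flow as an explicit path of cost exactly $H(\mu|\gamma)$ — is conceptually the classical Freidlin--Wentzell picture, and your algebra (the collision involution preserving $B$ and Lebesgue measure, $\Delta\phi$ flipping sign, the cancellation in $\tau(K)$, $\int\tau(K)\,d\overline m_\nu=D(g_{T-t})$) is correct where it is formal. But it rests on a substantial amount of spatially homogeneous Boltzmann theory that (\ref{eq: LB of quasipotential}) does not require and the paper does not develop: existence of an energy-conserving solution from $g_0=\mu$ in the class $\D$ when only $H(\mu|\gamma)<\infty$ is assumed; strict positivity of $g_t$ for $t>0$ (so that $K_t$ is well-defined $\overline m_\nu$-a.e.); $\int_0^T D(g_s)\,ds<\infty$; and the $H$-theorem with equality (not just $\le$). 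These are not purely technical footnotes — for Maxwell molecules, for instance, energy-conserving uniqueness is exactly where Lu--Wennberg-type pathologies enter elsewhere in the paper — so if you pursue this route you would have to cite or re-prove the relevant well-posedness and entropy-production results for both kernels. The paper's soft route avoids all of this, and in particular applies uniformly when $H(\mu|\gamma)=\infty$ or when $\mu$ is not in a nice class.
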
  In this sense, we view $\tau(K)$ as the entropic cost of moving to a higher-entropy state by tilted Boltzmann dynamics (\ref{eq: BEK}). The second item generalises Boltzmann's famous $H$-Theorem; however, in light of the nonemptiness of the sets $\A_\Theta$ in Theorem \ref{thm: main}, the upper bound would be false without the second moment condition.\bigskip \\  The paper is structured as follows. In the remainder of this section, we will review some recent works on large deviations and related problems, and make some remarks on the hypothesis and functional framework. In Section \ref{sec: ETUB}, we derive an upper bound Theorem \ref{thrm: main positive}i); in doing so, we will prove Proposition \ref{prop: ET + UB} via a variational formulation of the rate function $\mathcal{I}$ which appears in the upper bound. Section \ref{sec: properties_and_COM} gathers some properties of the Kac process and changes of measure from the literature for later convenience. Section \ref{sec: RLB} proves the restricted lower bound Theorem \ref{thrm: main positive}ii), based on an approximation argument for paths belonging to $\mathcal{R}$ and a standard `tilting' argument. The proof of Theorem \ref{thm: main} is given in Section \ref{sec: pf of main}, based on the properties of the Kac process in Section \ref{sec: properties_and_COM} and a careful analysis of Cram\'er bounds, and we deduce the corollaries in Section \ref{sec: corrs}. Finally, Appendix \ref{sec: sk} is a self-contained appendix on the Skorokhod topology and Appendix \ref{sec: singular girsanov}  contains a justification of the change-of-measure formula. \subsection{Literature Review \& Discussion}\label{sec: lit}

\paragraph{Large Deviations for Jump Particle Systems} The  theory of large deviations for Markov processes in the small-noise limit goes back to Freidlin and Wentzell \cite{freidlin1998random}. The seminal work of Feng \& Kurz \cite{feng2006large} developed tools based on a comparison principle for Hamilton-Jacobi equation in infinite dimensions, which are general but hard to verify. The analysis is somewhat different in the case where the dynamics are driven by diffusive rather than jump noise, see the discussion in L\'eonard \cite{leonard1995large}. In this context let us mention the recent works \cite{banerjee2020new,budhiraja2020large,budhiraja2021empirical,nguyen2021large}. \bigskip \\ Within collisional kinetic theory, previous works have reported upper bounds of a similar form. The work of L\'eonard \cite{leonard1995large} already cited considers the same case of the energy-preserving Kac model, and produces a rate function exactly given by $\mathcal{I}(\mu_\bullet)=\inf_w\mathcal{I}(\mu_\bullet, w)$, albeit for a different topology. Rezakhanlou \cite{rezakhanlou1998large} considers a collisional model for a spatially inhomogeneous gas, where the positions take values in the unit circle $\rr/\mathbb{Z}$ and the velocities take values only in a finite set, and finds an upper bound and a restricted lower bound, where the infimum runs only over a subset $\mathcal{R}\cap\U$ as in Theorem \ref{thrm: main positive} rather than over the full open set $\U$ as in (\ref{eq: formal LD LB}), with a rate function analagous to the variational form (cf Lemma \ref{lemma: variational form of RF} or \cite[Theorem 7.1]{leonard1995large}). Bodineau et al. \cite{bodineau2020fluctuation} consider the full spatially inhomogeneous Boltzmann--Grad limit with random initial data and deterministic dynamics for local interactions; the rate function is again given in a variational form, and the lower bound is again restricted to sufficiently good paths.  \bigskip \\ Finally, let us mention the recent works of Basile et al. \cite{basile2021large,basile2021large',basile2022asymptotic}, of which \cite{basile2021large',basile2022asymptotic} appeared after the first version of this work. In \cite{basile2021large}, the authors consider `Kac--like' random walks, which preserve momentum but not energy; the lower bound is again of the restricted form as in Theorem \ref{thrm: main positive}. The later works \cite{basile2021large',basile2022asymptotic} deal with a discrete Kac like model with a conserved energy and the true (energy-conserving) Kac process respectively, in both cases introduce a new rate function which assigns a non-zero rate to paths with energy evaporation \cite{basile2021large'} or energy creation \cite{basile2022asymptotic} respectively, and in both cases proving an upper bound and a restricted lower bound with the new rate function. The former work \cite{basile2021large'} also establishes that there is a path of finite rate with energy evaporation, which exactly corresponds to what we do implicitly in Corollary \ref{cor: bad by evolution}. In the work \cite{basile2022asymptotic}, the new rate function $\overline{\mathcal{I}}$ agrees with the one established here on the class of paths $\mathcal{R}$, but is strictly positive on the classes $\mathcal{A}_\Theta, \mathcal{B}$ in Theorem \ref{thm: main} and Corollary \ref{cor: bad by evolution}. Moreover, this work also replaces the regularised hard spheres kernel $1+|v|$ considered here by the more important \emph{true} hard spheres kernel $|v|$.  \bigskip \\ Outside of kinetic theory, analagous rate functions have been found for large deviations of jump processes, for instance \cite{djehiche1998large}. A number of works \cite{patterson2018large,patterson2016dynamical,renger2018flux} have considered the case of `reaction networks', which formally includes the Kac/Boltzmann dynamics considered here by viewing $\rrd$ as a continuum of particle species; these works are the origin of considering the pair $(\mu^N_\bullet, w^N)$ which significantly eases the analysis. Other works \cite{dupuis2016large,kraaij2017flux} in the context of reaction networks or mean-field dynamics exploit a control representation of the dynamics, leading to an equation similar to (\ref{eq: BEK}) with random controls $K$ and the same cost function $\tau$, and using weak convergence method due to Dupuis \cite{dupuis2011weak}. In this weak converence method, it is essential that the control uniquely determines possible limiting paths (see a similar argument in the proof of Theorem \ref{thm: main}ii) in Section \ref{sec: RLB}), whereas this type of uniqueness is known not to be hold in the Boltzmann case, even in the most advantageous possible case of Maxwell molecules. In the work \cite{kraaij2017flux}, the key to removing the restriction on regular paths is an approximation argument so that paths are perturbed to lie in the \emph{interior} of the space of probability measures on the space of the finite space of species $S$, which is clearly impossible in the infinite-dimensional setting here. \bigskip \\ Let us mention that this is a very natural form for the rate function jump processes. One recognises $\tau$ as the dynamic cost of controlling a Poisson random measure: for a Poisson process $(Z_t)_{t\ge 0}$ of unit intensity, a straightforward argument using Stirling's formula shows that $ \PP(N^{-1}Z_N \approx z) \asymp \exp(-N\tau(z))$ in the same sense as (\ref{eq: formal LD UB}, \ref{eq: formal LD LB}). Similarly, if one fixes a finite space $S$ and a probability measure $\mu$ and forms $X_N$ as a Poisson random measure of intensity $N\mu$, then one has the equivalent \begin{equation} \PP\left(N^{-1}X_N\approx \nu\right)\asymp \exp\left(-N\sum_S \tau\left(\frac{\nu(x)}{\mu(x)}\right)\mu(x)\right)=\exp\left(-N\int_S \tau\left(\frac{d\nu}{d\mu}\right)\mu(dx)\right).\end{equation} The proposed rate function above would correspond to the intuition that, given $\mu^N_t\approx \mu_t$, the instantaneous distribution of jumps is approximately Poisson, with intensity $\approx B(v-v_\star)dt\mu_t(dv)\mu_t(dv_\star)d\sigma$. \bigskip \\ As remarked above, several other works \cite{djehiche1998large,rezakhanlou1998large,bodineau2020fluctuation,basile2021large} have encountered the same problem that the lower bound can only be proven over a class of good paths. Both the works \cite{djehiche1998large,rezakhanlou1998large} conjecture that a `true' lower bound should hold in the respective frameworks. In the works cited above, such a conjecture has only been proven in the cases of reaction networks with a finite set of species \cite{kraaij2017flux,patterson2018large,patterson2016dynamical,renger2018flux} or mean-field dynamics with finite state space \cite{dupuis2016large}, which are very far from the Kac/Bolzmann dynamics we consider. To the best of our knowledge, the current work represents the first time that such a hypothesis has been falsified.
\paragraph{Remarks on the Hypotheses \& Functional Framework}We make the following remarks on the Hypothesis \ref{hyp: condition on ref ms} and on the functional framework. Firstly, the hypotheses allow the very natural choice of taking $\mu^\star_0$ to be the equilibrium distribution $\gamma$ given by (\ref{eq: Gaussian}) but Hypothesis \ref{hyp: condition on ref ms}ii). disallows measures of the form $\mu_0^\star(dv)\propto (1+|v|^2)^{-m}\gamma(dv), m>\frac{d}{2}$. In general, the condition that $\mu^N_0$ be given by drawing particles independently from a reference measure $\mu_0^\star$ will not propagate in time. However, this is natural in order to ensure that $\mu^N_0$ satisfies a large deviation principle; elementary counterexamples can be found to show that the more usual conditions, that the initial data be chaotic or entropically chaotic \cite{hauray2014kac}, do not imply a large deviation principle for $\mu^N_0$. Moreover, in the most important case $\mu_0^\star=\gamma$, the independence \emph{is} propagated, as $\gamma^{\otimes N}$ is an equilibrium distribution for the $N$-particle dynamics. \bigskip \\ Regarding the functional framework, while $(\cp_2, W)$ is not complete, the choice of metric $W$ and Skorokhod space $\D$ are natural to guarantee that $(\mu^N_\bullet, w^N)$ are exponentially tight. One could alternatively equip $\cp_2$ with the Wasserstein$_2$ metric $W_2$, which makes the map $\mu \mapsto \langle |v|^2, \mu\rangle$ continuous, and one can take a limit of the pathwise energy conservation $\langle |v|^2, \mu^N_t\rangle = \langle |v|^2, \mu^N_0\rangle$ to conclude that all possible large deviation paths still conserve energy. However, carefully following the arguments leading to our counterexamples in Section \ref{sec: pf of main} proves that the initial measures $\mu^N_0$ then fail to be exponentially tight, as does the whole process $(\mu^N_\bullet, w^N)$. Since large deviations techniques rely heavily on such tightness to prove the existence of subsequential limits under the change of measures, we have been unable to determine $\mathcal{I}$ correctly determines the large deviations in this framework. In light of this, we interpret Theorem \ref{thm: main} as showing the existence of a different kind of large deviations behaviour, where macroscopic energy concentrates in $\mathfrak{o}(N)$ particles, which is not captured by convergence in $(\cp_2, W_2)$. \bigskip \\ In future works, it may be interesting to consider the large deviations in the functional framework similar to that of L\'eonard \cite{leonard1995large}. Let us write $C_\mathrm{qu}(\rrd)$ for the continuous functions of quadratic growth, we write $\mathfrak{P}_2$ for the space of linear maps $\mathfrak{m}: C_\mathrm{qu}(\rrd)\to \rr$ satisfying $\mathfrak{m}(1)=1$, $\mathfrak{m}(\varphi)\ge 0$ whenever $\varphi\ge 0$, and such that there exists $\mu=j[\mathfrak{m}]\in \cp_2$ with $\langle \varphi, \mu\rangle = \mathfrak{m}(\varphi)$ for all bounded $\varphi\in C_\mathrm{qu}(\rrd)$. We then equip $\mathfrak{P}_2$ with the product topology from the inclusion $\mathfrak{P}_2\subset \rr^{C_\mathrm{q}(\rrd)}$, and we can view $\cp_2\subset \mathfrak{P}_2$ via the identification $\iota: \cp_2\to \mathfrak{P}_2$, $\iota(\mu)(\varphi):=\langle \varphi, \mu\rangle$, so that the Kac process can be understood as taking values in $\mathfrak{P}_2$. Moreover, thanks to the classical theorems of Tychonoff or Banach-Alaoglu, the sets \begin{equation} \mathfrak{K}_a=\left\{\mathfrak{m}\in \mathfrak{P}_2: \text{ for all }\varphi\in C_\mathrm{qu}(\rrd), |\mathfrak{m}(\varphi)|\le a\sup_v \frac{|\varphi(v)|}{1+|v|^2}\right\}\end{equation} are compact for all $a\in [0,\infty)$, and cover $\mathfrak{P}_2$. In this framework, one has both exponential tightness, and continuity of the map $\mathfrak{m}\mapsto \mathfrak{m}(|v|^2)$. On the other hand, we warn the reader that elements of $\mathfrak{P}_2$ are typically \emph{not} measures, since $j[\mathfrak{m}]=\mu$ does not imply that $\mathfrak{m}=\iota(\mu)$. Indeed, following the construction of the initial data in Section \ref{sec: pf of main} leading to Theorem \ref{thm: main} produces limits with $\mathfrak{m}(|v|^2)=\Theta(T)>\langle |v|^2, j[\mathfrak{m}]\rangle = 1$. 
\paragraph{Other Models \& Generality of the Phenomenon} Let us first remark that, although the current work only focusses on the two kernels identified in (\ref{eq: form of B}), identical arguments would apply with $B(v)=1+|v|^\gamma$ in place of the regularised hard spheres kernel, at the cost of further complications. The modification by adding $1$ is also not necessary for Theorem \ref{thm: main}, but eases some technical difficulties in Theorem \ref{thrm: main positive}ii). L\'eonard \cite{leonard1995large} also obtains an upper bound in the case of non-cutoff Maxwell molecules, where $B(v,\sigma)$ has a non-integrable angular singularity as $\sigma\to \pm v/|v|$, leading to an abundance of grazing collisions.    \bigskip \\ It may be interesting in future to consider the large deviations of other collisional processes associated to the Boltzmann Equation. In the Nanbu process \cite{nanbu1983interrelations}, the rate of each collision is doubled, but only one particle at a time is updated. In this case, one can construct a measure dominating $\PP$ under which the jumps are independent, while this is impossible for the Kac process (see the comments \cite[Remark 1.9i]{leonard1995large}). One could also consider the large deviations behaviour of the Kac model coupled to a heat bath \cite{bonetto2014kac, tossounian2015partially} which may introduce enough additional randomness to avoid the kind of behaviour described here. Heuristically, one could already view the counterexamples in Theorem \ref{thm: main} as constructing a heat bath from $\mathfrak{o}(N)$ particles, which are allowed to drive energy into the system; it would also be interesting to formalise this connection. \bigskip\\   It also seems that the key ingredients of the counterexample Theorem \ref{thm: main} may generalise to other large deviation systems. Although we will not explore the general case in more detail, the key points we require generalise to an interacting particle system on a locally compact\footnote{for such a space and $f:S\to \rr$, we say that $f\to\infty$ if all preimages $\{x: f(x)\le M\}\subset S$ are compact.} state space $S$ as follows: \begin{enumerate} \item \textbf{Conserved Quantity: } There exists a continuous  $\varphi: S\to [0,\infty)$, $\varphi\to \infty$ such that, almost surely, $\langle \varphi, \mu^N_t\rangle$ is constant along sample paths; \item \textbf{Criticality: } The initial distributions are such that $F(z):=\lim_N N^{-1}\log \EE[e^{Nz\langle \varphi, \mu^N_0\rangle}]$ exists in $[0,\infty]$ for all $z$. Moreover, the function $F(z)$ is finite on a neighbourhood $I$ of the origin, but diverges to infinity as $z\uparrow \sup I<\infty$. \item \textbf{Delocalisation Mechanism:} For some continuous $\psi$ with $\psi \ge \varphi, \psi/\varphi \to \infty$, one \textbf{either} has \begin{enumerate} \item Uniformly in $N$, for all $t>0$ and all starting points $\mu^N_0$, $\EE\langle \psi, \mu^N_t \rangle < \infty$ can be controlled only in terms of $t$ and $\langle \varphi, \mu^N_0\rangle$, uniformly in compact subsets of $t\in (0,\infty)$; \textbf{or} \item For some $\mu^N_0$ and for all $\delta>0$, one can find changes of measure $\PPm^{N, \delta}\ll\PP$ by modifying only the dynamics, such that $\sup_N \EE_{\PPm^{N,\delta}} \langle \psi, \mu^N_t\rangle $ can be controlled in terms of $t, \langle \varphi,\mu^N_0\rangle, \delta$, uniformly in $N$, uniformly in compact subsets of $t\in (0,\infty)$, and the perturbation is small in the sense that $\PPm^{N, \delta}(\frac{d\PPm^{N,\delta}}{d\PP}>e^{N\delta a})<\frac{1}{2}$ for $N$ large enough, for some $a$ depending only on $\langle \varphi, \mu^N_0\rangle$. \end{enumerate}  \end{enumerate} In our case, the conserved quantity $\varphi$ is the energy $\varphi(v)=|v|^2$, and this would apply to any system with stochastic, energy-preserving dynamics. The second point is natural for initial data sampled from Gibbs distributions in statistical mechanics, where the density with respect to some Lebesgue measure is given by $\propto e^{-H} = e^{-N\langle \varphi, \mu^N_0\rangle}$, and corresponds to Hypothesis \ref{hyp: condition on ref ms}i-ii). The third point says that, potentially under a small perturbation of the dynamics, the system rapidly distributes $\varphi$ among all particles. By item 2, no exponential moments for $\langle \psi, \mu^N_0\rangle$ can be hoped for, so that bounds on $\EE_{\mathbb{Q}^N}\langle \psi, \mu^N_t\rangle$ will not hold under typical changes of measure $\mathbb{Q}^N\ll \PP$: we only ask that one such change of measure can be found. In our case, this r\^ole will be played by the moment creation property and Povzner estimates with $\psi=|v|^p, p>2$, see Proposition \ref{prop: MCP}; case a) corresponds to regularised hard spheres, and case b) to Maxwell molecules.  \bigskip \\ In either case, since $\varphi$ is necessarily unbounded for item 2 to hold, the dynamics cannot only be captured by a weakly continuous function of the empirical measure. Item 2 allows cases where a macroscopic pertubation of $\langle \varphi, \mu^N_0\rangle$ is achieved with only a small pertubation of $\mu^N_0$ in the weak topology, and playing the pathwise conservation (item 1) against the delocalisation mechanism (item 3) instantaneously spreads this pertubation to the whole empirical measure. This leads to a law of large numbers for paths $\mu_\bullet$ along which $\langle \varphi, \mu_t\rangle $ is not conserved, and is a given, nonconstant function $\Theta(t)$ which is constant aside from a jump discontinuity at $0$; more general $\Theta$ could be found with further assumptions on the dynamics. The dynamic cost required for such paths is either $0$, or $e^{\mathcal{O}(N\delta)}$, by following exactly the arguments in Section \ref{sec: pf of main}; the conclusion that the large deviation occurrence of such limit paths is not correctly predicted by the na\"ive rate function then follows by exploiting the conflict between the non-conservative limit paths and conservative finite-$N$ paths (item 1).

\paragraph{Relationship to Other Problems} We mention some other aspects of the Boltzmann/Kac dynamics which are related to the current work. \bigskip \\ As already mentioned above, large deviations give a probabilistic meaning to Boltzmann's Entropy functional $H(\cdot|\gamma)$; the $H$-Theorem, which guarantees that this decreases along solutions to (\ref{eq: BE}) or its spatially inhomogeneous version, goes back to the foundations of kinetic theory. Quantitative versions of this increase, and hence qualifying the convergence to equilibrium, have been a major topic in the analysis of the Boltzmann Equation (among many others, \cite{cercignani1982h,toscani1999sharp,bobylev1999rate,villani1999trend,villani2003cercignani,desvillettes2010celebrating}). Let us also mention the work of Mischler and Mouhot \cite{mischler2013kac}, which gives a probabilistic proof of the $H$-theorem via entropic chaos of the Kac process; however, as remarked above, entropic chaos does not lead to the large deviations considered here.\bigskip \\ Following the seminal work of Jordan, Kinderlehrer and Otto \cite{jordan1998variational}, it has been shown that many equations of mean-field type can be understood as the gradient flow of the entropy for a metric adapted to the particular problem, so that the dynamics not only increase entropy, but do so in the most efficient way possible. Further, it is known that such gradient flow properties can be derived from large deviation principles \cite{adams2011large,duong2013wasserstein,adams2013large,mielke2014relation,erbar2015large}. Since such a gradient descent formulation of the Boltzmann equation is already known \cite{erbar2016gradient,bouchet2020boltzmann,basile2021large}, we will not explore this here. \bigskip \\ Finally, let us refer to the recent work of Bouchet \cite{bouchet2020boltzmann} which discusses the classical paradox of reversibility based on large deviations, with a rate function analagous to (\ref{eq: leo rate function}) above and the r\^ole of entropy as a quasipotential driving the Boltzmann dynamics.

\section{Exponential Tightness \& Upper Bound} \label{sec: ETUB} In this section, we will prove Propositon \ref{prop: ET + UB} and the upper bound Theorem \ref{thrm: main positive}i). We first verify exponential tightness in Subsection \ref{subsec: ET}. In Subsection \ref{sec: var form} we introduce a variational form for the rate function, similar to that of L\'eonard \cite[Theorem 3.1, Theorem 7.1]{leonard1995large} and prove equivalence of the two formulations; this leads to a simple proof of lower semicontinuity in Proposition \ref{prop: ET + UB}ii), as well as some functional analytical facts which will be useful later. Finally, we use the variational formulation to  prove the upper bound in Section \ref{subsec: ET}, based on standard martingale techniques and a covering argument.
\subsection{Exponential Tightness} \label{subsec: ET} We first prove that $(\mu^N_\bullet, w^N)$ are exponentially tight in $\D\times\Mm$, which proves the first assertion Proposition \ref{prop: ET + UB}i. \begin{lemma}[Verification of Conditions for Exponential Tightness]\label{lemma: easy UT} For any $M>0$, the following hold.  \begin{enumerate}[label=\alph*).] \item For $\lambda>0$, set $\cp_{2,\lambda}=\{\mu\in \cp_2:\langle |v|^2, \mu\rangle \le \lambda\}$ and $\D_\lambda:=\{\mu_\bullet \in \D: \mu_t\in \cp_{2,\lambda} \text{ for all }t\}$. There exists $\lambda\in (0,\infty)$ such that \begin{equation}\label{eq: tightness at single time} \limsup_N \frac{1}{N}\log \PP\left(\mu^N_\bullet \not \in \D_\lambda \right)\le -M. \end{equation} \item For all $\delta>0$, define $q^N(\delta)=\sup(W(\mu^N_s, \mu^N_t): |s-t|<\delta)$. For all $\epsilon>0$, there exists $\delta>0$ such that \begin{equation} \label{eq: tightness of fluctuations} \limsup_N \frac{1}{N}\log\PP\left(q^N(\delta)>\epsilon\right)\le -M. \end{equation}  \item There exists $C>0$ such that \begin{equation}\label{eq: finite mass for flux} \limsup_N\frac{1}{N}\log \PP\left(w^N(E)>C\right)\le -M. \end{equation} \end{enumerate} \end{lemma}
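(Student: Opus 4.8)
The three claims are all exponential tightness estimates, and the natural strategy is the standard one for jump processes: exploit the Poissonian structure of the collisions, the conservation laws, and the sub-exponential control on the number of collisions. I would organise the argument around the total collision rate, which for a configuration with energy $\langle |v|^2, \mu^N_t\rangle = \mathcal{E}$ is bounded by $N\langle B(v-v_\star), \mu^N_t\otimes\mu^N_t\rangle \le CN(1+\mathcal{E}^{1/2})$ in the regularised hard spheres case, and $\le CN$ for Maxwell molecules; since energy is conserved pathwise and the initial energy satisfies a Cramér-type bound under Hypothesis \ref{hyp: condition on ref ms}i), one gets a uniform-in-$t$ bound on the collision rate with overwhelming probability.

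For part a), I would first control the \emph{initial} energy: since $V^1_0,\dots,V^N_0$ are i.i.d.\ samples from $\mu_0^\star$ with $\mathcal{E}_{z_1}(\mu_0^\star)<\infty$, the i.i.d.\ Cramér bound gives $\limsup_N N^{-1}\log\PP(\langle|v|^2,\mu^N_0\rangle > \lambda) \le -(z_1\lambda - \log\mathcal{E}_{z_1}(\mu_0^\star))$, which tends to $-\infty$ as $\lambda\to\infty$; choose $\lambda$ so this is $\le -M$. Because $\Delta|v|^2 = 0$, the energy is \emph{exactly} conserved along every sample path, so $\{\mu^N_\bullet\notin\D_\lambda\} = \{\langle|v|^2,\mu^N_0\rangle > \lambda\}$ up to a null set, and a), i.e.\ (\ref{eq: tightness at single time}), follows immediately with no dynamic input at all. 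For part c), note $w^N(E)$ equals $N^{-1}$ times the total number of collisions on $[0,T]$; on the event $\{\langle|v|^2,\mu^N_0\rangle\le\lambda\}$ this number is stochastically dominated by a Poisson random variable of mean $\le C_\lambda N T$ (by the rate bound above and conservation of energy), so a Poisson Cramér estimate $\PP(\mathrm{Poi}(C_\lambda NT) > CN) \le e^{-N\tau(C/(C_\lambda T))\cdot C_\lambda T}$ gives (\ref{eq: finite mass for flux}) for $C$ large, after intersecting with the high-probability energy event from a).

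Part b) is the one requiring real work — the modulus-of-continuity estimate (\ref{eq: tightness of fluctuations}). Here I would use that for $f\in\mathcal{F}$ (so $\|f\|_\infty\le 1$, $1$-Lipschitz), each collision changes $\langle f,\mu^N_t\rangle$ by at most $4/N$, and the number of collisions in a window of length $\delta$ is, on the good energy event, dominated by $\mathrm{Poi}(C_\lambda N\delta)$; thus $W(\mu^N_s,\mu^N_t) \le (4/N)\cdot\#\{\text{collisions in }[s,t]\}$. A single-window Poisson large-deviation bound gives $\PP(\#\{\text{collisions in }[s,t]\} > \epsilon N/4) \le e^{-N c(\epsilon,\delta)}$ with $c(\epsilon,\delta)\to\infty$ as $\delta\to 0$ for fixed $\epsilon$; taking a union bound over $\mathcal{O}(1/\delta)$ (or $\mathcal{O}(T/\delta)$) windows and using the stationarity of the collision clock costs only a factor polynomial in $N^{-1}\log$, which is harmless on the exponential scale. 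The slightly delicate point is passing from a finite grid of times to the supremum over all $|s-t|<\delta$: I would cover $[0,T]$ by overlapping windows of length $2\delta$ so that any pair $s,t$ with $|s-t|<\delta$ lies in a common window, and bound $W(\mu^N_s,\mu^N_t)$ by the total variation of $\langle f,\mu^N_\cdot\rangle$ over that window, which is again controlled by the collision count. The main obstacle is therefore purely bookkeeping: making the window/union-bound argument uniform in $N$ while keeping the energy-conditioning clean, since the collision rate bound is only valid \emph{conditionally} on the energy event — this is handled by working throughout on the intersection with $\{\langle|v|^2,\mu^N_0\rangle\le\lambda\}$ and invoking a) to discard its complement at cost $e^{-NM}$.
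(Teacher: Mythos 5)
Your proposal is correct and takes essentially the same route as the paper: part a) via an i.i.d.\ Chebyshev/Cramér bound plus pathwise energy conservation, part c) via Poisson domination of the collision count on the good energy event, and part b) via the same Poisson domination combined with the $4/N$ jump bound, a partition of $[0,T]$ into $\mathcal{O}(T/\delta)$ windows (the paper uses non-overlapping steps of length $\delta$ and absorbs any $|s-t|\le\delta$ pair into two adjacent intervals, giving a factor $8$, rather than your overlapping windows of length $2\delta$ — an equivalent bookkeeping device), an exponential Markov estimate per window, and a union bound, all conditioned on the energy event from a). The only cosmetic difference is the collision-rate bound: you use $B\le C(1+\mathcal{E}^{1/2})$ whereas the paper crudely uses $B(v-v_\star)\le 3+|v|^2+|v_\star|^2$, but both are adequate.
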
 Let us remark that the first item proves that, for each fixed $t\in [0,T]$, $\mu^N_t$ are exponentially tight because $\cp_{2,\lambda}$ are compact for the metric $W$. Together, the first two conditions verify the well-known criteria for exponential tightness in the Skorokhod space $\D$ due to Feng and Kurz \cite[Theorem 4.1]{feng2006large}. In the third item, the sets $\{w\in \Mm: w(E)\le C\}$ are compact for the metric $d$, which induces the weak$^\star$ topology\footnote{I.e. the topology induced by $\langle f, \mu\rangle, f\in C_b(\mathbb{R}^d)$.}, and hence the third item shows that $w^N$ are exponentially tight in $\Mm$. Together, these prove that the pair $(\mu^N_\bullet, w^N)$ together are exponentially tight in $\D\times\Mm$. \begin{proof}[Proof of Lemma \ref{lemma: easy UT}] Fix $M$ throughout. We start with the first point, and begin by noting that, thanks to Hypothesis \ref{hyp: condition on ref ms}i) and a Chebychev bound, for $z_1>0$ sufficiently small and all $\lambda>0$, $$ \PP\left(\langle |v|^2, \mu^N_0\rangle>\lambda\right)\le e^{-N\lambda z_1}\EE\left[e^{Nz_1 \langle |v|^2, \mu^N_0\rangle}\right] = \exp\left(-N(\lambda z_1-\log \mathcal{E}_{z_1}(\mu^\star_0))\right) $$ where, in the right-hand side, we recall that $\mu^N_0$ is given by sampling particles independently from $\mu^\star_0$. Taking $\lambda_M=z_1^{-1}(M+\log \mathcal{E}_{z_1}(\mu^\star_0))$, we conclude that $$\PP\left(\mu^N_0\not\in \cp_{2,\lambda_M}\right)=\PP\left(\langle |v|^2, \mu^N_0\rangle > \lambda_M\right)\le e^{-MN}. $$ To extend this to the whole process we note that the kinetic energy $\langle |v|^2, \mu^N_t\rangle$ is constant in time, so that $\mu^N_t\in \D_{\lambda_M}$ if, and only if, $\mu^N_0\in \cp_{2,\lambda_M}$. Therefore, for any $t\in [0,T]$, $$ \PP\left(\mu^N_\bullet\not\in \D_{\lambda_M}\right)=\PP\left(\mu^N_0\not\in \cp_{2,\lambda_M}\right)\le e^{-MN}$$ and the first item follows. For the second item, we observe that the instantaneous rate of the Kac process, in either the hard spheres or Maxwell molecules case, is bounded by \begin{equation} \begin{split} N\int_{\rrd\times\rrd\times \ssd} B(v-v_\star)\mu^N_t(dv)\mu^N_t(dv_\star)d\sigma &   \le N\int_{\rrd\times\rrd} (3+|v|^2+|v_\star|^2)\mu^N_t(dv)\mu^N_t(dv_\star) \\[1ex] & \le 3N(1+\langle |v|^2, \mu^N_0\rangle)\end{split}\end{equation} where we note that, for either kernel, $ B(|v-v_\star|)\le 1+|v|+|v_\star|\le 3+|v|^2+|v_\star|^2$, and in the final inequality, we recall again that the second moment $\langle |v|^2, \mu^N_t\rangle$ is independent of time. It therefore follows that we can construct a time-homogenous Poisson process $\widetilde{w}^N_t$, of constant, random rate $3N(1+\langle |v|^2, \mu^N_0\rangle)$, such that $\widetilde{w}^N_t$ has jumps on a superset of the times when $\mu^N_t$ jumps. This leads to the bound, for any $s\le t$, $$(w^N_t-w^N_s)([0,T]\times \rrd\times\rrd\times\ssd)\le \frac{1}{N}(\widetilde{w}^N_t-\widetilde{w}^N_s).$$ For each $\delta$, we now pick a partition $0=t_0<t_1...<t_m$ of size $\lceil T/\delta\rceil$ by taking constant steps of size $\delta$. We now observe that $W(\mu^N_t,\mu^N_{t-})\le 4/N$ at times when $\mu^N_t$ jumps, and for any $|s-t|\le \delta$, the interval $[s,t]$ is contained in at most two adjacent intervals $[t_{i-1},t_{i+1}]$. Together, we conclude that  \begin{equation}\label{eq: dominate mod con by pp} q^N(\delta)\le 8\max_i \frac{1}{N}(\widetilde{w}^N_{t_i}-\widetilde{w}^N_{t_{i-1}}). \end{equation} For $\lambda_M$ as above and for any $z>0$, we bound \begin{equation} \begin{split}\label{eq: fluctuations in each interval} \PP\left(\frac{1}{N}(\widetilde{w}^N_{t_i}-\widetilde{w}^N_{t_{i-1}})>\frac{\epsilon}{8}\h\bigg|\h \mu^N_0\in \cp_{2,\lambda_M} \right) &\le e^{-z N\epsilon/8} \EE\left[e^{z (\widetilde{w}^N_{t_i}-\widetilde{w}^N_{t_{i-1}})}\h\bigg|\h \mu^N_0\in \cp_{2,\lambda_M}\right] \\& \le \exp\left(-N\left(\frac{z \epsilon}{8} - 3(1+\lambda_M) \delta (e^z-1) \right)\right) \end{split} \end{equation} where, in the second line, we use the bound that the rate of $\widetilde{w}^N$ is at most $3N(1+\lambda_M)$ if $\mu^N_0\in \cp_{2,\lambda_M}$. We now choose $z=8(M+1)/\epsilon$, and $\delta>0$ small enough, depending on $ z, \lambda_M$, so that $3(1+\lambda_M)\delta(e^z-1)<1$. For this choice of $\delta$, the final expression in (\ref{eq: fluctuations in each interval}) is $e^{-NM}$, for each interval. Finally, we take a union bound: $$ \{q^N(\delta)>\epsilon\}\subset \{\mu^N_0\not \in \cp_{2,\lambda_M}\}\cup \bigcup_{i\le \lceil T/\delta\rceil} \left\{\widetilde{w}^N_{t_i}-\widetilde{w}^N_{t_{i-1}}>\frac{N\epsilon}{8}, \h \mu^N_0\in \cp_{2,\lambda_M}\right\}.$$ By the choices of $\lambda_M$ and $\delta$, $$\PP\left(q^N(\delta)>\epsilon\right)\le (1+\lceil T/\delta\rceil)e^{-NM} $$ and the second item now follows. The final item follows in exactly the same way: following (\ref{eq: fluctuations in each interval}), for all $C>0$ we bound \begin{equation} \begin{split} \PP\left(w^N(E)>C\big|\mu^N_0\in \cp_{2,\lambda_M}\right) & \le \PP\left(\widetilde{w}^N_T > CN\big|\mu^N_0\in \cp_{2,\lambda_M}\right) \\[1ex]& \le e^{-CN}\EE\left[e^{\widetilde{w}^N_T}\big|\mu^N_0\in \cp_{2,\lambda_M} \right]\\[1ex] & \le \exp\bigg(-N\big(C  - 3(1+\lambda_M)T(e-1)\big)\bigg)  \end{split}  \end{equation} and choosing $C=M+3(1+\lambda_M)T(e-1)$ makes the final probability at most $e^{-MN}$. Using a union bound, \begin{equation} \PP\left(w^N(E)>C\right) \le \PP\left(w^N(E)>C\big| \mu^N_0\in \cp_{2,\lambda_M}\right) + \PP\left(\mu^N_0 \not \in \cp_{2,\lambda_M}\right) \le 2e^{-MN} \end{equation} from which (\ref{eq: finite mass for flux}) follows.  \end{proof} Let us also record, for later use, the following corollary. \begin{corollary}\label{cor: tightness} Let $\PPm^N \ll \PP$ be changes of measure such that \begin{equation}\label{eq: condition for tightness} \lim_{a\to\infty} \liminf_N\h \PPm^N\left(\frac{d\PPm^N}{d\PP} \le e^{Na}\right)=1. \end{equation} Then the laws of $(\mu^N_\bullet, w^N)$ under $\PPm^N$ are tight: for all $\epsilon>0$ there exists a compact set $\K\subset\D\times\Mm$ such that \begin{equation} \sup_N\h\PPm^N\left((\mu^N_\bullet, w^N)\not\in \K\right)<\epsilon.\end{equation} \end{corollary}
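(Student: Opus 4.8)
The plan for Corollary \ref{cor: tightness} is to transfer the exponential tightness under $\PP$ established in Lemma \ref{lemma: easy UT} to ordinary tightness under $\PPm^N$, absorbing the change of measure through a Markov bound on the density $d\PPm^N/d\PP$. The point is that exponential tightness furnishes, for every $M$, a compact set on whose complement the $\PP$-probability decays like $e^{-NM}$; such a bound can swallow the multiplicative factor $e^{Na}$ produced by the density precisely when the density is at most $e^{Na}$ with $\PPm^N$-probability close to $1$, which is exactly what hypothesis (\ref{eq: condition for tightness}) guarantees.

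Concretely, fix $\epsilon > 0$. By (\ref{eq: condition for tightness}) I would choose $a \in (0,\infty)$ with $\liminf_N \PPm^N\big(d\PPm^N/d\PP \le e^{Na}\big) > 1 - \epsilon/2$, so that $\PPm^N\big(d\PPm^N/d\PP > e^{Na}\big) < \epsilon/2$ for all $N \ge N_0$. Applying Lemma \ref{lemma: easy UT} (equivalently Proposition \ref{prop: ET + UB}i)) with $M = a+1$ gives a compact $\K_0 \subset \D\times\Mm$ with $\limsup_N \frac1N\log\PP\big((\mu^N_\bullet, w^N)\notin\K_0\big) \le -(a+1)$, hence $\PP\big((\mu^N_\bullet, w^N)\notin\K_0\big) \le e^{-N(a+1/2)}$ for $N \ge N_1$. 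On the event that the density is $\le e^{Na}$ one may change measure at that cost, so
\[
\PPm^N\big((\mu^N_\bullet, w^N)\notin\K_0,\ \tfrac{d\PPm^N}{d\PP}\le e^{Na}\big) \;\le\; e^{Na}\,\PP\big((\mu^N_\bullet, w^N)\notin\K_0\big) \;\le\; e^{-N/2}
\]
for $N \ge N_1$, which is $< \epsilon/2$ for $N \ge N_2$. Splitting according to the size of the density then yields $\PPm^N\big((\mu^N_\bullet, w^N)\notin\K_0\big) < \epsilon$ for all $N \ge N_\star := \max(N_0, N_2)$.

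Finally I would deal with the finitely many indices $2 \le N < N_\star$: for each fixed $N$, the explicit Chernoff-type bounds inside the proof of Lemma \ref{lemma: easy UT} (read at fixed $N$, they vanish as their parameters $\lambda, \delta^{-1}, C$ grow) show that the law of $(\mu^N_\bullet, w^N)$ under $\PP$ is tight, and since $d\PPm^N/d\PP \in L^1(\PP)$, absolute continuity of the integral transfers this to a compact $\K_N$ with $\PPm^N\big((\mu^N_\bullet, w^N)\notin\K_N\big) < \epsilon$. Then $\K := \K_0 \cup \bigcup_{2\le N<N_\star}\K_N$ is a finite union of compacts, hence compact, and $\sup_N \PPm^N\big((\mu^N_\bullet, w^N)\notin\K\big) < \epsilon$, as required. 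I do not anticipate a genuine obstacle here — Lemma \ref{lemma: easy UT} carries all the analytic content — and the only real care needed is the order of the quantifiers: $\epsilon$ first, then $a$ and $N_0$, then $M=a+1$ together with $\K_0, N_1, N_2$, and only then the leftover small $N$.
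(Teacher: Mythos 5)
Your proof is correct and takes essentially the same approach as the paper: a Markov bound on the density $d\PPm^N/d\PP$ combined with the exponential tightness of Proposition \ref{prop: ET + UB}i), together with a finite-$N$ cleanup exploiting that each individual law on the separable metric space $\D\times\Mm$ is tight. The paper handles the finitely many small $N$ by enlarging a single compact $\K$ so the $\PP$-bound (\ref{eq: for all N bound 2}) holds for all $N$ (after also increasing $a$ if necessary for those small $N$), whereas you take a finite union of additional compacts $\K_N$; these are the same idea.
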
 \begin{proof} This follows from Proposition \ref{prop: ET + UB}i) and the hypothesis (\ref{eq: condition for tightness}) by purely general considerations. Let us fix $\epsilon>0$; thanks to (\ref{eq: condition for tightness}) we can choose $a$ such that, for all but finitely many $N$, \begin{equation}\label{eq: for all N bound} \PPm^N\left(\frac{d\PPm^N}{d\PP}> e^{Na}\right)<\frac{\epsilon}{2} \end{equation} and, changing $a$ if necessary, we can arrange that (\ref{eq: for all N bound}) holds for all $N$. We now choose $M=a-\log (\epsilon/2)$; by Proposition \ref{prop: ET + UB}i), there exists a compact set $\K$ such that \begin{equation} \limsup_N N^{-1}\log \PP((\mu^N_\bullet, w^N)\not\in \K)<-M\end{equation}  which implies that, for all $N$ sufficiently large, \begin{equation} \label{eq: for all N bound 2}\PP\left((\mu^N_\bullet, w^N)\not\in \K\right) \le e^{-MN}.\end{equation} Since the space $\D\times\Mm$ is a separable metric space, each $(\mu^N_\bullet, w^N)$ is tight so $\K$ can be replaced with a larger compact set such that (\ref{eq: for all N bound 2}) again holds for all $N$. Together, (\ref{eq: for all N bound}, \ref{eq: for all N bound 2}) imply that \begin{equation} \begin{split} \PPm^N\left((\mu^N_\bullet, w^N)\not \in \K\right) & \le \PPm^N\left(\frac{d\PPm^N}{d\PP}\le e^{Na}, (\mu^N_\bullet, w^N)\not\in \K\right)+\PPm^N\left(\frac{d\PPm^N}{d\PP}>e^{Na}\right) \\[1ex]& < e^{Na} \PP\left((\mu^N_\bullet, w^N)\not\in \K\right) + \frac{\epsilon}{2} \le e^{Na}e^{-N(a+\log (\epsilon/2))}+ \frac{\epsilon}{2}\le \epsilon\end{split}  \end{equation}  and we are done.  \end{proof}
\subsection{Variational Formulation of the Rate Function}\label{sec: var form} In preparation for the upper bound, we will now present a variational formulation of the rate function. This will also allow us to prove the lower semicontinuity in Proposition \ref{prop: ET + UB}. We are aided in this equivalence by the inclusion of the flux in the large deviation principle: the choice of $K$, if it exists, is unique, which allows us to significantly simplify the proof of L\'eonard \cite{leonard1995large}.\bigskip \\ We begin with the following construction. Let us write $C^{1,1}_{0,b}([0,T]\times\rrd)$ for those functions $f: [0,T]\times\rrd\to \rr$ which are bounded and Lipschitz in the $v$-variable, with a bounded and $v$-Lipschitz time derivative $\partial_t f$, and such that $f_0=0$. For $\varphi \in C_b(\rrd), f\in C^{1,1}_{0,b}([0,T]\times \rrd)$ and $g\in C_c(E)$ and $t\in [0,T]$, we define \begin{equation} \Xi(\mu_\bullet, w,\varphi, f,g)_t= \Xi_0(\mu_\bullet, \varphi)+ \Xi_{1,t}(\mu_\bullet, w,f) + \Xi_{2,t}(\mu_\bullet, w,g) \end{equation} where \begin{equation} \Xi_0(\mu_\bullet, \varphi)= \langle \varphi, \mu_0\rangle - \log \langle e^\varphi, \mu^\star_0\rangle ;\end{equation} \begin{equation}\begin{split} \Xi_{1,t}(\mu_\bullet, w,f):&= \langle f_t, \mu_t\rangle -\int_0^t  \langle \partial_s f_s, \mu_s\rangle ds \\& \hs - \int_{(0,t]\times\rrd\times\rrd\times\ssd} \Delta f(s,v,v_\star,\sigma) w(ds,dv,dv_\star,d\sigma)    \end{split}\end{equation} and \begin{equation}\begin{split} \Xi_{2,t}(\mu_\bullet, w,g)&:= \int_{E} 1(s\le t)(g(s,v,v_\star,\sigma) w(ds,dv,dv_\star,d\sigma) \\ & \hs -\int_E(e^g-1)(s,v,v_\star,\sigma) \overline{m}_{\mu}(ds,dv,dv_\star,d\sigma)).\end{split} \end{equation} We write $\Xi(\mu_\bullet, w,\varphi, f,g)$ for the terminal value $\Xi(\mu_\bullet, w,\varphi, f,g)=\Xi(\mu_\bullet, w,\varphi,f,g)_T$. Let us note that these processes make sense at the level of the particle system $\mu^N_\bullet, w^N$. The function $f$ here entering into $\Xi_{1,t}(\mu_\bullet, w,f)$ will play the r\^ole of a Lagrange multiplier to enforce the constraint of the continuity equation. This is made precise by the following result. \begin{lemma}\label{lemma: lagrange multiplier}  Fix $(\mu_\bullet, w)\in \D\times\Mm$. Then \begin{equation}\sup\left\{\Xi_{1,T}(\mu_\bullet, w, f): f\in C^{1,1}_{0,b}([0,T]\times \rrd)\right\}=\begin{cases} 0 &\text{if }(\mu_\bullet, w)\text{ solves (\ref{eq: CE})}; \\ \infty & \text{else.} \end{cases} \end{equation} \end{lemma}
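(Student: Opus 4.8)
The plan is to analyse the linear functional $f \mapsto \Xi_{1,T}(\mu_\bullet, w, f)$ and observe that it has exactly the structure of a constraint-enforcing Lagrange term: it is linear (indeed affine, but in fact linear since $f_0 = 0$ kills any constant) in $f$, so its supremum over the linear space $C^{1,1}_{0,b}([0,T]\times\rrd)$ is either $0$ (attained at $f=0$) or $+\infty$, and the former happens precisely when the functional vanishes identically. So the whole content is to show that $\Xi_{1,T}(\mu_\bullet, w, f) = 0$ for all admissible $f$ if and only if $(\mu_\bullet, w)$ solves the continuity equation (\ref{eq: CE}).

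For the easy direction, suppose $(\mu_\bullet,w)$ solves (\ref{eq: CE}). I would like to simply substitute $\mu_t = \mu_0 + \int_E \Delta(v,v_\star,\sigma) 1_{s\le t}\, w(\cdots)$ into $\langle f_t, \mu_t\rangle$, but $f_t$ is only Lipschitz in $v$ and not smooth, so the cleanest route is a time-discretisation / telescoping argument: write $\langle f_T, \mu_T\rangle = \sum_i (\langle f_{t_i}, \mu_{t_i}\rangle - \langle f_{t_{i-1}}, \mu_{t_{i-1}}\rangle)$ over a fine partition, split each increment as $\langle f_{t_i} - f_{t_{i-1}}, \mu_{t_i}\rangle + \langle f_{t_{i-1}}, \mu_{t_i} - \mu_{t_{i-1}}\rangle$, use the continuity equation to rewrite the second piece as an integral of $\Delta f_{t_{i-1}}$ against $w$ restricted to $(t_{i-1},t_i]$, and use the $C^{1,1}$ regularity (boundedness of $\partial_t f$ and its $v$-Lipschitz constant, plus $\sup_t\langle|v|^2,\mu_t\rangle<\infty$ and $w(E)<\infty$) to pass to the limit, identifying the first sum with $\int_0^T\langle\partial_s f_s,\mu_s\rangle ds$ and the integrand $\Delta f_{t_{i-1}}$ with $\Delta f_s$ in the limit. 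Since $f_0 = 0$, the boundary term at $0$ vanishes, and one reads off $\Xi_{1,T}(\mu_\bullet,w,f) = 0$.

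For the converse, suppose $\Xi_{1,T}(\mu_\bullet, w, f) = 0$ for all $f\in C^{1,1}_{0,b}$. I would test against product functions of the form $f(s,v) = \chi(s)\psi(v)$ with $\psi\in C_b(\rrd)$ Lipschitz and $\chi\in C^1([0,T])$, $\chi(0)=0$; then $\Delta f(s,v,v_\star,\sigma) = \chi(s)\Delta\psi(v,v_\star,\sigma)$, and the identity $\Xi_{1,T}=0$ becomes $\chi(T)\langle\psi,\mu_T\rangle - \int_0^T \chi'(s)\langle\psi,\mu_s\rangle ds - \int_E \chi(s)\Delta\psi\, w(ds,\cdots) = 0$. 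Integrating by parts in the first two terms (using that $s\mapsto\langle\psi,\mu_s\rangle$ is of bounded variation, being a difference of finite measures pushed forward by $w$ — or more robustly, testing with a dense family of $\chi$ and invoking du Bois-Reymond) forces, for a.e.\ $t$ and then for all $t$ by the càdlàg/left-limit structure, the identity $\langle\psi,\mu_t\rangle = \langle\psi,\mu_0\rangle + \int_E \Delta\psi(v,v_\star,\sigma) 1_{s\le t}\, w(ds,\cdots)$. Running over a countable Lipschitz-dense subset of $C_b(\rrd)$ and using that both sides are finite measures of bounded total variation gives (\ref{eq: CE}) as an identity of measures for every $t\in[0,T]$.

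The main obstacle is the regularity bookkeeping in both directions: $\mu_\bullet$ lives in the Skorokhod space with only $J_1$-càdlàg paths and $f$ is merely $C^{1,1}$ (not smooth), so the "integration by parts in time" that is morally trivial must be done carefully — either via the telescoping argument above with explicit error bounds controlled by $\|\partial_t f\|_\infty$, $\mathrm{Lip}_v(f)$, $\sup_t\langle|v|^2,\mu_t\rangle$, and $w(E) < \infty$, or by first establishing that $t\mapsto\mu_t$ is of bounded variation in the dual-Lipschitz norm (which follows from (\ref{eq: CE}) and $w(E)<\infty$) and then applying a standard product-rule/integration-by-parts for BV-valued paths tested against Lipschitz functions. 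The reconstruction of the full measure identity from the test-function identity is routine once a countable separating family is fixed.
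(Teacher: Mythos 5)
Your structure is correct and the easy direction ((\ref{eq: CE}) implies $\Xi_{1,T}\equiv 0$) is essentially the paper's argument: the telescoping decomposition over a fine time partition, with errors controlled by $\|\partial_t f\|_\infty$, the $v$-Lipschitz constant, $w(E)<\infty$, and explicit handling of the finitely many large jumps of the c\`adl\`ag path. Your structural observation --- $\Xi_{1,T}(\mu_\bullet,w,\cdot)$ is linear in $f$, so the supremum over a linear space is $0$ or $+\infty$, with the former iff the functional vanishes identically --- is clean and is what the paper uses implicitly when it rescales $\Xi_{1,T}(\mu_\bullet,w,\lambda f^\delta)>\lambda$. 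Where you genuinely diverge is the converse direction. The paper works contrapositively: it fixes a single $t_0\in(0,T]$ and $g\in C^\infty_c(\rrd)$ at which (\ref{eq: CE}) fails by $>1$, builds $f^\delta(s,v)=\chi_\delta(s)g(v)$ with $\chi_\delta$ a smooth approximation of $1_{(0,t_0]}$, checks $\Xi_{1,T}(\mu_\bullet,w,f^\delta)\to$ the failure amount, and lets linearity finish. You instead characterize when $\Xi_{1,T}\equiv 0$ directly by testing with every product $\chi(s)\psi(v)$ and invoking a du Bois-Reymond argument to force $t\mapsto\langle\psi,\mu_t\rangle-\int_E\Delta\psi\,1_{s\le t}\,w$ constant. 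Both routes separate variables in the same way; the paper's is shorter because it needs only one witness of blowup, whereas yours must upgrade a scalar identity over all $\chi$ to (\ref{eq: CE}) via a.e.\ constancy, right-continuity, and a countable separating family of $\psi$.

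One repair is needed in the converse: your parenthetical claim that $s\mapsto\langle\psi,\mu_s\rangle$ is of bounded variation, \emph{``being a difference of finite measures pushed forward by $w$''}, is circular in that direction --- the BV structure via $w$ is exactly the conclusion of (\ref{eq: CE}), which is what you are trying to establish, and an arbitrary c\`adl\`ag path in $\D$ need not have BV marginals. The alternative you flag --- integration by parts on the genuinely BV term $W(t):=\int_E \Delta\psi\,1_{s\le t}\,w$ (which is BV since $w(E)<\infty$ and $\Delta\psi$ is bounded), followed by du Bois-Reymond applied to $\chi'$ and then right-continuity --- is the correct fix and closes the argument.
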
 \begin{proof} For the case where $(\mu_\bullet, w)$ solves (\ref{eq: CE}), we will show that, for all $f\in C^{1,1}_{0,b}([0,T]\times\rrd)$ and all $t\in [0,T]$, we have the time-dependent equivalent of (\ref{eq: CE}): \begin{equation}\label{eq: time dependent CE} \langle f, \mu_t\rangle = \int_0^t \langle \partial_s f_s,\mu_s\rangle ds+\int_E 1(s\le t)\Delta f(s,v,v_\star,\sigma)w(ds,dv,dv_\star,d\sigma). \end{equation} This will immediately imply that $\Xi_{1,t}(\mu_\bullet, w,f)=0$ for all $f$ and all $t$, which implies the claim. The proof of this formulation is slightly complicated by the lack of regularity, since we only assume \emph{a priori} that $\mu_\bullet$ is c{\`a}dl{\`a}g rather than continuous; we will instead use the facts about c{\`a}dl{\`a}g paths from Proposition \ref{prop: Sk continuity}. Since (\ref{eq: time dependent CE}) is linear in $f$, we can assume that $f_t, \partial_t f_t$ belong to the class $\mathcal{F}$ which are $1$-bounded and $1$-Lipschitz. Fix $t\in [0,T]$, $\epsilon>0$, $\delta>0$. \bigskip \\ Let us write $L\subset [0,T]$ for those $s\in [0,T]$ with $W(\mu_{s-}, \mu_s)\ge\epsilon$; thanks to Proposition \ref{prop: Sk continuity}a), $L$ is finite, and we write $m=|L|<\infty$ for its cardinality. Possibly making $\delta>0$ smaller, Proposition \ref{prop: Sk continuity}b) guarantees that $\delta$ can be chosen so that any interval $[u,v)$ of length $\delta$ either contains a point of $L$, or for all $s\in [u,v)$, $W(\mu_s, \mu_u)<\epsilon$. Now, for such $\delta$, we decompose $(0,t]$ into intervals $(t_i, t_{i+1}]$ of length at most $\delta$, and add \begin{equation} \langle f_{t_{i+1}}, \mu_{t_{i+1}}-\mu_{t_i}\rangle = \int_E 1(s\in (t_i, t_{i+1}]) \Delta f(t_{i+1},v,v_\star,\sigma)w(ds,dv,dv_\star,d\sigma);\end{equation}\begin{equation} \langle f_{t_{i+1}}-f_{t_i}, \mu_{t_{i}}\rangle =\int_{t_i}^{t_{i+1}} \langle \partial_s f_s, \mu_{t_{i}}\rangle ds  \end{equation} to obtain \begin{equation}\begin{split} \langle f_{t_{i+1}}, \mu_{t_{i+1}}\rangle -\langle f_{t_i}, \mu_{t_i}\rangle &= \int_{t_i}^{t_{i+1}} \langle \partial_s f_s, \mu_{t_{i}}\rangle ds \\ &  +\int_E 1(s\in (t_i, t_{i+1}]) \Delta f(t_{i+1},v,v_\star,\sigma)w(ds,dv,dv_\star,d\sigma). \end{split} \end{equation} We approximate the two terms by \begin{equation} \begin{split}  \left|\int_{t_i}^{t_{i+1}} \langle \partial_s f_s, \mu_{t_{i}}\rangle ds -\int_{t_i}^{t_{i+1}} \langle \partial_s f_s, \mu_s\rangle ds \right| & \le \int_{t_i}^{t_{i+1}} W(\mu_s, \mu_{t_i})ds \\[1ex] &\le \epsilon(t_{i+1}-t_i)+2\delta \cdot 1(L\cap [t_i, t_{i+1})\neq \emptyset) \end{split} \end{equation} since, by the choice of $\delta$, either $W(\mu_s, \mu_{t_i})\le \epsilon$ for all $s\in [t_i, t_{i+1}]$, or there is a point of $L$ in $[t_i, t_{i+1})$, in which case we use the trivial bound $W(\mu_s, \mu_{t_i})\le 2$ and recall that the interval is of length at most $\delta$. For the second term \begin{equation}\begin{split}&\left|\int_E 1(s\in (t_i, t_{i+1}]) \left(\Delta f(t_{i+1},v,v_\star,\sigma)-\Delta f(s,v,v_\star,\sigma)\right)w(ds,dv,dv_\star,d\sigma)\right|    \\ & \hs \hs \hs \hs   \le \left|\int_E 1(s\in (t_i, t_{i+1}])4\|f_{t_{i+1}}-f_s\|_\infty w(ds,dv,dv_\star,d\sigma)\right| \\[1ex] & \hspace{4cm}  \le 4\delta w\left((t_i, t_{i+1}]\times\rrd\times\rrd\times\ssd\right)\end{split}  \end{equation}  where in the final line we recall that we have scaled so that $\|\partial_t f_t\|_\infty \le 1$. Adding, we conclude that \begin{equation} \begin{split}  &\bigg|\langle f_{t_{i+1}}, \mu_{t_{i+1}}\rangle -\langle f_{t_i}, \mu_{t_i}\rangle - \int_{t_i}^{t_{i+1}} \langle \partial_s f_s, \mu_s\rangle \\ & \hspace{4cm}-\int_E 1(s\in (t_i, t_{i+1}])\Delta f(s,v,v_\star,\sigma)w(ds,dv,dv_\star,d\sigma) \bigg|  \\ &  \le \epsilon(t_{i+1}-t_i)+2\delta\cdot1\left(L\cap [t_i, t_{i+1})\neq\emptyset \right)+4\delta w\left((t_i, t_{i+1}]\times\rrd\times\rrd\times\ssd\right).\end{split} \end{equation}Summing over all such intervals $(t_i, t_{i+1}]$ covering $(0,t]$, and recalling that $f_0\equiv 0$, we obtain \begin{equation} \begin{split} &\left|\langle f_t, \mu_t\rangle -\int_0^t \langle \partial_s f_s, \mu_s\rangle ds -\int_E 1(s\le t)\Delta f(s,v,v_\star,\sigma)w(ds,dv,dv_\star,d\sigma)\right| \\ & \hspace{3cm}\le \epsilon t+2m\delta+4\delta w(E) \end{split} \end{equation} and the right-hand side can be made arbitrarily small by taking $\epsilon, \delta\to 0$, recalling that $w$ is a finite measure by hypothesis, so the claim (\ref{eq: time dependent CE}) is proven, and we conclude that $\sup_f \Xi_{1,T}(\mu_\bullet,w,f)=0$ as claimed. Otherwise, if (\ref{eq: CE}) fails, there exists some $t_0\in (0,T]$ and some $g\in C^\infty_c(\rrd)$ such that \begin{equation} \label{eq: CE fails} \langle g, \mu_{t_0}\rangle -\langle g, \mu_0\rangle -\int_E 1(s\le t_0) \Delta g(v,v_\star,\sigma)w(ds,dv,dv_\star,d\sigma) > 1. \end{equation}  Let us assume that $t_0\in (0,T)$; the proof $t_0=T$ is similar and strictly easier. We now fix a smooth, increasing function $\chi:\rr\to \rr$ such that $\chi=0$ on $(-\infty, 0]$ and $\chi=1$ on $[1,\infty)$, and for $0<\delta<\min(t_0, T-t_0)$, we construct $f^\delta \in C^{1,1}_{0,b}([0,T]\times\rrd)$ by defining \begin{equation} f^\delta_t(v):=\begin{cases} \chi(t/\delta)g(v) & \text{if }t\in [0, \delta]; \\ g(v) & \text{if }\delta<t\le t_0; \\ \chi(1-(t-t_0)/\delta)g(v) & \text{if }t_0<t<t_0+\delta \\ 0 &\text{else}.\end{cases}\end{equation} Thanks to right-continuity of $\mu_t$, we observe that $\int_0^T \langle \partial_t f^\delta_t, \mu_t\rangle dt \to \langle g, \mu_0\rangle - \langle g, \mu_{t_0}\rangle$, and using dominated convergence, \begin{equation} \int_E \Delta f^\delta(s,v,v_\star,\sigma)w(ds,dv,dv_\star,d\sigma)\to \int_{E} 1(s\le t_0) \Delta g(v,v_\star,\sigma)w(ds,dv,dv_\star,d\sigma).\end{equation}  Therefore, $\Xi_{1,T}(\mu_\bullet, w,f^\delta)$ converges to \begin{equation} \Xi_{1,T}(\mu_\bullet, w,f^\delta)\to \langle g, \mu_{t_0}\rangle - \langle g, \mu_0\rangle - \int_E 1(s\le t_0)\Delta g(v,v_\star,\sigma)w(ds,dv,dv_\star,d\sigma) \end{equation} and in particular, we can choose $\delta>0$ small enough that $\Xi_{1,T}(\mu_\bullet, w,f^\delta)>1$. By linearity, for all $\lambda>0$, $\Xi_{1,T}(\mu_\bullet, w, \lambda f^\delta)>\lambda$, and so the supremum is infinite, as claimed. \end{proof} We now use this equality to show that the functions $\Xi$ above give a variational formulation of the rate function $\mathcal{I}$ given in the introduction. \begin{lemma}\label{lemma: variational form of RF} For $(\mu_\bullet, w)\in \D\times\Mm$, we have \begin{equation} \mathcal{I}(\mu_\bullet, w)=\sup\left\{\Xi(\mu_\bullet, w,\varphi, f,g): \varphi \in C_b(\rrd), f\in C^{1,1}_{0,b}([0,T]\times\rrd),g\in C_c(E) \right\}. \end{equation} \end{lemma}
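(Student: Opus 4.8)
The plan is to exploit the fact that, for fixed $(\mu_\bullet, w)\in\D\times\Mm$, the functional $\Xi(\mu_\bullet,w,\varphi,f,g)=\Xi_0(\mu_\bullet,\varphi)+\Xi_{1,T}(\mu_\bullet,w,f)+\Xi_{2,T}(\mu_\bullet,w,g)$ is a sum of three terms depending on the disjoint test objects $\varphi\in C_b(\rrd)$, $f\in C^{1,1}_{0,b}([0,T]\times\rrd)$ and $g\in C_c(E)$ respectively. Consequently the joint supremum decouples:
\[ \sup_{\varphi,f,g}\Xi(\mu_\bullet,w,\varphi,f,g)=\sup_\varphi\Xi_0(\mu_\bullet,\varphi)+\sup_f\Xi_{1,T}(\mu_\bullet,w,f)+\sup_g\Xi_{2,T}(\mu_\bullet,w,g). \]
Taking the relevant test function equal to $0$ shows each of the three suprema lies in $[0,\infty]$, so no $\infty-\infty$ cancellation can arise, and it remains to identify the three pieces and recombine.

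For the first term, $\Xi_0(\mu_\bullet,\varphi)=\langle\varphi,\mu_0\rangle-\log\langle e^\varphi,\mu_0^\star\rangle$ is exactly the expression in the Donsker--Varadhan/Gibbs variational characterisation of relative entropy; a routine approximation (Lusin's theorem, replacing bounded measurable test functions by continuous ones of the same sup-norm, using that $\mu_0,\mu_0^\star$ are Borel probability measures on $\rrd$) shows that restricting to $\varphi\in C_b(\rrd)$ leaves the supremum unchanged, so $\sup_\varphi\Xi_0(\mu_\bullet,\varphi)=H(\mu_0|\mu_0^\star)$ (including the case $=\infty$). For the second term, Lemma~\ref{lemma: lagrange multiplier} gives directly $\sup_f\Xi_{1,T}(\mu_\bullet,w,f)=0$ if $(\mu_\bullet,w)$ solves (\ref{eq: CE}) and $=\infty$ otherwise.

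The substantive work is the third term. Since $\mu_\bullet\in\D$ we have $\overline{m}_\mu(E)\le |\ssd|\,T\,(1+2\sqrt{\sup_t\langle|v|^2,\mu_t\rangle})<\infty$, and $w$ is finite by definition, so $w$ and $\overline{m}_\mu$ are finite Borel measures on the locally compact Polish space $E$; moreover $1(s\le T)\equiv1$ on $E$, so $\Xi_{2,T}(\mu_\bullet,w,g)=\int_E g\,dw-\int_E(e^g-1)\,d\overline{m}_\mu$. The claim is that this supremum over $g\in C_c(E)$ equals $\int_E\tau(dw/d\overline{m}_\mu)\,d\overline{m}_\mu$ if $w\ll\overline{m}_\mu$ and $\infty$ otherwise, i.e.\ the classical Legendre duality behind the Poisson cost, resting on the pointwise inequality $gk-(e^g-1)\le\tau(k)$, with equality at $g=\log k$ and the boundary case $\sup_g(1-e^g)=1=\tau(0)$. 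If $w\ll\overline{m}_\mu$ with density $K$, integrating this inequality gives $\Xi_{2,T}\le\int\tau(K)\,d\overline{m}_\mu$ for every $g$; for the reverse inequality one approximates $g=\log K$ in $C_c(E)$ by first truncating to $[-n,n]$ (a bounded measurable function whose value $\int g_n\,dw-\int(e^{g_n}-1)\,d\overline{m}_\mu$ increases to $\int\tau(K)\,d\overline{m}_\mu$ by monotone convergence), then using Lusin's theorem and a compactly supported cutoff $\chi_m\uparrow1$ to produce genuinely continuous compactly supported approximants, the errors being controlled by $2n\,w(B)$ and $2e^n\overline{m}_\mu(B)$ on the small-measure Lusin exceptional set $B$ and by dominated convergence in $m$ (legitimate as $|e^{g\chi_m}-1|\le e^n$ with $\overline{m}_\mu$ finite). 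If instead $w$ has a nonzero singular part $w_s\perp\overline{m}_\mu$, choose a Borel set $A$ with $\overline{m}_\mu(A)=0$, $w_s(A)=w_s(E)>0$, an inner-regular compact $C\subset A$ with $w_s(C)\ge\tfrac12 w_s(E)$, and outer-regular open $U_n\supset C$ with $(e^n-1)\overline{m}_\mu(U_n)\to0$; an Urysohn function $g_n\in C_c(E)$ with $g_n=n$ on $C$, $0\le g_n\le n$, $\mathrm{supp}\,g_n\subset U_n$ gives $\Xi_{2,T}(\mu_\bullet,w,g_n)\ge n\,w_s(C)-o(1)\to\infty$. Finally, recombining: if $(\mu_\bullet,w)$ is a measure-flux pair the total supremum is $H(\mu_0|\mu_0^\star)+0+\int_E\tau(dw/d\overline{m}_\mu)\,d\overline{m}_\mu=\mathcal{I}(\mu_\bullet,w)$; otherwise either (\ref{eq: CE}) fails and $\sup_f\Xi_{1,T}=\infty$, or $w\not\ll\overline{m}_\mu$ and $\sup_g\Xi_{2,T}=\infty$, so the total is $\infty=\mathcal{I}(\mu_\bullet,w)$, consistent also when $H(\mu_0|\mu_0^\star)=\infty$. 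I expect the only genuinely delicate step to be this third-term duality — specifically, ensuring that passing from the ``optimal'' but merely measurable test function $\log K$ to functions of $C_c(E)$, and handling the singular case, preserves the supremum; both rely essentially on the finiteness of $w$ and $\overline{m}_\mu$ and on inner/outer regularity of finite Borel measures on $E$.
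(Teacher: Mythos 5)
Your proof is correct and follows essentially the same route as the paper: decompose the supremum into the three independent pieces, identify $\sup_\varphi\Xi_0$ with $H(\mu_0|\mu_0^\star)$ via the Donsker--Varadhan/Gibbs formula, invoke Lemma~\ref{lemma: lagrange multiplier} for $\sup_f\Xi_{1,T}$, and for $\sup_g\Xi_{2,T}$ use the pointwise Legendre inequality $gk-(e^g-1)\le\tau(k)$ for the upper bound and a Lusin--type approximation of $\log K$ (with an Urysohn construction on the singular part) for the lower bound. The only cosmetic differences are that you truncate $\log K$ both above and below and invoke monotone convergence, whereas the paper truncates only above and uses dominated convergence with dominating function $MK+1$, and that you absorb the diverging amplitude into the Urysohn function $g_n$ rather than scaling a fixed family by $\lambda_n\to\infty$; both variants close the same gaps in the same way.
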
 \begin{proof} Let us write $\widetilde{\mathcal{I}}$ for the right-hand side. Since $\Xi_{0}$ depends only on $\varphi$, $\Xi_1$ only on $f$ and $\Xi_2$ only on $g$, the supremum decomposes as \begin{equation} \label{eq: decomposition of var form} \widetilde{\mathcal{I}}(\mu_\bullet, w)=\sup_{\varphi} \Xi_0(\mu_\bullet, \varphi) +\sup_f \Xi_{1,T}(\mu_\bullet, w,f)+\sup_g \Xi_{2,T}(\mu_\bullet, w,g) \end{equation} where the suprema run over the same sets as above. Optimising over $\varphi $ produces the well-known variational formulation $\sup_{\varphi}\Xi_0(\mu_0, \varphi)=H(\mu_0|\mu_0^\star)$ of the relative entropy. This identity can be found in \cite[Appendix 1]{kipnis1998scaling}, or derived using essentially the same argument as for $\Xi_{2,T}$ below. Thanks to Lemma \ref{lemma: lagrange multiplier}, the supremum over $f$ is infinite unless the continuity equation (\ref{eq: CE}) holds, in which case this term vanishes. \bigskip\\ We now deal with the third term. If $w\not \ll \overline{m}_{\mu}$, there is a compact set $E'\subset E$ with $w(E')>0$ but $\overline{m}_{\mu}(E')=0$, and by since $E$ is a metric space and $w$ is a Borel measure, we can find open $U_n \downarrow E'$ and closed $U_n\supset A_n\supset E'$ with $\overline{m}_{\mu}(U_n)\downarrow 0$. We now choose $g_n \in C_c(E)$ so that $0\le g_n\le 1$, $g_n=1$ on $A_n$, and $=0$ except on $U_n$, and bound for $\lambda>0$, \begin{equation} \int_E \lambda g_n w(ds,dv,dv_\star,d\sigma) \ge \lambda w(E'); \end{equation}\begin{equation}\int_E (e^{\lambda g_n}-1)\overline{m}_{\mu}(ds,dv,dv_\star,d\sigma)\le (e^\lambda -1)\overline{m}_{\mu}(U_n) \end{equation} so that \begin{equation} \Xi_{2,T}(\mu_\bullet, w, \lambda g_n) \ge \lambda w(E') -(e^{\lambda}-1)\overline{m}_{\mu}(U_n).\end{equation} By taking $\lambda=\lambda_n \to \infty$ slowly enough, the right-hand side can be made arbitrarily large as $n\to \infty$, so in this case $\sup_g \Xi_{2,T}(\mu_\bullet, w,g)=\infty$. On the other hand, if $w\ll \overline{m}_\mu$, let us write $K$ for the tilting function $\frac{dw}{d\overline{m}_{\mu}}$, so that \begin{equation} \Xi_{2,T}(\mu_\bullet, w,g)=\int_E (Kg - e^g +1)(s,v,v_\star,\sigma)\overline{m}_{\mu}(ds,dv,dv_\star,d\sigma). \end{equation} Observing that, for all $x \in \rr, y\ge 0$, it holds that $xy\le (e^x-1)+(y\ln y - y+ 1)=(e^x-1)+\tau(y)$, the first term $\int Kg$ can be bounded by\begin{equation} \begin{split} \int_E Kg(s,v,v_\star,\sigma)\overline{m}_{\mu}(ds,dv,dv_\star,d\sigma)& \le \int_E \tau(K)\overline{m}_{\mu}(ds,dv,dv_\star,d\sigma) \\& + \int_E (e^g-1)\overline{m}_{\mu}(ds,dv,dv_\star,d\sigma)\end{split} \end{equation} which leads to the bound, uniformly in $g\in C_c(E)$, \begin{equation}\label{eq: upper bound for xi2} \Xi_{2,T}(\mu_\bullet, w,g)\le \int_E \tau(K(s,v,v_\star,\sigma))\overline{m}_{\mu}(ds,dv,dv_\star,d\sigma)\end{equation} whether or not the right-hand side is finite. On the other hand, let us fix $M$. By Lusin's theorem, we can construct continuous, bounded $g_n\in C_c(E)$ with $g_n\to \ln K \land M$ for $\overline{m}_{\mu}$-almost all $(t,v,v_\star,\sigma)$, so that \begin{equation} Kg_n-e^{g_n}+1\le MK+1\end{equation} and \begin{equation} Kg_n-e^{g_n}+1 \to K(\ln K \land M)-(K\land e^M)  + 1\end{equation} $ \overline{m}_\mu(ds,dv,dv_\star,d\sigma)$-almost everywhere. Since $K\in L^1(\overline{m}_\mu)$, we can use dominated convergence to obtain \begin{equation} \Xi_{2,T}(\mu_\bullet, w,g_n)\to \int_E (K(\ln K\land M)-(K\land e^M)+1)\overline{m}_{\mu}(ds,dv,dv_\star,d\sigma)\end{equation} and the supremum is at least the right-hand side. The integrand is increasing in $M$, and converges to $\tau(K)$ pointwise, so the whole integral converges to $\int_E \tau(K)d\overline{m}_\mu$. We conclude that \begin{equation}\sup\left\{\Xi_{2,T}(\mu_\bullet, w,g): g\in C_c(E)\right\} \ge \int_E \tau(K(s,v,v_\star,\sigma))\overline{m}_{\mu}(ds,dv,dv_\star,d\sigma) \end{equation} and (\ref{eq: upper bound for xi2}) shows that this is an equality. Putting everything together, we have shown that \begin{equation} \label{eq: variational J}\begin{split} & \sup_{f,g}\left\{\Xi_{1,T}(\mu_\bullet,w,f)+\Xi_{2,T}(\mu_\bullet, w,g)\right\}\\& \hspace{2cm}=\begin{cases} \int_E \tau(K)\overline{m}_{\mu}(ds,dv,dv_\star,d\sigma)& \text{if } (\mu_\bullet, w) \text{is a measure flux pair;} \\ \infty &\text{else}\end{cases}\end{split} \end{equation} and the right-hand side is exactly the definition of $\mathcal{J}(\mu_\bullet, w)$; returning to (\ref{eq: decomposition of var form}), we have proven that \begin{equation} \widetilde{\mathcal{I}}(\mu_\bullet, w)=H(\mu_0|\mu^\star_0)+\mathcal{J}(\mu_\bullet, w) = \mathcal{I}(\mu_\bullet, w) \end{equation}  as desired.\end{proof} Thanks to this variational form, we readily obtain the lower semi-continuity claimed in Proposition \ref{prop: ET + UB}. We first record, as a separate lemma, a result which will be helpful later. \begin{lemma}\label{lemma: really useful continuity result} For fixed $f\in L^\infty([0,T],C_b(\rrd))$ and $g\in C_c(E)$, the maps \begin{equation}   \mu_\bullet \mapsto \int_0^T \langle f_t, \mu_t\rangle dt;\qquad \mu_\bullet \mapsto \int_E g \overline{m}_{\mu}(ds,dv,dv_\star,d\sigma)\end{equation} are continuous in the topology of $\D$. \end{lemma}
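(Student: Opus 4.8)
The plan is to observe that both functionals are instances of a single continuity statement, and to reduce that statement to the behaviour of the Skorokhod topology at continuity points together with dominated convergence in the time variable. To this end, write the two maps as $\mu_\bullet\mapsto\int_0^T\Psi(t,\mu_t)\,dt$ for a suitable $\Psi:[0,T]\times\cp_2\to\rr$. In the first case $\Psi(t,\mu)=\langle f_t,\mu\rangle$. In the second, since $B$ does not depend on $\sigma$, Fubini gives $\int_E g\,\overline{m}_{\mu}=\int_0^T\langle h_t,\mu_t\otimes\mu_t\rangle\,dt$ with $h_t(v,v_\star):=B(v-v_\star)\int_{\ssd}g(t,v,v_\star,\sigma)\,d\sigma$. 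I claim that in either case $\Psi$ has the three properties: (a) $\mu\mapsto\Psi(t,\mu)$ is continuous on $(\cp_2,W)$ for each fixed $t$; (b) $M:=\sup_{t,\mu}|\Psi(t,\mu)|<\infty$; (c) $t\mapsto\Psi(t,\mu_t)$ is Borel measurable for every $\mu_\bullet\in\D$. For $f$: (b) is $\|f_t\|_\infty\le\|f\|_{L^\infty([0,T],C_b(\rrd))}$ uniformly in $t$; (a) holds because $W$ metrises weak convergence and $f_t\in C_b(\rrd)$; and (c) is routine, using Bochner measurability of $f$ together with continuity of $\mu\mapsto\langle\varphi,\mu\rangle$. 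For $g$: since $g\in C_c(E)$, each $h_t$ is continuous and supported in a fixed compact subset of $(v,v_\star)$-space, which absorbs the linear growth of the hard-spheres kernel; hence $h_t\in C_b(\rrd\times\rrd)$ with $\sup_t\|h_t\|_\infty<\infty$, giving (b), while (a) follows from weak continuity of $\mu\mapsto\mu\otimes\mu$ on $\cp(\rrd)$ together with $h_t\in C_b$, and (c) from joint continuity of $(t,\mu)\mapsto\langle h_t,\mu\otimes\mu\rangle$ (using uniform continuity of the compactly supported $g$ for the $t$-dependence).

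Next I would prove the general statement: if $\Psi$ satisfies (a)--(c) then $\mu_\bullet\mapsto\int_0^T\Psi(t,\mu_t)\,dt$ is continuous on $\D$. Since $\D$ is metrisable it suffices to verify sequential continuity, so let $\mu^n_\bullet\to\mu_\bullet$ in $\D$, i.e.\ in the Skorokhod $J_1$ topology. The set $\mathrm{Disc}(\mu_\bullet)\subset[0,T]$ of discontinuity points of $\mu_\bullet$ is at most countable — it is the union over $\epsilon=1/k$ of the finite sets of Proposition \ref{prop: Sk continuity} — hence Lebesgue-null. At any $t\notin\mathrm{Disc}(\mu_\bullet)$, the standard fact that $J_1$-convergence forces convergence at continuity points of the limit (Appendix \ref{sec: sk}) gives $\mu^n_t\to\mu_t$ in $W$, and then (a) gives $\Psi(t,\mu^n_t)\to\Psi(t,\mu_t)$; thus this convergence holds for a.e.\ $t\in[0,T]$. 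By (b), $|\Psi(t,\mu^n_t)|\le M$ for all $n$ and all $t$, with the constant $M$ integrable on $[0,T]$, and by (c) the integrands are measurable. Dominated convergence then yields $\int_0^T\Psi(t,\mu^n_t)\,dt\to\int_0^T\Psi(t,\mu_t)\,dt$, which is the claim.

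I expect no serious obstacle here: the only non-elementary input is the convergence-at-continuity-points property of the $J_1$ topology, which is recorded and proved in Appendix \ref{sec: sk}, and the rest is the observation that $W$ metrises weak convergence — so that bounded \emph{continuous}, not merely Lipschitz, test functions are admissible — together with the fact that the compact support of $g$ controls the growth of $B$. The one point deserving attention is the measurability (c) in the $f$-case, which relies on interpreting $L^\infty([0,T],C_b(\rrd))$ via Bochner measurability. It is also worth noting that the uniform-second-moment clause in the definition of $\D$ is not used anywhere: property (b) is precisely what allows both functionals to extend continuously to all of $\D$, which is why no moment hypothesis on $\mu_\bullet$ enters the statement.
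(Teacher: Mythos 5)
Your proposal is correct and takes essentially the same route as the paper: reduce to sequential continuity, use the Skorokhod-at-continuity-points fact (Proposition \ref{prop: Sk continuous functions}) to get $W(\mu^{(n)}_t,\mu_t)\to 0$ for $dt$-a.e.\ $t$, observe that compact support of $g$ absorbs the growth of $B$ so that the $(v,v_\star)$-integrand is $C_b$, and finish with bounded convergence in $t$. Your extra packaging via a general $\Psi$ satisfying (a)--(c), and your explicit note on measurability, are organisational refinements rather than a different argument.
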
 \begin{proof} Noting that the topology of $\D$ is induced by a metric, it is sufficient to prove sequential continuity: let us fix $\mu^{(n)}_\bullet \to \mu_\bullet$. By Proposition \ref{prop: Sk continuous functions}, it follows that $W(\mu^{(n)}_t, \mu_t)\to 0$ for $dt$-almost all $t$, and for all such $t$, we also have the weak convergence $\mu^{(n)}_t\otimes \mu^{(n)}_t\to \mu_t\otimes \mu_t$. Since $g$ has compact support in $E$, for any fixed $\sigma$ and for such $t$, the map $(v,v_\star)\mapsto g(t,v,v_\star,\sigma)B(v-v_\star)$ is bounded and continuous, and so we have the convergences \begin{equation} \langle  f_t, \mu^{(n)}_t\rangle \to \langle  f_t, \mu_t\rangle; \end{equation} \begin{equation} \begin{split} &\int_{\rrd\times\rrd}  g(t,v,v_\star,\sigma)B(v-v_\star)\mu^{(n)}_t(dv)\mu^{(n)}_t(dv_\star)\\& \hspace{5cm}\to \int_{\rrd\times\rrd}  g(t,v,v_\star,\sigma)B(v-v_\star)\mu_t(dv)\mu_t(dv_\star).\end{split}\end{equation} Since these hold for all $\sigma$ and $dt$-almost all $t$, we can integrate and use bounded convergence to find that \begin{equation} \int_0^T \langle f_t, \mu^{(n)}_t\rangle dt \to \int_0^T \langle f_t, \mu_t\rangle dt; \end{equation} and \begin{equation} \int_E g\overline{m}_{\mu^{(n)}}(ds,dv,dv_\star,d\sigma)\to \int_E g\overline{m}_{\mu}(ds,dv,dv_\star,d\sigma)  \end{equation} and we are done.   \end{proof}  \begin{lemma}\label{lemma: semiconntinuity} For fixed $\varphi\in C_b(\rrd), f\in C^{1,1}_{0,b}([0,T]\times\rrd)$ and $g\in C_c(\rrd)$, the maps \begin{equation} (\mu_\bullet, w)\to \Xi_{0}(\mu_\bullet, \varphi);\qquad (\mu_\bullet, w)\to \Xi_{1,T}(\mu_\bullet, \varphi);\qquad (\mu_\bullet, w)\to \Xi_{2,T}(\mu_\bullet, \varphi); \end{equation}\begin{equation} (\mu_\bullet, w)\to \Xi(\mu_\bullet, w,\varphi,f,g) \end{equation} are continuous for the topology of $\D\times\Mm$. In particular, the sub-level sets $\{\mathcal{J}\le a\}, \{\mathcal{I}\le a\}$ are closed in $\D\times\Mm$ for all $a\in [0,\infty)$, as is the set of pairs $(\mu_\bullet, w)$ for which (\ref{eq: CE}) holds, and $\{\mu\in \cp_2: H(\mu|\mu_0^\star)\le a\}$ in the topology of $(\cp_2, W)$. \end{lemma}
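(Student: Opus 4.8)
The plan is to reduce everything to the variational representations already in hand, so that the only real work is checking that the three building blocks $\Xi_0,\Xi_{1,T},\Xi_{2,T}$ are continuous on $\D\times\Mm$; the semicontinuity and closedness assertions then follow at once, since a pointwise supremum of continuous functions is lower semicontinuous. The single point that needs care is that, in the Skorokhod $J_1$-topology, the evaluation map $\mu_\bullet\mapsto\mu_t$ at an interior time $t\in(0,T)$ is \emph{not} continuous; one may only use evaluation at the endpoints $t=0$ and $t=T$, which is continuous by the facts recorded in Propositions \ref{prop: Sk continuity} and \ref{prop: Sk continuous functions} (since any time-change homeomorphism of $[0,T]$ fixes the endpoints), together with the time-averaged functionals, which are handled by Lemma \ref{lemma: really useful continuity result}, whose proof only needs $\mu^{(n)}_t\to\mu_t$ for Lebesgue-a.e.\ $t$.

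Concretely, for $\Xi_0(\mu_\bullet,\varphi)=\langle\varphi,\mu_0\rangle-\log\langle e^\varphi,\mu^\star_0\rangle$ the second term is a constant, while the first is continuous in $\mu_\bullet$ because evaluation at $t=0$ is continuous from $\D$ into $(\cp_2,W)$ and $W$ metrises weak convergence on $\cp_2$ with $\varphi\in C_b(\rrd)$. For $\Xi_{1,T}$ I would treat its three summands separately: $\langle f_T,\mu_T\rangle$ is continuous by continuity of evaluation at $t=T$ and $f_T\in C_b(\rrd)$; $-\int_0^T\langle\partial_s f_s,\mu_s\rangle\,ds$ is continuous in $\mu_\bullet$ by Lemma \ref{lemma: really useful continuity result}, since $\partial_s f_s\in L^\infty([0,T],C_b(\rrd))$; and for $-\int_E\Delta f\,dw$ one checks that $\Delta f$ lies in $C_b(E)$ --- it is bounded by $4\|f\|_\infty$, and jointly continuous because $f$ is jointly continuous (being $v$-Lipschitz with bounded time derivative) and $(v,v_\star,\sigma)\mapsto(v',v'_\star)$ is continuous --- so $w\mapsto\int_E\Delta f\,dw$ is continuous for the weak topology on $\Mm$. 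For $\Xi_{2,T}(\mu_\bullet,w,g)=\int_E g\,dw-\int_E(e^g-1)\,d\overline{m}_\mu$ with $g\in C_c(E)$: the first term is continuous in $w$ since $g\in C_b(E)$, and the second is continuous in $\mu_\bullet$ by Lemma \ref{lemma: really useful continuity result}, since $e^g-1$ again belongs to $C_c(E)$. Summing, $\Xi(\cdot,\cdot,\varphi,f,g)=\Xi_0+\Xi_{1,T}+\Xi_{2,T}$ is continuous on $\D\times\Mm$.

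The remaining claims then follow by writing each functional as a supremum of such continuous functions. By Lemma \ref{lemma: variational form of RF}, $\mathcal{I}=\sup_{\varphi,f,g}\Xi(\cdot,\cdot,\varphi,f,g)$ is lower semicontinuous, so $\{\mathcal{I}\le a\}$ is closed; by (\ref{eq: variational J}), $\mathcal{J}=\sup_{f,g}\{\Xi_{1,T}+\Xi_{2,T}\}$ is lower semicontinuous with closed sub-level sets; by Lemma \ref{lemma: lagrange multiplier}, $(\mu_\bullet,w)$ solves (\ref{eq: CE}) precisely when $\sup_f\Xi_{1,T}(\mu_\bullet,w,f)\le0$ (the supremum always being $\ge0$, as seen by taking $f\equiv0$), so the set of measure-flux pairs satisfying (\ref{eq: CE}) is the $0$-sub-level set of a lower semicontinuous function and hence closed; and $H(\cdot\,|\mu^\star_0)=\sup_\varphi\big(\langle\varphi,\cdot\rangle-\log\langle e^\varphi,\mu^\star_0\rangle\big)$ is a supremum of functions continuous on $(\cp_2,W)$, hence lower semicontinuous with closed sub-level sets. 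I expect the only real obstacle to be purely bookkeeping: keeping straight which evaluation maps remain continuous in the $J_1$-topology, and verifying the function-class memberships $\Delta f\in C_b(E)$ and $e^g-1\in C_c(E)$, so that the weak convergence of $w^{(n)}$ and the a.e.\ convergence underlying Lemma \ref{lemma: really useful continuity result} can be invoked; there is no analytic difficulty beyond this.
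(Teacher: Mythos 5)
Your proposal is correct and follows essentially the same route as the paper: continuity of $\Xi_0,\Xi_{1,T},\Xi_{2,T}$ from endpoint-evaluation continuity (Proposition \ref{prop: Sk continuous functions}a)), Lemma \ref{lemma: really useful continuity result}, and direct continuity of the pairings against $w$, followed by writing each sub-level set as an intersection of closed sets via Lemmas \ref{lemma: variational form of RF}, \ref{lemma: lagrange multiplier}, and (\ref{eq: variational J}). You fill in a bit more detail than the paper does (verifying $\Delta f\in C_b(E)$ and $e^g-1\in C_c(E)$, and noting that $\sup_f\Xi_{1,T}\ge 0$ so the (\ref{eq: CE})-set is a zero sub-level set), but these are minor amplifications of the identical argument.
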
 \begin{proof} With the choices of topologies on $\D, \Mm$, the maps $$  w\mapsto \int_E g w(ds,dv,dv_\star,d\sigma) ;\qquad w\mapsto \int_E \Delta f(s,v,v_\star,\sigma)w(ds,dv,dv_\star,d\sigma)$$ are immediately continuous, and thanks to Proposition \ref{prop: Sk continuous functions}a), so are $\mu_\bullet \mapsto \langle \varphi, \mu_0\rangle; \mu_\bullet \mapsto \langle f_T, \mu_T\rangle$. Combining with Lemma \ref{lemma: really useful continuity result}, with $f_t$ replaced by $\partial_t f_t$, each expression appearing in the definitions of $\Xi_{i,T}$ is continuous, and we conclude the claimed continuity of the stated maps. For the second point, we use  Lemma \ref{lemma: variational form of RF} to write the sublevel sets, for any $a\in [0,\infty]$, as \begin{equation} \{\mathcal{I}\le a\}=\bigcap_{\varphi\in C_b(\rrd), f\in C^{1,1}_{0,b}([0,T]\times\rrd),g\in C_c(E)}\left\{(\mu_\bullet, w): \Xi(\mu_\bullet, w, \varphi,f,g)_T\le a\right\}.\end{equation} Each set in the intersection is closed, and hence so is the left-hand side, which proves lower semi-continuity; the assertions for $\mathcal{J}$ and $H(\cdot|\mu^\star_0)$ are identical, recalling (\ref{eq: variational J}) and that $H(\mu|\mu^\star_0)=\sup_\varphi \Xi_0(\varphi,\mu_0)$. The remaining assertion is similar: using Lemma \ref{lemma: lagrange multiplier}, \begin{equation} \left\{(\mu_\bullet, w): \text{(\ref{eq: CE}) holds}\right\}=\bigcap_{f\in C^{1,1}_{0,b}([0,T]\times\rrd)} \left\{(\mu_\bullet, w): \Xi_{1,T}(\mu_\bullet, w,f)=0\right\}\end{equation} which is an intersection of closed sets, and hence closed. \end{proof}

\subsection{Upper Bound} \label{subsec: UB} Using the variational formulation above, we now prove the upper bound in Theorem \ref{thrm: main positive}. We begin with a local version of the result. \begin{lemma}\label{lemma: local upper bound} Fix $(\mu_\bullet, w)\in \D\times\Mm$ with finite rate $\mathcal{I}(\mu_\bullet, w)<\infty$, and fix $\epsilon>0$. Then there exists an open set $\U\ni (\mu_\bullet, w)$ such that  \begin{equation} \limsup_N \frac{1}{N}\log \PP\left((\mu^N_\bullet, w^N)\in \U\right)\le -\mathcal{I}(\mu_\bullet, w)+\epsilon.\end{equation} If instead $\mathcal{I}(\mu_\bullet, w)=\infty$ and $M<\infty$ then there exists an open set $\U\ni (\mu_\bullet, w)$ such that \begin{equation}  \limsup_N \frac{1}{N}\log \PP\left((\mu^N_\bullet, w^N)\in \U\right)\le -M.\end{equation} \end{lemma}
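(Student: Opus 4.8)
The plan is to run the standard large-deviation upper-bound argument through the variational representation of $\mathcal{I}$ established in Lemma \ref{lemma: variational form of RF}; this is available precisely because including the flux makes the rate function an explicit supremum of \emph{continuous} functionals $\Xi(\cdot,\cdot,\varphi,f,g)$. Suppose first that $\mathcal{I}(\mu_\bullet,w)<\infty$. Given $\epsilon>0$, Lemma \ref{lemma: variational form of RF} lets me choose $\varphi\in C_b(\rrd)$, $f\in C^{1,1}_{0,b}([0,T]\times\rrd)$ and $g\in C_c(E)$ with $\Xi(\mu_\bullet,w,\varphi,f,g)\ge\mathcal{I}(\mu_\bullet,w)-\epsilon/2$, and set
\[ \U:=\left\{(\nu_\bullet,\omega)\in\D\times\Mm:\Xi(\nu_\bullet,\omega,\varphi,f,g)>\mathcal{I}(\mu_\bullet,w)-\epsilon\right\}. \]
By the continuity of $\Xi(\cdot,\cdot,\varphi,f,g)$ on $\D\times\Mm$ (Lemma \ref{lemma: semiconntinuity}), $\U$ is open and contains $(\mu_\bullet,w)$, and a Chebychev bound gives
\[ \PP\left((\mu^N_\bullet,w^N)\in\U\right)\le e^{-N(\mathcal{I}(\mu_\bullet,w)-\epsilon)}\,\EE\left[e^{N\Xi(\mu^N_\bullet,w^N,\varphi,f,g)}\right], \]
so in the finite-rate case it remains only to prove the exponential-moment estimate $\EE[e^{N\Xi(\mu^N_\bullet,w^N,\varphi,f,g)}]\le e^{\mathfrak{o}(N)}$ — in fact $\le 1$ — uniformly in the test functions.

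For this estimate I would decompose $\Xi=\Xi_0+\Xi_{1,T}+\Xi_{2,T}$ and treat each piece. The term $\Xi_0(\mu^N_\bullet,\varphi)=\langle\varphi,\mu^N_0\rangle-\log\langle e^\varphi,\mu^\star_0\rangle$ depends only on the initial data, which are i.i.d.\ samples from $\mu^\star_0$, so $\EE[e^{N\Xi_0}]=\langle e^\varphi,\mu^\star_0\rangle^{-N}\EE[\exp(\sum_i\varphi(V^i_0))]=1$ exactly. Next, $(\mu^N_\bullet,w^N)$ solves the continuity equation (\ref{eq: CE}) identically, since at each collision $\mu^N$ jumps by $N^{-1}$ times the measure $\Delta(v,v_\star,\sigma)$, which is the same for all four labellings recorded by $w^N$; hence, by the time-dependent form of (\ref{eq: CE}) proved inside Lemma \ref{lemma: lagrange multiplier}, $\Xi_{1,T}(\mu^N_\bullet,w^N,f)=0$ almost surely, for every $f$. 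Finally, for $\Xi_{2,T}$ the key observation is that
\[ \mathcal{M}^N_t:=\exp\!\left(N\Xi_{2,t}(\mu^N_\bullet,w^N,g)\right)=\exp\!\left(N\!\int_E 1(s\le t)\,g\,dw^N-N\!\int_0^t\!\!\int_{\rrd\times\rrd\times\ssd}\!(e^{g(s,v,v_\star,\sigma)}-1)B(v-v_\star)\mu^N_s(dv)\mu^N_s(dv_\star)\,d\sigma\,ds\right) \]
is the stochastic exponential of the point process $w^N$, whose compensator, read off from the generator (\ref{eq: generator}), is $N B(v-v_\star)\mu^N_s(dv)\mu^N_s(dv_\star)\,d\sigma\,ds$. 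A direct Dynkin computation then shows $\mathcal{M}^N_\bullet$ is a nonnegative local martingale with $\mathcal{M}^N_0=1$, hence a supermartingale, so $\EE[e^{N\Xi_{2,T}}\mid\mathcal{F}_0]\le1$ where $\mathcal{F}_0$ is the initial $\sigma$-algebra. Since $\Xi_0$ is $\mathcal{F}_0$-measurable, $\EE[e^{N\Xi}]=\EE[e^{N\Xi_0}\,\EE[e^{N\Xi_{2,T}}\mid\mathcal{F}_0]]\le\EE[e^{N\Xi_0}]=1$, which yields $\limsup_N N^{-1}\log\PP((\mu^N_\bullet,w^N)\in\U)\le-\mathcal{I}(\mu_\bullet,w)+\epsilon$.

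The case $\mathcal{I}(\mu_\bullet,w)=\infty$ is handled in the same way: given $M$, the supremum in Lemma \ref{lemma: variational form of RF} being infinite lets me pick $(\varphi,f,g)$ with $\Xi(\mu_\bullet,w,\varphi,f,g)>M+1$, take $\U=\{\Xi(\cdot,\cdot,\varphi,f,g)>M+\tfrac12\}$, and run the same Chebychev bound with $\EE[e^{N\Xi}]\le1$ to get $\PP((\mu^N_\bullet,w^N)\in\U)\le e^{-N(M+1/2)}$. I expect the \emph{main obstacle} to be the exponential-martingale step: one must carefully reconcile the deterministic $\delta$-insertion into $w^N$ in (\ref{eq: generator}) with the fact that each physical collision is recorded under a uniformly random one of its four symmetric labels — this uses that $d\sigma$ and $\mu^N\otimes\mu^N$, and hence $\overline{m}_\mu$, are invariant under swapping $v\leftrightarrow v_\star$ and $\sigma\leftrightarrow-\sigma$ — and one must check integrability of $\mathcal{M}^N_\bullet$; here the compact support of $g$ is what saves the argument, since on $\mathrm{supp}(g)$ the velocities and hence $B(v-v_\star)$ are bounded, so the compensator $\int_E(e^g-1)\,\overline{m}_{\mu^N}(ds,dv,dv_\star,d\sigma)$ is bounded uniformly. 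This martingale identity is exactly the change-of-measure formula, which I would quote from Section \ref{sec: properties_and_COM} / Appendix \ref{sec: singular girsanov} rather than reprove here.
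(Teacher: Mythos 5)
Your proposal is correct and takes essentially the same approach as the paper: use the variational formulation of $\mathcal{I}$ together with continuity of $\Xi(\cdot,\cdot,\varphi,f,g)$ (Lemmas \ref{lemma: variational form of RF}, \ref{lemma: semiconntinuity}) to choose $\U$, then a Chebychev bound plus the exponential supermartingale estimate $\EE[e^{N\Xi}]\le 1$. The only cosmetic difference is bookkeeping — the paper treats $Z^N_t=\exp(N\Xi(\mu^N_\bullet,w^N,\varphi,f,g)_t)$ as a single nonnegative supermartingale with random initial value $e^{N\Xi_0}$ of mean $1$, whereas you split off $\Xi_0$ and condition on $\mathfrak{F}_0$; the two are equivalent.
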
 \begin{proof} Let us consider the first case; the second is essentially identical. Thanks to Lemma \ref{lemma: variational form of RF}, we can choose $\varphi\in C_b(\rrd),f\in C^{1,1}_{0,b}([0,T]\times\rrd)$ and $g\in C_c(E)$ such that \begin{equation} \mathcal{I}(\mu_\bullet, w)<\Xi(\mu_\bullet,w, \varphi,f,g) +\frac{\epsilon}{2}\end{equation} and, thanks to Lemma \ref{lemma: semiconntinuity}, we can find open $\U \ni (\mu_\bullet, w)$ such that, for all $(\mu'_\bullet, w')\in \U$, we have \begin{equation} \Xi(\mu'_\bullet,w' \varphi,f,g)  > \Xi(\mu_\bullet,w,\varphi,f,g) -\frac{\epsilon}{2} >\mathcal{I}(\mu_\bullet, w)-\epsilon. \end{equation}   We consider the processes \begin{equation} Z^N_t:=\exp\left(N\Xi(\mu^N_\bullet, w,\varphi,f,g)_t\right).\end{equation} We first observe that, since $\mu^N_\bullet, w^N$ satisfy the continuity equation (\ref{eq: CE}), Lemma \ref{lemma: lagrange multiplier} shows that, for all $t\ge 0$, \begin{equation} \Xi_{1,t}(\mu^N_\bullet, w^N, \varphi,f,g)=0. \end{equation}Next, we show that $Z^N$ is a martingale, following arguments of \cite{darling2008differential}. We observe that at points $(t,v,v_\star,\sigma)$ of $w^N$, $Z^N_t$ jumps by \begin{equation} Z^N_t-Z^N_{t-}=Z^N_{t-}\left(e^{g(t,v,v_\star,\sigma)}-1\right) \end{equation} while between jumps, $Z^N_t$ is differentiable and \begin{equation} \partial_t Z^N_t=-N\int_{\rrd\times\rrd\times\ssd} Z^N_t\left(e^{g(t,v,v_\star,\sigma)}-1\right)B(v-v_\star)\mu^N_t(dv)\mu^N_t(dv_\star)d\sigma.\end{equation} Together, $Z^N_t$ admits the representation \begin{equation} Z^N_t=Z^N_0+N\int_{(0,t]\times\rrd\times\rrd \times\ssd} Z^N_{s-}\left(e^{g(s,v,v_\star,\sigma)}-1\right)(w^N-\overline{m}_{\mu^N})(ds,dv,dv_\star,d\sigma). \end{equation} Recalling the generator (\ref{eq: generator}), $Z^N_t$ is a local martingale, and since it is clearly positive, a supermartingale, and at time $0$, \begin{equation} \EE\left[e^{N\Xi_0(\mu_\bullet, \varphi)}\right]= \frac{\mathbb{E}\left[e^{N\langle \varphi, \mu^N_0\rangle}\right]}{\langle e^\varphi, \mu_0^\star\rangle ^N} =1  \end{equation} where we recall that $\mu^N_0$ is formed by independent samples from $\mu_0^\star$. We now take the expectation of \begin{equation}\begin{split} 1\left((\mu^N_\bullet, w^N)\in \U\right) & \le Z^N_T \exp\left(-N\inf\left\{\Xi(\mu_\bullet, w,\varphi,f,g): (\mu_\bullet, w)\in \U\right\}\right) \\& \le Z^N_T\exp\left(-N(\mathcal{I}(\mu_\bullet, w)-\epsilon)\right)\end{split} \end{equation} to obtain \begin{equation} \begin{split} \PP\left((\mu^N_\bullet, w^N)\in \U\right)& \le \EE[Z^N_T] \exp\left(-N(\mathcal{I}(\mu_\bullet, w)-\epsilon)\right) \\ & \le \exp\left(-N(\mathcal{I}(\mu_\bullet, w)-\epsilon)\right)\end{split} \end{equation} to produce the desired result. The case where $\mathcal{I}(\mu_\bullet,w)=\infty$ is essentially identical. \end{proof}  We now give the proof of the global upper bound. \begin{proof}[Proof of Theorem \ref{thm: main}i)] Let $\A$ be any closed subset of $\D\times\Mm$ and fix $\epsilon\in (0,1]$. Let us assume that $\A$ is nonempty, and that $\inf_\A\mathcal{I}<\infty$. Choosing $M=\inf_\A\mathcal{I}+1$, by Proposition \ref{prop: ET + UB}i) there exists a compact set $\K\subset \D\times\Mm$ such that \begin{equation} \limsup_N\frac{1}{N}\PP\left((\mu^N_\bullet, w^N)\not \in \K\right)\le -M. \end{equation} Now, $\A\cap \K$ is compact, since $\A$ was assumed to be closed. For all $(\mu_\bullet, w)\in \A$, we now use Lemma \ref{lemma: local upper bound} to construct $\U(\mu_\bullet,w)\ni (\mu_\bullet, w)$: if $\mathcal{I}(\mu_\bullet, w)<\infty$ then choose $\U(\mu_\bullet,w)$ such that \begin{equation} \limsup_N \frac{1}{N}\log \PP\left((\mu^N_\bullet, w^N)\in \U(\mu_\bullet, w)\right)\le -\mathcal{I}(\mu_\bullet, w)+\epsilon \end{equation} or if $\mathcal{I}(\mu_\bullet,w)=\infty$, then choose $\U(\mu_\bullet, w)$ such that \begin{equation} \label{eq: second case in local to global}\limsup_N \frac{1}{N}\log \PP\left((\mu^N_\bullet, w^N)\in \U(\mu_\bullet, w)\right)\le -M.\end{equation} The sets $\{\U(\mu_\bullet, w): (\mu_\bullet, w)\in \A\cap\K\}$ are an open cover of $\A\cap \K$, so by compactness we can find $n<\infty$ and $(\mu^{(i)}_\bullet, w^{(i)})\in \A\cap \K$ such that $\A\cap \K$ is covered by $\U_i=\U(\mu^{(i)}_\bullet, w^{(i)}), i\le n$ and conclude that for each $N$, \begin{equation}\PP\left((\mu^N_\bullet, w^N)\in \A\right)\le \PP\left((\mu^N_\bullet, w^N)\not \in \K\right)+\sum_{i=1}^n \PP\left((\mu^N_\bullet, w^N)\in \U_i\right). \end{equation} It follows that \begin{equation} \begin{split} &\limsup_N \frac{1}{N}\log \PP\left((\mu^N_\bullet, w^N)\in \A\right) \\&\hspace{2cm} \le \max\left(\limsup\frac{1}{N}\log  \PP\left((\mu^N_\bullet, w^N)\in \V\right): \V=\K^\mathrm{c}, \U_1,...,\U_n \right).\end{split}  \end{equation}The terms appearing in the right hand side are all \emph{either} bounded by $-M \le -\inf_\A \mathcal{I}$, for the cases where $\V=\K^\mathrm{c}$ or $\V=\U_i$, for a path $(\mu^{(i)}_\bullet, w^{(i)})$ with  $\mathcal{I}(\mu^{(i)}_\bullet, w^{(i)})=\infty$, \emph{or} at most $-\mathcal{I}(\mu^{(i)}_\bullet, w^{(i)}) + \epsilon \le -\inf_\A\mathcal{I}+\epsilon$. All together, we conclude that \begin{equation}\limsup_N \frac{1}{N}\log \PP\left(\mu^N_\bullet, w^N)\not \in \A\right) \le -\inf\{\mathcal{I}(\mu_\bullet, w): (\mu_\bullet, w)\in \A\} +\epsilon \end{equation} and taking $\epsilon\to 0$ concludes the proof in the case where the infimum is finite. The case where the infimum is infinite is essentially identical: we now keep $M$ as a free parameter, choose a compact set $\K$ such that $\limsup_N N^{-1}\log \PP((\mu^N_\bullet, w^N)\not\in \K)\le -M$, and cover $\A\cap \K$ with open sets $\U(\mu_\bullet, w)\ni (\mu_\bullet, w)$ satisfying (\ref{eq: second case in local to global}). The same covering argument then gives \begin{equation} \limsup_N \frac{1}{N}\log \PP\left((\mu^N_\bullet, w^N)\not\in \A\right)\le -M\end{equation} and the conclusion follows by taking $M\to\infty$.  \end{proof}

\section{Properties of the Kac Process \& Boltzmann Equation}\label{sec: properties_and_COM}
 In order to prove Theorem \ref{thm: main} and its consequences, we will use some facts about the Kac process, including behaviour under changes of measure. We recall first the following moment creation property, both for the \emph{sample paths} of the Kac process and for the Boltmzann equation, and a result on the energy of the Boltzmann equation. \cite{norris2016consistency}.   \begin{proposition}[Moments of the Kac process and Boltzmann Equation]\label{prop: MCP} Let $B$ be a kernel of the form $B(v)=1+\delta |v|$ for some $\delta>0$. \begin{enumerate}[label=\roman*).] \item Fix $p\ge 2$, and let $\mu^N_t$ be a Kac process with collision kernel $B$ and an almost sure bound $\langle |v|^2, \mu^N_0\rangle \le a$. Then there exists a constant $C=C(p,\delta,a)<\infty$ such that, for all $0<s<t$, \begin{equation} \EE\left[\sup_{u\in [s, t]}\langle |v|^p, \mu^N_t\rangle \right] \le C(1+t)s^{2-p}.\end{equation} \item For the same $B, p$ as in item i), if $(\mu_t)_{t\ge 0}$ is a solution to the Boltzmann equation (\ref{eq: BE}) and the energy $\langle |v|^2, \mu_t\rangle=a$ is constant, then for all $s>0$,\begin{equation} \langle |v|^p, \mu_t\rangle \le Cs^{2-p}\end{equation} for some $C=C(p,\delta,a)$. \item For any solution $(\mu_t)_{t\ge 0}$ to the Boltzmann equation (\ref{eq: BE}), the energy $\langle |v|^2, \mu_t\rangle$ is nondecreasing in time. \end{enumerate} \end{proposition}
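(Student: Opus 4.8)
All three parts are driven by the classical \emph{Povzner inequality} for the kernel $B(v)=1+\delta|v|$, so the first step is to set that up. Using the parametrisation (\ref{eq: PCV}) and writing $c=\tfrac12(v+v_\star)$, $u=v-v_\star$, $u'=u-2(u\cdot\sigma)\sigma$, one has $|v'|^2+|v_\star'|^2=|v|^2+|v_\star|^2$ together with $|v'|^2=|c|^2+\tfrac14|u|^2+c\cdot u'$ and the analogue for $v_\star'$. Averaging the increment $\Delta(|v|^p)=(|v'|^2)^{p/2}+(|v_\star'|^2)^{p/2}-(|v|^2)^{p/2}-(|v_\star|^2)^{p/2}$ over $\sigma\in\ssd$ and exploiting convexity of $r\mapsto r^{p/2}$ for $p\ge 2$ produces constants $\alpha_p>0$ (for $p>2$; $\alpha_2=0$, consistent with pointwise energy conservation) and $\beta_p<\infty$ with
\begin{equation*}
\int_{\ssd}\Delta(|v|^p)\,d\sigma\le-\alpha_p\big(|v|^p+|v_\star|^p\big)+\beta_p\big(|v|^2|v_\star|^{p-2}+|v|^{p-2}|v_\star|^2\big).
\end{equation*}
Multiplying by $B(v-v_\star)\le 1+\delta|v|+\delta|v_\star|$ and integrating against $\mu(dv)\mu(dv_\star)$, the dissipative term dominates a negative multiple of $m_{p+1}:=\langle|v|^{p+1},\mu\rangle$, while the gain terms, after Young's inequality and the interpolation $m_{p-1}\le\varepsilon m_{p+1}+C_\varepsilon m_2^{\theta_p}$, are absorbed up to a quantity depending only on the energy $m_2$; this is where boundedness of the energy enters, and the assumption $\delta>0$ is essential, since for $\delta=0$ the dissipation is only $-\alpha_p m_p$, which cannot create moments out of nothing.

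\noindent\textbf{Part (i).} By Dynkin's formula applied to $F(\mu)=\langle|v|^p,\mu\rangle$ with the generator (\ref{eq: generator}) --- legitimate because for each fixed $N$ the empirical measure has finite moments of all orders almost surely --- $\langle|v|^p,\mu^N_t\rangle$ equals $\langle|v|^p,\mu^N_0\rangle$ plus a time integral of $\mathcal{G}^NF$ plus a martingale $M^N$. Taking expectations and invoking the estimate above gives, with $E:=\EE[\langle|v|^2,\mu^N_0\rangle]$ (conserved along the dynamics), a Bernoulli-type differential inequality
\begin{equation*}
\tfrac{d}{dt}\EE[\langle|v|^p,\mu^N_t\rangle]\le-c\,\big(\EE[\langle|v|^p,\mu^N_t\rangle]\big)^{1+\frac{1}{p-2}}E^{-\frac{1}{p-2}}+C\big(1+E^{\gamma_p}\big),
\end{equation*}
the super-linear exponent coming from the interpolation $m_{p+1}\ge m_p^{(p-1)/(p-2)}m_2^{-1/(p-2)}$. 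Comparison with the barrier solution of $\dot y=-c\,y^{1+1/(p-2)}E^{-1/(p-2)}$, which decreases from $+\infty$ like $(ct)^{-(p-2)}E$, yields the single-time bound $\EE[\langle|v|^p,\mu^N_s\rangle]\le C(1+t)s^{2-p}E$ for every $s>0$ --- the moment-creation phenomenon, the bound at time $s$ depending on the data only through the energy. To replace the single time $s$ by the supremum over $[s,t]$ I would restart the Dynkin decomposition at $s/2$: by the same Povzner estimate the \emph{positive} part of the drift $\mathcal{G}^NF$ is bounded by $C(1+E^{\gamma_p})$ uniformly in time, so $\sup_{u\in[s,t]}\langle|v|^p,\mu^N_u\rangle$ is at most $\langle|v|^p,\mu^N_{s/2}\rangle+C(1+E^{\gamma_p})\,t$ plus the supremum of the martingale part; taking expectations, using the single-time bound at $s/2$ and a Burkholder--Davis--Gundy estimate for the pure-jump martingale (whose predictable quadratic variation carries a factor $N^{-1}$, so that an induction on $p$ closes the recursion), one obtains the claim.

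\noindent\textbf{Parts (ii) and (iii).} Part (ii) is the deterministic shadow of part (i): one tests the weak formulation (\ref{eq: BE}) against the truncations $|v|^p\wedge R$, for which a truncated Povzner inequality holds with an error vanishing as $R\to\infty$; the hypothesis that $t\mapsto\langle|v|^2,\mu_t\rangle$ is constant controls the gain terms, so the same Bernoulli inequality and barrier comparison apply to the truncated moments, and monotone convergence as $R\to\infty$ gives $\langle|v|^p,\mu_t\rangle\le Cs^{2-p}\langle|v|^2,\mu_0\rangle$ for $t\ge s$. Part (iii) --- that the energy is nondecreasing, though, as Lu and Wennberg show, not necessarily conserved --- is the delicate one, precisely because energy is conserved \emph{pointwise} in every collision and yet can still increase in the kinetic limit. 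I would follow Lu and Wennberg's truncation argument: test (\ref{eq: BE}) against $\varphi_R(|v|^2)$ with $\varphi_R$ smooth, bounded, nondecreasing and concave, equal to the identity on $[0,R]$ and increasing pointwise to the identity, and show that $\int_0^t\int\Delta\varphi_R(|v|^2)B(v-v_\star)\,\mu_s(dv)\mu_s(dv_\star)\,d\sigma\,ds$ has nonnegative $\liminf$ as $R\to\infty$; monotone convergence then gives $\langle|v|^2,\mu_t\rangle\ge\langle|v|^2,\mu_0\rangle$, and applying this on subintervals gives monotonicity.

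\noindent The step I expect to be the real obstacle is this last limit. The naive hope --- that $\int_{\ssd}\Delta\varphi_R(|v|^2)\,d\sigma$ has one sign for $\varphi_R$ concave in $|v|^2$ --- is false: the post-collision energy pair $(|v'|^2,|v_\star'|^2)$, which has the same sum as $(|v|^2,|v_\star|^2)$, can be either more or less spread than it depending on $(v,v_\star,\sigma)$, so no pointwise comparison is available, and one must instead extract cancellation from the $\sigma$- and $\mu_s$-integrations, using an a priori (for instance exponential-in-time) bound on the energy together with the moment-creation estimate of part (ii) to render the contribution of large velocities negligible as $R\to\infty$. This is exactly the mechanism that permits energy non-conservation, and controlling it is the technical heart of the proposition; parts (i) and (ii), by contrast, are routine once the Povzner inequality and the Bernoulli comparison are in place.
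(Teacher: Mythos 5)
The paper does not prove Proposition~\ref{prop: MCP} at all: immediately after the statement it attributes the moment estimates to Desvillettes, Wennberg, Mischler--Mouhot, and (for the pathwise supremum in item i) to Norris \cite{norris2016consistency}, and the energy monotonicity to Mischler--Wennberg \cite{mischler1999spatially} and Lu \cite{lu1999conservation}. So there is no in-paper argument to compare against; what you have written is a reconstruction of those references, and it is a fair one. Your Povzner setup, the passage $m_{p+1}\gtrsim m_p^{(p-1)/(p-2)}m_2^{-1/(p-2)}$ to a Bernoulli-type ODE, and the barrier comparison giving $t^{2-p}$ decay is exactly the Desvillettes/Mischler--Wennberg moment-production mechanism, and the restart-at-$s/2$ plus martingale control is in the spirit of Norris's pathwise version. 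You are also right that $\delta>0$ is what makes the dissipation super-linear, and right that for item iii) there is no pointwise sign for $\Delta\varphi_R(|v|^2)$, so the cancellation has to be extracted from integration and tail control.

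Two places where the sketch is a little loose and would need tightening if you wanted a genuine proof rather than a pointer. First, in item i) the claimed bound is \emph{linear} in $\EE\langle|v|^2,\mu^N_0\rangle$ with $C=C(p,\delta)$ independent of the energy; your Young/interpolation step as written produces a gain term $C(1+E^{\gamma_p})$ with an unspecified exponent, and making the final estimate linear in $E$ requires choosing the interpolation exponents so that the equilibrium of the Bernoulli inequality scales like $E$. This is a bookkeeping issue rather than a conceptual one, but it is exactly the kind of thing that gets lost in a hand-waved ``Young plus interpolation.'' Second, in item iii) you would use moment creation (item ii) to control tails, but item ii) as stated assumes constant energy, which is precisely what is not known in item iii); the fix is to run the moment estimate under the weaker hypothesis $\sup_t\langle|v|^2,\mu_t\rangle<\infty$ (which is built into the definition of $\D$ in (\ref{eq: definition of D})), prove monotonicity on $[\epsilon,T]$, and then recover $\langle|v|^2,\mu_t\rangle\ge\langle|v|^2,\mu_0\rangle$ by lower semicontinuity of the second moment as $\epsilon\downarrow 0$. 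Neither gap is fatal, but both would need to be filled; since the paper outsources the whole proposition to the cited references, the honest comparison is that your sketch is a correct high-level account of what those references do, with the detailed constants and the $\epsilon\downarrow 0$ passage left to them.
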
 Moment estimates similar to the first two items go back to Desvillettes \cite{desvillettes1993some} and Wennberg \cite{mischler1999spatially,wennberg1997entropy}, based on Povzner esimtates; the same methods were applied to the Kac process for single fixed times by Mischler and Mouhot \cite{mischler2013kac}, and the pathwise estimate on compact time intervals of the kind given here was proven by Norris \cite{norris2016consistency}. The monotonicity of the energy can be found in works by Mischler and Wennberg \cite{mischler1999spatially} and Lu \cite{lu1999conservation}. \bigskip \\ The other property we will need are the changes of probability measure necessary to perturb the initial data and dynamics. The changes of measure we will use are as follows.\begin{proposition}[Kac process under change of measure]\label{prop: com} Let $\mu^N_t$ be a Kac process with collision kernel $B$, and velocities initially sampled independently from $\mu_0^\star$, which is a Markov process on a filtered probability space $(\Omega, \mathfrak{F},(\mathfrak{F}_t)_{t\ge 0},\PP)$, and let $w^N_t$ be the associated empirical flux. Let $\varphi:\rrd\to \mathbb{R}$ be such that $\int e^{\varphi(v)} \mu_0^\star(dv)=1$, $A_0\in \mathfrak{F}_0$ such that $c_N=\EE[1_Ae^{N\langle \varphi, \mu^N_0\rangle}]>0$, and let $K:\cp_2^N\times E\to [0,\infty)$ be measurable and such that $K/(1+|v|+|v_\star|)$ is uniformly bounded. Define a new measure $\mathbb{Q}$ by \begin{equation}\begin{split}\label{eq: COM0} \frac{d\mathbb{Q}}{d\PP}=&\exp\bigg(N\langle \varphi, \mu^N_0\rangle +N\langle \log K(\mu^N_0,\cdot), w^N\rangle \\&\hspace{2cm}- N \int_{E}(K-1)(\mu^N_0,t,v,v_\star,\sigma)\overline{w}_{\mu^N}(dt,dv,dv_\star,d\sigma)\bigg)c_N^{-1}1_A\end{split}\end{equation} where we understand the right-hand side to be $0$ if $\text{supp}(w^N)\cap \{K=0\}\neq \emptyset$. Then $\mathbb{Q}$ is a probability measure, under which $\mu^N_0$ is given as the empirical measure of $N$ independent draws from $e^{\varphi(v)}\mu_0^\star(dv)$ conditioned on $A_0\in \mathfrak{F}_0$, and under which $(\mu^N_0,\mu^N_t, w^N_t)$ is a time-inhomogeneous Markov process, with time-dependent generator, for bounded $F:\cp_2^N\times\cp_2^N\times\Mm\to \rr$,\begin{equation}\begin{split} \label{eq: time dependent generator}\mathcal{G}_tF(\nu,\mu^N, w^N)=N\int_{\rrd\times\rrd\times\ssd}(F(\nu,\mu^{N,v,v_\star,\sigma},w^{N,t,v,v_\star,\sigma})-F(\nu,\mu^N, w^N))&\\ & \hspace{-6cm} \dots \times K(\nu,t,v,v_\star,\sigma)B(v-v_\star, \sigma)\mu^N(dv)\mu^N(dv_\star)d\sigma.\end{split}\end{equation}    \end{proposition}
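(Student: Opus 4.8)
The plan is to split the density \eqref{eq: COM0} into an $\mathfrak{F}_0$-measurable factor governing the initial data and a multiplicative functional of the collision history governing the dynamics, and to treat the two separately. Writing $\varphi$ for the function denoted $f$ in \eqref{eq: COM0} (consistently with the normalisation $\int e^{\varphi}\,d\mu^\star_0=1$), set $Z^{\mathrm{in}}:=e^{N\langle\varphi,\mu^N_0\rangle}=\prod_{i=1}^N e^{\varphi(V^i_0)}$ and let $Z^{\mathrm{dyn}}_t$ be the remaining factor of \eqref{eq: COM0}, with the running integral stopped at time $t$ and $w^N$ replaced by $w^N_t$, so that $d\mathbb{Q}/d\PP=Z^{\mathrm{in}}Z^{\mathrm{dyn}}_T$ (with the convention, built into \eqref{eq: COM0}, that $Z^{\mathrm{dyn}}$ is sent to $0$ as soon as a collision lands in $\{K=0\}$). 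Since the $V^i_0$ are i.i.d.\ from $\mu^\star_0$ and $\int e^{\varphi}\,d\mu^\star_0=1$, the tilted measure $e^{\varphi}\mu^\star_0$ is a probability measure and $Z^{\mathrm{in}}\,\PP|_{\mathfrak{F}_0}$ is exactly the law of $N$ independent $e^{\varphi}\mu^\star_0$-draws; in particular $\EE_\PP[Z^{\mathrm{in}}]=1$. Everything therefore reduces to showing that $Z^{\mathrm{dyn}}_\bullet$ is a genuine $\PP(\cdot\mid\mathfrak{F}_0)$-martingale with $\EE_\PP[Z^{\mathrm{dyn}}_T\mid\mathfrak{F}_0]=1$, and to identifying the law of the dynamics under $Z^{\mathrm{dyn}}\,\PP$.

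I would condition on $\mathfrak{F}_0$ and argue on the event $\{\mu^N_0=\nu\}$ for a fixed empirical measure $\nu\in\cp_2^N$, on which $K(\nu,\cdot)$ is a deterministic function on $E$ satisfying $K(\nu,\cdot)\le C(1+|v|+|v_\star|)$. By the generator \eqref{eq: generator} the $\PP$-compensator of $w^N$ is $\overline{m}_{\mu^N}$; inspecting the jumps of $Z^{\mathrm{dyn}}$ (a multiplicative factor $K(\nu,t,v,v_\star,\sigma)$ at a collision with that label) together with its absolutely continuous evolution between collisions shows, exactly as for the process $Z^N$ in the proof of Lemma~\ref{lemma: local upper bound} with $K$ in the role of $e^g$, that $Z^{\mathrm{dyn}}_\bullet$ is the Dol\'eans--Dade exponential driven by the compensated collision measure $w^N-\overline{m}_{\mu^N}$ with predictable integrand proportional to $Z^{\mathrm{dyn}}_{s-}\big(K(\nu,s,v,v_\star,\sigma)-1\big)$; this is a local martingale, and being nonnegative it is a supermartingale. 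To upgrade it to a true martingale I would exploit that the collision dynamics conserve energy, since $\Delta|v|^2=0$: on $\{\mu^N_0=\nu\}$ every velocity stays in the ball of radius $R_\nu:=(N\langle|v|^2,\nu\rangle)^{1/2}$, so $K(\nu,\cdot)\le C(1+2R_\nu)=:\kappa_\nu$ wherever it is evaluated, while $\overline{m}_{\mu^N}(E)$ is bounded by a constant $c_\nu$ (again by energy conservation and $B\le C(1+|v|^2+|v_\star|^2)$), and the number of collisions $Nw^N_T(E)$ is dominated by a Poisson random variable of parameter $c_\nu'N$ as constructed in the proof of Lemma~\ref{lemma: easy UT}. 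Hence $0\le Z^{\mathrm{dyn}}_t\le\kappa_\nu^{\,Nw^N_T(E)}e^{Nc_\nu}$, an integrable random variable because the Poisson variable has exponential moments of every order, and a nonnegative local martingale dominated by a fixed integrable random variable is a true (indeed uniformly integrable) martingale; therefore $\EE_\PP[Z^{\mathrm{dyn}}_T\mid\mathfrak{F}_0]=Z^{\mathrm{dyn}}_0=1$.

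Combining the two steps, $\EE_\PP[d\mathbb{Q}/d\PP]=\EE_\PP\big[Z^{\mathrm{in}}\,\EE_\PP[Z^{\mathrm{dyn}}_T\mid\mathfrak{F}_0]\big]=\EE_\PP[Z^{\mathrm{in}}]=1$, so $\mathbb{Q}$ is a probability measure; and for bounded $G$, $\EE_\mathbb{Q}[G(V^1_0,\dots,V^N_0)]=\EE_\PP[Z^{\mathrm{in}}G(V_0)]=\int G\,d(e^{\varphi}\mu^\star_0)^{\otimes N}$, so under $\mathbb{Q}$ the initial velocities are i.i.d.\ from $e^{\varphi}\mu^\star_0$ and $\mu^N_0$ is their empirical measure. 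To obtain the $\mathbb{Q}$-dynamics I would appeal to the Girsanov theorem for integer-valued random measures (see e.g.\ Jacod--Shiryaev, and Appendix~\ref{sec: singular girsanov}): a change of measure by a Dol\'eans exponential tilting the collision intensity by $K$ turns the $\PP$-compensator $\overline{m}_{\mu^N}$ of $w^N$ into the $\mathbb{Q}$-compensator $K\,\overline{m}_{\mu^N}$. Concretely, by the transfer principle an adapted c\`{a}dl\`{a}g process is a $\mathbb{Q}(\cdot\mid\mathfrak{F}_0)$-martingale if and only if its product with $Z^{\mathrm{dyn}}_\bullet$ is a $\PP(\cdot\mid\mathfrak{F}_0)$-martingale; applying this to $F(\nu,\mu^N_t,w^N_t)-\int_0^t\mathcal{G}_sF(\nu,\mu^N_s,w^N_s)\,ds$ for bounded $F$, the purely-discontinuous product rule supplies precisely the covariation term that replaces $\overline{m}_{\mu^N}$ by $K\,\overline{m}_{\mu^N}$, and after cancellation one is left with a $\PP$-local martingale, which is a true martingale by the same domination as above. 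Since $\mu^N_0$ does not evolve, this identifies $(\mu^N_0,\mu^N_t,w^N_t)$ under $\mathbb{Q}$ as the time-inhomogeneous Markov process with generator \eqref{eq: time dependent generator}, the first coordinate entering only as a frozen parameter.

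The main obstacle is this last point combined with the fact that $K$ is only controlled by $1+|v|+|v_\star|$ rather than bounded, so that neither the martingale property of $Z^{\mathrm{dyn}}$ nor the textbook Girsanov theorems are available off the shelf; the resolution is the energy conservation already used, which confines all $N$ velocities to an ($N$-dependent) ball on each initial energy level set and makes $K$ effectively bounded there, while the Poisson domination of the collision count from Lemma~\ref{lemma: easy UT} supplies the integrability. Carrying this out rigorously, including the convention that the density vanishes on a $\{K=0\}$-collision so that $\mathbb{Q}\ll\PP$ without equivalence, is the content of Appendix~\ref{sec: singular girsanov}; equivalently, one may construct the $\mathbb{Q}$-process directly from its locally bounded, non-exploding collision rates and then verify that its law has the density \eqref{eq: COM0}.
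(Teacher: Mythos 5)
Your proposal is correct and follows essentially the same route as the paper's Appendix~\ref{sec: singular girsanov}: disintegrate over $\mu^N_0$ so that $K(\nu,\cdot)$ is deterministic, recognise $Z^{\mathrm{dyn}}$ as a Dol\'eans--Dade exponential, use pathwise energy conservation to confine the velocities to an $N$- and $\nu$-dependent ball (making $K$ effectively bounded) together with the Poisson domination of the collision count from Lemma~\ref{lemma: easy UT} to upgrade the nonnegative local martingale to a true martingale, and then identify the generator by showing $Z^{\mathrm{dyn}}_t\big(F-\int_0^t\mathcal{G}_sF\,ds\big)$ is a $\PP_\nu$-martingale. The only cosmetic difference is that where you cite the transfer principle and the purely-discontinuous product rule, the paper carries out that jump-and-drift computation for $Y^\nu_t$ explicitly.
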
 This is a version of the standard Girsanov theorem for jump processes, which is tailor-made for our purposes; see, for example, \cite[Appendix 1, Theorem 7.3]{kipnis1998scaling}. The hypotheses on the growth of $K$ are probably not the most general possible, but are sufficient for the applications in this paper in Sections \ref{sec: RLB}, \ref{sec: pf of main}. Since this particular form does not appear to be standard, a proof is given in Appendix \ref{sec: singular girsanov}.
 
 \section{Restricted Lower Bound}\label{sec: RLB} We now give a proof of the lower bound with the additional integrability hypothesis. The restricted lower bound is based on the following approximation lemma. \begin{lemma}[Approximation by Regular Paths]\label{lemma: approximation lemma} Let $(\mu_\bullet, w)$ be a measure-flux pair such that $$ \mathcal{I}(\mu_\bullet, w)<\infty; \qquad \langle 1+|v|^2+|v_\star|^2, w\rangle<\infty. $$ Then there exists a sequence $(\mu^{(n)}_\bullet, w^{(n)})$ of measure-flux pairs whose tilting functions $K^{(n)}$, are continuous, such that  and $K^{(n)}(t,v,v_\star,\sigma)B(v-v_\star,\sigma)$ is bounded and bounded away from $0$, such that $\mu^{(n)}_0$ admits a bounded density with respect to $\mu^\star_0$, and such that \begin{equation} \sup_{t\ge 0} W(\mu^{(n)}_t,\mu_t)+d(w^{(n)},w)\to 0;\qquad \limsup_n \mathcal{I}(\mu^{(n)}_\bullet,w)\le \mathcal{I}(\mu_\bullet, w). \end{equation} Moreover, each $(\mu^{(n)}_\bullet, w^{(n)})$ is the unique measure-flux pair starting from $\mu^{(n)}_0$ and with tilting function $K^{(n)}$.\end{lemma}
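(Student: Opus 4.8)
The plan is to produce $(\mu^{(n)}_\bullet, w^{(n)})$ by separately regularizing the initial datum $\mu_0$ and the tilting function $K = dw/d\overline{m}_{\mu}$, then taking $\mu^{(n)}_\bullet$ to be the \emph{unique} solution of the modified Boltzmann equation (\ref{eq: BEK}) driven by the regularized tilting, and finally controlling all errors by a diagonal argument. For the initial data: since $H(\mu_0|\mu^\star_0)<\infty$ we have $\mu_0=h_0\mu^\star_0$ with $h_0\in L^1(\mu^\star_0)$, and we set $\mu^{(n)}_0:=Z_n^{-1}\min(h_0,n)\mu^\star_0$ with $Z_n=\langle\min(h_0,n),\mu^\star_0\rangle\uparrow 1$. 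Then $\mu^{(n)}_0$ has density $\le n/Z_n$; the densities converge in $L^1(\mu^\star_0)$, so $\mu^{(n)}_0\to\mu_0$ in total variation (hence in $W$), $\langle|v|^2,\mu^{(n)}_0\rangle\to\langle|v|^2,\mu_0\rangle$ by monotone convergence (using $\langle|v|^2,\mu^\star_0\rangle<\infty$), and, using that $\tau$ is decreasing on $[0,1]$ and increasing on $[1,\infty)$ together with dominated convergence on $\{h_0\le n\}$ and $Z_n\to 1$, $H(\mu^{(n)}_0|\mu^\star_0)\to H(\mu_0|\mu^\star_0)$.

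For the tilting I exploit convexity of $\tau$ and $\tau(1)=0$. Truncating $\widetilde K_m:=\max(m^{-1},\min(K,m))$ decreases $\tau(\cdot)$ pointwise and gives $\widetilde K_m\to K$ in $L^1(\overline{m}_{\mu})$ and $\langle\tau(\widetilde K_m),\overline{m}_{\mu}\rangle\to\langle\tau(K),\overline{m}_{\mu}\rangle$ by dominated convergence. Damping the velocity tails, $\widehat K_{m,R}:=1+(\widetilde K_m-1)\chi_R$ with $\chi_R\in C_c(\rrd\times\rrd\times\ssd)$, $0\le\chi_R\le 1$, $\chi_R=1$ on $\{|v|,|v_\star|\le R\}$, stays in $[m^{-1},m]$, is supported where $B$ is bounded so $\widehat K_{m,R}B$ is bounded, and by convexity does not increase $\tau$; crucially, the hypothesis $\langle 1+|v|^2+|v_\star|^2,w\rangle<\infty$ yields the \emph{quantitative} cutoff bound $\|\widehat K_{m,R}-\widetilde K_m\|_{L^1(\overline{m}_{\mu})}\le CR^{-2}$. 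Finally, Lusin's theorem (as in the proof of Lemma \ref{lemma: variational form of RF}) followed by mollification upgrades $\widehat K_{m,R}$ to a continuous, indeed Lipschitz, $K^{(n)}$ with the same two-sided bounds — so $K^{(n)}B$ is bounded and bounded away from $0$ — and with $\tau(K^{(n)})$ continuous and compactly supported in velocity; choosing the parameters (truncation level $m$, cutoff radius $R$, mollification scale) to go to their limits slowly along a subsequence $n$ gives $\|K^{(n)}-K\|_{L^1(\overline{m}_{\mu})}\to 0$ and $\langle\tau(K^{(n)}),\overline{m}_{\mu}\rangle\to\langle\tau(K),\overline{m}_{\mu}\rangle$.

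Now let $\mu^{(n)}_\bullet$ be the solution of (\ref{eq: BEK}) with tilting $K^{(n)}$ started from $\mu^{(n)}_0$, and $w^{(n)}:=K^{(n)}\overline{m}_{\mu^{(n)}}$, so $(\mu^{(n)}_\bullet,w^{(n)})$ is a measure-flux pair with tilting $K^{(n)}$. Since $K^{(n)}B$ is bounded and Lipschitz and the collision map $(v,v_\star)\mapsto(v',v'_\star)$ is Lipschitz, $\mu\mapsto Q_{K^{(n)}}(\mu)$ is bounded and Lipschitz on probability measures for $W$; combined with the moment propagation of Proposition \ref{prop: MCP} applied to (\ref{eq: BEK}) to keep the energy finite on $[0,T]$, a Picard--Gronwall argument gives existence and uniqueness, which is exactly the last assertion of the lemma. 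Comparing (\ref{eq: BEK}) from $\mu^{(n)}_0$ with (\ref{eq: BEK}) from $\mu_0$ and splitting $Q_{K^{(n)}}(\mu^{(n)}_s)-Q_K(\mu_s)$ into a term Lipschitz in $W$ (with an $n$-dependent constant) and the term $\int\Delta f\,(K^{(n)}-K)B\,\mu_s\otimes\mu_s\,d\sigma$ (controlled by $\|K^{(n)}-K\|_{L^1(\overline{m}_{\mu})}$), a Gronwall estimate yields $\sup_{t\le T}W(\mu^{(n)}_t,\mu_t)\to 0$ provided the regularization parameters are let to their limits slowly enough relative to the initial-data error and the explicit rate $R^{-2}$. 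From $\mu^{(n)}_\bullet\to\mu_\bullet$ one gets $\overline{m}_{\mu^{(n)}}\to\overline{m}_{\mu}$, hence $d(w^{(n)},w)\to 0$, and, since $\tau(K^{(n)})$ is continuous and compactly supported, Lemma \ref{lemma: really useful continuity result} together with the uniform control from the previous step gives $\langle\tau(K^{(n)}),\overline{m}_{\mu^{(n)}}\rangle\to\langle\tau(K),\overline{m}_{\mu}\rangle$; with the convergence of the entropies this gives $\mathcal{I}(\mu^{(n)}_\bullet,w^{(n)})\to\mathcal{I}(\mu_\bullet,w)$, in particular the stated $\limsup$.

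The main obstacle is precisely this last step: the regularity constants of $K^{(n)}$ necessarily blow up (they are needed both for well-posedness of (\ref{eq: BEK}) and for the stability estimate), while the $L^1(\overline{m}_{\mu})$-approximation error of $K^{(n)}$ does not decay for a fixed regularization level, so the Gronwall comparison closes only if the multi-parameter diagonal extraction — and the order in which the parameters are sent to their limits — is orchestrated with care, and similarly for passing from $\langle\tau(K^{(n)}),\overline{m}_{\mu}\rangle$ to $\langle\tau(K^{(n)}),\overline{m}_{\mu^{(n)}}\rangle$ when $\|\tau(K^{(n)})\|_\infty$ grows. This is exactly where the integrability hypothesis $\langle 1+|v|^2+|v_\star|^2,w\rangle<\infty$ (the set $\mathcal{R}$) must be used quantitatively: for the explicit cutoff rate, and, via Proposition \ref{prop: MCP}, for uniform-in-$n$ moment bounds on $\mu^{(n)}_\bullet$ that tame the $n$-dependent constants. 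By contrast, the three regularization steps themselves are routine; the bookkeeping of the diagonal argument, together with the precise well-posedness and stability statements for (\ref{eq: BEK}), is the technical heart of the lemma.
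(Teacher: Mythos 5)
Your proposal takes a genuinely different route from the paper's. The paper truncates the \emph{flux} $w$ directly (Lemma~\ref{lemma: approximation 1}), adding a compensating measure $\nu^{(n)}$ to the initial data so that the continuity equation is preserved, and then mollifies \emph{the measures $\mu_t$} by the Gaussian $g_\lambda$ (Lemmas~\ref{lemma: convolution}, \ref{lemma: approximation 1.5})---this being the unique mollifier compatible with the collision map (remark following~(\ref{eq: convolution 2})), which simultaneously makes the tilting continuous in $(v,v_\star)$ and makes $\mu^{(n)}_0$ absolutely continuous with respect to $\mu^\star_0$ with controlled entropy. You instead regularize $K$ directly, define $\mu^{(n)}_\bullet$ as the unique solution of~(\ref{eq: BEK}) driven by the regularized tilting, and compare to $\mu_\bullet$ via Gr\"onwall. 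This is close in spirit to the paper's \emph{last} step (Lemma~\ref{lemma: approximation 2}) but compresses the whole regularization into a single pass against the nonlinear ODE, and this is where the argument breaks down.

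First, the boundedness claim is false: $\widehat K_{m,R}=1+(\widetilde K_m-1)\chi_R$ equals $1$ outside the support of $\chi_R$, so there $\widehat K_{m,R}B=B$, which is unbounded for regularized hard spheres. It is $\widehat K_{m,R}-1$, not $\widehat K_{m,R}$, that is compactly supported. Fixing this (say, by damping toward $c/B$, which is in effect what the paper's $\frac1{nB}$ correction in Lemma~\ref{lemma: approximation 2} does) prevents $\tau(K^{(n)})$ from vanishing at infinity and reintroduces an exterior contribution to $\mathcal J$ that you never control; the paper handles this by writing the dynamic cost directly in terms of $w$ (Step~4 of the proof of Lemma~\ref{lemma: approximation 1}). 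Second, the diagonal Gr\"onwall argument is circular. Closing Gr\"onwall requires $\|K^{(n)}-K\|_{L^1(\overline m_\mu)}\lesssim e^{-L_nT}$ where $L_n$ is (up to constants) the Lipschitz constant of $K^{(n)}B$. Lusin's theorem gives no control on the modulus of continuity of the intermediate approximation $\widetilde K$, so the mollification scale $\lambda$---hence $L_n\gtrsim1/\lambda$---must be chosen \emph{after} the Lusin error $\epsilon$; yet you need $\epsilon\lesssim e^{-L_nT}$, where $L_n$ depends on the Lusin output for that very $\epsilon$. You flag this as ``the technical heart'' but offer no mechanism to break the circle. The paper escapes precisely because Gaussian mollification of the measures produces via~(\ref{eq: identification of Kdelta 2}) a tilting $K^\lambda$ that is \emph{automatically} continuous in $(v,v_\star)$ with preserved sup bound~(\ref{eq: contraction on supremum}); the only Lebesgue mollification (Lemma~\ref{lemma: approximation 2}) acts in $(t,\sigma)$ only, on a $K$ already continuous in $(v,v_\star)$, so the Lipschitz constant does not blow up. Finally, your appeal to Lemma~\ref{lemma: really useful continuity result} for $\int\tau(K^{(n)})\,d\overline m_{\mu^{(n)}}\to\int\tau(K)\,d\overline m_{\mu}$ is not warranted: that lemma concerns a \emph{fixed} $g\in C_c(E)$, while $\tau(K^{(n)})$ is $n$-dependent and, after the fix above, not compactly supported. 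The uniform control of the dynamic cost under mollification of the measures---which is exactly Lemma~\ref{lemma: convolution} and is the technically hard part of the proof---has no analogue in your plan.
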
 Throughout, we write the indexes $^{(n)}$ in the superscripts in brackets, to distinguish them from similar notation for the Kac process $\mu^N_\bullet, w^N$.The proof of this lemma is rather technical, and so is deferred until Subsection \ref{subsec: approximation lemma}. Once this lemma is in hand, the restricted lower bound Theorem \ref{thrm: main positive}ii) follows straightforwardly from standard `tilting' arguments, using the change-of-measure given in Proposition \ref{prop: com} via the following law of large numbers. 

\begin{lemma}\label{lemma: restricted LLN} Let $(\mu_\bullet, w)$ be a measure-flux pair with $\mathcal{I}(\mu_\bullet, w)<\infty$, whose tilting function $K$ is continuous and such that $KB(v-v_\star)$ is bounded and bounded away from $0$, and which is the unique measure-flux pair with this tilting function and this value of $\mu_0$. Let $\PPm^N$ be the measures given by Proposition \ref{prop: com} with $\varphi=\log \frac{d\mu_0}{d\mu^\star_0}$ and $K:E\to (0,\infty)$ the tilting function for $(\mu_\bullet, w)$. Then for all open sets $\mathcal{U}\ni (\mu_\bullet, w)$ and $\epsilon>0$, we have \begin{equation} \PPm^N\left((\mu^N_\bullet, w^N)\in \U, \left|\frac{1}{N}\log \frac{d\PPm^N}{d\PP}-\mathcal{I}(\mu_\bullet, w)\right|<\epsilon\right)\to 1.\end{equation} \end{lemma}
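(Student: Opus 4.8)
The plan is a standard \emph{tilting} argument. By Proposition~\ref{prop: com}, under $\PPm^N$ the triple $(\mu^N_0,\mu^N_t,w^N_t)$ is a time-inhomogeneous Markov process with generator (\ref{eq: time dependent generator}) --- a Kac-type process whose collision rate is reweighted by $K(t,v,v_\star,\sigma)$ --- started from $N$ independent samples of $\mu_0=e^\varphi\mu^\star_0$, and we must show that it concentrates on the \emph{deterministic} path $(\mu_\bullet,w)$ while $\tfrac1N\log\tfrac{d\PPm^N}{d\PP}\to\mathcal I(\mu_\bullet,w)$. The single structural fact driving everything is that the collision map (\ref{eq: PCV}) conserves energy \emph{pointwise}, so that $\langle|v|^2,\mu^N_t\rangle=\langle|v|^2,\mu^N_0\rangle$ stays constant along sample paths even under $\PPm^N$; since $\mu^N_0$ is i.i.d.\ $\mu_0$ with $\mu_0\in\cp_2$, this is bounded uniformly in $N$ and converges a.s.\ to $\langle|v|^2,\mu_0\rangle$. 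Combined with the hypothesis that $KB(v-v_\star)$ is bounded and bounded below (and $B(v-v_\star)\ge 1$), which forces the jump rate to be $O(N)$, reading off the compensators from (\ref{eq: time dependent generator}) gives the uniform-in-$N$ bounds $\EEm[w^N(E)]+\EEm\langle|v|^2+|v_\star|^2,w^N\rangle\le C(T,\|KB\|_\infty,\langle|v|^2,\mu_0\rangle)$. (Note $\mathcal J(\mu_\bullet,w)=\int_E\tau(K)\,d\overline m_\mu<\infty$ automatically, since $\tau(K)$ is bounded and $\overline m_\mu(E)<\infty$ for $\mu_\bullet\in\D$; we may assume $H(\mu_0|\mu^\star_0)<\infty$, as otherwise $\PPm^N$ is ill-defined and there is nothing to prove.)

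\emph{Tightness.} From (\ref{eq: COM0}), $\tfrac1N\log\tfrac{d\PPm^N}{d\PP}=\langle\varphi,\mu^N_0\rangle+\langle\log K,w^N\rangle-\int_E(K-1)\,d\overline m_{\mu^N}$. The first term is a normalised sum of i.i.d.\ variables with $\PPm^N$-mean $\int\varphi\,d\mu_0=H(\mu_0|\mu^\star_0)<\infty$, hence converges a.s.; the remaining terms are bounded above by $\|KB\|_\infty$-multiples of $w^N(E)$ and $\overline m_{\mu^N}(E)$, and bounded below --- using $\log K=\log(KB)-\log B(v-v_\star)\ge\log c-(|v|+|v_\star|)$ and $B(v-v_\star)\le 3+|v|^2+|v_\star|^2$ --- by affine functions of $\langle|v|^2+|v_\star|^2,w^N\rangle$, $w^N(E)$ and $\langle|v|^2,\mu^N_0\rangle$, all bounded in $\PPm^N$-probability by the previous paragraph. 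So $\tfrac1N\log\tfrac{d\PPm^N}{d\PP}$ is tight under $\PPm^N$, condition (\ref{eq: condition for tightness}) holds, and Corollary~\ref{cor: tightness} gives that $(\mu^N_\bullet,w^N)$ is tight in $\D\times\Mm$ under $\PPm^N$.

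\emph{Identification of the limit.} Let $(\widetilde\mu_\bullet,\widetilde w)$ be any subsequential weak limit. By Dynkin's formula applied to (\ref{eq: time dependent generator}), for bounded continuous test functions $f$ on $\rrd$ and $g\in C_c(E)$ the processes $\langle f,\mu^N_t\rangle-\langle f,\mu^N_0\rangle-\int_0^t\!\!\int\Delta f\cdot KB\,d\mu^N_s\,d\mu^N_s\,d\sigma\,ds$ and $\langle g,w^N_t\rangle-\int_E 1_{s\le t}\,gK\,d\overline m_{\mu^N}$ are $\PPm^N$-martingales with quadratic variation $O(1/N)$ --- each jump moves them by $O(1/N)$ and there are $O(N)$ jumps --- so by Doob's inequality they vanish uniformly on $[0,T]$ in probability. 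Passing to the limit along the subsequence, using Lemma~\ref{lemma: really useful continuity result} and the Skorokhod-continuity facts for the bounded-continuous parts and a truncation controlled by the uniform second-moment bounds (and the conserved energy $\langle|v|^2,\mu^N_t\rangle=\langle|v|^2,\mu^N_0\rangle$) for the quadratically-growing parts $\Delta f\cdot B$ and $gKB$, we find that $\widetilde\mu_\bullet$ solves (\ref{eq: BEK}) with tilting $K$, that $\widetilde w=K\overline m_{\widetilde\mu}$, and that $\widetilde\mu_0=\mu_0$ (law of large numbers for the i.i.d.\ initial data). Thus $(\widetilde\mu_\bullet,\widetilde w)$ is a measure-flux pair with tilting $K$ and initial datum $\mu_0$, whence by the uniqueness hypothesis $(\widetilde\mu_\bullet,\widetilde w)=(\mu_\bullet,w)$; as every subsequential limit equals this deterministic value, $(\mu^N_\bullet,w^N)\to(\mu_\bullet,w)$ in $\PPm^N$-probability, so $\PPm^N((\mu^N_\bullet,w^N)\in\U)\to1$ for every open $\U\ni(\mu_\bullet,w)$.

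\emph{Convergence of the log-likelihood.} Using the convergence just established, pass to the limit in the three terms. One has $\langle\varphi,\mu^N_0\rangle\to H(\mu_0|\mu^\star_0)$; for $\int_E(K-1)\,d\overline m_{\mu^N}\to\int_E(K-1)\,d\overline m_\mu$ and $\langle\log K,w^N\rangle\to\langle\log K,w\rangle$, split off the bounded-continuous pieces $\int KB\,d\mu^N_t\,d\mu^N_t\,d\sigma\,dt$ and $\langle\log(KB),w^N\rangle$, which converge directly (as in Lemma~\ref{lemma: really useful continuity result} and by weak convergence in $\Mm$), and control the remainders $\int B\,d\overline m_{\mu^N}$ and $\langle\log B,w^N\rangle$, whose integrands grow like $|v-v_\star|$, by truncation together with the uniform bounds $\langle|v|^2,\mu^N_0\rangle\to\langle|v|^2,\mu_0\rangle$ and $\sup_N\EEm\langle|v|^2+|v_\star|^2,w^N\rangle<\infty$. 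Since $w=K\overline m_\mu$, the limit is $H(\mu_0|\mu^\star_0)+\int_E(\log K)K\,d\overline m_\mu-\int_E(K-1)\,d\overline m_\mu=H(\mu_0|\mu^\star_0)+\int_E\tau(K)\,d\overline m_\mu=\mathcal I(\mu_\bullet,w)$. Hence $\PPm^N(|\tfrac1N\log\tfrac{d\PPm^N}{d\PP}-\mathcal I(\mu_\bullet,w)|<\epsilon)\to1$; intersecting with the event of the previous step finishes the proof.

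\emph{The main obstacle} is the limit identification in the third step: because $w^N\to w$ only in the weak topology of $\Mm$, passing to the limit in $\langle\log K,w^N\rangle$ and in the compensator $\int\Delta f\cdot KB\,d\mu^N\,d\mu^N$ through the unbounded, quadratically-growing integrands present there needs genuine uniform integrability, and that is exactly what the pointwise energy conservation of (\ref{eq: PCV}) supplies --- it keeps the second moments of $\mu^N_t$, and hence (through the bounded tilted rates) of $w^N$, bounded uniformly in $N$ even under the perturbed law $\PPm^N$, in sharp contrast to the unbounded-$K$ situation exploited in Theorem~\ref{thm: main}.
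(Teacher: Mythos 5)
Your proposal is correct and follows essentially the same route as the paper's proof: tightness via Corollary~\ref{cor: tightness} (after controlling the Radon--Nikodym density), identification of the unique subsequential limit through vanishing martingale corrections to the compensated flux, and a term-by-term law of large numbers for the three pieces of $\tfrac1N\log\tfrac{d\PPm^N}{d\PP}$ using truncation together with the uniform second-moment bounds supplied by pathwise energy conservation under $\PPm^N$. The only inessential differences are that you also introduce a martingale in $\langle f,\mu^N_t\rangle$ (the paper works only with the flux martingale and recovers \eqref{eq: BEK} from the continuity-equation closedness of Lemma~\ref{lemma: semiconntinuity}), and you invoke truncation for $gK$ in the limit-identification step where none is needed since $g\in C_c(E)$ already makes $gK$ bounded with compact support.
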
 

\begin{proof} We start by applying Proposition \ref{prop: com}. Since $K$ is a function only $E\to\rr$, $(\mu^N_t,w^N_t)$ is a Markov process with generator given by (\ref{eq: time dependent generator}) applied to functions $F: \cp_2^N\times\Mm \to \rr$. For the initial data, $\mu^N_0$ is given, under $\PPm^N$, by sampling $N$ particles independently with common law  $e^{\varphi}\mu_0^\star = e^{\log d\mu_0/d\mu_0^\star}\mu_0^\star=\mu_0$; we also remark that te $\varphi$ given in the statement is well-defined, since the finiteness of the rate $\mathcal{I}(\mu_\bullet, w)<\infty$ implies that $\mu_0\ll \mu_0^\star$.  \paragraph*{Step 1: Functional Law of Large Numbers} We begin by show that, under $\mathbb{Q}^N$, the pairs $(\mu^N_\bullet, w^N)$ converge in probability to $(\mu_\bullet, w)$. Since $K, \varphi$ are bounded, Corollary \ref{cor: tightness} applies and the laws $\mathbb{Q}^N\circ(\mu^N_\bullet, w^N)^{-1}$ are tight on $\D\times\Mm$, so every subsequence has a further subsequence converging weakly on $\D\times\Mm$. We will now prove that the only possible subsequential limit is $\delta_{(\mu_\bullet, w)}$, which implies that the whole sequence $\mathbb{Q}^N\circ (\mu^N_\bullet, w^N)^{-1}$ converges weakly to this limit, and hence \begin{equation}\label{eq: conclusion 1}\mathbb{Q}^N\left((\mu^N_\bullet, w^N)\in \U\right)\to 1. \end{equation} Let $S\subset\mathbb{N}$ be any subsequence along which $\mathbb{Q}^N\circ (\mu^N_\bullet, w^N)^{-1}$ converges weakly. Thanks to Skorokhod's representation theorem, we can realise all $(\mu^N_\bullet, w^N), N\in S$ with these laws on a common probability space, with probability measure $\mathbb{Q}$, converging $\mathbb{Q}$-almost surely to a limit $(\widetilde{\mu}_\bullet, \widetilde{w})$. For each $N$, $(\mu^N_\bullet, w^N)$ almost surely lies in the set of pairs satisfying the continuity equation, which is closed by Lemma \ref{lemma: semiconntinuity}, and hence $(\widetilde{\mu}_\bullet, \widetilde{w})$ almost surely satisfies (\ref{eq: CE}). We now show that the limit is almost surely a measure-flux pair with tilting $K$: for all $g\in C_c(E)$, the processes \begin{equation}\begin{split} M^{N,g}_t & =\langle g, w^N_t\rangle - \int_{(0,t]\times \rrd\times\rrd\times\ssd} g(s,v,v_\star,\sigma)K(s,v,v_\star,\sigma)\\ & \hspace{6cm}\dots B(v-v_\star)ds\mu^N_s(dv)\mu^N_s(dv_\star)d\sigma \end{split} \end{equation}is a c{\`a}dl{\`a}g martingale, with previsible, increasing quadratic variation \begin{equation}\begin{split} [M^{N,g}]_t&=\frac{1}{N}\int_{(0,t]\times\rrd\times\rrd\times\ssd} g^2 K(s,v,v_\star,\sigma)B(v-v_\star)ds\mu^N_s(dv)\mu^N_s(dv_\star)d\sigma \\[2ex] & \hs \le \|g\|_\infty^2 \sup_E (B(v-v_\star)K)T/N,\end{split}\end{equation}see, for instance, \cite{darling2008differential,norris2016consistency}. In particular, since $B(v-v_\star)K$ is bounded by construction, the constant in the final expression is finite. Therefore, for all such $g$, \begin{equation} \label{eq: identification of limit} \mathbb{E}_{\mathbb{Q}}\left|\langle g, w^N\rangle - \int_E g K(s,v,v_\star,\sigma)B(v-v_\star)ds\mu^N_s(dv)\mu^N_s(dv_\star)d\sigma\right|\le\frac{C_g}{\sqrt{N}}  \end{equation} for some constant $C_g$. Taking $N\to \infty$ through $S$, the first term in the expectation converges almost surely to $\langle g, \widetilde{w}\rangle$, and the second term converges to $\int gK d\overline{m}_{\widetilde{\mu}}$ by Lemma \ref{lemma: really useful continuity result} applied to $gK$. We now take $N\to \infty$ through $S$ to obtain \begin{equation} \label{eq: identification of limit 3} \mathbb{E}_\mathbb{Q}\left|\langle g, \widetilde{w}\rangle -\int_E g(s,v,v_\star,\sigma)K(s,v,v_\star,\sigma)\overline{m}_{\widetilde{\mu}}(ds,dv,dv_\star,d\sigma) \right|=0\end{equation} and so the integrand is $0$, $\mathbb{Q}$-almost surely. Taking a union bound over a countable dense set in $C_c(E)$, we conclude that $\widetilde{w}=K\overline{m}_{\widetilde{\mu}}$ almost surely, and the limit is a measure-flux pair with the prescribed rate function $K$, and the convergence $\mu^N_0\to \widetilde{\mu}_0$ implies that $\mu_0=\widetilde{\mu}_0$. By hypothesis, these properties uniquely characterise the desired limit $(\mu_\bullet, w)$, so $\mathbb{Q}((\widetilde{\mu}_\bullet,\widetilde{w})=(\mu_\bullet, w))=1$ and the step is complete.\paragraph*{Step 2: Law of Large Numbers for the Dynamic Cost} We will now show that the (random) exponential cost induced by the change of measure (\ref{eq: COM0}) converges under $\mathbb{Q}^N$: for all $\epsilon>0$, \begin{equation} \label{eq: desired convergence of dynamic cost}\mathbb{Q}^N\left(\left|\frac{1}{N}\log \frac{d\mathbb{Q}^N}{d\mathbb{P}}-\mathcal{I}(\mu_\bullet, w)\right|>\epsilon\right)\to 0.\end{equation} We begin by using the definitions of $\mathcal{I}$ and (\ref{eq: COM0}) to rewrite the difference as \begin{equation} \label{eq: difference of dynamic cost} \begin{split}\frac{1}{N}\log \frac{d\mathbb{Q}^N}{d\mathbb{P}}-\mathcal{I}(\mu_\bullet, w)&=\langle \varphi, \mu^N_0\rangle - H(\mu_0|\mu^\star_0) + \langle \log K, w^N-w\rangle \\&\hspace{-2cm} -\int_E(K-1)B(v-v_\star)(\mu^N_t(dv)\mu^N_t(dv_\star)-\mu_t(dv)\mu_t(dv_\star))d\sigma   \end{split} \end{equation} and examine the terms one by one. Fix, throughout, $\epsilon, \epsilon'>0$. \paragraph*{Step 2a: Cost of the Initial Data} For the cost of the initial data, $\langle \varphi, \mu^N_0\rangle$ is the empirical mean of $\log \frac{d\mu'_0}{d\mu^\star_0}$, sampled at $N$ independent draws from $\mu_0$. The mean of each draw is exactly $\int_{\rrd} \log \frac{d\mu_0}{d\mu^\star_0}(v) \mu_0(dv)=:H(\mu_0|\mu_0^\star)$, so by the weak law of large numbers, for all $N$ large enough\begin{equation}\label{eq: term 1 of dynamic cost} \mathbb{Q}^N\left(|\langle \varphi, \mu^N_0\rangle-H(\mu_0|\mu^\star_0)|>\epsilon/4\right)<\epsilon'/3. \end{equation} \paragraph*{Step 2b: Integral against Empirical Flux}Let us now examine the second term. By the choice of $K$, $\log K$ is continuous, bounded above, and bounded below by $\log(c/B(v-v_\star))$ for some constant $c>0$. We can further bound this below by\begin{equation} \log \frac{1}{B(v-v_\star)}\ge \log \frac{1}{(1+|v|)(1+|v_\star)} \ge -c(|v|+|v_\star|)\end{equation} for a new constant $c$: in particular, $|\log K|\le C(1+|v|+|v_\star|)$ is continuous, and of at most linear growth. Recalling that $B(v-v_\star)K$ is bounded, we also estimate, uniformly in $N$, \begin{equation}\begin{split}\mathbb{E}_{\mathbb{Q}^N}\left[\langle 1+|v|^2+|v_\star|^2, w^N\rangle\right]&=\mathbb{E}_{\mathbb{Q}^N}\left[\int_E (1+|v|^2+|v_\star|^2)B(v-v_\star)K\mu^N_t(dv)\mu^N_t(dv_\star)d\sigma\right] \\[1ex] & \le C\mathbb{E}_{\mathbb{Q}^N}\langle 1+|v|^2, \mu^N_0\rangle =C\langle 1+|v|^2, \mu_0\rangle. \end{split} \end{equation} Elementary Chebychev estimates produce $R<\infty$ such that, uniformly in $N$,\begin{equation} \label{eq: truncate flux} \mathbb{Q}^N\left(\langle (1+|v|+|v_\star|)1(|v|>R \text{ or }|v_\star|>R), w^N\rangle > \epsilon/12C\right) <\epsilon'/6;\end{equation} and similarly, using the boundedness of $B(v-v_\star)K$ and finiteness of the second moments, the second moment $\langle 1+|v|^2+|v_\star|^2, w\rangle <\infty$ is also finite, and so we can additionally choose $R$ so that \begin{equation} \langle (1+|v|+|v_\star|)1(|v|>R\text{ or }|v_\star|>R),w\rangle <\frac{\epsilon}{12C}\end{equation} and construct a continuous, compactly supported function $g:E\to \mathbb{R}$ such that $|g-\log K|\le C(1+|v|+|v_\star|)$ and which agrees with $\log K$ when both $|v|, |v_\star|\le R$. We therefore find from (\ref{eq: truncate flux}) that \begin{equation}\label{eq: truncate flux 2} \mathbb{Q}^N\left(\langle |g-\log K|, w^N\rangle>\epsilon/12\right)<\epsilon'/6; \qquad \langle |g-\log K|, w\rangle <\frac{\epsilon}{12}.\end{equation} Thanks to the convergence in distribution, for $N$ large enough, \begin{equation}\label{eq: truncate flux 3}\mathbb{Q}^N\left(|\langle g, w^N-w\rangle|>\epsilon/12\right)<\epsilon'/6 \end{equation}  and we find from (\ref{eq: truncate flux 2},\ref{eq: truncate flux 3}) that \begin{equation} \label{eq: 2 term of dynamic cost} \mathbb{Q}^N\left(\left|\langle \log K, w^N-w\rangle\right|>\epsilon/4\right)<\epsilon'/3. \end{equation}  \paragraph*{Step 2c: Integral against Compensator} We finally deal with the third term in (\ref{eq: difference of dynamic cost}). Since $B(v-v_\star)K$ is bounded and $K$ is continuous, it follows that $B(v-v_\star)(K-1)$ is of at most linear growth, so there exists $C$ such that $|B(v-v_\star)(K-1)|\le C(1+|v|+|v_\star|)$, and as in the previous step, we can choose $R$ such that, uniformly in $N$, \begin{equation}\label{eq: truncate compensator} \mathbb{Q}^N\left(\sup_{t\le T} \int_{\rrd\times \rrd} (1+|v|+|v_\star|)1(|v|>R\text{ or }|v_\star|>R)\mu^N_t(dv)\mu^N_t(dv_\star)>\epsilon/12CT\right)<\epsilon'/6\end{equation} and similarly such that \begin{equation}\label{eq: truncate compensator 2} \sup_{t\le T} \int_{\rrd\times \rrd} (1+|v|+|v_\star|)1(|v|>R\text{ or }|v_\star|>R)\mu'_t(dv)\mu'_t(dv_\star)\le \epsilon/12CT.\end{equation} We again truncate, with a proxy $h: E\to \mathbb{E}$ which is continuous, compactly supported, agrees with $(K-1)B(v-v_\star)$ if both $|v|, |v_\star|\le R$, and such that $|(K-1)B(v-v_\star)-h|\le C(1+|v|+|v_\star|)$ for the same constant $C$. Using Lemma \ref{lemma: really useful continuity result} again, \begin{equation} \label{eq: convergence of compensator 1} \mathbb{Q}^N\left(\left|\int_E h(t,v,v_\star,\sigma)dt(\mu^N_t(dv)\mu^N_t(dv_\star)-\mu_t(dv)\mu_t(dv_\star))d\sigma\right|>\epsilon/12\right)<\epsilon'/6\end{equation} for $N$ large enough, while (\ref{eq: truncate compensator}) implies that \begin{equation} \label{eq: convergence of compensator 2}\mathbb{Q}^N\left(\left|\int_E (h-B(v-v_\star)(K-1))dt\mu^N_t(dv)\mu^N_t(dv_\star)d\sigma\right|>\frac{\epsilon}{12}\right)<\epsilon'/6\end{equation} and (\ref{eq: truncate compensator 2}) implies that \begin{equation} \label{eq: convergence of compensator 3} \left|\int_E (h-B(v-v_\star)(K-1))dt\mu_t(dv)\mu_t(dv_\star)\right|<\frac{\epsilon}{12}.\end{equation} Gathering (\ref{eq: convergence of compensator 1}, \ref{eq: convergence of compensator 2}, \ref{eq: convergence of compensator 3}), we conclude that, for $N$ large enough,\begin{equation}\label{eq: term 3 of dynamic cost}\mathbb{Q}^N\left(\left|\int_E(K-1)B(v-v_\star)dt(\mu^N_t(dv)\mu^N_t(dv_\star)-\mu_t(dv)\mu_t(dv_\star))d\sigma\right|>\epsilon/4\right)<\epsilon'/3. \end{equation} Returning to (\ref{eq: difference of dynamic cost}), we combine (\ref{eq: term 1 of dynamic cost}, \ref{eq: 2 term of dynamic cost}, \ref{eq: term 3 of dynamic cost}) to obtain, for all $\epsilon, \epsilon'>0$, and all $N$ large enough, depending on $\epsilon, \epsilon'$, \begin{equation} \mathbb{Q}^N\left(\left|\frac{1}{N}\log \frac{d\mathbb{Q}^N}{d\PP}-\mathcal{I}(\mu_\bullet, w)\right|>\epsilon\right)<\epsilon'\end{equation} and we have proven the desired convergence (\ref{eq: desired convergence of dynamic cost}). Together with the previous step, the proof is complete. \end{proof} 

We can now prove the restricted lower bound. 
\begin{proof}[Proof of Theorem \ref{thrm: main positive}ii)] Let us fix a Skorokhod-open set $\U$, a path $(\mu_\bullet, w)\in \U\cap \mathcal{R}$, and $\epsilon>0$. Let us assume that $\mathcal{I}(\mu_\bullet, w)<\infty$. Thanks to Lemma \ref{lemma: approximation lemma}, there exists a measure-flux pair $(\mu'_\bullet, w')\in \U$ with overall cost $\mathcal{I}(\mu'_\bullet, w')<\mathcal{I}(\mu_\bullet, w)+\epsilon$, satisfying the conclusions of Lemma \ref{lemma: approximation lemma}. For the changes of measure $\PPm^N\ll \PP$ as in Lemma \ref{lemma: restricted LLN}, we then have, for all $N$ large enough, \begin{equation} \mathbb{Q}^N\left((\mu^N_\bullet, w^N)\in \U, \frac{d\mathbb{Q}^N}{d\mathbb{P}}\le \exp\left(N(\mathcal{I}(\mu'_\bullet,w')+\epsilon)\right)\right)\ge \frac{1}{2}\end{equation} which implies that \begin{equation} \begin{split} \mathbb{P}\left((\mu^N_\bullet, w^N)\in \U\right)&\ge \mathbb{E}_{\mathbb{Q}^N}\left[\left(\frac{d\mathbb{Q}^N}{d\mathbb{P}}\right)^{-1}1\left(\frac{d\mathbb{Q}^N}{d\mathbb{P}}\le \exp\left(N(\mathcal{I}(\mu'_\bullet,w')+\epsilon)\right),\h (\mu^N_\bullet, w^N)\in \U\right)\right] \\[2ex] &\ge \frac{1}{2}\exp\left(-N(\mathcal{I}(\mu'_\bullet, w')+\epsilon)\right) \ge \frac{1}{2}\exp\left(-N(\mathcal{I}(\mu_\bullet, w)+2\epsilon)\right).\end{split} \end{equation}  Taking the logarithm and the limit $N\to \infty$, and then $\epsilon\to 0$, \begin{equation} \label{eq: nearly end of RLB}\liminf_N\frac{1}{N}\log \PP\left((\mu^N_\bullet, w^N)\in \U\right)\ge -\mathcal{I}(\mu_\bullet, w). \end{equation} Of course, (\ref{eq: nearly end of RLB}) trivially holds if $\mathcal{I}(\mu_\bullet,w)=\infty$, and so applies to any $(\mu_\bullet, w)\in \U\cap \mathcal{R}$, and the result is proven.\end{proof}

\subsection{Proof of Approximation Lemma} \label{subsec: approximation lemma} 
We will now present the proof of the approximation lemma as a number of intermediate steps. We will present the statements here, to give an overview of the proof of the overall approximation lemma, and the proofs in Subsection \ref{subsec: polypheme}. We begin with the following definition. \begin{definition} Let $(\mu_\bullet, w)$ be a measure-flux pair, and $\lambda>0$. Let $g_\lambda$ be the Gaussian in $\rrd$ \begin{equation}\label{eq: gdelta} g_\lambda(x):=\frac{1}{(2\pi \lambda)^{d/2}}\exp\left(-|x|^2/2\lambda\right).\end{equation}  We define the convolutions $(g_\lambda\star \mu_\bullet), g_\lambda\star w$ by \begin{equation} \label{eq: convolution 1} (g_\lambda\star\mu_\bullet )_t(dv)=(g_\lambda\star \mu_t)(dv)=\int_{\rrd} g_\lambda(v-u)\mu_t(du)dv; \end{equation} \begin{equation} \label{eq: convolution 2}  (g_\lambda\star w)(dt,dv,dv_\star,d\sigma)=\int_{\rrd\times \rrd}g_\lambda(v-u) g_\lambda(v_\star-u_\star) w(dt,du,du_\star,d\sigma).\end{equation}  The measures $g_\lambda\star \mu_t$ are absolutely continuous with respect to the Lebesgue measure; we will alternatively use the notation $g_\lambda \star \mu_t$ for their density on $\rrd$.  \end{definition} \begin{remark} Let us note that the choice of Gaussian mollification is essential here, as it is the unique mollifier which is invariant under changing between the pre- and post- collisional velocities.\end{remark}\begin{lemma}[Approximation by Convolution]\label{lemma: convolution} Suppose $(\mu_\bullet, w)$ is a measure-flux pair with a bounded tilting function $K$,  such that \begin{equation}\label{eq: controlling quantities} \langle |v|^2+|v_\star|^2, w\rangle < \infty; \qquad \mathcal{J}(\mu_\bullet, w)<\infty;\qquad \sup_{t\le T} \langle |v|^2, \mu_t\rangle=\langle |v|^2, \mu_0\rangle<\infty.\end{equation} Then, for all $\lambda>0$, $(\mu_\bullet\star g_\lambda, w\star g_\lambda)$ is a measure-flux pair. Furthermore, there exists a continuous function $\vartheta: [0,1]\to [0,\infty)$, which is continuous at $0$ and $\vartheta(0)=0$, and a constant $C$, \emph{which only depends on upper bounds for the quantities in (\ref{eq: controlling quantities})} and not the boundedness of $K$,  such that for all $\lambda\in (0,1]$, \begin{equation} \label{eq: dynamic cost conclusion} \mathcal{J}(g_\lambda\star \mu_\bullet, g_\lambda\star w) \le \mathcal{J}(\mu_\bullet, w)+C\vartheta(\lambda).\end{equation} Finally, the tilting function $K^\lambda$ satisfies \begin{equation} \label{eq: contraction on supremum}\sup_{t,v,v_\star,\sigma} K^\lambda(t,v,v_\star,\sigma)B(v-v_\star)\le \sup_{t,v,v_\star,\sigma} K(t,v,v_\star,\sigma)B(v-v_\star)\end{equation} \end{lemma}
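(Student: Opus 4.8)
The plan is to write the mollified tilting function as a Gaussian average and then exploit convexity of $\tau$. For $(t,v)\in[0,T]\times\rrd$ let $P^\lambda_{t,v}(du):=g_\lambda(v-u)\mu_t(du)/(g_\lambda\star\mu_t)(v)$, a probability measure on $\rrd$ — well defined since $g_\lambda>0$ forces $(g_\lambda\star\mu_t)(v)>0$ for every $v$ — and write $G_\lambda(x,x_\star):=g_\lambda(x)g_\lambda(x_\star)$ for the Gaussian density on $\rrd\times\rrd$. I first verify that $(g_\lambda\star\mu_\bullet, g_\lambda\star w)$ is a measure-flux pair. Each $g_\lambda\star\mu_t$ is a probability measure with $\langle|v|^2,g_\lambda\star\mu_t\rangle=\langle|v|^2,\mu_t\rangle+\lambda d$ (the mollifying Gaussian being centred and independent), so $g_\lambda\star\mu_\bullet$ takes values in $\cp_2$ with a $t$-uniform second moment bound; since $g_\lambda\star f\in\mathcal{F}$ whenever $f\in\mathcal{F}$, the map $\mu\mapsto g_\lambda\star\mu$ is $1$-Lipschitz on $(\cp_2,W)$, so $g_\lambda\star\mu_\bullet$ is c{\`a}dl{\`a}g and lies in $\D$, while $g_\lambda\star w$ is a finite measure of mass $w(E)$. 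The continuity equation transfers by the identity
$$\int_{\rrd\times\rrd}G_\lambda(v-u,v_\star-u_\star)\,\Delta\phi(v,v_\star,\sigma)\,dv\,dv_\star=\Delta(g_\lambda\star\phi)(u,u_\star,\sigma),\qquad\phi\text{ bounded Lipschitz}.$$
The two ``gain'' contributions to $\Delta\phi$ are immediate by Fubini, and for $\phi(v')$ one uses that, for fixed $\sigma$, the collision map $T_\sigma(v,v_\star)=(v',v_\star')$ is a linear \emph{orthogonal involution} of $\rrd\times\rrd$ (it preserves $v+v_\star$ and $|v|^2+|v_\star|^2$ and squares to the identity), so $G_\lambda$ is $T_\sigma$-invariant and $T_\sigma(a,b)-(u,u_\star)=T_\sigma\big((a,b)-(u',u_\star')\big)$; the substitution $(a,b)=T_\sigma(v,v_\star)$ then turns $\int G_\lambda(v-u,v_\star-u_\star)\phi(v')\,dv\,dv_\star$ into $(g_\lambda\star\phi)(u')$, and symmetrically for $\phi(v_\star')$ — this is the content of the preceding remark. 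Testing the continuity equation for $(\mu_\bullet,w)$ against $g_\lambda\star\phi$, applying the identity, and recognising the right-hand side as integration against $g_\lambda\star w$ gives the continuity equation for the mollified pair. Finally $g_\lambda\star w\ll\overline{m}_{g_\lambda\star\mu}$ with the explicit density
$$K^\lambda(t,v,v_\star,\sigma)=\int_{\rrd\times\rrd}K(t,u,u_\star,\sigma)\,\frac{B(u-u_\star)}{B(v-v_\star)}\,P^\lambda_{t,v}(du)\,P^\lambda_{t,v_\star}(du_\star),$$
read off from the two measures; it is finite everywhere and $\overline{m}_{g_\lambda\star\mu}$-integrable, its integral being $(g_\lambda\star w)(E)=w(E)<\infty$.

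The contraction (\ref{eq: contraction on supremum}) is now immediate, since $K^\lambda(t,v,v_\star,\sigma)B(v-v_\star)=\int_{\rrd\times\rrd}K(t,u,u_\star,\sigma)B(u-u_\star)\,P^\lambda_{t,v}(du)P^\lambda_{t,v_\star}(du_\star)$ is an average of $KB$ over probability measures. For (\ref{eq: dynamic cost conclusion}) I apply Jensen's inequality (convexity of $\tau$) to the displayed representation of $K^\lambda$ with respect to $P^\lambda_{t,v}\otimes P^\lambda_{t,v_\star}$, integrate the resulting pointwise bound against $\overline{m}_{g_\lambda\star\mu}$, unfold using $P^\lambda_{t,v}(du)\,(g_\lambda\star\mu_t)(v)=g_\lambda(v-u)\mu_t(du)$, and integrate out $v,v_\star$ by Fubini. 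Writing $r:=B(v-v_\star)/B(u-u_\star)$ and using the algebraic identity $r\,\tau(k/r)=\tau(k)+\big(r-1-k\log r\big)$ together with $K\,\overline{m}_\mu=w$, this produces
$$\mathcal{J}(g_\lambda\star\mu_\bullet,g_\lambda\star w)\le\mathcal{J}(\mu_\bullet,w)+\int_E\big(\langle r\rangle_{u,u_\star}-1\big)\,\overline{m}_\mu(dt,du,du_\star,d\sigma)-\int_E\langle\log r\rangle_{u,u_\star}\,w(dt,du,du_\star,d\sigma),$$
where $\langle h\rangle_{u,u_\star}$ denotes the average of $h(v,v_\star)$ over $v=u+Z$, $v_\star=u_\star+Z_\star$ with $Z,Z_\star$ independent centred Gaussians of covariance $\lambda I_d$; note $v-v_\star=(u-u_\star)+(Z-Z_\star)$, with $Z-Z_\star$ a centred Gaussian of covariance $2\lambda I_d$.

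It remains to bound the two error terms. For the regularised hard spheres kernel $B(x)=1+|x|$ one has $\big|B(u-u_\star+Z-Z_\star)-B(u-u_\star)\big|\le|Z-Z_\star|$, and since $s\mapsto\log(1+s)$ is $1$-Lipschitz on $[0,\infty)$ and $B\ge1$, also $\big|\log B(u-u_\star+Z-Z_\star)-\log B(u-u_\star)\big|\le|Z-Z_\star|$; taking expectations and using $\EE|Z-Z_\star|\le(\EE|Z-Z_\star|^2)^{1/2}=\sqrt{2\lambda d}$ gives $\big|\langle r\rangle_{u,u_\star}-1\big|\le\sqrt{2\lambda d}$ and $\big|\langle\log r\rangle_{u,u_\star}\big|\le\sqrt{2\lambda d}$, uniformly in $(u,u_\star)$. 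Hence $\mathcal{J}(g_\lambda\star\mu_\bullet,g_\lambda\star w)\le\mathcal{J}(\mu_\bullet,w)+\sqrt{2\lambda d}\,\big(\overline{m}_\mu(E)+w(E)\big)$, which is (\ref{eq: dynamic cost conclusion}) with $\vartheta(\lambda):=\sqrt\lambda$ and $C:=\sqrt{2d}\,\big(\overline{m}_\mu(E)+w(E)\big)$; by $\overline{m}_\mu(E)\le|\ssd|T\big(1+2(\sup_t\langle|v|^2,\mu_t\rangle)^{1/2}\big)$ and $w(E)\le\langle1+|v|^2+|v_\star|^2,w\rangle$, this $C$ is bounded in terms of upper bounds for the quantities in (\ref{eq: controlling quantities}) alone, in particular independently of $\|K\|_\infty$. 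For the Maxwell molecules kernel $B\equiv1$ forces $r\equiv1$, both error terms vanish, and $\mathcal{J}$ does not increase at all (this is the data-processing inequality for the $\tau$-divergence).

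The main obstacle is the mismatch between $\overline{m}_{g_\lambda\star\mu}$ and the push-forward $g_\lambda\star\overline{m}_\mu$, which arises because $B$ is evaluated at the mollified difference $v-v_\star$ rather than at $u-u_\star$; absent the kernel this would be an exact contraction, and with the hard spheres kernel the Lipschitz bounds above show the discrepancy costs only $\mathcal{O}(\sqrt\lambda)$, controlled by the second-moment hypotheses and not by $\|K\|_\infty$. A secondary point requiring care — and the reason the Gaussian mollifier is mandatory — is that only for $G_\lambda$ is the continuity equation preserved under convolution, via the invariance of $G_\lambda$ under the orthogonal collision map $T_\sigma$.
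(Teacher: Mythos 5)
Your proof is correct, and it replaces the most technical part of the paper's argument with something considerably cleaner. Both proofs agree on the preliminary steps: the continuity equation is transferred via the $T_\sigma$-invariance of the Gaussian mollifier, and $K^\lambda$ is identified as the Gaussian average $\int K(t,u,u_\star,\sigma)\frac{B(u-u_\star)}{B(v-v_\star)}P^\lambda_{t,v}(du)P^\lambda_{t,v_\star}(du_\star)$, whence the contraction (\ref{eq: contraction on supremum}) follows immediately. The divergence is in how the dynamic cost is estimated. The paper decomposes $(\tau-1)(K^\lambda) = K^\lambda\log\psi_\lambda + \psi_\lambda(\tau-1)(\overline{K}^\lambda)$ using a $B$-weighted proxy $\overline{K}^\lambda$ and the ratio $\psi_\lambda = K^\lambda/\overline{K}^\lambda$, applies Jensen to the second piece, and is then forced into a lengthy estimate of $\mathcal{T}_\lambda = \int K^\lambda\log\psi_\lambda\,\overline{m}_{g_\lambda\star\mu}$: because $\psi_\lambda$ is the logarithm of an \emph{averaged} ratio, the only available global bound is $\psi_\lambda \le C(1+|u|^2+|u_\star|^2)$, and controlling $\log\psi_\lambda$ then requires the five-region decomposition $A_1,\dots,A_5$ with a simultaneous optimisation over $M,R,x,\epsilon$, yielding an implicit $\vartheta(\lambda)$. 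You instead apply Jensen \emph{first}, to the genuine probability measure $P^\lambda_{t,v}\otimes P^\lambda_{t,v_\star}$, and only then split using the pointwise identity $r\,\tau(k/r)=\tau(k)+(r-1)-k\log r$ (valid for all $r>0$, $k\ge 0$), so the error term involves the \emph{pointwise} ratio $r = B(v-v_\star)/B(u-u_\star)$ rather than the averaged $\psi_\lambda$. Since both $B$ and $\log B$ are globally $1$-Lipschitz for the regularised hard-sphere kernel (and trivially constant for Maxwell molecules), $|r-1|$ and $|\log r|$ are dominated pointwise by $|Z-Z_\star|$, and the error collapses to $\sqrt{2\lambda d}\,(\overline{m}_\mu(E)+w(E))$ with no truncation or covering argument and an \emph{explicit} rate $\vartheta(\lambda)=\sqrt\lambda$. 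The absolute integrability of the three pieces after the algebraic split is exactly what the hypotheses (\ref{eq: controlling quantities}) provide: $\int\tau(K)\,\overline{m}_\mu = \mathcal{J}(\mu_\bullet,w)$, the $(r-1)$-term is $\overline{m}_{g_\lambda\star\mu}(E)-\overline{m}_\mu(E)$ (which the paper also has to handle, and which your estimate makes quantitative), and the $K\log r$-term is controlled by $w(E)\le\langle 1+|v|^2+|v_\star|^2,w\rangle$ — so $C$ is indeed independent of $\|K\|_\infty$, as required. Your observation that the Maxwell molecules case is exactly the data-processing inequality for the $\tau$-divergence, and that for hard spheres the entire error is the mismatch $\overline{m}_{g_\lambda\star\mu}\neq g_\lambda\star\overline{m}_\mu$, is also the right conceptual framing; the paper's argument never articulates this as cleanly.
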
  We now apply this to produce some approximation results. \begin{lemma}\label{lemma: approximation 1} Let $\mu_\bullet, w$ be as in Lemma \ref{lemma: approximation lemma}. Then there exist measure-flux pairs $\mu^{(n)}_\bullet, w^{(n)}$ such that \begin{equation} \label{eq: Approx 1 1} \mathcal{J}(\mu^{(n)}_\bullet, w^{(n)})\to \mathcal{J}(\mu_\bullet, w);\qquad \sup_{t\le T} \|(1+|v|^2)(\mu^{(n)}_t-\mu_t)\|_{\mathrm{TV}}+\|(1+|v|^2+|v_\star|^2)(w^{(n)}-w)\|_{\mathrm{TV}}\to 0\end{equation} and, for each $n$, the tilting function $K^{(n)}$ is such that $K^{(n)}(t,v,v_\star,\sigma)B(v-v_\star)$ is bounded and $K^{(n)}$ is continuous in $v,v_\star$. Furthermore, the starting points $\mu^{(n)}_0$ can be taken to be of the form \begin{equation} \label{eq: form of starting point} \mu^{(n)}_0(dv)=c_n^{-1}\left(\mu_0(dv)+\nu^{(n)}(dv)\right) \end{equation} for a suitable normalising constant $c_n \to 1$ and measures $\nu^{(n)}$ on $\rrd$ with $\langle 1+|v|^2, \nu^{(n)}\rangle \to 0$.\end{lemma}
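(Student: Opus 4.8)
The plan is to reach $(\mu^{(n)}_\bullet,w^{(n)})$ from $(\mu_\bullet,w)$ by three successive regularisations --- truncating the tilting, truncating the initial velocities, then convolving with a Gaussian --- each of which requires re-solving the modified Boltzmann equation \eqref{eq: BEK}, and then to take a diagonal sequence. Two facts are used throughout: first, the collision map preserves $|v|^2$ pointwise, so every measure-flux pair with $\langle|v|^2+|v_\star|^2,w\rangle<\infty$ has $\langle|v|^2,\mu_t\rangle=\langle|v|^2,\mu_0\rangle$ for all $t$, and it is this conserved energy on which all constants below depend; second, by hypothesis $\int_E(1+|v|^2+|v_\star|^2)\,K\,\overline{m}_{\mu}=\langle 1+|v|^2+|v_\star|^2,w\rangle<\infty$, i.e. $(1+|v|^2+|v_\star|^2)K\in L^1(\overline{m}_\mu)$, which will drive the dominated-convergence arguments.

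\emph{Step 1 (truncate the tilting).} For $M\ge 1$ set $K_M:=\min(K,\,M/B(v-v_\star))$, a Borel function on $E$ with $K_MB\le M$ and $K_M\uparrow K$ as $M\uparrow\infty$. Since $K_MB$ is bounded and the energy is conserved a priori, \eqref{eq: BEK} with tilting $K_M$ from $\mu_0$ is well posed in $C([0,T],\cp_2)$ for the weighted total-variation norm $\|(1+|v|^2)\,\cdot\,\|_{\mathrm{TV}}$; let $\mu^M_\bullet$ be its solution and $w^M:=K_M\overline{m}_{\mu^M}$, a measure-flux pair with tilting $K_M$ and $\langle 1+|v|^2+|v_\star|^2,w^M\rangle<\infty$. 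The aim of this step is $\sup_{t\le T}\|(1+|v|^2)(\mu^M_t-\mu_t)\|_{\mathrm{TV}}+\|(1+|v|^2+|v_\star|^2)(w^M-w)\|_{\mathrm{TV}}\to 0$ together with $\mathcal{J}(\mu^M_\bullet,w^M)\to\mathcal{J}(\mu_\bullet,w)$; the cost convergence then splits into $\limsup_M\int_E\tau(K_M)\,\overline{m}_{\mu^M}\le\mathcal{J}(\mu_\bullet,w)$ (from $\int_E\tau(K_M)\,\overline{m}_\mu\to\mathcal{J}(\mu_\bullet,w)$ by monotone and dominated convergence, plus a Vitali argument once $\overline{m}_{\mu^M}\to\overline{m}_\mu$) and the reverse inequality, which is lower semicontinuity of $\mathcal J$ (Lemma~\ref{lemma: semiconntinuity}).

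\emph{Step 2 (truncate the initial velocities) and Step 3 (convolution).} Working now with the fixed bounded effective kernel $K_MB$, put $\mu^{M}_{0,R}:=c_{M,R}^{-1}\mathbf 1_{|v|\le R}\mu_0$ with $c_{M,R}\to 1$, let $\mu^{M,R}_\bullet$ solve \eqref{eq: BEK} with tilting $K_M$ from $\mu^{M}_{0,R}$, and $w^{M,R}:=K_M\overline{m}_{\mu^{M,R}}$. As $K_MB$ is bounded by $M$, the stability estimate for \eqref{eq: BEK} has an $R$-independent Gr\"onwall constant, so $\sup_{t\le T}\|(1+|v|^2)(\mu^{M,R}_t-\mu^M_t)\|_{\mathrm{TV}}\le e^{C_MT}\|(1+|v|^2)(\mu^{M}_{0,R}-\mu_0)\|_{\mathrm{TV}}\to 0$, and likewise $w^{M,R}\to w^M$ and $\mathcal{J}(\mu^{M,R}_\bullet,w^{M,R})\to\mathcal{J}(\mu^M_\bullet,w^M)$, as $R\to\infty$. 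Since $\mu^{M}_{0,R}$ is compactly supported and lies in $L^1(\rrd)$, propagation of moments for the bounded kernel $K_MB$ gives $\sup_{t\le T}\langle|v|^p,\mu^{M,R}_t\rangle<\infty$ for every $p\ge2$, and $t\mapsto(1+|v|^2)\mu^{M,R}_t$ is Lipschitz into $L^1(\rrd)$ (absolute continuity being preserved by the gain operator); hence $\{(1+|v|^2)\mu^{M,R}_t:t\in[0,T]\}$ is a \emph{compact} subset of $L^1(\rrd)$. Now each $(\mu^{M,R}_\bullet,w^{M,R})$ satisfies the hypotheses of Lemma~\ref{lemma: convolution}, so for $\lambda\in(0,1]$ the pair $(g_\lambda\star\mu^{M,R}_\bullet,\,g_\lambda\star w^{M,R})$ is a measure-flux pair whose tilting $K^{M,R,\lambda}$ is continuous in $(v,v_\star)$ (from the explicit Gaussian-smoothed densities) with $\sup K^{M,R,\lambda}B\le\sup K_MB\le M<\infty$, and $\mathcal{J}(g_\lambda\star\mu^{M,R}_\bullet,\,g_\lambda\star w^{M,R})\le\mathcal{J}(\mu^{M,R}_\bullet,w^{M,R})+C_{M,R}\vartheta(\lambda)$. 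By the $L^1$-compactness just noted, $\sup_{t\le T}\|(1+|v|^2)(g_\lambda\star\mu^{M,R}_t-\mu^{M,R}_t)\|_{\mathrm{TV}}\to 0$ as $\lambda\to0$ (uniform convergence of mollification on a compact subset of $L^1$, together with the elementary bound $\|(1+|v|^2)(g_\lambda\star\nu)-g_\lambda\star((1+|v|^2)\nu)\|_{\mathrm{TV}}\le C\sqrt\lambda\,\langle 1+|v|^2,|\nu|\rangle$ used to move the weight across the convolution), and similarly for the flux. Choosing $M_n\to\infty$, then $R_n$ large (given $M_n$) so the Step 2 errors are below $1/n$, then $\lambda_n$ small (given $M_n,R_n$) so the convolution errors and $C_{M_n,R_n}\vartheta(\lambda_n)$ are below $1/n$, and setting $(\mu^{(n)}_\bullet,w^{(n)}):=(g_{\lambda_n}\star\mu^{M_n,R_n}_\bullet,\,g_{\lambda_n}\star w^{M_n,R_n})$: the triangle inequality through the three steps gives the strong convergence in \eqref{eq: Approx 1 1}, chaining the cost estimates gives $\limsup_n\mathcal{J}(\mu^{(n)}_\bullet,w^{(n)})\le\mathcal{J}(\mu_\bullet,w)$ while the matching lower bound is lower semicontinuity of $\mathcal J$ (as strong convergence implies convergence in $\D\times\Mm$), and $\mu^{(n)}_0=g_{\lambda_n}\star(c_{M_n,R_n}^{-1}\mathbf 1_{|v|\le R_n}\mu_0)$ is of the stated form $c_n^{-1}(\mu_0+\nu^{(n)})$ with $c_n=c_{M_n,R_n}\to1$ and $\nu^{(n)}=(g_{\lambda_n}\star\mu_0-\mu_0)-g_{\lambda_n}\star(\mathbf 1_{|v|>R_n}\mu_0)$, whence $\langle 1+|v|^2,\nu^{(n)}\rangle\to0$.

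\emph{The main obstacle} is the convergence $\mu^M_\bullet\to\mu_\bullet$ in Step 1. The naive stability estimate, comparing the solutions of \eqref{eq: BEK} with kernels $K_MB$ and $KB$, produces a Gr\"onwall constant proportional to $\|K_MB\|_\infty=M$, so it cannot be combined directly with $\int_E(1+|v|^2+|v_\star|^2)|K-K_M|\,\overline{m}_\mu\to 0$. The natural route is to split the collision integral at a level $L\ll M$, treating $\{K_MB\le L\}$ by a Gr\"onwall argument with an $L$-dependent but $M$-independent constant and the remainder $\{K_MB>L\}$ using the uniform integrability $(1+|v|^2+|v_\star|^2)K\in L^1(\overline{m}_\mu)$; closing this requires a priori tightness of $\{(\mu^M_\bullet,w^M)\}_M$ in $\D\times\Mm$ together with uniqueness of solutions to \eqref{eq: BEK} in the finite-energy-flux class $\langle 1+|v|^2+|v_\star|^2,w\rangle<\infty$, so that every subsequential limit is forced to coincide with $(\mu_\bullet,w)$. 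This last uniqueness is exactly what breaks down in the Lu--Wennberg regime $\langle|v|^2,w\rangle=\infty$, which is why the restriction $(\mu_\bullet,w)\in\mathcal R$ is indispensable here; once Step 1 is in hand, Steps 2 and 3 and the diagonalisation are routine.
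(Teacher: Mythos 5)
Your proposal takes a fundamentally different route from the paper, and the key step does not go through. The paper's proof of this lemma is purely pathwise: it never re-solves the tilted equation. It fixes the set $E_n=\{B(v-v_\star)K\le n,\ v,v_\star,v',v'_\star\in B_n\}$, sets $w^{(n)}:=c_n^{-1}\mathbf 1_{E_n}w$ directly, defines $\mu^{(n)}_t$ by the continuity equation, and pre-loads the initial data with the positive reservoir $\nu^{(n)}=\int_{E_n^{\mathrm c}}(\delta_{v'}\mathbf 1_{v'\in B_n}+\delta_{v'_\star}\mathbf 1_{v'_\star\in B_n})\,w$ so that the post-collisional mass of the deleted collisions is already present at time $0$; positivity of $\mu^{(n)}_t$, the weighted-TV convergence, and the density bound $\frac{d(\mu_t\mathbf 1_{B_n})}{d\mu^{(n)}_t}\le c_n$ then follow by inspection, and the cost comparison is an explicit pointwise inequality $(\tau-1)(K^{(n)})\,\overline m_{\mu^{(n)}}\le c_n^{-1}\big((\tau-1)(K)+\log c_n\big)\,\overline m_\mu$ on $E_n$. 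Because $w^{(n)}$ is a sub-measure of $c_n^{-1}w$, there is never any question of producing new solutions or comparing them to the old one.

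Your Step 1 re-solves (\ref{eq: BEK}) with the truncated tilting $K_M$, and you correctly identify that this stalls: the Lipschitz constant of the map $\mu\mapsto Q_{K_M}(\mu)$ scales like $\|K_MB\|_\infty=M$, so the Gr\"onwall constant blows up exactly as fast as the truncation error vanishes. The repair you sketch — a priori tightness of $(\mu^M_\bullet,w^M)_M$ in $\D\times\Mm$ together with \emph{uniqueness of solutions to (\ref{eq: BEK}) in the class $\langle 1+|v|^2+|v_\star|^2,w\rangle<\infty$} — is not available. Uniqueness for the tilted equation with an unbounded, merely $L^1(\overline m_\mu)$ tilting $K$ is precisely the kind of statement the whole paper is engineered to avoid assuming; the only place uniqueness is invoked (Lemma~\ref{lemma: approximation 2}) is for bounded, continuous $K^{(n)}$, where it comes from Picard--Lindel\"of. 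The restriction to $\mathcal R$ does not buy you uniqueness for a general tilting $K$. Moreover the tightness itself is not free: the trivial bound $w^M(E)\le MT|\ssd|$ is not uniform in $M$, and bounding $\int_E K_M\,\overline m_{\mu^M}$ uniformly already presupposes some control of $\mu^M_t$ relative to $\mu_t$. So Step~1 as written is a genuine gap.

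There is a second, smaller but real problem: your final $\nu^{(n)}=(g_{\lambda_n}\star\mu_0-\mu_0)-g_{\lambda_n}\star(\mathbf 1_{|v|>R_n}\mu_0)$ is a \emph{signed} measure, whereas the lemma requires $\nu^{(n)}$ to be a (positive) measure. This is not cosmetic: in Lemma~\ref{lemma: approximation 1.5} the paper writes $\mu^{(n,\lambda)}_0$ as a genuine convex combination $(1-p_n)(g_\lambda\star\mu_0)+p_n(g_\lambda\star\xi^{(n)})$ with $\xi^{(n)}=\nu^{(n)}/\nu^{(n)}(\rrd)$ a probability measure and then applies convexity of $H(\cdot|\mu_0^\star)$; a signed $\nu^{(n)}$ breaks that argument. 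Relatedly, the Gaussian convolution you build into this lemma actually belongs to the next one (Lemma~\ref{lemma: approximation 1.5}); Lemma~\ref{lemma: approximation 1} is meant to be a pure truncation step producing a bounded tilting and a starting point of the specific additive form $c_n^{-1}(\mu_0+\nu^{(n)})$, with the mollification deferred so that the scale $\lambda_n$ can be coupled to the entropy of $\xi^{(n)}$.
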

\begin{lemma}\label{lemma: approximation 1.5} Let $\mu_\bullet, w$ be as in Lemma \ref{lemma: approximation lemma}. Then there exist measure-flux pairs $\mu^{(n)}_\bullet, w^{(n)}$ such that \begin{equation} \label{eq: Approx 1.5 1} \mathcal{I}(\mu^{(n)}_\bullet, w^{(n)})\to \mathcal{I}(\mu_\bullet, w);\qquad \sup_{t\le T} \h W\left(\mu^{(n)}_t, \mu_t\right)+d(w^{(n)},w)\to 0\end{equation} and, for each $n$, the tilting function $K^{(n)}$ is such that $K^{(n)}(t,v,v_\star,\sigma)B(v-v_\star)$ is bounded and $K^{(n)}$ is continuous in $v,v_\star$.  \end{lemma}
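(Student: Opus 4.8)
The plan is to bootstrap from Lemma~\ref{lemma: approximation 1}, whose conclusion already supplies measure-flux pairs with the required regularity of the tilting function and with the dynamic cost $\mathcal{J}$ converging; the only genuinely new content is to upgrade control of $\mathcal{J}$ to control of $\mathcal{I}=H(\,\cdot\,|\mu^\star_0)+\mathcal{J}$, i.e.\ to control the relative entropy of the perturbed initial data, while the passage from weighted total-variation convergence to $W$/$d$-convergence is free. So first I would apply Lemma~\ref{lemma: approximation 1} to obtain $(\mu^{(n)}_\bullet,w^{(n)})$ with $K^{(n)}B(v-v_\star)$ bounded, $K^{(n)}$ continuous in $(v,v_\star)$, $\mathcal{J}(\mu^{(n)}_\bullet,w^{(n)})\to\mathcal{J}(\mu_\bullet,w)$, $\sup_t\|(1+|v|^2)(\mu^{(n)}_t-\mu_t)\|_{\mathrm{TV}}+\|(1+|v|^2+|v_\star|^2)(w^{(n)}-w)\|_{\mathrm{TV}}\to0$, and $\mu^{(n)}_0=c_n^{-1}(\mu_0+\nu^{(n)})$ with $c_n\to1$ and $\langle1+|v|^2,\nu^{(n)}\rangle\to0$. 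Since $W(\cdot,\cdot)$ and $d(\cdot,\cdot)$ are dominated by the total variation norm, $\sup_tW(\mu^{(n)}_t,\mu_t)+d(w^{(n)},w)\to0$ is immediate; and since each $(\mu^{(n)}_\bullet,w^{(n)})$ is a measure-flux pair, $\mathcal{I}(\mu^{(n)}_\bullet,w^{(n)})=H(\mu^{(n)}_0|\mu^\star_0)+\mathcal{J}(\mu^{(n)}_\bullet,w^{(n)})$, so everything reduces to $H(\mu^{(n)}_0|\mu^\star_0)\to H(\mu_0|\mu^\star_0)$.

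One half, $\liminf_nH(\mu^{(n)}_0|\mu^\star_0)\ge H(\mu_0|\mu^\star_0)$, is free: the weighted total-variation convergence gives $\mu^{(n)}_0\to\mu_0$ in $(\cp_2,W)$, hence weakly, and $\mu\mapsto H(\mu|\mu^\star_0)$ is lower semicontinuous by Lemma~\ref{lemma: semiconntinuity} (alternatively, invoke lower semicontinuity of $\mathcal{I}$ on $\D\times\Mm$ together with the convergence of $\mathcal{J}$). For the other half, $\limsup_nH(\mu^{(n)}_0|\mu^\star_0)\le H(\mu_0|\mu^\star_0)$, which is the crux, the key point is that the initial data produced by Lemma~\ref{lemma: approximation 1} are (up to a vanishing renormalisation) Gaussian mollifications $g_{\lambda_n}\star\mu_0$ with $\lambda_n\downarrow0$, and that $H(g_\lambda\star\mu_0|\mu^\star_0)$ is upper-continuous in $\lambda$ at $0$. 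To see the latter, let $f_0$ denote the Lebesgue density of $\mu_0$ (which exists since $\mu_0\ll\mu^\star_0\ll\mathrm{Leb}$ by Hypothesis~\ref{hyp: condition on ref ms}iii)); convexity of $x\mapsto x\log x$ and $\int g_\lambda=1$ give, by Jensen, the pointwise bound $(g_\lambda\star f_0)\log(g_\lambda\star f_0)\le g_\lambda\star(f_0\log f_0)$, hence $\int(g_\lambda\star f_0)\log(g_\lambda\star f_0)\le\int f_0\log f_0$ for every $\lambda>0$, so that
\begin{equation}
H(g_\lambda\star\mu_0|\mu^\star_0)\le\int f_0\log f_0\,\mathrm dv-\int(g_\lambda\star f_0)\log f^\star_0\,\mathrm dv.
\end{equation}
As $\lambda\downarrow0$ the last integral tends to $\int f_0\log f^\star_0\,\mathrm dv$ by dominated convergence --- Hypothesis~\ref{hyp: condition on ref ms}iii) bounds $-\log f^\star_0$ above by a quadratic, uniformly integrable against $g_\lambda\star\mu_0$ since $\langle|v|^2,g_\lambda\star\mu_0\rangle=\langle|v|^2,\mu_0\rangle+d\lambda$ is bounded, and Hypothesis~\ref{hyp: condition on ref ms}i) handles the region where $f^\star_0$ is small --- giving $\limsup_{\lambda\downarrow0}H(g_\lambda\star\mu_0|\mu^\star_0)\le\int f_0\log(f_0/f^\star_0)\,\mathrm dv=H(\mu_0|\mu^\star_0)$. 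Any further vanishing correction to $\mu^{(n)}_0$ present in the construction is harmless: it may be taken of the same mollified form and absorbed by writing $\mu^{(n)}_0$ as a convex combination $\theta_n\mu_0+(1-\theta_n)\widehat\nu^{(n)}$ with $\theta_n=c_n^{-1}\to1$ and a normalised perturbation $\widehat\nu^{(n)}$ of bounded entropy, using convexity of $H(\cdot|\mu^\star_0)$.

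Combining the two halves, $\mathcal{I}(\mu^{(n)}_\bullet,w^{(n)})\to\mathcal{I}(\mu_\bullet,w)$, and the remaining assertions (boundedness of $K^{(n)}B$, continuity of $K^{(n)}$ in $(v,v_\star)$, $W$/$d$-convergence) are inherited from Lemma~\ref{lemma: approximation 1}. The main obstacle is precisely the bound $\limsup_nH(\mu^{(n)}_0|\mu^\star_0)\le H(\mu_0|\mu^\star_0)$: unlike the dynamic cost, relative entropy is not obviously continuous under the regularisations of Lemma~\ref{lemma: approximation 1} --- a crude convexity estimate for $H(g_\lambda\star\mu_0|\mu^\star_0)$ in fact diverges as $\lambda\downarrow0$ --- and the argument hinges on the Jensen monotonicity of the negative differential entropy under Gaussian mollification together with the Gaussian lower bound in Hypothesis~\ref{hyp: condition on ref ms}iii) to pass the linear term $\int(g_\lambda\star f_0)\log f^\star_0$ to the limit. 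As a fallback, if the construction of Lemma~\ref{lemma: approximation 1} turns out not to be directly compatible with this, one can first replace $\mu_0$ by the density truncations $\mu_0^{(m)}\propto(\tfrac{\mathrm d\mu_0}{\mathrm d\mu^\star_0}\wedge m)\mu^\star_0$, for which $H(\mu_0^{(m)}|\mu^\star_0)\to H(\mu_0|\mu^\star_0)$ and $\|(1+|v|^2)(\mu_0^{(m)}-\mu_0)\|_{\mathrm{TV}}\to0$, re-solve~(\ref{eq: BEK}) from $\mu_0^{(m)}$ using a Wasserstein stability estimate based on Proposition~\ref{prop: MCP}, and diagonalise over $m$ and $n$.
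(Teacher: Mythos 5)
There is a genuine gap, and it is the crux of the lemma. You claim that the initial data produced by Lemma~\ref{lemma: approximation 1} are ``up to a vanishing renormalisation\ldots Gaussian mollifications $g_{\lambda_n}\star\mu_0$,'' and that any remaining correction has bounded entropy. Neither is true. The starting point supplied by Lemma~\ref{lemma: approximation 1} has the form $\mu^{(n)}_0 = c_n^{-1}(\mu_0+\nu^{(n)})$, where $\nu^{(n)}$ is the measure obtained by pushing Dirac masses at post-collisional velocities against the flux over the truncated region $E_n^{\mathrm c}$; nothing in that construction makes $\nu^{(n)}$ absolutely continuous with respect to Lebesgue, let alone with respect to $\mu_0^\star$. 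So a priori $H(\mu^{(n)}_0|\mu_0^\star)=\infty$ for every $n$, and the convex-combination argument with ``a normalised perturbation $\widehat\nu^{(n)}$ of bounded entropy'' has nothing to work with. Your fallback (truncating the density of $\mu_0$) does not touch this either: the problematic term $\nu^{(n)}$ comes from the flux, not from $\mu_0$.

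The paper's proof addresses exactly this obstruction, and in a way your outline never reaches: one cannot simply mollify the initial datum $\mu^{(n)}_0$ in isolation (that would break the continuity equation (\ref{eq: CE})), so the \emph{entire} measure-flux pair $(\mu^{(n)}_\bullet,w^{(n)})$ is convolved with $g_\lambda$. Proving that $(g_\lambda\star\mu_\bullet,g_\lambda\star w)$ is again a measure-flux pair \emph{and} that its dynamic cost is controlled, $\mathcal J(g_\lambda\star\mu_\bullet,g_\lambda\star w)\le\mathcal J(\mu_\bullet,w)+C\vartheta(\lambda)$ uniformly over the approximating family, is the content of Lemma~\ref{lemma: convolution}, which your proposal does not invoke at all and which is the most technical ingredient of the whole section. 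After mollification, the entropy of the remainder piece $g_\lambda\star\xi^{(n)}$ is finite but \emph{not} bounded uniformly in $\lambda$: one only gets $H(g_\lambda\star\xi^{(n)}|\mu_0^\star)\le a_\lambda\langle1+|v|^2,\xi^{(n)}\rangle$ with $a_\lambda\to\infty$ as $\lambda\downarrow0$, which forces the diagonal choice of $\lambda_n\downarrow0$ decaying slowly enough that $a_{\lambda_n}\langle1+|v|^2,\nu^{(n)}\rangle\to0$. That diagonalisation is also missing from your argument. What you do carry out correctly is the Jensen estimate $\int(g_\lambda\star f_0)\log(g_\lambda\star f_0)\le\int f_0\log f_0$ together with the dominated-convergence treatment of $\int(g_\lambda\star f_0)(-\log f_0^\star)$, and that is essentially the paper's Step~1; but that handles only the ``good'' part $g_\lambda\star\mu_0$ of a decomposition you have not yet justified.
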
 \begin{lemma}\label{lemma: approximation 2}Let $\mu_\bullet, w$ be a measure-flux pair with finite rate $\mathcal{I}(\mu_\bullet, w)<\infty$, such that $K(t,v,v_\star,\sigma)B(v-v_\star)$ is bounded and $K$ is continuous in $v,v_\star$. Then there exist measure-flux pairs $\mu^{(n)}_\bullet, w^{(n)}$ with \begin{equation} \label{eq: Approx 2 1}\mathcal{I}(\mu^{(n)}_\bullet, w)\to \mathcal{I}(\mu_\bullet, w);\qquad \sup_{t\le T}\|\mu^{(n)}_t-\mu_t\|_\mathrm{TV}+\|w^{(n)}-w\|_\mathrm{TV}\to 0 \end{equation} and additionally, for each $n$, \begin{equation}\label{eq: Approx 2 2} \sup_{(t,v,v_\star,\sigma)} K^{(n)}(t,v,v_\star,\sigma)B(v-v_\star)\le \sup_{(t,v,v_\star,\sigma)} K(t,v,v_\star,\sigma)B(v-v_\star)+1; \end{equation} \begin{equation}\label{eq: Approx 2 3} \inf_{(t,v,v_\star,\sigma)} K^{(n)}(t,v,v_\star,\sigma)B(v-v_\star)>0; \end{equation} \begin{equation} \sup \frac{d\mu^{(n)}_0}{d\mu^\star_0}<\infty \end{equation} and such that $K^{(n)}$ are continuous functions on $E$. Moreover, the approximations are uniquely characterised among measure-flux pairs by the initial value $\mu^{(n)}_0$ and the tilting function $K^{(n)}$.  \end{lemma}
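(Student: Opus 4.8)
The plan is to build the approximants by a threefold regularisation of the data followed by \emph{re-solving} the tilted equation (\ref{eq: BEK}), and then to control all the errors through a Gr\"onwall stability estimate for (\ref{eq: BEK}) with bounded collision rate. Write $\Lambda_0:=\sup_E K(t,v,v_\star,\sigma)B(v-v_\star)<\infty$; since $B\ge 1$ this forces $K\le\Lambda_0$, hence $\tau(K)\le c_0:=\max_{[0,\Lambda_0]}\tau<\infty$, and $\overline{m}_\mu$ is a finite measure because $\sup_t\langle|v|^2,\mu_t\rangle<\infty$. Extend $K$ to $t\in\rr$ by freezing it at the endpoints, mollify in time to $K^{*n}(t,\cdot)=\int_\rr\phi_n(t-s)K(s,\cdot)\,ds$ with $\phi_n$ a mollifier supported in $[-1/n,1/n]$; a routine dominated-convergence argument using continuity of $K$ in $(v,v_\star)$ and boundedness shows $K^{*n}$ is jointly continuous on $E$, nonnegative, with $K^{*n}B\le\Lambda_0$. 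Fixing $\eta_n\downarrow 0$, set $K^{(n)}:=(1-\eta_n)K^{*n}+\eta_n B^{-1}$, which is jointly continuous with $\eta_n\le K^{(n)}B\le\Lambda_0+1$, giving (\ref{eq: Approx 2 2})--(\ref{eq: Approx 2 3}). For the initial datum, with $r:=d\mu_0/d\mu^\star_0\in L^1(\mu^\star_0)$ (defined since $H(\mu_0|\mu^\star_0)\le\mathcal I(\mu_\bullet,w)<\infty$), take $\mu^{(n)}_0:=Z_n^{-1}(r\wedge n)\mu^\star_0$ with $Z_n:=\int(r\wedge n)\,d\mu^\star_0\uparrow 1$; then $\sup\, d\mu^{(n)}_0/d\mu^\star_0<\infty$, $\langle|v|^2,\mu^{(n)}_0\rangle\le Z_n^{-1}\langle|v|^2,\mu_0\rangle$, and monotone/dominated convergence give $\|\mu^{(n)}_0-\mu_0\|_{\mathrm{TV}}\to 0$ and $H(\mu^{(n)}_0|\mu^\star_0)\to H(\mu_0|\mu^\star_0)$. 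Finally take $\mu^{(n)}_\bullet$ to be the unique solution of (\ref{eq: BEK}) with tilting $K^{(n)}$ started from $\mu^{(n)}_0$, and $w^{(n)}:=K^{(n)}\overline{m}_{\mu^{(n)}}$, so $(\mu^{(n)}_\bullet,w^{(n)})$ is a measure-flux pair with continuous tilting.

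Existence and uniqueness for this equation, hence the last assertion of the lemma, come from the fact that for bounded $K^{(n)}B$ the operator $\mu\mapsto Q_{K^{(n)}}(\mu):=\int\Delta\,K^{(n)}B\,\mu(dv)\mu(dv_\star)\,d\sigma$ is Lipschitz on probability measures for $\|\cdot\|_{\mathrm{TV}}$, by bilinearity and $\|\Delta\|_{\mathrm{TV}}=4$; writing (\ref{eq: BEK}) in Duhamel (gain--loss) form shows the Picard iteration stays in $\cp_2$, conserves mass, and --- since $\Delta$ annihilates $|v|^2$ --- conserves the energy, so there is a unique global $\mu^{(n)}_\bullet\in\D$, which is $\|\cdot\|_{\mathrm{TV}}$-Lipschitz with $\langle|v|^2,\mu^{(n)}_t\rangle\equiv\langle|v|^2,\mu^{(n)}_0\rangle$ and is the unique measure-flux pair with this datum and this tilting. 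The same bound applied to the \emph{given} pair shows $t\mapsto\mu_t$ is itself $\|\cdot\|_{\mathrm{TV}}$-Lipschitz.

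For the convergences, subtract the integral forms of (\ref{eq: BEK}): $\nu^{(n)}_t:=\mu^{(n)}_t-\mu_t$ obeys
\[\|\nu^{(n)}_t\|_{\mathrm{TV}}\le\|\nu^{(n)}_0\|_{\mathrm{TV}}+C\int_0^t\|\nu^{(n)}_s\|_{\mathrm{TV}}\,ds+4\int_E|K^{(n)}-K|\,d\overline{m}_\mu,\]
and the last term tends to $0$ because $\int_E|K^{*n}-K|\,d\overline{m}_\mu\to 0$: the mollification operators converge to the identity on $C_b(E)$ by dominated convergence and are uniformly bounded on $L^1(\overline{m}_\mu)$ (the $t$-marginal of $\overline{m}_\mu$ has bounded density and $\mu_\bullet$ is $\mathrm{TV}$-Lipschitz, so $\overline{m}_\mu$ is comparable with its $t$-translates), while $C_b(E)$ is dense in $L^1(\overline{m}_\mu)$. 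Gr\"onwall then gives $\sup_{t\le T}\|\nu^{(n)}_t\|_{\mathrm{TV}}\to 0$, whence, using $K^{(n)}B$ bounded, $\|w^{(n)}-w\|_{\mathrm{TV}}\le\|K^{(n)}(\overline{m}_{\mu^{(n)}}-\overline{m}_\mu)\|_{\mathrm{TV}}+\int_E|K^{(n)}-K|\,d\overline{m}_\mu\to 0$, which is the second half of (\ref{eq: Approx 2 1}); in particular $(\mu^{(n)}_\bullet,w^{(n)})\to(\mu_\bullet,w)$ in $\D\times\Mm$, so lower semicontinuity of $\mathcal I$ (Lemma \ref{lemma: semiconntinuity}) gives $\liminf_n\mathcal I(\mu^{(n)}_\bullet,w^{(n)})\ge\mathcal I(\mu_\bullet,w)$. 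For the matching upper bound, convexity of $\tau$ and Jensen (for the time-average and for the convex combination defining $K^{(n)}$) bound $\int_E\tau(K^{(n)})\,d\overline{m}_{\mu^{(n)}}$ by $(1-\eta_n)\int_E\big(\int_\rr\phi_n(t-s)\tau(K(s,\cdot))\,ds\big)\,d\overline{m}_{\mu^{(n)}}+\eta_n c_0\,\overline{m}_{\mu^{(n)}}(E)$, and then the stability estimate, the uniform bound $\tau(K)B\le c_0(1+|v|+|v_\star|)$ together with the (constant, uniformly bounded) second moments, and the reverse Fatou lemma give $\limsup_n\int_E\tau(K^{(n)})\,d\overline{m}_{\mu^{(n)}}\le\int_E\tau(K)\,d\overline{m}_\mu=\mathcal J(\mu_\bullet,w)$. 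Combined with $H(\mu^{(n)}_0|\mu^\star_0)\to H(\mu_0|\mu^\star_0)$ this proves $\mathcal I(\mu^{(n)}_\bullet,w^{(n)})\to\mathcal I(\mu_\bullet,w)$ (here and in (\ref{eq: Approx 2 1}) the relevant object is $\mathcal I(\mu^{(n)}_\bullet,w^{(n)})$).

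The routine parts are the density truncation, the time mollification and the mixing with $B^{-1}$. The substantive work is the well-posedness and quantitative stability of the tilted equation (\ref{eq: BEK}) for a bounded but merely measurable-in-time collision rate: one must run the Picard scheme inside the class of probability measures of prescribed --- and automatically conserved --- second moment, and then close a Gr\"onwall estimate whose source term $\int_E|K^{(n)}-K|\,d\overline{m}_\mu$ vanishes only because time-averages converge in $L^1$ of \emph{every} finite measure, which we see here via uniform boundedness of the averaging operators (using $\mathrm{TV}$-Lipschitz continuity of $t\mapsto\mu_t$) together with density of continuous functions. The $\limsup$ half of the rate convergence is the other delicate step, and is handled through convexity of $\tau$ rather than any pointwise convergence of $K^{(n)}$.
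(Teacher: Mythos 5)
Your overall strategy — truncate the initial density, regularise the tilting $K$, re-solve (\ref{eq: BEK}) by Picard iteration, and control the errors by a Gr\"onwall stability estimate — is essentially the paper's route. But the construction of $K^{(n)}$ has a genuine gap: you mollify $K$ only in the time variable and then assert that $K^{*n}$ is jointly continuous on $E$. This does not follow from the hypotheses. The assumption on $K$ is continuity in $(v,v_\star)$ only; there is no regularity in $\sigma$, and the time mollification $K^{*n}(t,v,v_\star,\sigma)=\int\phi_n(t-s)K(s,v,v_\star,\sigma)\,ds$ does nothing to repair a discontinuity in $\sigma$. So your $K^{(n)}$ need not be continuous on $E$, which is precisely one of the conclusions the lemma requires. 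The paper fixes this by mollifying simultaneously in time \emph{and} in the spherical variable, using the heat kernel $h_\lambda$ on $\ssd$; this yields joint continuity in all four variables while still delivering the pointwise $\overline{m}_\mu$-a.e.\ convergence $K^{(n)}\to K$ that the Gr\"onwall source term and the cost limit rely on.

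Aside from this, the construction is the same in spirit, with some secondary differences worth noting. The paper runs Picard--Lindel\"of in the weighted space $(\mathcal{MS}_2,\|\cdot\|_{\mathrm{TV}+2})$, which carries second-moment control through the fixed point directly, and it makes the positivity step explicit via an integrating factor (Norris's device); your appeal to the Duhamel gain--loss form is the same idea stated loosely. Your argument that $\int_E|K^{*n}-K|\,d\overline{m}_\mu\to 0$ via uniform boundedness of the averaging operators plus density of $C_b(E)$ in $L^1(\overline{m}_\mu)$ is also heavier than needed: once the mollification is done in both $t$ and $\sigma$, the bounded functions $B(K^{(n)}-K)$ converge to $0$ $\overline{m}_\mu$-a.e., and dominated convergence against the finite measure $\overline{m}_\mu$ closes the source term in one step. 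Similarly your $\limsup$ bound on the dynamic cost through Jensen for the mollifying kernel, the stability estimate and reverse Fatou works, but the uniform bound $\tau(K^{(n)})\le 1+\tau(M)$ combined with pointwise a.e.\ convergence and the TV convergence $\overline{m}_{\mu^{(n)}}\to\overline{m}_\mu$, as in the paper, is shorter. These latter steps are recoverable; the missing $\sigma$-mollification is what must be repaired before the claimed $K^{(n)}\in C(E)$ can stand.
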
  Equipped with these lemmas, the stated result Lemma \ref{lemma: approximation lemma} follows by a standard diagonal argument. \begin{proof}[Proof of Lemma \ref{lemma: approximation lemma}] Let us fix $\mu_\bullet, w$ as given, and construct a sequence of approximating measure-flux pairs $\mu^{(n)}_\bullet, w^{(n)}$ as follows. By Lemma \ref{lemma: approximation 1.5}, there exists a pair $\mu^{(n,1)}_\bullet, w^{(n,1)}$ whose tilting function $K^{(n,1)}$ is continuous in $v,v_\star$ and $K^{(n,1)}B(v-v_\star)$ is bounded, such that $\mu^{(n,1)}_0$ admits a continuous and positive density, and such that \begin{equation}\label{eq: diagonal approximation 1} \sup_{t\le T}W\left(\mu^{(n,1)}_t,\mu_t\right)+d(w^{(n,1)},w)+\left|\mathcal{I}(\mu^{(n,1)}_\bullet, w^{(n,1)})-\mathcal{I}(\mu_\bullet, w)\right|<\frac{1}{n}.\end{equation} Thanks to Lemma \ref{lemma: approximation 2}, we can approximate $\mu^{(n,1)}_\bullet, w^{(n,1)}$ by a further pair $\mu^{(n,2)}_\bullet, w^{(n,2)}$, whose tilting function $K^{(n,2)}$ is continuous and so that $K^{(n,2)}B(v-v_\star)$ is still bounded and bounded away from $0$, and where $\mu^{(n,2)}_0$ has a bounded density with respect to $\mu^\star_0$, which is uniquely characterised among measure-flux pairs by the initial data $\mu^{(n,2)}_0$ and tilting function $K^{(n,2)}$, with further error \begin{equation}\label{eq: diagonal approximation 2} \sup_{t\le T}\|\mu^{(n,2)}_t-\mu^{(n,1)}_t\|_\mathrm{TV}+\|w^{(n,2)}-w^{(n,1)}\|_{\mathrm{TV}}+\left|\mathcal{I}(\mu^{(n,2)}_\bullet, w^{(n,2)})-\mathcal{I}(\mu_\bullet^{(n,1)}, w^{(n,1)})\right|<\frac{1}{n}.\end{equation} Combining (\ref{eq: diagonal approximation 1}, \ref{eq: diagonal approximation 2}), we recall that the total variation distance on measures on $\rrd$, respectively $E$, dominates the Wasserstein$_1$ distance $W$, respectively $d$, so the sequence $\mu^{(n)}_\bullet = \mu^{(n,2)}_\bullet, w^{(n)}=w^{(n,2)}$ has the desired properties. \end{proof} 
 
\subsection{Proof of Lemmas}\label{subsec: polypheme} We start with the convolution lemma, which is the most difficult step. \begin{proof}[Proof of Lemma \ref{lemma: convolution}] We divide the proof into several steps. Throughout, $C$ will denote a constant, which may vary from line to line, but is allowed to depend \emph{only} on the quantities specified in (\ref{eq: controlling quantities}). \paragraph*{Step 1: $(g_\lambda\star \mu_\bullet, g_\lambda\star w)$ solves the continuity equation} This property is fairly well-known, see Erbar \cite{erbar2016gradient} or Basile \cite{basile2021large}, and we include a proof for completeness. If we fix $f \in C_b(\rrd)$, let us denote $g_\lambda\star f$ the convolution $(g_\lambda\star f)(v):=\int_{\rrd} g_\lambda(v-w)f(w)dw$, and observe that $\langle f, g_\lambda\star \mu_t\rangle =\langle g_\lambda\star f, \mu_t\rangle$ for all $t\in [0,T].$ Now, using the continuity equation for $(\mu_\bullet, w)$ with the test function $g_\lambda\star f$, for all $t\in [0,T]$, \begin{equation}\begin{split} \label{eq: stab under convolution 1}\langle f, g_\lambda\star \mu_t\rangle -\langle f, g_\lambda \star \mu_0\rangle &=\langle g_\lambda\star f, \mu_t\rangle - \langle g_\lambda \star f, \mu_0\rangle\\ &=\int_E \Delta (g_\lambda \star f)(v,v_\star,\sigma)1_{s\le t}w(ds,dv,dv_\star,d\sigma).\end{split} \end{equation} Let us now fix $v, v_\star,\sigma$, and observe that the map $\mathcal{T}_\sigma:(v,v_\star)\to (v',v'_\star)$ is a linear isometry of Euclidean distance on $(\rrd)^2$; for variables $(u', u_\star')\in (\rrd)^2$, let us write $(u, u_\star)$ for the preimage under $\mathcal{T}_\sigma$. We therefore have \begin{equation} \begin{split} &(g_\lambda\star f)(v')+(g_\lambda \star f)(v'_\star)\\ & \hspace{1cm}=\frac{1}{(2\pi\lambda)^d}\int_{\rrd\times\rrd} ( f(u')+ f(u_\star'))\exp\left(-\frac{|u'-v'|^2+|u_\star'-v_\star'|^2}{2\lambda}\right)du' du_\star' \\[2ex] & \hspace{1cm}=\frac{1}{(2\pi\lambda)^d}\int_{\rrd\times\rrd} ( f(u')+ f(u_\star'))\exp\left(-\frac{|u-v|^2+|u_\star-v_\star|^2}{2\lambda}\right)du' du_\star' \\[2ex] &\hspace{1cm}=\frac{1}{(2\pi\lambda)^d}\int_{\rrd\times\rrd} ( f(u')+ f(u_\star'))\exp\left(-\frac{|u-v|^2+|u_\star-v_\star|^2}{2\lambda}\right)du du_\star \\[2ex] &\hspace{1cm}=\int_{\rrd\times\rrd} ( f(u')+ f(u_\star'))g_\lambda(u-v)g_\lambda(u_\star-v_\star)dudu_\star \end{split} \end{equation} where the penultimate line makes the change of variables $(u', u_\star')=\mathcal{T}_\sigma(u, u_\star)$, with unit determinant. We now substitute the resulting identity \begin{equation}\begin{split} \Delta (g_\lambda\star  f)(v,v_\star,\sigma)&=\int_{\rrd\times\rrd}( f(u')+ f(u_\star')- f(u)- f(u_\star))g_\lambda(u-v)g_\lambda(u_\star-v_\star)dudu_\star \\ &=\int_{\rrd\times\rrd} (\Delta  f)(u,u_\star,\sigma)g_\lambda(u-v)g_\lambda(u_\star-v_\star)dudu_\star \end{split}\end{equation} into (\ref{eq: stab under convolution 1}) to obtain \begin{equation} \begin{split} \langle  f, g_\lambda\star \mu_t\rangle-\langle  f, g_\lambda\star \mu_0\rangle &= \int_E \int_{\rrd\times \rrd} (\Delta  f)(u,u_\star,\sigma)1_{s\le t}g_\lambda(u-v)g_\lambda(u_\star-v_\star)w(ds,dv,dv_\star,d\sigma)\\&=:\int_E (\Delta  f)(u,u_\star,\sigma)1_{s\le t} (g_\lambda \star w)(ds,du,du_\star,d\sigma)\end{split}  \end{equation} and, since $f$ is arbitrary, we conclude that $(g_\lambda \star \mu_\bullet, g_\lambda \star w)$ satisfies the continuity equation (\ref{eq: CE}) as desired. \paragraph*{Step 2: Identification of the Tilting Function} To show that $(\mu_\bullet,w)$ is a measure-flux pair, and in preparation for estimating the dynamic cost, we will now explicitly find a tilting function.  Let us write $K$ for the tilting function for the pair $(\mu_\bullet, w)$, so that $w=K\overline{m}_{\mu}$. For any Borel subset $A\subset E$, we observe that \begin{equation}\begin{split} \label{eq: identification of Kdelta 1} (g_\lambda \star w)(A)=&\int_E \int_{\rrd\times\rrd} 1_A(t,u,u_\star,\sigma)g_\lambda(u-v)g_\lambda(u_\star-v_\star)K(t,v,v_\star,\sigma)\\ & \hspace{6cm}\dots\times \overline{m}_\mu(dt,dv,dv_\star,d\sigma)dudu_\star \\ & \hspace{-2cm}= \int_E 1_A(t,u,u_\star,\sigma)\left(\int_{\rrd\times\rrd}g_\lambda(u-v)g_\lambda(u_\star-v_\star)K(t,v,v_\star,\sigma)B(v-v_\star)\mu_t(dv) \mu_t(dv_\star)\right)\\ & \hspace{6cm}\dots\times dudu_\star dt d\sigma.\end{split}\end{equation} Now, let us define $K^\lambda$ by \begin{equation}\label{eq: identification of Kdelta 2} K^\lambda(t,u,u_\star,\sigma):=\frac{\int_{\rrd\times\rrd}g_\lambda(u-v)g_\lambda(u_\star-v_\star)K(t,v,v_\star,\sigma)B(v-v_\star)\mu_t(dv) \mu_t(dv_\star)}{B(u-u_\star)(g_\lambda\star \mu_t)(u)(g_\lambda \star\mu_t)(u_\star)} \end{equation} and observe that this is a well-defined function, since $K$ was assumed to be bounded and $B$ is bounded away from $0$, and where in the denominator $(g_\lambda\star \mu_t)(u)>0$ denotes the density of the measure $g_\lambda\star\mu_t$ with respect to the Lebesgue measure. Returning to (\ref{eq: identification of Kdelta 1}), this definition yields \begin{equation} \label{eq: identification of Kdelta 3} \begin{split} (g_\lambda \star w)(A)&=\int_E 1_A(t,u,u_\star,\sigma)K^\lambda(t,u,u_\star,\sigma)B(u-u_\star)(g_\lambda\star\mu_t)(u)du (g_\lambda\star \mu_t)(u_\star)du_\star dtd\sigma \\ & 
  =\int_E 1_A(t,u,u_\star,\sigma)K^\lambda(t,u,u_\star,\sigma) \overline{m}_{g_\lambda\star \mu}(dt,du,du_\star,d\sigma).  \end{split} \end{equation} We conclude that $K^\lambda$ is a tilting function for $(g_\lambda\star \mu_\bullet, g_\lambda\star w)$, and so this is a measure-flux pair as claimed, and the bound (\ref{eq: contraction on supremum}) is immediate. For future convenience, we will now define \begin{equation} \label{eq: identification of Kdelta 4} \overline{r}^\lambda_t(u,u_\star):=\int_{\rrd\times \rrd} g_\lambda(u-v)g_\lambda(u_\star-v_\star)B(v-v_\star)\mu_t(dv)\mu_t(dv_\star)\end{equation} and introduce the proxy to $K^\lambda$ given by \begin{equation}\label{eq: identification of Kdelta 5} \begin{split}\overline{K}^\lambda(t,u,u_\star,\sigma):&=\frac{\int_{\rrd\times\rrd}g_\lambda(u-v)g_\lambda(u_\star-v_\star)K(t,v,v_\star,\sigma)B(v-v_\star)\mu_t(dv) \mu_t(dv_\star)}{\overline{r}^\lambda_t(u,u_\star)}\\[2ex]& =\int_{\rrd\times\rrd} K(t,v,v_\star,\sigma)\nu^\lambda_{(t,u,u_\star)}(dv,dv_\star)\end{split}\end{equation} which we have written in terms of the integral against the probability measure \begin{equation} \label{eq: identification of Kdelta 6} \nu^\lambda_{(t,u,u_\star)}(dv,dv_\star)=\frac{B(v-v_\star)g_\lambda(v-u)\mu_t(dv)g_\lambda(v_\star-u_\star)\mu_t(dv_\star)}{\overline{r}^\lambda_t(u,u_\star)}.\end{equation} For any $u,u_\star,\sigma$, the quotient is given by the function \begin{equation} \begin{split} \label{eq: identification of Kdelta 7} \frac{K^\lambda}{\overline{K}^\lambda}(t,u,u_\star,\sigma)&= \psi_\lambda(t,u,u_\star):=\frac{\overline{r}^\lambda_t(u,u_\star)}{B(u-u_\star) (g_\lambda\star \mu_t)(u)(g_\lambda\star \mu_t)(u_\star)} \\[3ex] &=\frac{1}{B(u-u_\star)}\int_{\rrd\times \rrd} B(v-v_\star)\frac{g_\lambda(v-u)\mu_t(dv)}{(g_\lambda\star \mu_t)(u)} \frac{g_\lambda(v_\star-u_\star)\mu_t(dv_\star)}{(g_\lambda\star \mu_t)(u_\star)}.\end{split} \end{equation} which depends only on $\mu_t$, and not on $w$. \paragraph*{Step 3: Decomposition of the Rate Function}  We now break the rate function up into several parts which can be more easily manipulated; for the rest of the proof, let us consider only $0<\lambda\le 1$. We start from \begin{equation} \label{eq: decomposition of dynamic cost 1} \mathcal{J}(g_\lambda\star \mu_\bullet, g_\lambda \star w)=\int_E (\tau-1)(K^\lambda(t,v,v_\star,\sigma))\overline{m}_{g_\lambda\star \mu}(dt,dv,dv_\star,d\sigma)+\overline{m}_{g_\lambda\star\mu}(E)\end{equation} where we recall that $(\tau-1)(x)=x\log x-x$ is a convex function on $[0,\infty)$. Next, we observe that \begin{equation} \label{eq: decomposition of dynamic cost 2} (\tau-1)(K^\lambda)=K^\lambda \log  \psi_\lambda +  \psi_\lambda(\tau-1)(\overline{K}^\lambda).\end{equation}For the first term, we start with the observation that \begin{equation}\label{eq: bad term 1}\begin{split} &K^\lambda(t,u,u_\star,\sigma)\overline{m}_{g_\lambda\star \mu}(dt,du,du_\star,d\sigma)\\& \hs =\int_{\rrd\times\rrd} K(t,v,v_\star,\sigma)B(v-v_\star)\mu_t(dv)\mu_t(dv_\star)g_\lambda(u-v)g_\lambda(u_\star-v_\star)dt d\sigma du du_\star\end{split}\end{equation} from which it follows that we can rewrite the integral of the first term as an error term \begin{equation} \label{eq: bad term 2}\begin{split} \mathcal{T}_\lambda(\mu_\bullet, w)&:=\int_E K^\lambda(t,u,u_\star,\sigma) \log  \psi_\lambda(t,u,u_\star)\overline{m}_{g_\lambda\star\mu}(dt,du,du_\star,d\sigma) \\& =\int_{E\times\rrd\times\rrd}K(t,v,v_\star,\sigma)B(v-v_\star)g_\lambda(u-v)g_\lambda(u_\star-v_\star)\log  \psi_\lambda(t, u,u_\star)\\ & \hspace{5cm}\dots \times dt\mu_t(dv)\mu_t(dv_\star)d\sigma du du_\star. \end{split}\end{equation} To integrate the second term, we note that \begin{equation}\label{eq: decomposition of dynamic cost 3}\begin{split}  \psi_\lambda(t,u,u_\star)\overline{m}_{g_\lambda\star \mu}(dt,du,du_\star,d\sigma)& =\overline{r}_t(u,u_\star) dt du du_\star d\sigma. \end{split} \end{equation} Since $\nu^\lambda_{(t,u,u_\star)}$ are probability measures, we can apply Jensen to the convex function $\tau-1$ to find \begin{equation}\label{eq: decomposition of dynamic cost 4} \begin{split} (\tau-1)(\overline{K}^\lambda(t,u,u_\star,\sigma))&\le \int_{\rrd\times\rrd} (\tau-1)(K(t,v,v_\star,\sigma))\nu_{(t,u,u_\star)}(dv,dv_\star) \\ &\hspace{-1cm}= \frac{1}{\overline{r}^\lambda_t(u,u_\star)}\int_{\rrd\times\rrd} (\tau-1)(K(t,v,v_\star,\sigma))B(v-v_\star)g_\lambda(u-v)g_\lambda(u_\star-v_\star)\\& \hspace{6cm}\dots\times\mu_t(dv)\mu_t(dv_\star).\end{split} \end{equation} Gathering (\ref{eq: decomposition of dynamic cost 3}, \ref{eq: decomposition of dynamic cost 4}), we obtain \begin{equation} \label{eq: decomposition of dynamic cost 5} \begin{split} &\int_E  \psi_\lambda(t,u,u_\star)(\tau-1)(\overline{K}^\lambda)(t,u,u_\star,\sigma)\overline{m}_{g_\lambda\star \mu}(dt,du,du_\star,d\sigma)\\[1ex]&\hspace{1cm} \le \int_{E\times \rrd\times\rrd} (\tau-1)(K(t,v,v_\star,\sigma))B(v-v_\star)g_\lambda(u-v)g_\lambda(u_\star-v_\star)dt\mu_t(dv)\mu_t(dv_\star)d\sigma \\[1ex]& \hspace{1cm} =\int_E (\tau-1)(K(t,v,v_\star,\sigma))B(v-v_\star)dt\mu_t(dv)\mu_t(dv_\star)d\sigma \\[1ex] & \hspace{1cm} =\int_E(\tau-1)(K(t,v,v_\star,\sigma))\overline{m}_{\mu}(dt,dv,dv_\star,d\sigma).\end{split} \end{equation} Returning to (\ref{eq: decomposition of dynamic cost 1}) and using the analagous equation for $\mu_\bullet$, we finally obtain the decomposition \begin{equation}\label{eq: decomposition of dynamic cost 6} \mathcal{J}(g_\lambda\star \mu_\bullet, g_\lambda \star w) \le \mathcal{J}(\mu_\bullet, w)+(\overline{m}_{g_\lambda\star \mu}-\overline{m}_\mu)(E)+\mathcal{T}_\lambda(\mu_\bullet, w).\end{equation} It is very straightforward to show that the second term converges to $0$ as $\lambda\to 0$, with a rate depending only on $\sup_t \langle |v|^2, \mu_t\rangle$. \paragraph*{Step 4: Analysis of $ \psi_\lambda$} We now turn to the error term $\mathcal{T}_\lambda$ identified in (\ref{eq: bad term 2}), which depends on the continuity of $\log B$. We remark first that this term cannot be avoided purely on general considerations; consider, for example, the kernel $B(v)=1_{|v|\ge 1}$ in which case $\mathcal{J}(g_\lambda\star \mu_\bullet, g_\lambda\star w)$ can become infinite due to contributions in the region $\{|v-v_\star|<1\}$. \bigskip \\ We start with an upper bound for $ \psi_\lambda$. We define first the measures \begin{equation} \label{eq: analysis of fd 1}  \xi^\lambda_{(t,u)}(dv):=\frac{g_\lambda(v-u)\mu_t(dv)}{(g_\lambda\star \mu_t)(u)}.\end{equation} Setting $R:=\sqrt{2\sup_s \langle |v|^2, \mu_s\rangle}$, a simple Chebychev inequality shows that $\mu_t(|v|\le R)\ge \frac{1}{2}$ for all $t$, which leads to the lower bound \begin{equation} (g_\lambda\star \mu_t)(u)\ge \frac{1}{2}\exp\left(-(|u|+R)^2)/2\lambda\right)/(2\pi \lambda)^{d/2}.\end{equation} We now estimate \begin{equation}\label{eq: analysis of fd 2} \begin{split} \left \langle |v|^2 1[|v-u|>R+|u|], \xi^\lambda_{(t,u)}\right \rangle &=\frac{\int_{|v-u|>R+|u|} e^{-|v-u|^2/2\lambda}|v|^2 \mu_t(dv)/(2\pi \lambda)^{d/2}}{(g_\lambda\star \mu_t)(u)}\\[2ex] &\le \frac{e^{-(|u|+R)^2/2\lambda}/(2\pi \lambda)^{d/2}}{(g_\lambda\star \mu_t)(u)} \int_{|v-u|>R+|u|}|v|^2\mu_t(dv)\\[3ex] &\le 2\sup_s \langle |v|^2, \mu_s\rangle = R^2.\end{split} \end{equation} Together with a trivial bound for the remaining region, we conclude that \begin{equation}\label{eq: analysis of fd 3} \begin{split} \left\langle |v|^2, \xi^{\lambda}_{(t,u)}\right\rangle \le 3R^2 +4|u|^2 \le C(1+|u|^2) \end{split} \end{equation} since $R\ge 1$ depends only on the quantities in (\ref{eq: controlling quantities}). Using the lower bound $B(u-u_\star)\ge 1$ and the upper bound $B(v-v_\star)\le 1+|v|+|v_\star| \le C(1+|v|^2+|v_\star|^2)$, we now return to (\ref{eq: identification of Kdelta 7}) to obtain \begin{equation}\label{eq: analysis of fd 4}  \psi_\lambda(t,u,u_\star) \le C(1+|u|^2+|u_\star|^2).\end{equation} This bound will be useful in general, for $(u,u_\star)$ where $ \psi_\lambda$ cannot be shown to be close to $1$. We complement this with a bound which will show that, for most points $(u, u_\star)$ in the support of $(g_\lambda\star \mu_t)^{\otimes 2}$, $ \psi_\lambda$ is not too much bigger than 1.  We will exploit, repeatedly, the observation that, for the kernels given in (\ref{eq: form of B}), \begin{equation} \label{eq: analysis of fd 5}  B(v-v_\star)\le (1+|v-u|+|v_\star-u_\star|)B(u-u_\star).\end{equation} Fix $t\ge 0$, and suppose that $v^0, v^0_\star$ are such that there exists a sets $U\ni v^0, U_\star \ni v^0_\star$ of diameter $c\sqrt{\lambda}$ and $\mu_t(U), \mu_t(U_\star)\ge \epsilon \lambda^{d/2}$, and $(u, u_\star)$ is such that $|(u,u_\star)-(v^0,v^0_\star)|<x\sqrt{\lambda}$, for some constant $c$ and parameters $\epsilon>0, x<\infty$ to be chosen later. In this case, we bound the denominator below by observing that \begin{equation} \label{eq: analysis of fd 6} (g_\lambda \star \mu_t)(u)\ge \frac{1}{(2\pi \lambda)^{d/2}} e^{-(x\sqrt \lambda+c\sqrt{\lambda})^2/2\lambda} \mu(U)\ge \frac{\epsilon}{(2\pi)^{d/2}} \exp\left(-x^2-c^2\right) \end{equation} and similarly for $u_\star$. We now return to (\ref{eq: identification of Kdelta 7}) and split the integral defining $ \psi_\lambda$ into the regions $E_1=\{(v,v_\star):|(v,v_\star)-(u, u_\star)|< \lambda^{1/3}+3x\sqrt{\lambda}\}$ and its complement $E_2$. On $E_1$, \begin{equation}\label{eq: analysis of fd 6} B(v-v_\star)\le (1+2\lambda^{1/3}+6x\sqrt{\lambda})B(u-u_\star)\end{equation} and so \begin{equation}\begin{split} \label{eq: analysis of fd 7} &\int_{E_1} \h \frac{B(v-v_\star)}{B(u-u_\star)} \h \frac{g_\lambda(v-u)\mu_t(dv)}{(g_\lambda\star \mu_t)(u)}\h  \frac{g_\lambda(v_\star-u_\star)\mu_t(dv_\star)}{(g_\lambda\star \mu_t)(u_\star)}\le 1+2\lambda^{1/3}+6x\sqrt{\lambda}.\end{split} \end{equation} On the other hand, using (\ref{eq: analysis of fd 6}) and the trivial bound $B(v-v_\star)/B(u-u_\star)\le C(1+|v|+|v_\star|)$, we bound the term from the second region by \begin{equation}\label{eq: analysis of fd 8}\begin{split} &  \int_{E_2} \h \frac{B(v-v_\star)}{B(u-u_\star)} \h \frac{g_\lambda(v-u)\mu_t(dv)}{(g_\lambda\star \mu_t)(u)}\h  \frac{g_\lambda(v_\star-u_\star)\mu_t(dv_\star)}{(g_\lambda\star \mu_t)(u_\star)} \\[1ex] & \hspace{3cm}  \le C \int_{\rrd\times \rrd} (1+|v|+|v_\star|)\lambda^{-d}\exp\left(-\frac{1}{2\lambda}(9x^2\lambda+\lambda^{2/3})\right)\epsilon^{-2}  \\ &\hspace{5cm}\dots\times \exp(2x^2+2c^2)\mu_t(dv)\mu_t(dv_\star)\\[2ex]&\hs\hs\hs \le C\exp\left(2c^2-x^2\right)\epsilon^{-2}. \end{split}  \end{equation} where, in the final line, we recall that $\lambda^{-d}\exp(-\lambda^{-1/6}/{2})$ is uniformly bounded on $(0,\infty)$, and that the remaining integral is controlled in terms of the second moments of $\mu_t$, so can be absorbed into $C$. Gathering (\ref{eq: analysis of fd 7}, \ref{eq: analysis of fd 8}), we conclude that for $(u, u_\star), x, \epsilon$ as above, \begin{equation} \label{eq: analysis of fd 9} \begin{split} \psi_\lambda(t, u,u_\star)&\le 1+2\lambda^{1/3}+6x\sqrt{\lambda}+C\exp(2c^2-x^2)\epsilon^{-2}\\&\le (1+2\lambda^{1/3}+6x\sqrt{\lambda})\left(1+C\exp(2c^2-x^2)\epsilon^{-2}\right). \end{split}\end{equation} \paragraph*{Step 5: Analysis of $\mathcal{T}_\lambda$} Equipped with this preliminary analysis of $ \psi_\lambda$ in the previous step, we bound the final term $\mathcal{T}_\lambda$ appearing in (\ref{eq: decomposition of dynamic cost 6}). Together with the observation under (\ref{eq: decomposition of dynamic cost 6}), this suffices to prove (\ref{eq: dynamic cost conclusion}) and finish the proof of the lemma. \bigskip \\  We break up the integration space $E\times \rrd\times \rrd$ in the definition of $\mathcal{T}_\lambda$ as follows. For $M \ge 2, R\in [3,\infty)\cap \sqrt{\lambda}\mathbb{N}$,  $x \in (0,\infty), \epsilon\in (0,\infty)$ to be chosen later, we form a partition $\mathfrak{P}$ of $(-R,R]^d$ into $(2R/\sqrt{\lambda})^d$ translates of $(0,\sqrt{\lambda}]^d$, and for $v\in (-R,R]^d$, write $\mathcal{B}(v)$ for the unique $\mathcal{B}\in \mathfrak{P}$ containing $v$. We now consider the partition of $E\times\rrd\times\rrd$ given by \begin{equation*}\hspace{-0.5cm} A_1:=\left\{v, v_\star\in (-R, R]^d, K\le M, \mu_t(\mathcal{B}(w))\ge \epsilon \lambda^{d/2}, \mu_t(\mathcal{B}(v_\star))\ge \epsilon \lambda^{d/2}, |(u,u_\star)-(v,v_\star)|<x\sqrt{\lambda} \right\};\end{equation*}
\begin{equation*}\hspace{-0.5cm} A_2:=\left\{v, v_\star\in (-R, R]^d, K\le M, \mu_t(\mathcal{B}(w))\ge \epsilon \lambda^{d/2}, \mu_t(\mathcal{B}(v_\star))\ge \epsilon \lambda^{d/2}, |(u,u_\star)-(v,v_\star)|\ge x\sqrt{\lambda} \right\};\end{equation*} \begin{equation*}A_3:=\left\{v,v_\star\in (-R,R]^d, K\le M, \mu_t(\mathcal{B}(v))< \epsilon \lambda^{d/2}\text{ or } \mu_t(\mathcal{B}(v_\star))< \epsilon \lambda^{d/2}\right\};\end{equation*} \begin{equation*} A_4:=\left\{v,v_\star\in (-R,R]^d, K(t,v,v_\star,\sigma)> M\right\};\end{equation*} 
 \begin{equation}\label{eq: decomposition 5} A_5:=\left\{(v,v_\star)\not\in (-R,R]^{2d}\right\}.\end{equation} We analyse the contributions from these regions one-by-one. Roughly, $A_1$ is the `good' region, containing most of the contributions from the integrating measure, where $\log  \psi_\lambda$ is small by (\ref{eq: analysis of fd 9}), and the remaining terms are small, depending on the parameters $M, R, x, \epsilon$; at the end, we will optimise, so that $M, R, x\to \infty$ and $\epsilon\to 0$ as functions of $\lambda \to 0$. \paragraph*{Step 5a: Contribution from $A_1$} For the region $A_1$, we observe that the hypotheses leading to (\ref{eq: analysis of fd 9}) hold, with $U=\mathcal{B}(v), U_\star=\mathcal{B}(v_\star)$ and $c=\sqrt{d}$ is an absolute constant. Further, $K<M$ and $B(v-v_\star)\le 1+2R \le CR$, so we integrate (\ref{eq: analysis of fd 9}) to find \begin{equation} \label{eq: contribution from A1}\begin{split}& \int_{A_1}K(t,v,v_\star,\sigma)B(v-v_\star)g_\lambda(u-v)g_\lambda(u_\star-v_\star)\log  \psi_\lambda(t,u,u_\star)dt\mu_t(dv)\mu_t(dv_\star)d\sigma du du_\star \\& \hs \hs \hs \hs \hs \le CMR\left(\log (1+2\lambda^{1/3}+6x\sqrt{\lambda})+\log (1+\epsilon^{-2}e^{-x^2})\right).\end{split}\end{equation}  \paragraph*{Step 5b: Contribution from $A_2$} For $A_2$, we use the general upper bound (\ref{eq: analysis of fd 4}), which is valid without restriction on $u, u_\star$. For fixed $v,v_\star$ we use H\"older's inequality to see that \begin{equation}\label{eq: contribution from A2 1} \begin{split} &\int_{\rrd\times\rrd} \log  \psi_\lambda(t,u,u_\star)1[|(u,u_\star)-(v,v_\star)|\ge x\sqrt{\lambda}]g_\lambda(u-v)g_\lambda(u_\star-v_\star)dudu_\star \\& \hs \hs \hs \le C\int_{\rrd\times\rrd} \log(1+|u|^2+|u_\star|^2)1[|(u,u_\star)-(v,v_\star)|\ge x\sqrt{\lambda}] \\& \hspace{5cm}\dots\times g_\lambda(u-v)g_\lambda(u_\star-v_\star)dudu_\star \\ & \hs \hs \le C\left(\int_{\rrd\times\rrd} (1+|u|^2+|u_\star|^2)g_\lambda(u-v)g_\lambda(u_\star-v_\star)dudu_\star\right)^{1/2} \\ & \hs\hs\hs\hs\hs\dots\times \left(\int_{\rrd\times\rrd} 1[|(u', u'_\star)|>x\sqrt{\lambda}]g_\lambda(u')g_\lambda(u'_\star)du'du'_\star\right)^{1/2}\\[2ex] & \hs\hs \le C(1+|v|+|v_\star|) \exp\left(-x^2/16d\right)\end{split} \end{equation} where for the first factor in the final line, we integrated $\int |u|^2 g_\lambda(u-v)du =|u|^2+d\lambda^2\le |v|^2+d$, and for the second factor we used standard tail estimates for the normal distribution, absorbing constants into the prefactor $C$. Bounding $B(v-v_\star), K$ as above, and integrating over $t, v, v_\star,\sigma$, we find \begin{equation} \label{eq: contribution from A2} \begin{split}&\int_{A_2}K(t,v,v_\star,\sigma)B(v-v_\star)g_\lambda(u-v)g_\lambda(u_\star-v_\star)\log  \psi_\lambda(t, u, u_\star)dt \mu_t(dv)\mu_t(dv_\star)d\sigma du du_\star \\ & \hspace{5cm} \le CMR\exp\left(-x^2/16d\right).\end{split} \end{equation} \paragraph*{Step 5c: Contribution from $A_3$} Similarly to the previous step, we start from a bound on the integrals over $u, u_\star$, with $t$ and $v,v_\star$ fixed. We split the integral over $u, u_\star$ into $\{|(u, u_\star)-(v, v_\star)|\le (1+|v|+|v_\star|)\}$ and $\{|(u, u_\star)-(v, v_\star)|> (1+|v|+|v_\star|)\}$. In the first region, thanks to (\ref{eq: analysis of fd 5}), \begin{equation} \label{eq: contribution from A3 1} \log  \psi_\lambda(t, u, u_\star)\le C\log (1+|u|^2+|u_\star|^2) \le C\log (1+|v|^2+|v_\star|^2)\end{equation} while the contribution from the second region is controlled by using H\"older's inequality in the same way as (\ref{eq: contribution from A2 1}) to obtain \begin{equation}\begin{split}\label{eq: contribution from A3 2} &\int_{|(u,u_\star)-(v,v_\star)|>(1+|v|+|v_\star|)} \log  \psi_\lambda(t,u,u_\star) g_\lambda(u-v)g_\lambda(u_\star-v_\star)dudu_\star \\ & \hspace{5cm}\le C(1+|v|+|v_\star|)\exp(-(1+|v|+|v_\star|)^2/16d).\end{split} \end{equation} This term can be absorbed into the contribution from (\ref{eq: contribution from A3 1}), and we conclude that, for all $t$ and  $v,v_\star \in \rrd$, \begin{equation}\label{eq: contribution from A3 3} \int_{\rrd\times\rrd} \log  \psi_\lambda(t,u,u_\star)g_\lambda(u-v)g_\lambda(u_\star-v_\star)dudu_\star\le C\log (1+|v|^2+|v_\star|^2). \end{equation} In particular, when $v,v_\star\in (-R,R]^d$, the right-hand side can be replaced by $C\log R$, and $B(v-v_\star)\le CR$. We now integrate over $A_3$ to find \begin{equation} \label{eq: contribution from A3 4} \begin{split} &\int_{A_3} K(t,v,v_\star,\sigma)B(v-v_\star)g_\lambda(u-v)g_\lambda(u_\star-v_\star)\log  \psi_\lambda(t,u,u_\star)dt \mu_t(dv)\mu_t(dv_\star)d\sigma du du_\star \\ &  \le CM(R\log R)\int_0^T \mu_t^{\otimes 2}\left((v, v_\star)\in (-R, R]^{2d}: \mu_t(\mathcal{B}(v))<\epsilon \lambda^{d/2} \text{ or }\mu_t(\mathcal{B}(v_\star))<\epsilon \lambda^{d/2}\right)dt \\&\hs\hs \le CM(R\log R)\int_0^T\mu_t\left(v\in (-R,R]^d: \mu_t(\mathcal{B}(v))<\epsilon \lambda^{d/2}\right)dt\end{split} \end{equation} where the last line follows using a union bound, absorbing the factor of $2$ into $C$. The integrand is now \begin{equation} \label{eq: contribution from A3 5} \begin{split} \mu_t\left(v\in (-R,R]^d: \mu_t(\mathcal{B}(v))<\epsilon \lambda^{d/2}\right)&=\sum_{\mathcal{B}\in \mathfrak{P}} \mu_t(\mathcal{B})1(\mu_t(\mathcal{B})<\epsilon \lambda^{d/2}) \\[2ex] & \le \epsilon \lambda^{d/2} (\#\mathfrak{P})=\epsilon (2R)^{d}\end{split} \end{equation} recalling that $\#\mathfrak{P}=(2R/\sqrt{\lambda})^d$. Substituting this bound back into (\ref{eq: contribution from A3 4}) we conclude that \begin{equation} \label{eq: contribution from A3} \begin{split} &\int_{A_3} K(t,v,v_\star,\sigma)B(v-v_\star)g_\lambda(u-v)g_\lambda(u_\star-v_\star)\log  \psi_\lambda(t,u,u_\star)dt \mu_t(dv)\mu_t(dv_\star)d\sigma du du_\star \\ & \hs \hs \le CM(R^{d+1}\log R)\epsilon.\end{split} \end{equation} \paragraph*{Step 5d: Contribution from $A_4$} In $A_4$, we use the same bound (\ref{eq: contribution from A3 3}) on $\int \log  \psi_\lambda g_\lambda(u-v)g_\lambda(u_\star-v_\star)dudu_\star$, and observe that, on $A_4$, $K(t,v,v_\star,\sigma)\le \frac{M}{\tau(M)} \tau(K)$. Integrating, it follows that \begin{equation} \label{eq: contribution from A4} \begin{split} & \int_{A_4}K(t,v,v_\star,\sigma)B(v-v_\star)g_\lambda(u-v)g_\lambda(u_\star-v_\star)\log  \psi_\lambda(t,u,u_\star)dt \mu_t(du)\mu_t(dv_\star)d\sigma du du_\star \\& \hs \le C(\log R)\left(\frac{M}{\tau(M)}\right)\int_E \tau(K(t,v,v_\star,\sigma))\overline{m}_\mu(dt,dv,dv_\star,d\sigma) \\[2ex] & \hs =C(\log R)\left(\frac{M}{\tau(M)}\right) \mathcal{J}(\mu_\bullet, w)=C(\log R)\left(\frac{M}{\tau(M)}\right)\end{split} \end{equation} since $C$ is allowed to depend on an upper bound for $\mathcal{J}(\mu_\bullet, w)$. \paragraph*{Step 5e: Contribution from $A_5$} We finally turn to the contribution from $A_5$. Thanks to (\ref{eq: contribution from A3 3}), for any $(v,v_\star)\not\in (-R,R]^{2d}$, we have \begin{equation} \label{eq: contribution from A5 1}\begin{split} \int_{\rrd\times\rrd} \log  \psi_\lambda(t,u,u_\star)g_\lambda(u-v)g_\lambda(u_\star-v_\star)du du_\star & \le C \log (1+|v|^2+|v_\star|^2) \\ & \hspace{-1cm} \le C \frac{\log (R^2)}{R^2}(1+|v|^2+|v_\star|^2)\end{split} \end{equation} as $(\log x)/x$ is decreasing on $[R,\infty)\subset [e,\infty)$, and $1+|v|^2+|v_\star|^2\ge R^2$. Integrating over $t,v,v_\star,\sigma$, we obtain \begin{equation} \label{eq: contribution from A5} \begin{split} &\int_{A_5}K(t,v,v_\star,\sigma)B(v-v_\star)g_\lambda(u-v)g_\lambda(u-v_\star)\log  \psi_\lambda(t,u,u_\star)dt \mu_t(dv)\mu_t(dv_\star)d\sigma du du_\star \\& \hs \le C\left(\frac{\log R^2}{R^2}\right)\int_E (|v|^2+|v_\star|^2)K(t,v,v_\star,\sigma)\overline{m}_\mu(dt,dv,dv_\star,d\sigma) \\[1ex]& \hs =C\left(\frac{\log R}{R^2}\right)\langle |v|^2+|v_\star|^2, w\rangle = C\left(\frac{\log R}{R^2}\right)  \end{split} \end{equation} recalling again that the second moment of $w$ is one of the quantities (\ref{eq: controlling quantities}) on which $C$ is allowed to depend. \paragraph*{Step 5f: Conclusion} Gathering (\ref{eq: contribution from A1}, \ref{eq: contribution from A2}, \ref{eq: contribution from A3}, \ref{eq: contribution from A4}, \ref{eq: contribution from A5}), we conclude that, for any $M,R,x, \epsilon$ as above, \begin{equation}\begin{split}  \mathcal{T}_\lambda(\mu_\bullet, w)& \le C\bigg(MR \log (1+2\lambda^{1/3}+6x\sqrt{\lambda})+MR\log\left(1+e^{-x^2}\epsilon^{-2}\right)\\ & \hs \dots + MR e^{-x^2/16d}+M(R^{d+1}\log R)\epsilon +(\log R)\left(\frac{M}{\tau(M)}\right)+\frac{\log R}{R^2}\bigg).\end{split}\end{equation} We now define $\vartheta_1(\lambda)$ to be the infimum of the term in parantheses over the possible choices of $M,R,\epsilon, x$ described at the start of Step 5 for $\lambda>0$, and $\vartheta_1(0)=0$. Although this expression is somewhat complicated to optimise directly, it is straightforward to see that $\vartheta_1(\lambda)\to 0$ as $\lambda\downarrow 0$: given a target $\eta>0$, we can choose $R$ such that the last term is at $<\eta/6$, independently of $M, \epsilon, x, \lambda$; with $R$ thus fixed, we choose $M$ such that the second-last term is $<\eta/6$ for all $\epsilon, x, \lambda$, and so on. Returning to (\ref{eq: decomposition of dynamic cost 6}), one obtains an additional error, corresponding to the term $\overline{m}_{g_\lambda\star \mu}(E)-\overline{m}_\mu(E)$, which can easily be controlled, giving another term $C\vartheta_2(\lambda)$. Adding the two, the lemma is proven, with a new function $\vartheta$.  \end{proof} \begin{proof}[Proof of Lemma \ref{lemma: approximation 1}] We now prove Lemma \ref{lemma: approximation 1} based on the following truncation argument. \paragraph*{Step 1: Definition} For $n\ge 1$, let $B_n$ be the set $\{v\in \rrd: |v|\le n\}$ and \begin{equation} E_n=\{(t,v,v_\star,\sigma): B(v-v_\star)K(t,v,v_\star,\sigma) \le n,\text{ and }v, v_\star, v', v'_\star \in B_n\}\end{equation}  and define measures $\nu^{(n)}$ on $\rrd$ by specifying, for all bounded and measureable $f:\rrd\to\rr$ \begin{equation} \langle f, \nu^{(n)}\rangle=\int_{E_n^\mathrm{c}} (f(v')1_{v'\in B_n}+f(v_\star')1_{v_\star'\in B_n})\hspace{0.1cm}w(dt,dv,dv_\star,d\sigma) \end{equation} and\begin{equation} c_n=(\mu_0+\nu^{(n)})(\rrd). \end{equation} We define a new flux\begin{equation}\begin{split} w^{(n)}(dt,dv,dv_\star,d\sigma)&=c_n^{-1}\hspace{0.1cm}K(t,v,v_\star,\sigma)\hspace{0.1cm}1_{E_n}\hspace{0.1cm}\overline{m}_\mu(dt,dv,dv_\star,d\sigma)\\& =c_n^{-1}1_{E_n}w(dt,dv,dv_\star,d\sigma) \end{split}\end{equation}  Let $\mu^{(n)}_\bullet$ be given by  \begin{equation} \mu^{(n)}_t=\frac{\mu_0+\nu^{(n)}}{c_n}+\int_E \Delta(v,v_\star,\sigma) \hspace{0.1cm}1_{s\le t}\hspace{0.1cm}w^{(n)}(ds,dv,dv_\star,d\sigma). \end{equation} This definition gives a signed measure with $\mu^{(n)}_t1_{B_n^\mathrm{c}}=c_n^{-1}\mu_01_{B_n^\mathrm{c}}\ge 0$, and we further observe that for any Borel $A\subset B_n$ and $t\le T$, \begin{equation} \begin{split} \mu^{(n)}_t(A) &\ge c_n^{-1}\bigg(\mu_0(A)+\int_{E}\left((\Delta 1_A)(v,v_\star,\sigma)1_{E_n}+(1_A(v')+1_A(v'_\star))1_{E_n^\mathrm{c}}\right)\\& \hspace{7cm}\dots\times 1_{s\le t} \hspace{0.1cm} w(ds,dv,dv_\star,d\sigma)\bigg)   \\[1ex] & \ge c_n^{-1}\left(\mu_0(A)+\int_{E}\Delta1_A(v,v_\star,\sigma)1_{s\le t} \hspace{0.1cm} w(ds,dv,dv_\star,d\sigma)\right) = c_n^{-1}\mu_t(A). \end{split} \end{equation} It follows that $\mu^{(n)}_t$ is a positive measure for all $t \le T$, and thanks to the normalisation by $c_n$, it follows that $\mu^{(n)}_t$ is a probability measure. Moreover, it also follows that  $\mu_t1_{B_n}$ is absolutely continuous with respect to $\mu^{(n)}_t$, and that \begin{equation}\label{eq: bound on density} \frac{d(\mu_t1_{B_n})}{d\mu^{(n)}_t} \le c_n \hs \mu^{(n)}_t\text{-almost everywhere}\end{equation} and the form (\ref{eq: form of starting point}) of $\mu^{(n)}_0$ is immediate by construction.  Further, the conntinuity equation (\ref{eq: CE}) follows immediately by construction.  \paragraph*{Step 2: Convergence of the Truncated Measure-Flux} Firstly, we show that $\mu^{(n)}_\bullet$ approximates $\mu_\bullet$ uniformly in a weighted total variation norm. At time $0$, \begin{equation} \label{eq: weighted TV error at 0}\begin{split} \|(1+|v|^2)(\mu^{(n)}_0-\mu_0)\|_\mathrm{TV} \le c_n^{-1}\langle 1+|v|^2, \nu^{(n)}\rangle  +\frac{|1-c_n|}{c_n}\langle 1+|v|^2, \mu_0\rangle. \end{split} \end{equation} In the first term,\begin{equation} \langle 1+|v|^2, \nu^{(n)}\rangle = \int_{E} ((1+|v'|^2)1_{v'\in B_n}+(1+|v_\star'|^2)1_{v_\star\in B_n})1_{E^\mathrm{c}_n}w(dt,dv,dv_\star,d\sigma)\to 0\end{equation} by applying dominated convergence: the integrand is at most $2(1+|v|^2+|v_\star|^2)$ by energy conservation, and by hypothesis, $\langle 2+|v|^2+|v_\star|^2, w\rangle<\infty$. It follows already from these estimates that $c_n\to 1$, and the second term on the right-hand side of (\ref{eq: weighted TV error at 0}) converges to $0$.  Similarly, we estimate \begin{equation} \begin{split}\left\|(1+|v|^2)((\mu^{(n)}_t-\mu^{(n)}_0)-(\mu_t-\mu_0))\right\|_\mathrm{TV}&\le 4c_n^{-1}\int_{E_n^\mathrm{c}} (1+|v|^2+|v_\star|^2)1_{E^\mathrm{c}_n}w(ds,dv,dv_\star,d\sigma) \\& +4|1-c_n^{-1}|\int_E (1+|v|^2+|v_\star|^2)w(ds,dv,dv_\star,d\sigma) \\ & \hs\hs \rightarrow 0 \end{split} \end{equation} and we conclude that $\sup_{t\le T} \|(1+|v|^2)(\mu^{(n)}_t-\mu_t)\|_\mathrm{TV}\rightarrow 0$. A similar argument shows that \begin{equation}\|(1+|v|^2+|v_\star|^2)(w^{(n)}-w)\|_\mathrm{TV}\to 0\end{equation}  as desired. \paragraph*{Step 3: Tilting Function for the Truncated Pair}We now construct the tilting function $K^{(n)}$, which completes the proof that $\mu^{(n)}_\bullet, w^{(n)}$ is a measure-flux pair. By construction, we have \begin{equation}\mu^{(n)}_t-\mu^{(n)}_0=\int_E \Delta(v,v_\star,\sigma)1_{s\le t}w^{(n)}(ds,dv,dv_\star,d\sigma) \end{equation} and\begin{equation} \begin{split} w^{(n)}(dt,dv,dv_\star,d\sigma)&=c_n^{-1}K(t,v,v_\star,\sigma)1_{E_n}(t,v,v_\star,\sigma)B(v-v_\star,d\sigma)\mu_t(dv)\mu_t(dv_\star)dt  \\ &\hspace{-1cm}=c_n^{-1}K(t,v,v_\star,\sigma)1_{E_n}(t,v,v_\star,\sigma)B(v-v_\star,d\sigma)(\mu_t1_{B_n})(dv)(\mu_t1_{B_n})(dv_\star)dt\end{split}\end{equation} where, in the last line, we observe that $1_{E_n}=1_{E_n}1_{B_n}(v)1_{B_n}(v_\star)$. Recalling the absolute continuity (\ref{eq: bound on density}), we have  \begin{equation} w^{(n)}(dt,dv,dv_\star,d\sigma)=K^{(n)}(t,v,v_\star,\sigma)\overline{m}_{\mu^{(n)}}(dt,dv,dv_\star,d\sigma) \end{equation} where $K^{(n)}$ is given by \begin{equation} K^{(n)}(t,v,v_\star,\sigma)=c_n^{-1}K(t,v,v_\star,\sigma)1_{E_n}(t,v,v_\star,\sigma)\left(\frac{d(\mu_t1_{B_n})}{d\mu^{(n)}_t}\right)(v)\left(\frac{d(\mu_t1_{B_n})}{d\mu^{(n)}_t}\right)(v_\star).\end{equation} From (\ref{eq: bound on density}) and the definition of $E_n$, \begin{equation} \label{eq: bound on Kn} B(v-v_\star)K^{(n)}(t,v,v_\star,\sigma)\le nc_n\end{equation} is bounded, as claimed. \paragraph*{Step 4: Convergence of the Dynamic Cost} It remains to show that $\mathcal{J}(\mu^{(n)}, w^{(n)})\rightarrow \mathcal{J}(\mu, w)$. From the total variation convergence proven above, it follows that $(\mu^{(n)}_\bullet, w^{(n)})\to (\mu_\bullet,w)$ in the topology of $\D\times \Mm$. This implies that $\liminf_n \mathcal{J}(\mu^{(n)}, w^{(n)})\ge \mathcal{J}(\mu, w)$ by lower semicontinuity (Lemma \ref{lemma: semiconntinuity}), and so it suffices to prove an upper bound. We start by observing that, by construction \begin{equation}\begin{split} (\tau-1)(K^{(n)})\overline{m}_{\mu^{(n)}}(dt,dv,dv_\star,d\sigma)&=(\log K^{(n)}-1)w^{(n)}(dt,dv,dv_\star,d\sigma) \\ & =c_n^{-1} 1_{E_n}(\log K^{(n)}-1)w(dt,dv,dv_\star,d\sigma)\end{split}\end{equation} and that, on $E_n$, $K^{(n)}\le c_n K$, so \begin{equation}\begin{split} (\tau-1)(K^{(n)})\overline{m}_{\mu^{(n)}}(dt,dv,dv_\star,d\sigma) &\le c_n^{-1}(\log K-1+\log c_n)1_{E_n}w(dt,dv,dv_\star,d\sigma) \\[1ex]&=c_n^{-1}((\tau-1)(K)+K\log c_n)1_{E_n}\overline{m}_{\mu}(dt,dv,dv_\star,d\sigma).\end{split} \end{equation} Integrating, and recalling the definition of $\mathcal{J}$, we see that \begin{equation}\begin{split} \mathcal{J}(\mu^{(n)}_\bullet, w^{(n)})-\overline{m}_{\mu^{(n)}}(E)&=\int_E(\tau-1)(K^{(n)})\overline{m}_{\mu^{(n)}}(dt,dv,dv_\star,d\sigma) \\[1ex] & \le c_n^{-1}\left(\mathcal{J}(\mu_\bullet, w)-\overline{m}_{\mu}(E)-\int_{E_n^\mathrm{c}}(\tau-1)(K)\overline{m}_\mu(dt,dv,dv_\star,d\sigma)\right)\\&\hs +(c_n^{-1}\log c_n )w(E).\end{split}\end{equation} We now take the limit superior of both sides. On the left-hand side, it is straightforward to see, using the weighted total variation convergence, that $\overline{m}_{\mu^{(n)}}(E)\to \overline{m}_{\mu}(E)$, while the third term on the first line of the right-hand side converges to $0$ by dominated convergence, and the final term converges to $0$ because $c_n\to 1$ and $w(E)<\infty$. We conclude that $\limsup_n \mathcal{J}(\mu^{(n)}_\bullet, w^{(n)})\le \mathcal{J}(\mu, w)$ and we are done.  \end{proof} Combining the previous two results, we prove Lemma \ref{lemma: approximation 1.5}. The main difficulty with the construction above is that the presence of $\nu^{(n)}$ may make the cost of the initial data large: \emph{a priori} $\nu^{(n)}$ could be singular, which would give $H(\mu^{(n)}_0|\mu_0^\star)=\infty$. To avoid this, we will convolve with the mollifiers $g_\lambda$, at a scale $\lambda=\lambda_n$ to be chosen. For this reason, it is important to have the uniform convergence of the cost function in Lemma \ref{lemma: convolution}. \begin{proof}[Proof of Lemma \ref{lemma: approximation 1.5}] Let $\mu_\bullet, w$ be as given, and let $\mu^{(n,0)}, w^{(n,0)}$ be the approximations produced by Lemma \ref{lemma: approximation 1}. We observe first that, thanks to the strong convergence (\ref{eq: Approx 1 1}), \begin{equation} \label{eq: controlling quantities uniform in trunctation} \sup_n \langle 1+|v|^2+|v_\star|^2, w^{(n,0)}\rangle;\qquad \sup_n\mathcal{J}(\mu^{(n,0)}_\bullet, w^{(n,0)}); \qquad \sup_n \sup_{t\le T} \langle |v|^2, \mu^{(n,0)}_t\rangle \end{equation} are all finite, uniformly in $n$. For any $\lambda>0$, let $\mu^{(n,\lambda)}_\bullet, w^{(n,\lambda)}$ be the convolutions \begin{equation} \mu^{(n,\lambda)}_\bullet:=g_\lambda\star \mu^{(n,0)}_\bullet;\qquad w^{(n,\lambda)}:=g_\lambda\star w^{(n,0)}.\end{equation} Thanks to Lemma \ref{lemma: convolution} and (\ref{eq: controlling quantities uniform in trunctation}), there exists some $C$, uniform in $n$, such that \begin{equation}\label{eq: uniform approximation of dynamic cost} \mathcal{J}(\mu^{(n,\lambda)}, w^{(n,\lambda)})\le \mathcal{J}(\mu^{(n,0)},w^{(n,0)})+C\vartheta(\lambda).\end{equation} We consider now the cost due to the initial data. Firstly, we write \begin{equation} \mu^{(n,0)}_0=(1-p_n)(g_\lambda\star \mu_0)+p_n(g_\lambda\star \xi^{(n)})\end{equation} with $p_n=\nu^{(n)}(\rrd)/c_n \to 0$, and $\xi^{(n)}:=\nu^{(n)}/\nu^{(n)}(\rrd)$. Using the convexity of $H(\cdot|\mu_0^\star)$, we immediately have \begin{equation} \label{eq: initial cost under convolution  1} H\left(\mu^{(n,\lambda)}\big|\mu_0^\star\right)\le (1-p_n)H(g_\lambda \star \mu_0|\mu_0^\star)+p_nH(g_\lambda\star \xi^{(n)}|\mu_0^\star). \end{equation} We investigate these terms one at a time. \paragraph*{Step 1: Entropy of $H(g_\lambda\star\mu_0|\mu_0^\star)$} We first show that \begin{equation}\label{eq: entropy of convolved term conclusion} \limsup_{\lambda \to 0} H(g_\lambda\star \mu_0|\mu_0^\star)\le H(\mu_0|\mu_0^\star)\end{equation} where we recall that, since $\mathcal{I}(\mu_\bullet, w)<\infty$ by hypothesis, the right-hand side is a finite limit. Since $\mu_0$ is absolutely continuous with respect to  $\mu_0^\star$, it is absolutely continuous with respect to the Lebesgue measure; let us write $f_0$ for its density, and recall the notation $f_0^\star$ for the density of $\mu_0^\star$. We can then write $H(\mu_0|\mu_0^\star)=\int f_0 \log (f_0/f_0^\star)<\infty$, and, recalling that $f_0^\star \ge ce^{-z_3|v|^2}$ by Hypothesis \ref{hyp: condition on ref ms}iii), $\log f_0\le \log (f_0/f_0^\star) -\log c + z_3|v|^2$. Since $\mu_0$ has a finite second moment, we see that $\int f_0 \log f_0 dv<\infty$. Further, bounding $-\log f_0^\star \le \log c+ z_3|v|^2$ and $(\log f_0^\star)1(f_0^\star\ge 1)f_0\le f_0^\star \exp(1(f_0^\star\ge 1))+f_0\log f_0$, we conclude that $\int |\log f_0^\star|f_0<\infty$. We now write, as a difference of finite integrals,  \begin{equation}\begin{split} H(\mu_0|\mu_0^\star)&=\int_{\rrd} f_0 \log f_0 dv + \int_{\rrd} (-\log f^\star_0)f_0(v)dv\\ &=\int_{\rrd}f_0\log f_0 dv+\int_{\rrd} (-\log f_0^\star)\mu_0(dv)\end{split} \end{equation} and similarly\begin{equation}\begin{split}\label{eq: entropy of convolved term} H(g_\lambda\star \mu_0|\mu^\star_0)&=\int_{\rrd} (g_\lambda\star f_0)\log (g_\lambda \star f_0)dv+\int_{\rrd} (g_\lambda\star f_0)(-\log f_0^\star) dv\\&= \int_{\rrd} (g_\lambda\star f_0)\log (g_\lambda \star f_0)dv+\int_{\rrd} (-\log f_0^\star) (g_\lambda\star \mu_0)(dv).\end{split}\end{equation} Let us fix $\epsilon>0$. For the first term, we recall that the function $x\log x$ is convex on $[0,\infty)$, which implies that, for all $\lambda>0$, \begin{equation} \label{eq: convergence of entropy 1} \int_{\rrd} (g_\lambda\star f_0)\log (g_\lambda \star f_0)dv\le \int_{\rrd} f_0\log f_0 dv. \end{equation} For the second term, we recall that $-\log f_0^\star$ is continuous, and $-\log f_0^\star \le -\log c + z_3|v|^2$ for some $c>0$ and $z_3<\infty$. Using the fact $\langle |v|^2, g_\lambda\star \mu_0\rangle = \langle |v|^2, \mu_t\rangle + d\lambda \to \langle |v|^2, \mu_0\rangle <\infty$ and $g_\lambda\star \mu_0\to \mu_0$ weakly, one can check the uniform integrability \begin{equation}\limsup_M \limsup_{\lambda \to 0} \langle |v|^2 1(|v|\ge M), g_\lambda\star \mu_0\rangle =0\end{equation} whence there exists $M<\infty$ and $\lambda_0\in(0,1]$ such that, for all $\lambda<\lambda_0$, \begin{equation}\label{eq: control from outside box}\int_{|v|>M}(-\log f_0^\star)_+(g_\lambda\star \mu_0)(dv)\le  \int_{|v|>M}\left|\log c+z_3|v|^2\right|(g_\lambda \star \mu_0)(dv)<\frac{\epsilon}{4}\end{equation} where $_+$ denotes the positive part. Using the weak convergence $g_\lambda\star \mu_0 \to \mu_0$, there exists $\lambda_1<\lambda_0$ such that, for all $\lambda<\lambda_1$, \begin{equation}\label{eq: convergence of entropy 2}  \left|\int_{|v|\le M} (-\log f_0^\star)(g_\lambda\star \mu_0)(dv)-\int_{|v|\le M} (-\log f_0^\star)\mu_0(dv)\right|<\frac{\epsilon}{3}\end{equation} since the indicator $1_{|v|\le M}$ is discontinuous on a $\mu_0$-measure set, by absolute continuity. Finally, observe that the map \begin{equation} \mu\mapsto \langle 1[|v|>M, f_0^\star \ge 1](\log f_0^\star), \mu\rangle  \end{equation} is lower semicontinuous for the weak convergence, since the integrand is nonnegative, and is finite for $\mu=\mu_0^\star$ as noted above. Therefore, we can find $\lambda_2<\lambda_1$ such that, for all $\lambda<\lambda_2$, \begin{equation}\label{eq: convergence of entropy 3} \int_{|v|>M, f_0^\star\ge 1} (\log f_0^\star)(g_\lambda\star \mu_0)(dv)>\int_{|v|>M, f_0^\star\ge 1} (\log f_0^\star)\mu_0(dv)-\frac{\epsilon}{3}.\end{equation} For such $\lambda$, we split the second integral in (\ref{eq: entropy of convolved term}) into the regions $\{|v|\le M\}$, $\{|v|>M, f_0^\star\ge 1\}$ and $\{|v|>M, f_0^\star<1\}$ to obtain \begin{equation}\label{eq: entropy of convolved term}\begin{split} H(g_\lambda\star \mu_0|\mu_0^\star) & \le \int_{\rrd}f_0\log f_0 dv+ \int_{|v|\le M}(-\log f_0^\star)(g_\lambda\star \mu_0)(dv) \\&  \hs -\int_{|v|>M, f_0^\star \ge 1}(\log f_0^\star)(g_\lambda\star \mu_0)(dv)+\int_{|v|>M, f_0^\star<1} |\log f_0^\star|(g_\lambda\star \mu_0)(dv) \\[2ex]& \hspace{-1cm}\le \int_{\rrd} f_0 \log f_0 (dv)+\left(\int_{|v|<M} (-\log f_0^\star)\mu_0(dv)+\frac{\epsilon}{4}\right) \\ & \hs -\left(\int_{|v|>M, f_0^\star \ge 1}(\log f_0^\star)(g_\lambda\star \mu_0)(dv)-\frac{\epsilon}{4}\right) +\frac{\epsilon}{4} \\[2ex]& =\int_{\rrd} f_0\log f_0 dv-\int_{|v|\le M\text{ or } f_0^\star \ge 1} (\log f_0^\star)\mu_0(dv) +\epsilon  \\[2ex] & \le H(\mu_0|\mu_0^\star)+\epsilon \end{split} \end{equation} and we have proven (\ref{eq: entropy of convolved term conclusion}). \paragraph*{Step 2: Entropy of Remainder Term} We next turn to the convolution $g_\lambda\star \xi^{(n)}$. On the one hand, the density of $g_\lambda \star \xi^{(n)}$ is at most $g_\lambda(0)$; on the other hand, taking $R^2=2\langle |v|^2, \xi^{(n)}\rangle$, it follows by Chebychev that \begin{equation}\label{eq: entropy of remainder} \xi^{(n)}(|v|\le R)\ge 1-R^{-2}\langle |v|^2, \xi^{(n)}\rangle =\frac{1}{2}.\end{equation} For any fixed $u$, if $|v|\le R$ then $g_\lambda(u-v)\ge g_\lambda(0)\exp(-(|u|^2+R^2)/\lambda)$, and integrating over this region gives \begin{equation} \label{eq: lower bound on remainder density} (g_\lambda \star \xi^{(n)})(v)\ge \frac{1}{2}\exp\left(-(|u|^2+2\langle |v|^2, \xi^{(n)}\rangle)/\lambda\right)g_\lambda(0). \end{equation}  Together with Hypothesis \ref{hyp: condition on ref ms}iii), there exists a constant $a_\lambda$  such that \begin{equation} \label{eq: density of convolved error term} \left|\log \frac{d(g_\lambda \star \xi^{(n)})}{d\mu_0^\star} (u)\right|\le a_\lambda \left(1+|u|^2+\langle |v|^2, \xi^{(n)}_0\rangle\right).\end{equation}  We now integrate, and recall that the second moment of $\langle |v|^2, g_\lambda\star \xi^{(n)}\rangle = d\lambda + \langle |v|^2, \xi^{(n)}\rangle$, to find \begin{equation} \label{eq: convolved error term}H(g_\lambda\star \xi^{(n)}|\mu_0^\star)=\int_{\rrd} \log \frac{d(g_\lambda \star \xi^{(n)})}{d\mu_0^\star} (v)(g_\lambda\star \xi^{(n)})(dv) \le a_\lambda\left\langle 1+|v|^2, \xi^{(n)}\right\rangle\end{equation} potentially for a new choice of $a_\lambda$. \paragraph*{Step 3: Control of Overall Cost} We now combine (\ref{eq: uniform approximation of dynamic cost}, \ref{eq: initial cost under convolution  1}, \ref{eq: entropy of convolved term conclusion}, \ref{eq: convolved error term}) to see that, for some constant $C$ and $a_\lambda$, \begin{equation}\label{eq: total cost} \begin{split}\mathcal{I}(\mu^{(n,\lambda)}_\bullet, w^{(n,\lambda)})&\le \mathcal{I}(\mu_\bullet, w)+\left(\mathcal{J}(\mu^{(n,0)}_\bullet, w^{(n,0)})-\mathcal{J}(\mu_\bullet, w)\right) + C\vartheta(\lambda)\\& \hs \hs +\bigg(H(g_\lambda\star \mu_0|\mu_0^\star)-H(\mu_0|\mu_0^\star)\bigg)+ a_\lambda \langle 1+|v|^2, p_n\xi^{(n)}\rangle. \end{split}\end{equation} By the definitions of $p_n, \xi^{(n)}$, it follows that $p_n\xi^{(n)}= \nu^{(n)}/c_n$; by Lemma \ref{lemma: approximation 1}, $\langle 1+|v|^2, \nu^{(n)}\rangle \to 0$, $c_n\to 1$, so for fixed $\lambda>0$, the last term converges to $0$ as $n\to \infty$. We can therefore choose a sequence $\lambda_n\in (0,1], \lambda_n \to 0$ which decays slowly enough that $a_{\lambda_n}\langle 1+|v|^2, \nu^{(n)}\rangle \to 0$. We now define $\mu^{(n)}_\bullet:=\mu^{(n,\lambda_n)}_\bullet, w^{(n)}:=w^{(n,\lambda_n)}$. Every term except the first on the right-hand side of (\ref{eq: total cost}) converges to $0$, and in  particular $\limsup_n \mathcal{I}(\mu^{(n)}_\bullet, w^{(n)})\le \mathcal{I}(\mu_\bullet, w)$. \paragraph*{Step 4: Conclusion} We now check that the diagonal sequence extracted has all the desired properties. First, thanks to (\ref{eq: contraction on supremum}), the convolution with $g_{\lambda_n}$ preserves the boundedness, so \begin{equation} \sup_{(t,v,v_\star,\sigma)} K^{(n)}(t,v,v_\star,\sigma)B(v-v_\star)\le \sup_{(t,v,v_\star,\sigma)} K^{(n,0)}(t,v,v_\star,\sigma)B(v-v_\star)<\infty.\end{equation} To see convergence of the overall sequence, note that $W(\mu, g_\lambda\star \mu)\le C\sqrt{\lambda}$ for all measures $\mu$, and since $W$ is dominated by the total variation distance,\begin{equation} \begin{split}\sup_{t\le T} W(\mu^{(n)}_t,\mu_t)&\le \sup_{t\le T} W(g_{\lambda_n}\star \mu^{(n,0)}_t,\mu^{(n,0)}_t)+\sup_{t\le T} \left\|\mu^{(n,0)}_t-\mu_t\right\|_\mathrm{TV} \\[2ex]& \le C\sqrt{\lambda_n}+\sup_{t\le T} \left\|(1+|v|^2)(\mu^{(n,0)}_t-\mu_t)\right\|_\mathrm{TV} \to 0. \end{split}\end{equation} Similarly, $d(g_{\lambda_n}\star w^{(n,0)},w^{(n,0)})\le C\sqrt{\lambda_n}$, so that $d(w^{(n)},w)\to 0$.    \end{proof}   
 Finally, we prove Lemma \ref{lemma: approximation 2}, which allows us to impose an asymptotic lower bound on $K$, so we control how fast $|\log K|$ grows as $v,v_\star \to \infty$. 
 \begin{proof}[Proof of Lemma \ref{lemma: approximation 2}] Let us consider the space $\mathcal{MS}_2(\rrd)$ of signed measures with finite second moment $\langle 1+|v|^2, |\xi|\rangle<\infty$, equipped with the complete distance given by the weighted total variation norm $\|\xi\|_{\mathrm{TV}+2}:=\|(1+|v|^2)\xi\|_\mathrm{TV}$. We start from a measure-flux pair $(\mu_\bullet, w)$ as in the statement, so that the tilting function $K$ is continuous in $v,v_\star$, and $B(v-v_\star)K$ is bounded; since $B$ is bounded away from $0$, this implies the same for $K$. \paragraph*{Step 1: Construction of $K$} We begin with a family of mollifiers. Let us fix a smooth function $\eta: \rr \to [0,\infty)$, supported on $[-1,1]$ and such that $\int \eta ds=1$, and for $t\in [0,T], \lambda>0$, define \begin{equation} \eta_\lambda(s,t)=\frac{\eta((s-t)/\lambda)}{\int_0^T \eta((u-t)/\lambda)du}\end{equation} so that $\eta_\lambda$ is continuous in both arguments, $\eta_\lambda(\cdot, t)$ is supported on $[0,T]\cap [t-\lambda, t+\lambda]$, and $\int_0^T \eta_\lambda(s,t)ds=1$. For the spherical directions, let $h_\lambda(\cdot, \cdot)$ be the heat kernel on $\ssd$, so that $h_\lambda(\cdot, \sigma)$ is a smooth function on $\ssd$ which integrates to $1$, and $h_\lambda(\sigma', \sigma)d\sigma'\to \delta_{\sigma}(d\sigma')$ weakly as $\lambda\to 0$. With these fixed, we define $K^{(n)}$ by \begin{equation} K^{(n)}(t,v,v_\star,\sigma):=\int_{[0,T]\times \ssd} K(s,v,v_\star,\sigma')\eta_{1/n}(s,t)h_{1/n}(\sigma',\sigma)ds d\sigma'+\frac{1}{nB(v-v_\star)}. \end{equation} From the construction, the continuity of $K$ in $v,v_\star$ implies that each $K^{(n)}$ is continuous on $E$. $K^{(n)}$ also inherit the upper bound: there exists $M$ such that \begin{equation} \label{eq: growth of K 47}\sup_n \sup_{t,v,v_\star,\sigma} B(v-v_\star)K^{(n)}(t,v,v_\star,\sigma)\le M;\qquad \sup_{t,v,v_\star,\sigma} B(v-v_\star)K(t,v,v_\star,\sigma)\le M\end{equation} and by construction $\inf_E B(v-v_\star)K^{(n)}\ge \frac{1}{n}>0.$ Finally, for all $(v,v_\star)$ fixed, $dt d\sigma$ almost everywhere, $K^{(n)}(t,v,v_\star,\sigma)\to K(t,v,v_\star,\sigma)$. \paragraph*{Step 2: Construction of $\mu^{(n)}_\bullet$ by Picard-Lindel\"of} We now construct processes $\mu^{(n)}_\bullet$, which at this stage may be signed measures, via the machinery of the Picard-Lindel\"of theorem. We consider the space $\mathcal{MS}_2$ of signed measures for which the quadratic total variation norm $\|\xi\|_{\mathrm{TV}+2}:=\langle 1+|v|^2, |\xi|\rangle$ is finite. For any $t\in [0,T]$, $\xi\in \mathcal{MS}_2$, define the signed measures \begin{equation}\Phi(t,\xi):=\int_{\rrd\times\rrd\times\ssd} \Delta(v,v_\star,\sigma)B(v-v_\star)K(t,v,v_\star,\sigma)\xi(dv)\xi(dv_\star)d\sigma; \end{equation} \begin{equation}\Phi_n(t,\xi):=\int_{\rrd\times\rrd\times\ssd} \Delta(v,v_\star,\sigma)B(v-v_\star)K^{(n)}(t,v,v_\star,\sigma)\xi(dv)\xi(dv_\star)d\sigma. \end{equation} Using the uniform boundedness of $B(v-v_\star)K, B(v-v_\star)K^{(n)}$, it is easy to see that \begin{equation} \label{eq: bilinear TV estimate}\|(\Phi_n(t,\xi)-\Phi_n(t,\xi')\|_\mathrm{TV} \le C\|(\xi-\xi')\|_{\mathrm{TV}+2}(\|\xi\|_{\mathrm{TV}+2}+\|\xi'\|_{\mathrm{TV}+2}) \end{equation} for some constant $C$, uniformly in $n$, and similarly for $\Phi$. It then follows from the Picard-Lindel\"of theorem that, for any $\xi^{(n)}_0, \xi_0$, there exist unique local solutions to the integral equations \begin{equation}\label{eq: integral equations} \xi^{(n)}_t=\xi^{(n)}_0+\int_0^t \Phi_n(s,\xi^{(n)}_s)ds;\qquad \xi_t=\xi_0+\int_0^t \Phi(s,\xi_s)ds.\end{equation} Further, observing that $\|\Phi_n(t,\xi)\|_{\mathrm{TV}}\le C\|\xi\|_\mathrm{TV}$, it follows that $\|\xi^{(n)}_t\|_\mathrm{TV}$ grows at most exponentially in time; using the similar estimate that $\|\Phi_n(t,\xi)\|_{\mathrm{TV}+2}\le C \|\xi\|_{\mathrm{TV}+2}\|\xi\|_{\mathrm{TV}}$ by taking $\xi'=0$ above, the same holds for $\|\xi^{(n)}_t\|_{\mathrm{TV}+2}$, so the solutions are globally defined. \bigskip \\ Let us now consider these equations to construct our approximations. For the initial data, we recall that the finiteness of the entropy $H(\mu_0|\mu^\star_0)\le \mathcal{I}(\mu_\bullet, w)<\infty$ implies that $\mu_0$ has a density with respect to $\mu^\star_0$, and we set  \begin{equation}\label{eq: bounded density} \mu^{(n)}_0(dv)=c_n \left(\frac{d\mu_0}{d\mu^\star_0}\land n\right)\mu^\star_0(dv)\end{equation} for a normalising constant $c_n\to 1$ which makes $\mu^{(n)}_0$ a probability measure. We now take $\mu^{(n)}_\bullet$ to be the unique solution $\xi^{(n)}_t$ produced to $\partial_t\xi^{(n)}_t=\Phi_n(t, \xi^{(n)}_t)$ for this choice of initial data. It follows by definition of $K$ that the process $\mu_t$ given satisfies $\partial_t \mu_t=\Phi(t,\mu_t)$, which must then by the unique solution.  \paragraph*{Step 3: Positivity of $\mu^{(n)}_\bullet$}  To see that this gives positive measures, we use an integrating factor introduced by Norris \cite{norris1999smoluchowski} in the context of a similar construction for the Smolouchowski equation. We define \begin{equation} \theta^{(n)}_t(v):=\exp\left(\int_0^t\int_{\rrd\times\ssd} (K^{(n)}(s,v,v_\star,\sigma)+K^{(n)}(s,v_\star,v,\sigma))B(v-v_\star)\mu^{(n)}_s(dv_\star)d\sigma ds \right)\end{equation} and \begin{equation} \Phi^+_n(t,\xi)=\int_{\rrd\times\rrd\times\ssd} (\theta^{(n)}_t(v')\delta_{v'}+\theta^{(n)}_t(v'_\star)\delta_{v'_\star})B(v-v_\star)K^{(n)}(t,v,v_\star,\sigma)\xi(dv)\xi(dv_\star)d\sigma.\end{equation} Thanks to the boundedness, $\theta^{(n)}_t$ is bounded and bounded away from $0$, uniformly on compact time intervals, and $\Phi^+_n(\xi)\ge 0$ whenever $\xi\ge 0$. These integrating factors are such that $\partial_t (\theta^{(n)}_t \mu^{(n)}_t)=\Phi^+_n(t,\theta^{(n)}_t\mu^{(n)}_t)$, while applying the same arguments as above in the smaller space $(\cp_2, \|\cdot\|_{\mathrm{TV}+2})$ shows that the unique solution to $\partial_t \nu_t=\Phi_n^+(t, \nu_t)$ remains positive if $\nu_0\in \cp_2$ is a positive measure. It follows that $\theta^{(n)}_t\mu^{(n)}_t \ge 0$ are positive measures, and hence so are $\mu^{(n)}_t$; recalling again the boundedness, energy conservation implies that $\langle |v|^2, \mu^{(n)}_t\rangle=\langle |v|^2, \mu^{(n)}_0\rangle<\infty$ is constant for each $n$, and we conclude that $\mu^{(n)}_\bullet \in \D$.  We define the corresponding flux $w^{(n)}$ by \begin{equation} w^{(n)}(dt,dv,dv_\star,d\sigma):=K^{(n)}(t,v,v_\star,\sigma)\overline{m}_{\mu^{(n)}}(dt,dv,dv_\star,d\sigma)\end{equation} so that $\mu^{(n)}_\bullet, w^{(n)}$ is a measure-flux pair. Moreover, if $(\mu'_\bullet, w')$ is any measure-flux pair with $\mu'_0=\mu^{(n)}_0$ and with tilting function $K^{(n)}$, then $\partial_t\mu'_t=\Phi_n(t,\mu'_t)$, which implies that $\mu'_t=\mu^{(n)}_t$ by the uniqueness in step 2, and $w'=K\overline{m}_{\mu'}=K\overline{m}_{\mu^{(n)}}=w$, so each approximating pair $(\mu^{(n)}_\bullet, w^{(n)})$ is uniquely characterised by the initial value and tilting function, as claimed. \paragraph*{Step 4: Convergence of the Approximations} Let us now show that the measure-flux pairs constructed above converge as $n\to \infty$. We start from \begin{equation} \begin{split} \|\Phi_n(t,\mu^{(n)}_t)-\Phi(t,\mu_t)\|_{\mathrm{TV}+2}& \le \|\Phi_n(t,\mu^{(n)}_t)-\Phi_n(t,\mu_t)\|_{\mathrm{TV}+2}+ \|\Phi_n(t,\mu_t)-\Phi(t,\mu_t)\|_{\mathrm{TV}+2} \\ &\hspace{-3cm} \le C\|\mu^{(n)}_t-\mu_{t}\|_{\mathrm{TV}+2}(\|\mu^{(n)}_t\|_{\mathrm{TV}+2}+\|\mu_t\|_{\mathrm{TV}+2}) + \|\Phi_n(t,\mu_t)-\Phi(t,\mu_t)\|_{\mathrm{TV}+2}\end{split}\end{equation} using (\ref{eq: bilinear TV estimate}). In the first term, we observe that $\|\mu^{(n)}_t\|_{\mathrm{TV}+2}=\langle 1+|v|^2, \mu^{(n)}_0\rangle$ is bounded uniformly in $n, t$, thanks to energy conservation and the construction of $\mu^{(n)}_0$, and we absorb this constant factor into $C$. We can now use Gr\"onwall's Lemma to obtain \begin{equation}\label{eq: Gr 47}\sup_{t\le T} \|\mu^{(n)}_t-\mu_t\|_{\mathrm{TV}+2}\le e^{CT}\left(\|\mu^{(n)}_0-\mu_0\|_{\mathrm{TV}+2}+\int_0^T \|\Phi_n(t,\mu_t)-\Phi(t,\mu_t)\|_{\mathrm{TV}+2}dt\right).\end{equation}The first term is readily seen to converge to $0$ using the construction (\ref{eq: bounded density}) of $\mu^{(n)}_0$, recalling that $\mu_0$ has finite second moment. For the second term, we return to the definition of $\Phi, \Phi_n$ to see that \begin{equation} \begin{split} \label{eq: error from K Kn} \|\Phi_n(t,\mu_t)-\Phi(t,\mu_t)\|_{\mathrm{TV}+2}&\le 2\int_{\rrd\times\rrd\times\ssd}(1+|v|^2+|v_\star|^2)(K^{(n)}-K)(t,v,v_\star,\sigma)\\ & \hspace{4cm}\dots\times B(v-v_\star)\mu_t(dv)\mu_t(dv_\star)d\sigma \end{split}\end{equation} and integrating over $t\in [0,T]$ produces \begin{equation} \begin{split}& \int_0^T  \|\Phi_n(t,\mu_t)-\Phi(t,\mu_t)\|_{\mathrm{TV}+2} dt \\& \hs \hs\le 2\int_E  (1+|v|^2+|v_\star|^2)(K^{(n)}-K)B(v-v_\star)dt\mu_t(dv)\mu_t(dv_\star)d\sigma.\end{split} \end{equation} We now apply dominated convergence to see that the right-hand side converges to $0$, since $B(v-v_\star)(K^{(n)}-K)$ is bounded by (\ref{eq: growth of K 47}), and converges to $0$ for $dt \mu_t(dv)\mu_t(dv_\star)d\sigma$ almost all $(t,v,v_\star,\sigma)$, while $\mu_t$ has constant, finite second moment. Returning to (\ref{eq: Gr 47}), we conclude that $\|\mu^{(n)}_t-\mu_t\|_{\mathrm{TV}+2}\to 0$, which is stronger than the required convergence.  For the flux, we estimate \begin{equation}\label{eq: convergence of flux plus extra}\begin{split}  \|w^{(n)}-w\|_\mathrm{TV} &\le \int_E |K^{(n)}-K|B(v-v_\star)dt\mu^{(n)}_t(dv)\mu^{(n)}_t(dv_\star)d\sigma \\ & \hs  + \int_E K B(v-v_\star)dt|\mu_t(dv)\mu_t(dv_\star)-\mu^{(n)}_t(dv)\mu^{(n)}_t(dv_\star)|d\sigma. \end{split}\end{equation} The first term converges to $0$ as above, and recalling (\ref{eq: growth of K 47}), the second term is bounded by\begin{equation}\label{eq: convergence of flux plus extra 2}\begin{split}   \int_E K B(v-v_\star)dt|\mu_t(dv)\mu_t(dv_\star)-\mu^{(n)}_t(dv)\mu^{(n)}_t(dv_\star)|d\sigma \le 2C \int_0^T \|\mu^{(n)}_t-\mu_t\|_\mathrm{TV}dt \to 0 \end{split}\end{equation}  and we have proven that $\|w^{(n)}-w\|_{\mathrm{TV}}\to 0$ as desired. \paragraph*{Step 5: Convergence of the Cost Function} We finally check the convergence of the rate function $\mathcal{I}$ along our subsequence. First, $\mu^{(n)}_0=\mu_0$ by construction, so it suffices to prove the same thing for the dynamic cost $\mathcal{J}$; by the usual lower semicontinuity, it suffices to prove $\limsup_n \mathcal{J}(\mu^{(n)}_\bullet, w^{(n)})\le \mathcal{J}(\mu_\bullet, w)$. We start by writing \begin{equation} \begin{split} \mathcal{J}(\mu^{(n)}_\bullet, w^{(n)})&=\int_E \tau(K^{(n)})\overline{m}_{\mu^{(n)}}(dt,dv,dv_\star,d\sigma)\\[1ex]  &\le \mathcal{J}(\mu_\bullet, w)+\int_E \tau(K^{(n)})(\overline{m}_{\mu^{(n)}}-\overline{m}_\mu)(dt,dv,dv_\star,d\sigma) \\ &\hs\hs  +\int_E(\tau(K^{(n)})-\tau(K))\overline{m}_\mu(dt,dv,dv_\star,d\sigma).\end{split} \end{equation} In the second term, $B(v-v_\star)K^{(n)}\le M$ everywhere, and since $B\ge 1$, this implies the same bound for $K^{(n)}$ and hence the bound  $\tau(K^{(n)})\le 1+\tau(M)$, uniformly in $n$. The second term is now at most $(1+\tau(M))\|w^{(n)}-w\|_{\mathrm{TV}}\to 0$. Similarly, $\tau(K^{(n)})\to \tau(K)$ converges $dt \mu_t(dv)\mu_t(dv_\star)d\sigma$ almost everywhere, and hence $\overline{m}_\mu$ almost everywhere, with the same uniform bound as above. Since $\overline{m}_\mu(E)<\infty$, we can apply dominated convergence to see that the second term $\to 0$, and we are done. \end{proof}
 
 \section{Proof of Theorem \ref{thm: main}} \label{sec: pf of main}

 We now turn to the proof of the main result Theorem \ref{thm: main}. Let us fix, throughout, $\Theta$ and $P$ as in the theorem. We first present the proofs in detail in the case of the regularised hard sphere potential $B=1+|v|$: we will first carefully construct a change of measures $\PPm^N$ using the general form in Proposition \ref{prop: com}. We then prove a law of large numbers for the modified measures in Lemmas \ref{lemma: bad LLN main}, showing that any subsequential limit in distribution under the new measures almost surely lands in $\A_\Theta$; the proof is further broken down into Lemmas \ref{lemma: bad LLN 2} - \ref{lemma: bad LLN 4}, and we finally show how this implies the stated conclusion.  We will discuss at the end the necessary modifications for the Maxwell Molecule case $B=1$. \subsection{Construction of a change of measure $\mathbb{Q}^N$}\label{subsec: construction of com} Throughout, let us fix $(\Omega, \mathfrak{F}, (\mathfrak{F}_t)_{t\ge 0}, \mathbb{P})$ on which are defined regularised hard sphere Kac processes $\mu^N_\bullet$ and their empirical fluxes $w^N$. We now use the Girsanov formula recalled in Proposition \ref{prop: com}, and construct the tilting $\varphi$ of the initial data and dynamic modification of $K$ of the dynamics separately.  \paragraph*{Step 1. Construction of Initial Data} Let us consider the random variables $X_M, M\ge 0$, which describe the initial localisation of the energy in the initial data: \begin{equation} X_M=\left\langle |v|^21[|v|\ge M],\mu^N_0\right\rangle. \end{equation} Since the particles are sampled independently from $\mu_0^\star$, we can write $X_M$ as the mean of $N$ independent variables, which each have the distribution $ Y_M=|V|^21[|V|\ge M]; V\sim \mu_0^\star$. We write $\psi_M$ for the cumulant generating function for $Y_M$, and $\psi^\star_M$ for the associated Legendre transform: $$ \psi_M(\lambda)=\log \EE\left[e^{\lambda Y_M}\right]; \qquad \psi^\star_M(a)=\sup\left\{a\lambda-\psi_M(\lambda)\right\}.$$ By Hypothesis \ref{hyp: condition on ref ms}ii), it follows that $\psi_M(\lambda)=\infty$ for all $\lambda\ge z_2$ and all $M$, which implies that $\psi^\star_M(a)\le az_2$, uniformly in $M$.\bigskip \\  For $M>0$ and $\lambda\in [0,z_2)$ to be chosen later, we will take $\varphi_{N,M}$ to be the function \begin{equation} \varphi_{M,\lambda}(v)=\lambda |v|^21[|v|\ge M] - \psi_M(\lambda)\end{equation} so that, under any change of measure of the form (\ref{eq: COM0}) for this choice of $\varphi$, each initial velocity $V^i_0$ is distributed independently with law \begin{equation} \mu^\star_{0,\lambda,M}(dv)=\exp\left(\lambda|v|^21[|v|\ge M]-\psi_M(\lambda)\right)\mu_0^\star(dv). \end{equation} \paragraph*{Step 2: Choice of $\lambda$} We now choose $\lambda$ as a function of $M$. For fixed $M$, it is standard to check that $$E_M(\lambda)=\int_{\rrd} |v|^2\exp\left(\lambda |v|^21[|v|\ge M]-\psi_M(\lambda)\right)\mu_0^\star(dv)=\langle |v|^2, \mu^\star_{0,\lambda,M}\rangle $$ is continuous on $[0, z_2)$, $E_M(0)=1$, and $E_M$ diverges to infinity as $\lambda \uparrow z_2$ thanks to Hypothesis \ref{hyp: condition on ref ms}ii). In particular, we can choose $\lambda=\lambda(M)\in (0,z_2)$ such that $$ E_M(\lambda_M)=\langle |v|^2, \mu^\star_{0,\lambda_M,M}\rangle=\Theta(T)\in (1,\infty). $$With this choice of $\lambda$, we write $\varphi_M=\varphi_{M,\lambda_M}$ and $\mu^\star_{0,M}$ for $\mu^\star_{0,\lambda_M,M}$. \paragraph*{Step 3: Choice of $K$} We next choose the dynamic tilting function $K=K^{M,r,N}$, depending on the same parameter $M$ and an additional parameter $r$, to be chosen later. Given $r\in \mathbb{N}$, let $0=t^{(r)}_0\le t^{(r)}_1\le ...\le t^{(r)}_r<T$ be the partition given by \begin{equation}\label{eq: definition of time discretisation} \tri=\inf\left\{t\in [0,T]: \Theta(t)\ge \left(1-\frac{i}{r}\right)\Theta(0)+\frac{i}{r}\Theta(T)\right\} \in P\end{equation} and set $t^{(r)}_{r+1}=T$. By Hypothesis \ref{hyp: condition on ref ms}iii), $\mu^\star_{0,M}$ has a density, and in particular the function $r\mapsto \langle |v|^21[|v|\le r], \mu^\star_{0,M}\rangle$ is continuous. We can therefore choose $M_0\le M_1\le M_2\le \dots \le M_{r-1}\le M_r=\infty$ such that, for all $i=0,1,..r-1$,\begin{equation} \left \langle |v|^2 1[|v|\le M_i], \mu^\star_{0,M}\right \rangle =\Theta(t_i+) \end{equation} and observe that $M_0>M$. We now construct a tilting function $K=K^{M,r,N}$ by setting, for $\trim\le t<\tri$, \begin{equation} K^{M,r,N}(\mu^N_0,t,v,v_\star,\sigma)=\begin{cases} 0 &\text{if either }v,v_\star \in \text{Supp}(1(|v|\ge M_{i-1})\mu_0^N)=S_t; \\ N 1(N_t\ge 1)/N_t & \text{else} \end{cases} \end{equation} where $N_t=N_t(M,r)$ is the number of particles not in the special set, which is constant on $[\trim, \tri)$: \begin{equation} N_t=N-N\langle 1[|v|\ge M_{i-1}], \mu^N_0\rangle =N\langle 1[|v|<M_i], \mu^N_0\rangle=N-\#S_t.  \end{equation} Throughout, we will suppress the dependence of $K^{M,r,N}$ on the initial data $\mu^N_0$. In this way, particles with initial velocity $|v|\in [M_{i-1}, M_i)$ are `frozen' until time $\trim$. Moreover, since the special set $S_t$ is finite and is random only though the dependence on $\mu^N_0$, almost surely, no particles ever enter $S_t$, and so under the new measures, all particles whose initial velocity is $\le M_{i-1}$ interact as a Kac process on $N_t$ particles on $[\trim, \tri)$. We highlight that only particles whose \emph{initial} velocities are at least $M_{i-1}$ in magnitude are frozen, so that particles whose velocity increases to above $M_{i-1}$ by collision remain unfrozen, except on the probability $0$ event where the post collisional velocity coincides with that of a still-frozen particle. Let us also remark that $K^{M,r,N}$ satisfies the hypotheses of Proposition \ref{prop: com}, since $N_t$ depends only on $\mu^N_0$, with the uniform bound $K^{M,r,N}\le N$. \bigskip \\ With this choice of $K$ and $\varphi=\varphi_{M}$ as in steps 1-2, we now take $\mathbb{Q}^N_{M,r}$ to be the change of measure given by Proposition \ref{prop: com}. \paragraph*{Step 4: Choice of $M,r$} By the law of large numbers, as $N\rightarrow \infty$ with $M$ fixed,\begin{equation}\mathbb{Q}^N_{M,r}\left(W(\mu^N_0,\mu^\star_{0,M})>\delta\right)\to 0\end{equation} for any $\delta>0$ and, with $M,r$ fixed, for all $\delta>0$, \begin{equation} \mathbb{Q}^N_{M,r}\left(\text{For all }i=0,...,r, \h \left|\langle |v|^21[|v|\le M_{i}, \mu^N_0\rangle -\Theta(\tri+)\right|<\delta \right) \to 1;\end{equation} \begin{equation} \mathbb{Q}^N_{M,r}\left(\left|\frac{N_0(M,r)}{N}-\langle 1[|v|\le M_0], \mu^\star_{0,M}\rangle \right|<\delta \right)\to 1. \end{equation}  Now, we compare the equations \begin{align} e^{\psi_M(\lambda_M)}=\int_{\mathbb{R}^d}e^{\lambda_M|v|^21[|v|\ge M]}\mu^\star_0(dv);\\\Theta(T) e^{\psi_M(\lambda_M)}=\int_{\mathbb{R}^d}e^{\lambda_M|v|^21[|v|\ge M]}|v|^2\mu^\star_0(dv)\end{align} to obtain\begin{equation}\begin{split}  e^{\psi_M(\lambda_M)}& =\int_{|v|<M} \mu_0^\star(dv) + \int_{|v|\ge M} e^{\lambda_M |v|^2} \mu_0^\star(dv) \\[1ex]& \hs\hs \le 1 +\frac{1}{M^2}\int_{\rrd} |v|^2 e^{\lambda_M |v|^2 1[|v|\ge M]} \mu_0^\star(dv) \end{split}  \end{equation} which implies that  \begin{equation} e^{\psi_M(\lambda_M)}\le 1+\frac{\Theta(T)}{M^2}e^{\psi_M(\lambda_M)}\end{equation} and hence $\psi_M(\lambda_M)\rightarrow 0$ as $M\rightarrow \infty$. This implies the convergence of $\mu^\star_{0,M}$ to $\mu^\star_0$: for any $f$ with $|f|\le 1$, we estimate \begin{align*} \left|\langle f, \mu^\star_0-\mu^\star_{0,M}\rangle \right|&\le \int_{|v|<M}|f(v)|\left|1-e^{-\psi_M(\lambda_M)}\right|\mu^\star_0(dv) +  \frac{\langle |v|^2|f|, \mu^\star_0+\mu^\star_{0,M}\rangle}{M^2} \\[1ex] & \le \left|1-e^{-\psi_M(\lambda_M)}\right|+\frac{\Theta(T)+1}{M^2} \to 0\end{align*} and, since $f$ was arbitrary, the right-hand side is a bound for $\|\mu^\star_0-\mu^\star_{0,M}\|_\mathrm{TV}\ge W(\mu^\star_0,\mu^\star_{0,M})$. Similarly, we observe that \begin{equation} \langle 1[|v|\le M_0], \mu^\star_{0,M}\rangle \ge 1-\frac{\langle |v|^2, \mu_{0,M}^\star\rangle}{M_0^2} \ge 1-\frac{\Theta(T)}{M^2} \end{equation} using that $M_0\ge M$. Combining everything, and using a diagonal argument, we can construct a sequence $M_N\rightarrow \infty, r_N\to \infty$ slowly enough that, for all $\delta>0$, \begin{align} \label{eq: construction of QN1} &\mathbb{Q}^N_{M_N,r_N}\left(\max_{i\le {r_N}}\left|\langle |v|^21[|v|\le M_i], \mu^N_0\rangle -\Theta(\tri+)\right|<\delta \right)\rightarrow 1 \\[2ex]\label{eq: construction of QN2} &\mathbb{Q}^N_{M_N,r_N}\left(\inf_{t\in [0,T]} \frac{N_t(M_N, r_N)}{N}<1-\delta\right)\rightarrow 0\\[2ex]\label{eq: construction of QN3} &\mathbb{Q}^N_{M_N,r_N}(W(\mu^N_0,\mu^\star_{0, M_N})>\delta)\rightarrow 0. \end{align} We now take $\widetilde{\PPm}^N=\PPm^N_{M_N,r_N}$, and define $\PPm^N$ by conditioning: \begin{equation} \PPm^N(A):=c_N^{-1}\widetilde{\PPm}^N\left(A\cap\left\{\langle |v|^2, \mu^N_0\rangle \le 2\Theta(T), \frac{N_0}{N}\ge \frac{1}{2}\right\}\right)\end{equation} where $c_N$ is the $\widetilde{\PPm}^N$-probability of the event in the conditioning, which converges to $1$ by (\ref{eq: construction of QN1}- \ref{eq: construction of QN2}). We write throughout $K^N$ for $K^{N,M_N,r_N}$, and we remark that, since $\PPm^N$ is the conditioning of $\widetilde{\PPm}^N$ to events of high $\widetilde{\PPm}^N$-probability, the same convergences (\ref{eq: construction of QN1}- \ref{eq: construction of QN3}) hold with $\PPm^N$ in place of $\PPm^N_{M_N, r_N}=\widetilde{\PPm^N}$. By Proposition \ref{prop: com}, under these new measures, the particles are initially sampled from $\mu^\star_{0, M_N}$, conditional on the $\mathfrak{F}_0$- event $\frac{N_0}{N}\ge \frac{1}{2}$ and $\langle |v|^2, \mu^N_0\rangle \le 2\Theta(T)$, and the dynamics are then goverened by the inhomogeneous generator (\ref{eq: time dependent generator}). We begin with the following preparatory lemma. \begin{lemma}[Estimate on the Radon-Nidoykm Derivative]\label{lemma: estimate on change of measure} For the changes of measure $\PPm^N\ll \PP$ constructed above, and for all $\epsilon>0$, \begin{equation} \PPm^N\left(\frac{1}{N}\log \frac{d\PPm^N}{d\PP} > z_2 \Theta(T) +\epsilon \right)\to 0. \end{equation} \end{lemma} \begin{proof} By definition, the change of measure is \begin{equation} \label{eq: reiteration of cost} \begin{split} \frac{1}{N}\log\frac{d\PPm^N}{d\PP}&=\langle \varphi_{M_N},\mu^N_0\rangle +\langle \log K^{N}, w^N\rangle - \int_E (K^{N}-1)(t,v,v_\star,\sigma)\overline{m}_{\mu^N}(dt,dv,dv_\star,d\sigma) \\ &\hs -\frac{1}{N}\log c_N. \end{split}  \end{equation} The final term converges to $0$ because, as already noted above, $c_N\to 1$. For the first term, recall that $\phi_{M_N}=\phi_{M_N,\lambda_{M_N}}\le \lambda_{M_N}|v|^2$, and that $\lambda_{M_N}\le z_2$ is bounded, uniformly in $N$, so that (\ref{eq: construction of QN1}) gives \begin{equation}\label{eq: 1st term in COM}\PPm^N\left(\langle \varphi_{M_N}, \mu^N_0\rangle >z_2 \Theta(T)+\epsilon/3\right) \le \PPm^N\left(\langle |v|^2, \mu^N_0\rangle > \Theta(T)+\epsilon/3z_2\right)\to 0. \end{equation} In the second term, observe that $\log K^{N, M_N, r_N}\le \log N/N_0(M_N,r_N)$ on the support of $w^N$, $\PPm^N$-almost surely, since by definition of $w^N$, no points in the support of $w^N$ have either $v,v_\star$ belonging to the special set $S_t$. Thanks to the conditioning in the definition of $\PPm^N$, we have, $\PPm^N$-almost surely, \begin{equation} \label{eq: } \sup K^N\le 2, \hspace{1cm} \langle |v|^2, \mu^N_0\rangle \le 2\Theta(T)\end{equation} and the same arguments as in Section \ref{sec: ETUB} bound $Nw^N((0,T]\times\rrd\times\rrd\times\ssd)$ by a Poisson process of rate $NC$, for some constant $C$. All together, there exists a new constant $C$, depending only on $\Theta(T)$, on such that \begin{equation}\label{eq: upper bound on mass} \PPm^N\left(w^N(E)>C\right)\to 0. \end{equation} Using (\ref{eq: construction of QN3}) again, \begin{equation} \label{eq: upper bound on K} \PPm^N \left(\log N/N_0(M_N, r_N) > \epsilon/3C\right)\to 0 \end{equation} and, together with (\ref{eq: upper bound on mass}), \begin{equation} \label{eq: 2nd term in COM}\PPm^N\left(\langle \log K^N, w^N\rangle > \epsilon/3 \right) \to 0. \end{equation} For the final term, we will find an upper bound for $\int_E |K^{N,M_N, r_N}-1| d\overline{m}_{\mu^N}$. We split the integral into cases where neither $v,v_\star\in S_t$ and its complement. In the first case $(v,v_\star,\sigma)\in S_t^\mathrm{c}\times S_t^\mathrm{c}\times \ssd$, we have $$|1-K^N(t,v,v_\star,\sigma)|\le \frac{N}{N_t(M_N,r_N)}-1.$$ On the other hand, observing  that $S_t\subset\{|v|\ge M_N\}$, the contributions from $v\not \in S_t$ and $v_\star \not \in S_t$ can be controlled by straightforward Markov inequalities: for some constant $C$,\begin{equation} \int_{\rrd\times\rrd} (1+|v|+|v_\star)(1(v\in S_t)+1(v_\star\in S_t))\mu^N_t(dv)\mu^N_t(dv_\star) \le C M_N^{-1}\langle 1+|v|^2, \mu^N_0\rangle^2. \end{equation} Together we obtain\begin{equation} \begin{split} \label{eq: difference of K and 1}&\int_{\rrd\times\rrd\times\ssd} \left|1-K^N(t,v,v_\star,\sigma)\right|B(v-v_\star)\mu^N_t(dv)\mu^N_t(dv_\star)d\sigma \\& \hspace{6cm}\le C\left(\left(\frac{N}{N_t(M_N,r_N)}-1\right)+\frac{1}{M_N}\right) \langle 1+|v|^2, \mu^N_t\rangle^2. \end{split} \end{equation} We recall that $M_N\to \infty$, use  (\ref{eq: construction of QN2}) to bound the first factor and using the conditioning in the definition of $\PPm^N$ to bound the moment factor by $(1+2\Theta(T))^2$, $\PPm^N$-almost surely. We conclude that \begin{equation}\label{eq: thrd term in COM} \PPm^N\left(\int_E |1-K^N(t,v,v_\star,\sigma)|\overline{m}_{\mu^N}(dt,dv,dv_\star,d\sigma)>\epsilon/3\right) \to 0.\end{equation} Gathering (\ref{eq: 1st term in COM}, \ref{eq: 2nd term in COM}, \ref{eq: thrd term in COM}) and returning to (\ref{eq: reiteration of cost}), the lemma is proven.  \end{proof} \subsection{Law of Large Numbers} We next prove the following law of large numbers under the new measures for the sets $\A_\Theta$ given in the theorem. \begin{lemma}\label{lemma: bad LLN main} Let $\PPm^N$ be the probability measures constructed above, and suppose $S\subset N$ is an infinite subsequence such that, the laws $\PPm^N \circ (\mu^N_\bullet, w^N)^{-1} $ converges weakly on $\D\times\Mm$, and let $(\mu_\bullet, w)$ be a random variable, defined with respect to a new probability space $(\Omega, \mathfrak{F},\PPm)$ and whose distribution is the limit. Then, for $\A_\Theta$ as in Theorem \ref{thm: main} for some $\alpha>0$ to be chosen, \begin{equation} \PPm\left((\mu_\bullet, w)\in \A_\Theta\right) =1.\end{equation} \end{lemma}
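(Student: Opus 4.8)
The plan is to carry out a hydrodynamic-limit argument for the tilted processes, showing that every subsequential weak limit must concentrate on $\A_\Theta$. First I would invoke Skorokhod's representation theorem to realise all $(\mu^N_\bullet,w^N)$, $N\in S$, together with their limit $(\mu_\bullet,w)$, on one probability space carrying a measure $\PPm$, with $(\mu^N_\bullet,w^N)\to(\mu_\bullet,w)$ in $\D\times\Mm$ $\PPm$-almost surely; it then suffices to verify that, $\PPm$-a.s., the limit has the four properties defining $\A_\Theta$: (a) $\mu_0=\mu^\star_0$; (b) $w=\overline{m}_\mu$ and $\mu_\bullet$ solves (\ref{eq: BE}); (c) $\langle|v|^2,\mu_t\rangle=\Theta(t)$ for all $t$; (d) $\langle|v|^4,\mu_t\rangle\le A(t)$ for all $t$. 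Property (a) is immediate from the law of large numbers for the initial data (\ref{eq: construction of QN3}) together with the convergence $\mu^\star_{0,M_N}\to\mu^\star_0$ established during the construction of $\PPm^N$. I would organise (b)--(d) as Lemmas \ref{lemma: bad LLN 2}--\ref{lemma: bad LLN 4}.

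The central bookkeeping object is the \emph{active empirical measure} $\nu^N_t$: the normalised empirical measure of the $N_t$ particles not frozen at time $t$, so that $\mu^N_t=\tfrac{N_t}{N}\nu^N_t+(1-\tfrac{N_t}{N})\rho^N_t$ with $\rho^N_t$ supported on $\{|v|\ge M_0\}$ and $M_0>M_N\to\infty$. Since the frozen mass is at most $\langle 1[|v|\ge M_0],\mu^N_0\rangle$, which vanishes because $M_0>M_N\to\infty$ and $\langle|v|^2,\mu^N_0\rangle\approx\Theta(T)$, and $N_t/N\to1$ by (\ref{eq: construction of QN1})--(\ref{eq: construction of QN2}), one gets $\sup_tW(\mu^N_t,\nu^N_t)\to0$, hence $\mu_\bullet=\lim\nu^N_\bullet$. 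For (b), on the high-probability event where $\sup K^N\le2$ (true with $\PPm^N$-probability $\to1$ by (\ref{eq: construction of QN1}), (\ref{eq: construction of QN3})) the product $K^N B$ is uniformly bounded, so the martingale/compensator computation of Step 1 of Lemma \ref{lemma: restricted LLN} applies: for $g\in C_c(E)$ the process $\langle g,w^N_t\rangle-\int_{(0,t]\times\cdots}g\,K^N B\,ds\,\mu^N_s(dv)\mu^N_s(dv_\star)d\sigma$ is a martingale with $O(1/N)$ quadratic variation. Because $K^N=N/N_t\to1$ on the active set and each $g$ has compact support (so the escaping frozen particles do not contribute for large $N$), the compensator converges to $\int g\,d\overline{m}_\mu$ via Lemma \ref{lemma: really useful continuity result}, while a uniform tail bound as in Step 2b of Lemma \ref{lemma: restricted LLN} excludes escape of mass in $w^N$. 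Thus $w=\overline{m}_\mu$; combined with the closedness of (\ref{eq: CE}) (Lemma \ref{lemma: semiconntinuity}), $(\mu_\bullet,w)$ is a measure-flux pair with tilting $K\equiv1$, i.e.\ $\mu_\bullet$ solves (\ref{eq: BE}).

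The heart of the matter, and the step I expect to be hardest, is property (c): every finite-$N$ process has energy $\langle|v|^2,\mu^N_t\rangle\approx\Theta(T)$, yet the limit must carry the nonconstant energy $\Theta(t)$, the gap $\Theta(T)-\Theta(t)$ being lodged at time $t$ in the $\mathfrak{o}(N)$ still-frozen particles, whose speeds diverge and which therefore disappear in the $W$-limit. Quantitatively: on each interval $[\trim,\tri)$ (with $r=r_N$) the active particles perform a Kac process on $N_{t_{i-1}}$ particles, which conserves energy; by the quantile construction of the $M_i$ together with $N_t/N\to1$ this conserved energy equals $\Theta(\trim+)+o(1)$, and the staircase $t\mapsto\Theta(\trim+)$ converges uniformly to $\Theta$ as $r_N\to\infty$, consecutive levels differing by $(\Theta(T)-1)/r_N$. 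For $t\in(0,T]\setminus E$ one has $t-\trim\ge\inf_{s\in E:\,s<t}(t-s)>0$ uniformly in $N$, so Proposition \ref{prop: MCP}i), applied to the time-shifted $N_{t_{i-1}}$-particle Kac process, bounds $\langle|v|^4,\nu^N_t\rangle$ uniformly in $N$; this yields uniform integrability of $|v|^2$ under $\nu^N_t$, so no energy escapes in $\nu^N_t\to\mu_t$ and $\langle|v|^2,\mu_t\rangle=\lim_N\langle|v|^2,\nu^N_t\rangle=\lim_N\Theta(\trim+)=\Theta(t)$. To pass to every $t$: $\mu_\bullet$ is weakly (hence $W$-)continuous as a solution of (\ref{eq: BE}), $t\mapsto\langle|v|^2,\mu_t\rangle$ is nondecreasing by Proposition \ref{prop: MCP}iii) and lower semicontinuous, hence left-continuous, and it agrees with the nondecreasing left-continuous $\Theta$ on the dense set $\{0\}\cup((0,T]\setminus E)$, so the two coincide everywhere.

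Property (d) then follows from the same moment estimate: for $t\in[\trim,\tri)$ with $\trim<\tri$, Proposition \ref{prop: MCP}i) gives $\EE_{\PPm^N}\big[\langle|v|^4,\nu^N_t\rangle\big]\le C(1+T)(t-\trim)^{-2}\cdot2\Theta(T)$, so by lower semicontinuity of $\mu\mapsto\langle|v|^4,\mu\rangle$ and $\sup_tW(\mu^N_t,\nu^N_t)\to0$, in the limit $\langle|v|^4,\mu_t\rangle\le C(1+T)(t-\trim)^{-2}\cdot2\Theta(T)$; since $\trim\in E$ and $\trim<t$ we have $(t-\trim)^{-2}\le(\inf_{s\in E:\,s<t}(t-s))^{-2}$, so taking $\alpha:=2C(1+T)\Theta(T)$ gives exactly $\langle|v|^4,\mu_t\rangle\le A(t)$. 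Assembling (a)--(d) yields $(\mu_\bullet,w)\in\A_\Theta$ $\PPm$-a.s. The points needing genuine care are controlling all the batches simultaneously as $r_N\to\infty$, so that each released packet of energy fully thermalises before the next release, and confirming that the still-frozen particles contribute nothing to $w$ and nothing spurious to the limiting energy --- i.e.\ that the energy gap arises solely from honest escape to infinity of a vanishing fraction of particles.
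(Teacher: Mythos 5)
Your overall plan --- Skorokhod representation, the decomposition into frozen/active particles, and the organisation into properties (a)--(d) corresponding to the paper's Lemmas \ref{lemma: bad LLN 2}--\ref{lemma: bad LLN 4} --- matches the paper's proof structure. Items (a), (b) and (c) proceed essentially as the paper does: the martingale/compensator computation of Lemma \ref{lemma: restricted LLN} Step 1 is the basis for $w=\overline{m}_\mu$, and for the energy identity you correctly combine the $N_t$-particle conservation law on each time batch, the quantile construction of the $M_i$, and Proposition \ref{prop: MCP}i) as a source of uniform integrability of $|v|^2$ under $\nu^N_t$.

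However, for item (d) your argument has a genuine gap. From Proposition \ref{prop: MCP}i) you obtain a bound on $\EE_{\PPm^N}[\langle|v|^4,\nu^N_t\rangle]$, and Fatou plus lower semicontinuity of the fourth moment under $W$-convergence will only pass this to $\EE_{\PPm}[\langle|v|^4,\mu_t\rangle]\le C(1+T)(t-\trim)^{-2}\,2\Theta(T)$. That is a bound \emph{in expectation}, not the $\PPm$-almost-sure pointwise bound $\langle|v|^4,\mu_t\rangle\le A(t)$ that the definition of $\A_\Theta$ requires. You cannot simply drop the expectation: since existence but not uniqueness of the limiting Boltzmann solution is known (this is exactly the phenomenon the paper exploits), the limit $\mu_\bullet$ is genuinely a random element, and a bound on the mean of $\langle|v|^4,\mu_t\rangle$ does not deliver an almost-sure pointwise bound. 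The paper takes a different and cleaner route for this step: by Lemma \ref{lemma: bad LLN 2}, on a $\PPm$-full event $\mu_\bullet$ is a solution to (\ref{eq: BE}) with energy $\Theta(\cdot)\le\Theta(T)$ constant on each interval bounded away from $E$; one then applies the \emph{deterministic} moment-creation estimate Proposition \ref{prop: MCP}ii) pathwise to this solution on $[a,v]$ with $a\downarrow u':=\max\{s\in E: s<u\}$, which yields $\langle|v|^4,\mu_t\rangle\le C\Theta(T)(t-u')^{-2}=A(t)$ on the same almost-sure event, with no probabilistic passage needed. Your argument should be replaced by this deterministic application of Proposition \ref{prop: MCP}ii) to the limit path.
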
 It will be convenient, throughout, to realise all $(\mu^N_\bullet, w^N), N\in S$ and $(\mu_\bullet,w)$ on a common probability space with probability measure $\PPm$, such that the law of $(\mu^N_\bullet, w^N)$ under $\PPm$ is the same as under $\PPm^N$, and such that $\mu^N_\bullet \rightarrow \mu_\bullet$ and $w^N \to w$ almost surely, and such that (\ref{eq: construction of QN1} - \ref{eq: construction of QN3}) hold with almost sure convergence as $N\to \infty$ through $S$:  \begin{align} \label{eq: construction of QN1'} &\PPm\left(\max_{i\le {r_N}}\left|\langle |v|^21[|v|\le M_i], \mu^N_0\rangle -\Theta(\tri+)\right|\to 0 \right)= 1; \\[2ex]\label{eq: construction of QN2'} &\PPm\left(\inf_{t\in [0,T]} \frac{N_t(M_N, r_N)}{N} \to 1\right)= 1\\[2ex]\label{eq: construction of QN3'} &\PPm(W(\mu^N_0,\mu^\star_{0, M_N})\to 0)=1. \end{align} We will write $\EEm$ for the expectation under this probability measure. For clarity, we will subdivide the argument into three smaller steps. We also observe that each $\mu^N_\bullet$ has jumps of size at most $4/N$, and so the limit $\mu_\bullet$ is $\PPm$-almost surely continuous and the Skorokhod convergence can be upgraded to uniform convergence by Proposition \ref{prop: Sk continuous functions}c), \begin{equation}\label{eq: uniform convergence} \PPm\left(\sup_{t\le T} W(\mu^N_t, \mu_t)\to 0\text{ as }N\to \infty \text{ through }S\right)=1. \end{equation} We also note immediately from (\ref{eq: construction of QN3}) that $W(\mu^N_0, \mu^\star_0) \le W(\mu^N_0, \mu^\star_{0,M_N})+W(\mu^\star_{0,M_N}, \mu^\star_0)\to 0$, $\PPm$-almost surely, so that $\mu_0=\mu_0^\star$ almost surely. We now prove the remaining properties defining $\mathcal{A}_\Theta$ one by one. We next prove that the limit process $(\mu_\bullet, w)$ is almost surely a measure-flux pair. \begin{lemma}[Limiting Path as a Measure-Flux Pair]\label{lemma: bad LLN 2} Continue in the notation following Lemma \ref{lemma: bad LLN main}. Then \begin{equation}\label{eq: limiting object is ms flux pair} \PPm\left((\mu_\bullet, w)\text{ is a measure-flux pair,} \h w=\overline{m}_{\mu}\right)=1.\end{equation} In particular \begin{equation}\label{eq: AS BE} \PPm\left(\mu_\bullet \text{ is a solution to  (\ref{eq: BE}) with }\mu_0=\mu_0^\star\right)=1.\end{equation} \end{lemma}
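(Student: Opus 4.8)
The plan is to follow the scheme of Step~1 in the proof of Lemma~\ref{lemma: restricted LLN}, now with the tilting function $K^N$ constructed in Section~\ref{subsec: construction of com}. The continuity equation part is immediate: for each $N$ the empirical flux $w^N$ is built so that $(\mu^N_\bullet, w^N)$ solves (\ref{eq: CE}) exactly; the set of pairs in $\D\times\Mm$ solving (\ref{eq: CE}) is closed by Lemma~\ref{lemma: semiconntinuity}; and $(\mu^N_\bullet, w^N)\to(\mu_\bullet, w)$ $\PPm$-almost surely along $S$. Hence $(\mu_\bullet, w)$ solves (\ref{eq: CE}) $\PPm$-almost surely.

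The substantive point is to identify the tilting function of the limit as $K\equiv 1$. Fix $g\in C_c(E)$. By Proposition~\ref{prop: com}, under $\PPm^N$ the process
\[ M^{N,g}_t=\langle g, w^N_t\rangle-\int_{(0,t]\times\rrd\times\rrd\times\ssd} g\,K^N B(v-v_\star)\,ds\,\mu^N_s(dv)\mu^N_s(dv_\star)d\sigma \]
is a c\`adl\`ag martingale with predictable quadratic variation $N^{-1}\int g^2\,K^N B(v-v_\star)\,ds\,\mu^N_s(dv)\mu^N_s(dv_\star)d\sigma$. On the event where $N_t/N\ge\tfrac12$ for all $t$, which holds for all large $N$ $\PPm$-almost surely by (\ref{eq: construction of QN2'}), one has $\sup K^N\le 2$; since moreover $g$ is compactly supported, so $g^2 B$ is bounded, the quadratic variation is $O(1/N)$ there, and $\langle g, w^N\rangle-\int g\,K^N B\,ds\,\mu^N_s(dv)\mu^N_s(dv_\star)d\sigma\to 0$ in $\PPm$-probability. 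Writing $K^N=1+(K^N-1)$, the $1$-part equals $\int g\,\overline{m}_{\mu^N}(ds,dv,dv_\star,d\sigma)$, which converges to $\int g\,\overline{m}_\mu$ by Lemma~\ref{lemma: really useful continuity result} and the uniform convergence (\ref{eq: uniform convergence}). For the error term: with $\mathrm{supp}(g)\subset\{|v|,|v_\star|\le R_g\}$ and $M_N\to\infty$, for $N$ large the frozen set $S_t\subset\{|v|\ge M_N\}$ misses $\{|v|\le R_g\}$, so on $\mathrm{supp}(g)$ we have $K^N=N\,1(N_t\ge1)/N_t$ and thus $|K^N-1|\le|N/N_t-1|$, which $\to 0$ uniformly in $t$ by (\ref{eq: construction of QN2'}); as $B$ is bounded on $\mathrm{supp}(g)$ and the $\mu^N_s$ are probability measures, this term is $O(|N/N_t-1|)\to 0$. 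A union bound over a countable dense subset of $C_c(E)$ then gives $w=\overline{m}_\mu$, $\PPm$-almost surely.

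Combining the two parts, $w=\overline{m}_\mu$ exhibits $w\ll\overline{m}_\mu$ with tilting function $K\equiv 1$, and together with (\ref{eq: CE}) this shows $(\mu_\bullet, w)$ is $\PPm$-almost surely a measure-flux pair, which is (\ref{eq: limiting object is ms flux pair}). Since the tilting function is $1$, the equation (\ref{eq: BEK}) collapses to (\ref{eq: BE}); combined with the identity $\mu_0=\mu^\star_0$ already noted (from (\ref{eq: construction of QN3'}) and $\mu^\star_{0,M_N}\to\mu^\star_0$), this gives (\ref{eq: AS BE}).

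I expect the main obstacle to be the middle step --- controlling the compensator $\int g\,K^N B\,ds\,\mu^N_s(dv)\mu^N_s(dv_\star)d\sigma$ despite the unboundedness of $B$ and the fact that $K^N$ equals $0$ on the frozen particles and $N/N_t\ne 1$ on the rest. This is precisely where the construction pays off: the pathwise energy conservation $\langle|v|^2,\mu^N_t\rangle=\langle|v|^2,\mu^N_0\rangle$ (uniformly close to $\Theta(T)$ on the good events) controls the second moments, $M_N\to\infty$ moves the frozen particles past the support of any fixed test function, and $N_t/N\to 1$ forces the nonfrozen tilting back to $1$.
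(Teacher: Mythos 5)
Your proof is correct and follows essentially the same route as the paper's: (CE) passes to the limit because the constraint set is closed (Lemma \ref{lemma: semiconntinuity}), and the identification $w=\overline{m}_\mu$ comes from the martingale $M^{N,g}$ with vanishing quadratic variation together with convergence of the compensator $\int g K^N B\,ds\,\mu^N_s\otimes\mu^N_s\,d\sigma$ via Lemma \ref{lemma: really useful continuity result}. The one small variation is in bounding the error term $\int g\,(K^N-1)\,\overline{m}_{\mu^N}$: you observe that for $N$ large the frozen set $S_t\subset\{|v|\ge M_N\}$ misses $\mathrm{supp}(g)$, so $|K^N-1|=|N/N_t-1|$ there, whereas the paper applies the global estimate (\ref{eq: difference of K and 1}) and absorbs an extra, vanishing $1/M_N$ term — both are valid, and yours is a mild streamlining rather than a different argument.
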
 \begin{proof} This lemma is similar to Step 1 in the proof of the lower bound in Section \ref{sec: RLB}. As in the cited proof, the continuity equation (\ref{eq: CE}) holds for the finite paths $(\mu^N_\bullet, w^N)$ $\PPm$-almost surely, and since the set of pairs $(\mu_\bullet, w)$ satisfying the continuity equation is closed by Lemma \ref{lemma: semiconntinuity}, it follows that $(\mu_\bullet, w)$ solves (\ref{eq: CE}) almost surely. \bigskip \\ We next show that $w=\overline{m}_\mu$, almost surely. Let us fix $g:E\to\rr$ continuous and compactly supported, and start by observing that the process \begin{equation} M^{N,g}_t=\int_E 1(s\le t) g(s,v,v_\star,\sigma)(w^N-K^N\overline{m}_{\mu^N})(ds,dv,dv_\star,d\sigma) \end{equation} is a $\PPm$-martingale, with previsible quadratic variation at most \begin{equation}\begin{split} [M^{N,g}]_t & = N^{-1} \int_E 1(s\le t) g^2 K^N(s,v,v_\star,\sigma)B(v-v_\star)\mu^N_s(dv)\mu^N_s(dv_\star)dsd\sigma \\[2ex] & \le 2N^{-1} T\|g\|_\infty^2 (3+2\langle |v|^2, \mu^N_0\rangle)\\ & \le 2N^{-1} T\|g\|_\infty^2 (3+4\Theta(T)). \end{split} \end{equation} We now observe that $M^{N,g}_0=0$ and $[M^{N,g}]_T\to 0$ $\PPm$-almost surely, which implies by standard martingale estimates that \begin{equation}\label{eq: first term in bad LLN} \sup_{t\le T} M^{N,g}_t\to 0\hs \text{in }\PPm\text{-probability}.\end{equation} We now investigate the difference between these martingales and the equivalent processes with $K\equiv 1$:  \begin{equation} \begin{split} &M^{N,g}_T-\int_E g(s,v,v_\star,\sigma)(w^N-\overline{m}_{\mu^N})(ds,dv,dv_\star,d\sigma)\\& \hspace{2cm} =\int_E g(s,v,v_\star,\sigma)(1-K^N)\overline{m}_{\mu^N}(ds,dv,dv_\star,d\sigma)\end{split} \end{equation} Returning to (\ref{eq: difference of K and 1}) and integrating over time, \begin{equation}\begin{split}\label{eq: second term in bad LLN}&\left|\int_E  g(w^N-\overline{m}_{\mu^N})(ds,dv,dv_\star,d\sigma)-M^{N,g}_t \right| \\& \hs\hs \hs \le C_gT\left(\left(\frac{N}{N_0(M_N,r_N)}-1\right)+\frac{1}{M_N}\right) \langle 1+|v|^2, \mu^N_0\rangle^2  \end{split}  \end{equation}  and, by the choice of $r_N, M_N$, the right-hand side converges to $0$, almost surely. Finally, using Lemma \ref{lemma: really useful continuity result}, $\PPm$-almost surely, \begin{equation} \label{eq: third tirm in bad LLN}\left|\int_E g(t,v,v_\star,\sigma)(\overline{m}_{\mu^N}-\overline{m}_\mu)(ds,dv,dv_\star,d\sigma)\right|\to 0. \end{equation} We now gather (\ref{eq: first term in bad LLN}, \ref{eq: second term in bad LLN}, \ref{eq: third tirm in bad LLN}) to conclude that, $\PPm$-almost surely, $\langle g, w-\overline{m}_\mu\rangle=0$. This extends to all $g\in C_c(E)$ simultaneously by taking a union bound over a countable dense subset of $C_c(E)$ to conclude that $\PPm(w=\overline{m}_\mu)=1$, and the lemma is proven. \end{proof} Next, we prove that the second moment $\langle |v|^2, \mu_t\rangle $ coincides everywhere with the function $\Theta(t)$ given.  \begin{lemma}[Second Moment of Limiting Path]\label{lemma: bad LLN 3} We continue in the notation following Lemma \ref{lemma: bad LLN main}. Then \begin{equation} \label{eq: conclusion of 2d} \PPm\left(\langle |v|^2, \mu_t\rangle =\Theta(t) \text{ for all }t\le T\right)=1.\end{equation}\end{lemma}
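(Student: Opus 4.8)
The plan is to show that, under the coupling measure $\PPm$, the empirical measure $\mu^N_t$ splits into a \emph{frozen} part of vanishing mass but macroscopic energy and an \emph{active} part whose energy is exactly the cumulative initial energy of the particles that have been unfrozen by time $t$; in the limit $\mu_t$ sees only the active energy, which is forced to equal $\Theta(t)$. Concretely: for $t$ in the partition interval $[\trim,\tri)$ (with $r=r_N$, the index $i$ depending on $N$), I would write $\mu^N_t=\mu^{N,\mathrm{act}}_t+\mu^{N,\mathrm{fr}}_t$ with $\mu^{N,\mathrm{fr}}_t=1[|v|\ge M_{i-1}]\mu^N_0$, the frozen particles, which never move because $K^N$ vanishes on collisions touching the special set $S_t$. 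Since $M_{i-1}\ge M_0$, bound (\ref{eq: construction of QN2'}) gives $\sup_{t\le T}\mu^{N,\mathrm{fr}}_t(\rrd)\le 1-N_0(M_N,r_N)/N\to 0$ $\PPm$-a.s., so (as $W$ is dominated by total variation) $\mu^{N,\mathrm{act}}_t\to\mu_t$ weakly for every $t$, $\PPm$-a.s. On the other hand every collision is elastic ($\Delta|v|^2=0$) and frozen particles are stationary, so $\langle|v|^2,\mu^N_t\rangle=\langle|v|^2,\mu^N_0\rangle$ for all $t$ $\PPm$-a.s., whence $\langle|v|^2,\mu^{N,\mathrm{act}}_t\rangle=\langle|v|^2\,1[|v|<M_{i-1}],\mu^N_0\rangle$.

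Next I would work at a fixed good time $t\in E^c$, where $E\subset[0,T]$ is the closed null set from Theorem \ref{thm: main}. Since $\Theta$ is locally constant off $E$, there is $\eta_t>0$ with $\Theta$ constant on $(t-\eta_t,t+\eta_t)$; then (\ref{eq: definition of time discretisation}) forces no partition point to lie in this interval, so for every $N$ one has $\trim\le t-\eta_t<t<\tri$. A short computation from (\ref{eq: definition of time discretisation}), using $\trim<t<\tri$ and that the partition has $\Theta$-mesh $(\Theta(T)-1)/r_N\to 0$, gives $\Theta(t)-(\Theta(T)-1)/r_N<\Theta(\trim+)\le\Theta(t)$; feeding this and the uniform convergence (\ref{eq: construction of QN1'}) into the identity above yields $\langle|v|^2,\mu^{N,\mathrm{act}}_t\rangle\to\Theta(t)$ $\PPm$-a.s. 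By lower semicontinuity of $\mu\mapsto\langle|v|^2,\mu\rangle$ under weak convergence, this already gives the upper bound $\langle|v|^2,\mu_t\rangle\le\Theta(t)$ a.s.

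For the reverse inequality — the delicate point — I would invoke moment creation. On $[\trim,\tri)$ the active particles collide only among themselves, and the constant factor $N/N_t$ in $K^N$ exactly compensates the $1/N$ normalisation, so the active cloud evolves as an ordinary $N_t$-particle Kac process with kernel $B$; since $\trim\le t-\eta_t$, by time $t$ it has been running for time at least $\eta_t$, and by energy conservation together with (\ref{eq: construction of QN1'})--(\ref{eq: construction of QN2'}) its energy is $\le 2\Theta(T)$ on an event $A_N$ of $\PPm$-probability tending to $1$. Applying Proposition \ref{prop: MCP}i) to this sub-process with a fixed $p>2$ and $s=\eta_t$ gives $\sup_N\EEm[\langle|v|^p,\mu^{N,\mathrm{act}}_t\rangle\,1_{A_N}]<\infty$. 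A Fatou/truncation argument then finishes: from $\langle(|v|^2-R)^+,\mu^{N,\mathrm{act}}_t\rangle\le R^{2-p}\langle|v|^p,\mu^{N,\mathrm{act}}_t\rangle$ together with $\langle|v|^2,\mu^{N,\mathrm{act}}_t\rangle\to\Theta(t)$ and $\langle|v|^2\wedge R,\mu^{N,\mathrm{act}}_t\rangle\to\langle|v|^2\wedge R,\mu_t\rangle$ a.s., one obtains $\Theta(t)-\EEm[\langle|v|^2\wedge R,\mu_t\rangle]\le CR^{2-p}$, and letting $R\to\infty$ gives $\EEm[\langle|v|^2,\mu_t\rangle]\ge\Theta(t)$; combined with the a.s. upper bound this forces $\langle|v|^2,\mu_t\rangle=\Theta(t)$ $\PPm$-a.s., for each fixed $t\in E^c$.

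Finally I would extend from $E^c$ to all of $[0,T]$: taking a countable dense set $\{t_k\}\subset E^c$ (dense since $E$ has empty interior), $\PPm$-a.s. $\langle|v|^2,\mu_{t_k}\rangle=\Theta(t_k)$ for all $k$; by Lemma \ref{lemma: bad LLN 2} the limit solves (\ref{eq: BE}), so Proposition \ref{prop: MCP}iii) makes $g(t):=\langle|v|^2,\mu_t\rangle$ nondecreasing, and since $\mu_\bullet$ is $\PPm$-a.s. $W$-continuous (as noted after Lemma \ref{lemma: bad LLN main}, cf.\ (\ref{eq: uniform convergence})), lower semicontinuity of $g$ plus monotonicity makes $g$ left-continuous; as $\Theta$ is also nondecreasing and left-continuous and agrees with $g$ on $\{t_k\}$, one gets $g(t)=g(t-)=\lim_k\Theta(t_k)=\Theta(t-)=\Theta(t)$ for all $t\in(0,T]$, while $g(0)=\langle|v|^2,\mu^\star_0\rangle=1=\Theta(0)$ since $\mu_0=\mu^\star_0$ a.s., which is (\ref{eq: conclusion of 2d}). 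The main obstacle is the reverse inequality in the third paragraph: one must recognise the active cloud as a bona fide Kac process to apply Proposition \ref{prop: MCP}i), and crucially use that $\Theta$ is locally constant off the null set $E$ — otherwise the time elapsed since the most recent ``reset'' (partition point) could be $o(1)$, making the $s^{2-p}$ factor in the moment-creation estimate blow up — while also tracking carefully the interplay between $\PPm^N$ and the coupling measure $\PPm$.
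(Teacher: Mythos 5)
Your proof is correct and follows essentially the same approach as the paper's: the same decomposition of $\mu^N_t$ into a vanishing frozen part and an active Kac process on $N_t$ particles, the same use of the moment-creation estimate (Proposition \ref{prop: MCP}i) together with local constancy of $\Theta$ off the closed set $E$ to keep the elapsed running time since the last partition point bounded below, and the same extension to all of $[0,T]$ via monotonicity of $t\mapsto\langle|v|^2,\mu_t\rangle$ (Proposition \ref{prop: MCP}iii) and left-continuity of $\Theta$. The technical route differs only modestly — you work pointwise at fixed $t\in E^{\mathrm{c}}$ and pass to a countable dense set rather than uniformly over intervals $I\subset[0,T]\setminus E$, and your Fatou/expectation trick (a.s.\ upper bound plus $\EEm\langle|v|^2,\mu_t\rangle\ge\Theta(t)$ forces a.s.\ equality) replaces the paper's uniform-integrability argument — and the small slips (your $\mu^{N,\mathrm{act}}_t$ has sub-unit mass, so one should renormalise to $\nu^N_t=(N/N_t)\mu^{N,\mathrm{act}}_t$ before invoking Proposition \ref{prop: MCP}i and speaking of $W$-convergence; and the exponent should be $R^{(2-p)/2}$ rather than $R^{2-p}$) are immaterial since both exponents are negative.
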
 \begin{proof} We start by decomposing \begin{equation} \label{eq: frozen plus time dependent Kac} \mu^N_t= \xi^N_t + \frac{N_t}{N}\nu^N_t\end{equation} where $\xi^N_t$ is the empirical measure of frozen particles $\xi^N_t= 1_{S_t}\mu^N_0$, which is constant on each time interval $[t^{(r_N)}_{i-1}, t^{(r_N)}_i)$, and on each such time interval, $\nu^N_t$ is a Kac process on $N_t$ particles. Moreover, thanks to the conditioning in the definition of $\PPm^N$, we have the almost sure energy bound: \begin{equation} \langle |v|^2, \nu^N_{\trim}\rangle \le \frac{N}{N_0}\langle |v|^2,\mu^N_0\rangle \le 4\Theta(T).\end{equation}   We now fix an interval $I\subset [0,T]\setminus P$ and $\ell>0$ such that \begin{equation} \inf_{t\in I, s\in P, s<t} (t-s)\ge \ell >0. \end{equation}  For each $N$, the points $t^{(r_N)}_i, i\le r_N$ all belong to $P$, and so do not lie in $I$; we may therefore apply the moment creation property in Proposition \ref{prop: MCP}i) to obtain, for all $N$ large enough,  \begin{equation} \label{eq: moment bound on finite path} \EEm\left[ \sup_{t\in I}\langle |v|^4, \nu^N_t\rangle\right] \le C\ell^{-2}\end{equation} uniformly in $N$, for some $C$ depending on $\Theta(T)$. We next observe that $\|\xi^N_t\|_{\mathrm{TV}}\le 1-N_0/N \to 0$, uniformly in time $\PPm$-almost surely, and so it follows from (\ref{eq: uniform convergence}) that \begin{equation}\label{eq: uniformity 2}\PPm\left(\sup_{t\le T} W(\nu^N_t, \mu_t)\to 0 \text{ as $N\to \infty$ through $S$}\right)=1.\end{equation} Using Fatou's lemma and the lower semicontinuity of moments, we may now send $N\rightarrow \infty$ through $S$ in (\ref{eq: moment bound on finite path}) to obtain, for $I$ as before, the same estimate on $\sup_I \langle |v|^4, \mu_t\rangle$, and together \begin{equation} \EEm\left[\sup_{t\in I}\left(\langle |v|^4, \nu^N_t\rangle + \langle |v|^4, \mu_t\rangle\right) \right] \le C\ell^{-2}\end{equation} so we can find a large $M$, depending on $\ell$, such that, for all $N$, \begin{equation} \label{eq: localise second moment} \PPm\left(\sup_{t\in I}\langle |v|^2 1[|v|\ge M], \nu^N_t+\mu_t\rangle \ge \epsilon/3 \right)<\epsilon'/3. \end{equation} Now, let $f_M$ be a continuous, compactly supported function with $0\le f_M\le |v|^2$ and $f_M=|v|^2$ when $|v|\le M$. By the uniform convergence (\ref{eq: uniform convergence}), for $N\in S$ large enough,\begin{equation}\label{eq: convergence of truncated 2nd moment} \PPm\left(\sup_{t\in I}|\langle f_M, \nu^N_t-\mu_t\rangle |\ge \epsilon/4\right)<\epsilon'/3\end{equation} and thanks to (\ref{eq: localise second moment}), \begin{equation} \PPm\left(\sup_{t\in I} \langle |v|^2-f_M, \nu^N_t+ \mu_t \rangle \ge \epsilon/3 \right) <\epsilon'/3\end{equation} and together, for $N\in S$ large enough, \begin{equation}\label{eq: convergence of second moments 1} \PPm\left(\sup_{t\in I} \left|\langle |v|^2, \nu^N_t \rangle - \langle |v|^2, \mu_t\rangle\right|>2\epsilon/3 \right) \le 2\epsilon'/3. \end{equation}  For each $N$, the interval $I$ lies in some $[\trim, \tri)$, $r=r_N, i=i_N$, as the endpoints of all such intervals always belong to $P$, and in particular, $\nu^N_t$ is a conservative Kac process on this interval, with \begin{equation} \langle |v|^2, \nu^N_{\trim}\rangle = \frac{N}{N_{\trim}} \langle |v|^2 1[|v|\le M_{i(N)-1}, \mu^N_0\rangle  \end{equation} Thanks to (\ref{eq: construction of QN2}), the first factor converges to $1$, $\PPm$-almost surely, and using (\ref{eq: construction of QN3}), for all $\epsilon>0$, we obtain\begin{equation}\label{eq: convergence of second moment of unfrozen} \PPm\left(\sup_{t\in I} \left|\langle |v|^2, \nu^N_t\rangle - \Theta(t^{(r_N)}_{i(N)-1}+)\right|>\epsilon\right)\to 0.\end{equation} Since $I$ is an interval disjoint from $P$, $\Theta$ is constant on $I$, and by the construction of the points $\tri$, we have the nonrandom bound \begin{equation} \sup_{t\in I}\left|\Theta(t)-\Theta(t^{(r_N)}_{i(N)-1}+)\right|\le \frac{1}{r_N} \to 0\end{equation} and we conclude that, for $N\in S$ large enough, \begin{equation} \label{eq: convergence of second moments 2} \PPm\left( \sup_{t\in I}\left|\langle |v|^2, \nu^N_t\rangle -\Theta(t)\right| \ge \epsilon/3\right) <\epsilon'/3.\end{equation} Combining (\ref{eq: convergence of second moments 1}, \ref{eq: convergence of second moments 2}), we have shown that, for all $\epsilon, \epsilon'>0$, \begin{equation} \PPm\left(\sup_{t\in I} \left|\langle |v|^2, \mu_t\rangle - \Theta(t)\right|>\epsilon\right)\le \epsilon' \end{equation} so that $\langle |v|^2, \mu_t\rangle = \Theta(t)$ for all $t\in I$, $\PPm$-almost surely. We can now cover $[0,T]\setminus P$ by a countable collection of intervals of this form, so that this conclusion holds for all $t\not \in P$ almost surely. \bigskip \\ We now show that, on a single almost sure event, this also holds for $t\in P$. As remarked in Lemma \ref{lemma: bad LLN 2}, there is a $\PPm$-almost sure event on which $\mu_\bullet$ is a solution to (\ref{eq: BE}) with $\mu_0=\mu^\star_0$, and in particular $\mu_\bullet$ is continuous and the energy $\langle |v|^2, \mu_t\rangle$ is nondecreasing by Proposition \ref{prop: MCP}iii). On this event, the equality $\langle |v|^2, \mu_0\rangle=\langle |v|^2, \mu^\star_0\rangle=1=\Theta(0)$ certainly holds at time $t=0$, and on the intersection of this event and the event where the second moment equality holds for $t\not \in P$, then any $t\in P\setminus \{0\}$ can be approached from below by $s\in [0,T]\setminus P$. By left-continuity of $\Theta$, \begin{equation} \Theta(t)=\limsup_{s\uparrow t, s\not \in P} \Theta(s) = \limsup_{s\uparrow t, s\not \in P} \h \langle |v|^2, \mu_s\rangle  \le \langle |v|^2, \mu_t\rangle. \end{equation} For the other inequality, on the same almost sure event as above, fix $t\in P$ and $\epsilon>0$. By monotone convergence, we can find a continuous, compactly supported function $0\le f\le |v|^2$ and $\langle |v|^2, \mu_t\rangle <\langle f, \mu_t\rangle +\epsilon$. Using continuity in $W$, \begin{equation} \langle |v|^2, \mu_t\rangle < \langle f, \mu_t\rangle +\epsilon = \limsup_{s\uparrow t, s\not\in P} \h \langle f, \mu_s\rangle + \epsilon \le \limsup_{s\uparrow t} \Theta(s) + \epsilon = \Theta(t)+\epsilon \end{equation} and, since $\epsilon>0$ was arbitrary, we have equality at $t$. We emphasise again that the almost sure event used here does not depend on $t\in P$, and so the equality holds for all $t$ simultaneously with $\PPm$-probability $1$, as desired. \end{proof} Finally we check the fourth moment conditions. \begin{lemma}[Fourth Moment of Limiting Path]\label{lemma: bad LLN 4} Continue in the notation above. For $A(t)$ as in (\ref{eq: At}), for some $\alpha>0$ to be chosen, we have \begin{equation} \PPm\left(\langle |v|^4, \mu_t\rangle \le A(t)\text{ for all }t\in [0,T]\right) =1. \end{equation} \end{lemma}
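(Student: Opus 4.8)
The plan is to derive the bound \emph{deterministically} from the structure of the limit $\mu_\bullet$ already identified, rather than by passing finite-$N$ estimates to the limit. Indeed, the moment creation estimate Proposition \ref{prop: MCP}i) only controls $\EEm[\sup_{t\in I}\langle|v|^4,\mu_t\rangle]$, which after Fatou yields finiteness almost surely but not the pointwise bound $\langle|v|^4,\mu_t\rangle\le A(t)$ we are after. Instead, by Lemmas \ref{lemma: bad LLN 2} and \ref{lemma: bad LLN 3} there is a $\PPm$-almost sure event $\Omega_0$ on which $\mu_\bullet$ is a continuous solution of (\ref{eq: BE}) with $\mu_0=\mu_0^\star$ and $\langle|v|^2,\mu_t\rangle=\Theta(t)$ for all $t$; it then suffices to show that on $\Omega_0$ one has $\langle|v|^4,\mu_t\rangle\le A(t)$ for every $t$, for a suitable $\alpha=\alpha(\Theta(T))$. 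Since $A(t)=\infty$ whenever $\rho(t):=\inf_{s\in E,\,s<t}(t-s)=0$, only the $t$ with $\rho(t)>0$ need attention.

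First I would treat $t\notin E$. Because $E$ is closed with $0\in E$, such a $t$ lies in a connected component of $[0,T]\setminus E$ whose left endpoint $a$ belongs to $E$, and then $a=\sup\{s\in E:s<t\}$, so $\rho(t)=t-a$; moreover by hypothesis $\Theta$ is constant on this component, equal to some value $\theta\le\Theta(T)$ by monotonicity. Hence, on $\Omega_0$, for any $s\in(a,t)$ the restriction of $\mu_\bullet$ to $[s,t]$ solves (\ref{eq: BE}) with energy constant equal to $\theta$, and Proposition \ref{prop: MCP}ii), applied with $\mu_s$ as initial datum and elapsed time $t-s$, gives $\langle|v|^4,\mu_t\rangle\le C(t-s)^{-2}\langle|v|^2,\mu_s\rangle=C\theta(t-s)^{-2}$, where $C$ is the constant of Proposition \ref{prop: MCP} for $p=4$. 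Letting $s\downarrow a$ yields $\langle|v|^4,\mu_t\rangle\le C\Theta(T)(t-a)^{-2}$, so the choice $\alpha:=C\Theta(T)$ gives $\langle|v|^4,\mu_t\rangle\le\alpha\rho(t)^{-2}=A(t)$, with $A$ as in (\ref{eq: At}).

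For $t\in E$ with $\rho(t)>0$, the interval $(a,t)$ with $a:=t-\rho(t)$ is disjoint from $E$, so the previous step already gives $\langle|v|^4,\mu_s\rangle\le\alpha(s-a)^{-2}$ for all $s\in(a,t)$; using continuity of $\mu_\bullet$ in $(\cp_2,W)$ and lower semicontinuity of $\mu\mapsto\langle|v|^4,\mu\rangle$ for weak convergence, $\langle|v|^4,\mu_t\rangle\le\liminf_{s\uparrow t}\langle|v|^4,\mu_s\rangle\le\liminf_{s\uparrow t}\alpha(s-a)^{-2}=\alpha(t-a)^{-2}=A(t)$. Intersecting the almost sure events furnished by Lemmas \ref{lemma: bad LLN 2} and \ref{lemma: bad LLN 3} then gives $\PPm(\langle|v|^4,\mu_t\rangle\le A(t)\text{ for all }t)=1$.

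The step I expect to be delicate is the appeal to Proposition \ref{prop: MCP}ii) on the bounded interval $[s,t]$: the statement there concerns a global solution $(\mu_t)_{t\ge0}$ with constant energy, whereas here I only have such a solution on $[s,b)$. One resolves this by noting that the Povzner-based moment creation bound is local in time — it follows from a differential inequality for $\langle|v|^4,\mu_r\rangle$ that involves the equation only on $[s,t]$ — so it transfers verbatim to the restricted solution; an alternative, which I would avoid since it requires a uniqueness input, is to extend $\mu_s$ to a global energy-conserving solution. A routine remaining check is that the constant $C$, and hence $\alpha$, depends only on $\Theta(T)$ and the fixed kernel, as claimed in (\ref{eq: At}) and Theorem \ref{thm: main}.
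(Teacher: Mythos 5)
Your argument is essentially the paper's: fix a closed interval $I$ of $[0,T]\setminus E$, identify the nearest left point $u'\in E$, apply Proposition \ref{prop: MCP}ii) pathwise on $[a,v]$ for $a\downarrow u'$ using the energy identity from Lemma \ref{lemma: bad LLN 3}, and cover $[0,T]\setminus E$ by countably many such intervals. The choice $\alpha=C\Theta(T)$ is also the paper's. One small place where you are actually \emph{more} careful than the text: the paper dismisses $t\in E$ with ``$A=\infty$ on $E$,'' but this is only true at points of $E$ that are accumulation points of $E$ from the left (and at $t=0$); a left-isolated point of $E$ has $\rho(t)>0$ and hence finite $A(t)$. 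Your lower-semicontinuity argument, approaching such a $t$ from below through the adjacent open component, cleanly covers this case and is the right fix. Your caveat about applying Proposition \ref{prop: MCP}ii) to a solution defined only on a finite interval is well taken, but the paper makes exactly the same pathwise appeal and, as you note, the Povzner differential inequality underlying the estimate is local in time, so this is not a genuine gap in either proof.
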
\begin{proof} Since $A=\infty$ on $P$, there is nothing to prove for such times. Let us fix $I=[u,v]\subset (0,T]$ disjoint from $P$, and let $u'=\max(s: s\in P, s<u)$, which always exists, belongs to $P$ and is strictly less than $u$, because $P$ is closed and $0\in P, 0<u$.  For any $J_a=[a,v]\supset I$ with $u'<a\le u$, we apply Lemma \ref{lemma: bad LLN 2} to see that,$\PPm$-almost surely, $(\mu_t)_{t\in J_a}$ is a solution to (\ref{eq: BE}), with energy given by $\langle |v|^2, \mu_t\rangle = \Theta(t)=\Theta(a)\le \Theta(T)$, because $J_a$ is disjoint from $P$. Proposition \ref{prop: MCP}ii) now applies pathwise, and for some absolute constant $C$, \begin{equation} \PPm\left(\langle |v|^4, \mu_t\rangle \le C\Theta(T)(t-a)^{-2} \text{ for all }t\in I\right)=1. \end{equation} We now take $a \downarrow u'$ to obtain \begin{equation} \PPm\left(\langle |v|^4, \mu_t\rangle \le C\Theta(T)(t-u')^{-2} \text{ for all }t\in I\right)=1. \end{equation} Choosing $\alpha=C\Theta(T)$, the bound is exactly $A(t)$, because $t-u'=\min(t-s: s\in P, s<t)$ for all $t\in I$. We now cover $[0,T]\setminus P$ with countably many such $I$, and the claim is proven. \end{proof}   Together, Lemmas \ref{lemma: bad LLN 2}, \ref{lemma: bad LLN 3}, \ref{lemma: bad LLN 4} prove the conclusions of Lemma \ref{lemma: bad LLN main}\subsection{Proof of Theorem}  We now give the proof in the case of the regularised hard spheres kernel. \begin{proof}[Proof of Theorem \ref{thm: main}a]  We first check that $\A_\Theta\subset\D\times \Mm$ are compact. This follows almost exactly the same argument as Lemma \ref{lemma: bad LLN main} above: fix $(\mu^{(n)}_\bullet, w^{(n)})\in \A_\Theta$. Since the spaces $\{\mu\in \cp_2: \langle |v|^2, \mu\rangle \le \Theta(T)\}$ are compact for $W$, and using the Boltzmann equation (\ref{eq: BE}) and the second moment bound to check equicontinuity, we can pass to a subsequence converging to a limit $(\mu_\bullet, w)$. First, since continuous functions are closed for Skorokhod convergence, $\mu_\bullet$ must also be continuous, so one can upgrade to uniform convergence $\sup_t W(\mu^{(n)}_t, \mu_t)\to 0$ by Proposition \ref{prop: Sk continuous functions}c). Immediately, $\mu_0=\mu_0^\star$, and the lower semicontinuity of moments gives $\langle |v|^4, \mu_t\rangle\le A(t), \sup_t \langle |v|^2, \mu_t\rangle \le \Theta(T)$. Using the same argument as (\ref{eq: third tirm in bad LLN}) and the second moment bound, $\overline{m}_{\mu^{(n)}}=w^{(n)}\to \overline{m}_{\mu}$ so that $w=\overline{m}_{\mu}$, and the same argument as before allows us to take the limit of the continuity equation to conclude that $\mu_\bullet$ solves (\ref{eq: BE}). Finally, repeating the arguments of Lemma \ref{lemma: bad LLN 3}, $\langle |v|^2, \mu_t\rangle$ can be found as the limit of $\langle |v|^2, \mu^{(n)}_t\rangle = \Theta(t)$ away from $P$ to obtain $\langle |v|^2, \mu_t\rangle, t\not \in P$. Since $\mu_\bullet$ solve (\ref{eq: BE}), $\langle |v|^2, \mu_t\rangle$ is nondecreasing, and we may take left-limits to extend the equality to $t\in P$.\bigskip \\ For the rate function, we return to the definition (\ref{eq: leo rate function}): all $\mu_\bullet \in \A_\Theta$ start at $\mu_0=\mu^\star_0$, we have $H(\mu_0|\mu^\star_0)=0$, and the unique choice $K=1$ gives $\tau(K)=0$, so $\mathcal{J}(\mu_\bullet,w)=0$ and $\mathcal{I}(\mu_bullet, w)=0$ as desired.  \bigskip \\ We now prove (\ref{eq: first item of main},\ref{eq: second item of main}). For the first item, let $\U\supset \mathcal{A}_\Theta$ be any open set, and $S'\subset \mathbb{N}$ a subsequence such that \begin{equation} \label{eq: defin of S'} \lim_{N\to \infty, N\in S'} \frac{1}{N}\log \PP\left((\mu^N_\bullet,w^N) \in \U\right)=\liminf_{N\in \mathbb{N}} \frac{1}{N}\log \PP\left((\mu^N_\bullet,w^N) \in \U\right).\end{equation} For the changes of measure $\PPm^N$ constructed above, we recall Lemma \ref{lemma: estimate on change of measure} to see that Corollary \ref{cor: tightness} applies, so that the laws $\PPm^N\circ (\mu^N_\bullet, w^N)^{-1}$ are tight. We can therefore pass to a further subsequence $S\subset S'$ such that the laws $\PPm^N\circ (\mu^N_\bullet, w^N)^{-1}$ converge to the law of a new random variable $(\mu_\bullet, w)$ under a new probability measure $\PPm$. This is exactly the setting of Lemma \ref{lemma: bad LLN main}, from which $\PPm((\mu_\bullet, w)\in \A_\Theta)=1$, which certainly implies that $\A_\Theta$ is nonempty. We then have $$ \liminf_{N\in S} \PPm^N\left((\mu^N_\bullet,w^N)\in \U\right)\ge \PPm((\mu_\bullet, w)\in \U)\ge \PPm((\mu_\bullet, w)\in \A_\Theta) =1 $$ since $\U\supset \mathcal{A}_\Theta$. Fixing $\epsilon>0$ and recalling Lemma \ref{lemma: estimate on change of measure}, we see that, for $N\in S$ large enough, $$\PPm^N\left((\mu^N_\bullet, w^N) \in \U, \frac{d\PPm^N}{d\PP} \le e^{N(\Theta(T)z_2+\epsilon)}\right)>\frac{1}{2}.$$ It follows that, for $N\in S$ large enough, \begin{equation} \begin{split} \PP\left((\mu^N_\bullet, w^N) \in \U\right) & = \EE_{\PPm^N}\left[\left(\frac{d\PPm^N}{d\PP}\right)^{-1}1(\mu^N_\bullet, w^N)\in \U]\right]  \\ & \ge e^{-N(\Theta(T)z_2+\epsilon)}\PPm^N\left(\mu^N_\bullet \in \U, \frac{d\PPm^N}{d\PP} \le e^{N(z_2\Theta(T)+\epsilon)}\right) \\ &\ge \frac{1}{2}e^{-N(\Theta(T)z_2+\epsilon)}. \end{split} \end{equation} Taking the logarithm and the limit $N\to \infty$ through $S\subset S'$ and then the limit $\epsilon\downarrow 0$, we conclude that $$ \lim_{N\to \infty, N\in S'}\frac{1}{N}\log \PP\left((\mu^N_\bullet, w^N) \in \U\right) \ge -\Theta(T)z_2 $$ and by the choice (\ref{eq: defin of S'}), we have proven the same bound for the limit inferior over the full sequence $N\in \mathbb{N}$. The lower bound is independent of $\U\supset \A_\Theta$, and so we have proven the claim (\ref{eq: first item of main}). \bigskip \\ For the second item (\ref{eq: second item of main}), we observe that $\Theta$ is locally constant at $T$, so we can find an interval $I\ni T $, with $\inf(t-s: t\in I, s\in P)>0$ and such that $\Theta(t)=\Theta(T)>1$ for all $t\in I$. Thanks to the fourth moment bound in the construction of $\A_\Theta$, we can choose $R<\infty$ and a continuous, compactly supported function $0\le f_R(v)\le |v|^2$ such that, for all $\mu_\bullet \in \A_\Theta$, \begin{equation} \inf_{t\in I} \langle f_R, \mu_t\rangle >\frac{1+\Theta(T)}{2}.\end{equation} Now, writing $|I|$ for the Lebesgue measure of $I$, we choose $\V$ to be the set \begin{equation} \V=\left\{(\mu_\bullet, w)\in \D\times \Mm: \int_I \langle f_R, \mu_t\rangle dt> \frac{1+\Theta(T)}{2} |I|\right\}. \end{equation} $\V$ is open in $\D\times\Mm$ by Lemma \ref{lemma: really useful continuity result}, and $\A_\Theta\subset \V$ by construction. However, for all $N$, we have the bound $\langle f_R, \mu^N_t\rangle \le \langle |v|^2, \mu^N_0\rangle$ for all $t$, because the kinetic energy is constant in time, so \begin{equation} \mathbb{P}\left((\mu^N_\bullet, w^N) \in \V\right)\le \mathbb{P}\left(\langle |v|^2,\mu^N_0\rangle>\frac{1+\Theta(T)}{2}\right).\end{equation} We now apply Cranm\'er's theorem. Recalling the notation $\psi_0, \psi_0^\star$ defined in Subsection \ref{subsec: construction of com}, we recall that $\psi_0^\star(a)>0$ for all $a\neq \langle |v|^2, \mu_0^\star\rangle =1$, and \begin{equation}\begin{split}\label{eq: crame'r} \liminf_N \left[\frac{1}{N}\log \mathbb{P}\left((\mu^N_\bullet, w^N) \in \V\right)\right] &\le \liminf_N \left[\frac{1}{N}\log \mathbb{P}\left(\langle |v|^2,\mu^N_0 \rangle >\frac{1+\Theta(T)}{2}\right)\right] \\[1ex]& = -\psi_0^\star\left(\frac{1+\Theta(T)}{2}\right)<0.\end{split}\end{equation} \end{proof}

 \subsection{Maxwell Molecules Case} We now give the proof in the case of Maxwell molecules. In this case, since the kernel $B$ is bounded, the moment creation property no longer holds; we also change measure so that, under $\PPm^N$, the Kac process has a kernel $\widetilde{B}=\widetilde{B}_\delta$ with linear growth. The previous argument then applies, albeit with an additional (small) exponential cost. Since the argument is almost identical, we will discuss only the essential modifications relative to the regularised hard spheres case. As before, let us fix $(\Omega, \mathfrak{F}, (\mathfrak{F}_t)_{t\ge 0}, \mathbb{P})$ on which are defined Maxwell molecule Kac processes $\mu^N_\bullet$ and their empirical fluxes $w^N$. \begin{proof}[Proof of Theorem \ref{thm: main}b)] Fix $\Theta, P, \delta>0$ as in the statement. We construct the modification of the initial data via $\varphi_M$ exactly as for the case of hard spheres above. With the same notation on $M_i, \tri$ and the special set of `frozen' particles $S_t$, we now choose $K$ to be given by \begin{equation} K^{N,M,r}(t,v,v_\star,\sigma)=\begin{cases} 0 & \text{if either }v,v_\star\in S_t; \\ N(1+\delta|v-v_\star|)1(N_t\ge 1)/N_t & \text{else} \end{cases} \end{equation} where, again, we supress the argument $\mu^N_0$. In this way, the non-frozen particles interact as a Kac process with kernel $(1+\delta|v|)$ on $N_t$ particles on each time interval $[\trim, \tri)$. We choose $M_N, r_N \to \infty$ in exactly the same way as before, and write $\PPm^N$ for the resulting changes of measure via Proposition \ref{prop: com}:  \begin{equation} \begin{split} \label{eq: COM MM} \frac{d\PPm^N}{d\PP}&=\exp\bigg(N\langle \varphi_{M_N}, \mu^N_0\rangle + \langle \log K^N, w^N\rangle \\ &\hspace{2cm} -N\int_0^T \int_{\rrd\times\rrd\times \ssd}(K^N-1)\overline{m}_{\mu^N}(dt,dv,dv_\star,d\sigma)\bigg).\end{split} \end{equation} We will write $K=(1+\delta|v-v_\star|)$ for the limiting tilting function. The strategy is now similar to the previous case. The law of large numbers follows in the same way for the new definition of $\A_\Theta$ without essential modification, allowing $\alpha$ to depend on $\delta$ and arguing in the same was as leading to (\ref{eq: difference of K and 1}) to obtain  \begin{equation}\label{eq: difference of KN and K}\begin{split} & \int_E |K^N-K|(t,v,v_\star,\sigma)\overline{m}_{\mu^N}(dt,dv,dv_\star,d\sigma) \\& \hspace{4cm}\le CT\left(\left(\frac{N}{N_0(M_N,r_N)}-1\right)+\frac{1}{M_N}\right)\langle 1+|v|^2, \mu^N_0\rangle^2.\end{split} \end{equation} This is, in fact, the same estimate as before, up to the inclusion of $\delta$; the linear factor $(1+\delta|v-v_\star|)$ now included in $K, K^N$ replaces the equivalent one previously in the measure $\overline{m}_{\mu^N}=B(v-v_\star)\mu^N_t(dv)\mu^N_t(dv_\star)d\sigma dt$ so that the previous calculations are unchanged. With these modifications, the proof of the law of large numbers works exactly as before. \bigskip \\ We again estimate the change of measure $\frac{1}{N}\log \frac{d\PPm^N}{d\PP}$. In this case, we will only find an estimate which asymptotically holds with sufficiently large probability, rather than with probabilities converging to $1$ as we did before; this will not affect the final result. We recall that \begin{equation}\label{eq: reiteration of COM}\begin{split} \frac{1}{N}\log \frac{d\mathbb{Q}^N}{d\PP}& =\langle \varphi_{M_N}, \mu^N_0\rangle + \langle \log K^N, w^N\rangle \\ & \hs \hs - \int_E(K^N-1)(t,v,v_\star,\sigma)\overline{m}_{\mu^N}(dt,dv,dv_\star,d\sigma)\end{split}\end{equation} where again, $K^N$ is allowed to depend on $\mu^N_0$. Let us fix $\epsilon>0$. The term from the change of initial data is exactly as in the hard spheres case: \begin{equation} \label{eq: t1 in MM COM} \mathbb{Q}^N\left( \langle \varphi_{M_N}, \mu^N_0\rangle > z_2\Theta(T) +\frac{\epsilon}{3}\right) \to 0. \end{equation} In the second term, we now use the upper bound \begin{equation} \begin{split}\label{eq: upper bound KN MM} \log K^N &\le \log N/N_0(M_N, r_N) + \log (1+\delta|v-v_\star|) \\& \le \log N/N_0(M_N, r_N) + \delta(|v|+|v_\star|). \end{split} \end{equation} As in the hard spheres case, the first term contributes at most $(\log N/N_0)w^N(E) \le \epsilon/2$ with high $\mathbb{Q}^N$-probability, and in the second term, observe that  \begin{equation}\begin{split} &\delta \langle |v|+|v_\star|, w^N_t\rangle \\& \hs - \delta \int_{(0,t]\times\rrd\times\rrd\times\ssd}  \frac{N}{N_0} (1+\delta|v-v_\star|)(|v|+|v_\star|)ds\mu^N_s(dv)\mu^N_s(dv_\star)d\sigma\end{split} \end{equation} is a $\mathbb{Q}^N$-supermartingale, so there exists a constant $C$ such that \begin{equation} \begin{split} \label{eq: second moment on MM change of measure} &\mathbb{E}_{\mathbb{Q}^N}\left[\delta\langle |v|+|v_\star|, w^N\rangle 1\left(N_0\ge \frac{1}{2}N\right)\right]\\ & \hs \hs\le \delta \mathbb{E}_{\mathbb{Q}^N}\left[\int_E 2(|v|+|v_\star|+2\delta |v|^2+2\delta|v_\star|^2)ds \mu^N_s(dv)\mu^N_s(dv_\star)d\sigma\right] \\[1ex] &\hs \hs \le \delta C\mathbb{E}_{\mathbb{Q}^N}\left[\int_0^T\langle |v|^2, \mu^N_s\rangle ds\right] = \delta CT\Theta(T). \end{split} \end{equation} Therefore, up to a new choice of $C$, for all $N$, \begin{equation} \mathbb{Q}^N\left(\delta \langle |v|+|v_\star|, w^N\rangle > \delta C T\Theta(T), \h\h N_0\ge \frac{1}{2}N\right) \le \frac{1}{9}\end{equation} and recalling (\ref{eq: construction of QN2}), for all $N$ sufficiently large, $\mathbb{Q}^N(N_0\le N/2) \le \frac{1}{9}$, so, for all $N$ sufficiently large, \begin{equation}\label{eq: t3a in MM COM} \mathbb{Q}^N\left(\delta \langle |v|+|v_\star|, w^N\rangle > \delta C T\Theta(T)\right) \le \frac{2}{9}\end{equation} and including the term $\log N/N_0$, we conclude that \begin{equation}\label{eq: t2 in MM COM} \mathbb{Q}^N\left(\langle \log K^N, w^N\rangle > \delta C T\Theta(T) + \epsilon / 3 \right) < \frac{1}{3}. \end{equation} For the final term of (\ref{eq: reiteration of COM}), we observe that \begin{equation} |K^N-1| \le |K^N-K|+|K-1|\le |K^N-K|+\delta(|v|+|v_\star|)\end{equation}  and arguing from (\ref{eq: difference of KN and K}), for all $N$ sufficiently large, \begin{equation} \mathbb{Q}^N\left(\int_E |K^N-K|\overline{m}_{\mu^N}(dt,dv,dv_\star,d\sigma) >\epsilon/3\right) \to 0 \end{equation}  while in the second term, we have the pathwise inequality \begin{equation} \begin{split} \int_E \delta(|v|+|v_\star|)\overline{m}_{\mu^N}(dt,dv,dv_\star,d\sigma) \le 2 \delta \int_0^T \langle |v|^2, \mu^N_t\rangle dt = 2\delta T\langle |v|^2, \mu_0\rangle. \end{split} \end{equation}The right-hand side converges with $\PPm^N$-probability to $2\delta T\Theta(T)$ by (\ref{eq: construction of QN1}), and so \begin{equation} \label{eq: t3 in MM COM}\mathbb{Q}^N\left(\int_E |K^N-1| \overline{m}_{\mu^N}(dt,dv,dv_\star,d\sigma) > 2\delta T\Theta(T) +\epsilon/3 \right)\to 0. \end{equation} Gathering (\ref{eq: t1 in MM COM}, \ref{eq: t2 in MM COM}, \ref{eq: t3a in MM COM}, \ref{eq: t3 in MM COM}) and returning to (\ref{eq: reiteration of COM}), we conclude that, for some absolute constant $C$ and all $N$ large enough, \begin{equation} \label{eq: MM COM final} \mathbb{Q}^N\left( \frac{1}{N}\log \frac{d\PPm^N}{d\PP} > \Theta(T) (z_2+C\delta) +\epsilon \right) < \frac{1}{4}. \end{equation} Exactly the same argument also implies that \begin{equation}\lim_{a\to \infty} \limsup_N \PPm^N\left(\frac{d\PPm^N}{d\PP} > e^{Na}\right)=0 \end{equation} so that Corollary \ref{cor: tightness} applies. \bigskip \\
 The conclusions of the theorem now follow in the same pattern as the hard spheres case. For the dynamic cost of any $(\mu_\bullet, w)\in \A_{\Theta,\delta}$, one bounds $\tau(k)\le (k-1)^2$ to obtain \begin{equation}\begin{split} \mathcal{J}(\mu_\bullet, w)&= \int_E \tau(1+\delta|v-v_\star|)dt\mu_t(dv)\mu_t(dv_\star)d\sigma \\ &\le 2\delta^2 \int_E (|v|^2+|v_\star|^2)dt\mu_t(dv)\mu_t(dv_\star)d\sigma \\ & = 4\delta^2 \int_0^T \Theta(t)dt \le 4\delta^2 T\Theta(T)\end{split} \end{equation} and recalling that $\mu_0=\mu_0^\star$ for all such $\mu_\bullet$, we conclude that the same bound holds for $\mathcal{I}(\mu_\bullet, w)$. If we now fix an open set $\U\supset \A_{\Theta,\delta}$, we let $S'$ be an infinite subsequence along which $N^{-1}\log \PP((\mu^N_\bullet, w^N)\in \U)$ converges to its $\liminf$; using Corollary \ref{cor: tightness} to prove tightness, we can pass to a further subsequence $S$ such that the laws $\PPm^N\circ (\mu^N_\bullet, w^N)^{-1}$ converge weakly for the changes of measure above. By the law of large numbers, for $N\in S$ large enough, \begin{equation} \PPm^N\left((\mu^N_\bullet, w^N)\in \U\right)\ge \frac{1}{2} \end{equation} and, for $\epsilon>0$ fixed, combining with (\ref{eq: MM COM final}), for all sufficiently large $N$, \begin{equation} \PPm^N\left((\mu^N_\bullet, w^N)\in \U, \frac{1}{N}\log \frac{d\PPm^N}{d\PP} \le \Theta(T)(z_2+C\delta)+\epsilon \right) \ge \frac{1}{4}. \end{equation} For such $N\in S$, we invert in the usual way to find \begin{equation} \PP\left((\mu^N_\bullet, w^N)\in \U\right)\ge \frac{1}{4} \exp\left(-N(\Theta(T)(z_2+C\delta)+\epsilon)\right). \end{equation} Since $S\subset S'$ attains the $\liminf$, we take the logarithm and send $N\to \infty$ through $S$ to obtain \begin{equation}  \liminf_N \frac{1}{N}\log \PP\left((\mu^N_\bullet, w^N)\in \U\right) \ge -\Theta(T)(z_2+C\delta)-\epsilon \end{equation} and taking $\epsilon\to 0$ proves the claim. The final item, regarding open $\V_\delta$, follows in the same way as in the hard spheres case: we fix an open interval $I\ni T$ on which $\Theta$ is constant, and bounded away from $P$, and write $|I|$ for its Lebesgue measure. Recalling that the fourth moment condition on $\A_{\Theta, \delta}$ depends on $\delta$, we can choose $R=R_\delta$ and a continuous, compactly supported $0\le f_{\delta}\le |v|^2$, which coincides on $|v|^2$ when $|v|\le R_\delta$ such that, for all $(\mu_\bullet, w)\in \A_{\Theta,\delta}$, \begin{equation} \inf_{t\in I} \langle f_\delta, \mu_t\rangle > \frac{1+\Theta(T)}{2}\end{equation} and, following the previous case, take \begin{equation} \V_\delta:=\left\{(\mu_\bullet, w)\in \D\times\Mm: \int_I \langle f_\delta,\mu_t\rangle dt > \left(\frac{1+\Theta(T)}{2}\right)|I|\right\}.\end{equation} Using Lemma \ref{lemma: really useful continuity result} as before, these are open and contain $\A_{\Theta, \delta}$ by construction, and uniformly in $\delta$, \begin{equation}  \frac{1}{N}\log \PP\left((\mu^N_\bullet, w^N)\in \V_\delta\right) \le \frac{1}{N}\log \PP\left(\langle |v|^2, \mu^N_0\rangle > \frac{1+\Theta(T)}{2}\right) \end{equation} so by Cram\'er, \begin{equation} \begin{split} \liminf_N \frac{1}{N}\log \PP\left((\mu^N_\bullet, w^N)\in \V_\delta\right) &\le \frac{1}{N}\log \PP\left(\langle |v|^2, \mu^N_0\rangle > \frac{1+\Theta(T)}{2}\right)\\ & \le -\psi_0^\star\left(\frac{1+\Theta(T)}{2}\right) <0. \end{split} \end{equation} The final bound is uniform in $\delta>0$, and the theorem is complete.\end{proof}

\section{Proof of Corollaries} \label{sec: corrs} We now give the two corollaries \ref{cor: bad by evolution}, \ref{cor: no energy conserving LDP}. \begin{proof}[Proof of Corollary \ref{cor: bad by evolution}] Let $B$ be either the hard-spheres or Maxwell Molecules kernel, and let $\mu_0^\star=\gamma$, recalling that $\gamma(dv)=e^{-|v|^2/2d}/(2\pi d)^{d/2}dv$. We start with the well-known observation that, in either case, all Kac processes $\mu^N_\bullet$ are reversible in equilibrium when the initial data are sampled independently from $\gamma$, so that the law of $\mu^N_\bullet$ is the same as the time-reversal $$ \mathbb{T}\mu^N_\bullet=\left(\mu^N_{(T-t)-}\right)_{0\le t\le T}. $$ Let us now set $\Theta_\mathbb{T}$ to be the time reversed function $\Theta_\mathbb{T}(t):=\Theta(T-t)$. By hypothesis, $\Theta_\mathbb{T}$ satisfies the conditions required in Theorem \ref{thm: main}; in the case of Maxwell Molecules, choose $\delta>0$ arbitrarily, and in either case set $A_{\mathbb{T}}$ to be the fourth moment bound given by Theorem \ref{thm: main} and $\mathcal{A}_{\mathbb{T},\Theta}$ the resulting bad set constructed by Theorem \ref{thm: main}. We now set $A(t):=A_\mathbb{T}(T-t)$ and set $\widehat{\mathcal{A}}_{\mathbb{T}}$ to be the projection \begin{equation} \widehat{\A}_{\mathbb{T}}=\left\{\mu_\bullet: (\mu_\bullet, w)\in \A_{\mathbb{T},\Theta}\right\}.\end{equation} Since $\A_{\mathbb{T},\Theta}$ are compact and $\mathbb{T}$ preserves the Skorokhod topology of $\D$, it follows that $\widehat{\A}_{\mathbb{T}}$ are also compact, as are $\widehat{\A}:=\{\mathbb{T}\mu_\bullet: \mu_\bullet \in \widehat{\A}_\mathbb{T}\}$  and by construction, the set desired can be written as $\mathcal{B}=\widehat{\A}\times\Mm$.\bigskip \\  Let us now fix $\U\supset \mathcal{B}$ open and $M>0$ to be chosen later. Thanks to Lemma \ref{lemma: easy UT}, we can choose a compact $\K\subset\Mm$ such that $\PP(w^N\not\in \K)\le e^{-MN}$ for all $N$, and since $\widehat{\A}\times \K$ is compact, we can choose $\U_1, \U_2$, open in $\D, \Mm$ respectively, such that $\widehat{\A}\times \K\subset\U_1\times\U_2\subset\U$. Now, $\mathbb{T}\U_1$ is open and contains $\widehat{\A}_\mathbb{T}$ and using reversibility, \begin{equation}\label{eq: reversibility} \PP(\mu^N_\bullet\in \mathbb{T}\U_1)=\PP(\mathbb{T}\mu^N_\bullet\in \U_1)=\PP(\mu^N_\bullet\in \U_1). \end{equation} Using Theorem \ref{thm: main} in either of the two cases on the open set $\mathbb{T}\U_1\times\Mm\supset \A_{\mathbb{T},\Theta}$, for some finite $C$, independent of $M$, it holds that \begin{equation} \liminf_N \frac{1}{N}\log \PP\left(\mu^N_\bullet \in \mathbb{T}\U_1\right) =  \liminf_N \frac{1}{N}\log \PP\left((\mu^N_\bullet, w^N) \in \mathbb{T}\U_1\times\Mm\right) \ge -C \end{equation} and thanks to (\ref{eq: reversibility}), the same holds with $\U_1$ in place of $\mathbb{T}\U_1$. We now observe that \begin{equation}\begin{split} & -C\le \liminf_N \frac{1}{N}\log \PP\left((\mu^N_\bullet, w^N)\in \U_1\times\Mm\right) \\& \hs \hs \le  \max\left(\liminf_N \frac{1}{N}\log \PP\left((\mu^N_\bullet, w^N) \in \U_1\times\U_2\right), \liminf_N \frac{1}{N}\log \PP\left(w^N\not \in \U_2\right)\right) \\& \hs \hs \le  \max\left(\liminf_N \frac{1}{N}\log \PP\left((\mu^N_\bullet, w^N) \in \U\right), -M\right)\end{split} \end{equation} where, in the final line, we use the choice of $\K$ and recall that $\U_2\supset\K$. If we now choose $M>C$, we must have that \begin{equation} \liminf_N \frac{1}{N} \log \PP\left((\mu^N_\bullet, w^N)\in \U\right)\ge -C>-\infty \end{equation} as claimed.  \end{proof}

We next prove that there can be no energy-conserving large deviation principle.

\begin{proof}[Proof of Corollary \ref{cor: no energy conserving LDP}]  Throughout, fix $\Theta$ arbitrarily as in the statement of Theorem \ref{thm: main}, and, in the case of Maxwell molecules, pick $\delta>0$ arbitrarily, and let $\A=\A_{\Theta}, \A_{\Theta,\delta}$ be the resulting `bad' set from Theorem \ref{thm: main} in either case.  For a contradiction, let $(\mu^N_\bullet)_{N\in S}$ be a subsequence which satisfies a large deviation principle with a rate function $\widetilde{\mathcal{I}}$ such that $\widetilde{\mathcal{I}}(\mu_\bullet,w)=\infty$ if $\mu_\bullet$ does not conserve energy. Since no paths in $\A$ conserve energy, we know that $\mathcal{I}(\mu_\bullet,w)=\infty$ for all $(\mu_\bullet,w) \in \A$ by construction. Due to exponential tightness in Proposition \ref{prop: ET + UB}, the rate function must be good; that is, the sublevel sets $\{(\mu_\bullet,w) \in \mathcal{D}\times\Mm: \widetilde{\mathcal{I}}(\mu_\bullet,w)\le a\}$ are compact in $\D\times\Mm$ for any $a\in [0,\infty)$. Now, for any $a$, $\{ \widetilde{\mathcal{I}}\le a\}$ is disjoint from $\mathcal{A}$, and since $\D\times\Mm$ is a normal topological space, there exists an open set $\mathcal{U}_a \supset \mathcal{A}$ whose closure $\overline{\mathcal{U}}_a$  is disjoint from $\{\widetilde{I}\le a\}$. By hypothesis, \begin{equation} \begin{split}\label{eq: LDP contradiction 1} \limsup_{N\in S} \left[\frac{1}{N}\log \PP\left((\mu^N_\bullet,w^N) \in \overline{\mathcal{U}}_a\right)\right] &\le -\inf\left\{\widetilde{I}(\mu_\bullet,w): \mu_\bullet \in \overline{\mathcal{U}}_a\right\} \\ & \le -a. \end{split}\end{equation} This is inconsistent with (\ref{eq: first item of main},\ref{eq: first point of main MM}) for $a$ large enough, and we have the desired contradiction. \end{proof}

 Finally, we prove the result on entropy as a quasipotential. \begin{proof}[Proof of Corollary \ref{cor: quasipotential}] This follows from Theorem \ref{thrm: main positive} using a contraction principle argument. Since we do not have a true large deviation principle, and must further compensate for the failure of the rate function $\mathcal{I}$ to be good, the arguments do not follow from any statement of the contraction principle we have found in the literature, and we present the arguments in detail. \bigskip \\  Let us fix $\mu\in \cp_2$ and $\epsilon>0$ and consider \begin{equation} \U_\epsilon:=\left\{(\mu_\bullet, w)\in \D\times\Mm: W(\mu_T, \mu)<\epsilon\right\} \end{equation} so that the closure is \begin{equation} \overline{\U}_\epsilon:=\left\{(\mu_\bullet, w)\in \D\times\Mm: W(\mu_T, \mu)\le \epsilon\right\} \end{equation}Let us take $\mu_0^\star=\gamma$ and, for each $N$, sample initial velocities independently from $\gamma$. In this case, the $N$-particle system is in equilibrium, so that the distribution of $\mu^N_T$ is that of a $N$-particle independent sample from $\gamma$, and in particular, Sanov's theorem applies, so that $\mu^N_T$ satisfies a large deviation principle with rate function $H(\cdot|\gamma)$. We first prove the first item (\ref{eq: LB of quasipotential}): by Sanov's Theorem \begin{equation}\label{eq: sanov 1} \begin{split} \liminf_N \frac{1}{N} \log \PP\left((\mu^N_\bullet, w^N)\in {\U}_\epsilon\right)&=\liminf_N \frac{1}{N} \log \PP\left(W(\mu^N_T, \mu)< \epsilon \right) \\ & \ge  -\inf\{H(\nu|\gamma): W(\nu, \mu)<  \epsilon\} \\ & \ge -H(\mu|\gamma) \end{split} \end{equation} while, immediately  \begin{equation}\begin{split} \label{eq: apply UB for QP}\liminf_N \frac{1}{N} \log \PP\left((\mu^N_\bullet, w^N)\in {\U}_\epsilon\right)& \le \limsup_N \frac{1}{N} \log \PP\left( (\mu^N_\bullet, w^N)\in \overline{\U}_\epsilon\right). \end{split}\end{equation} If $H(\mu|\gamma)=\infty$ there is, of course, nothing to prove; otherwise, we choose $M>H(\mu|\gamma)$ and using Proposition \ref{prop: ET + UB}i), pick a compact set $\K\subset \D\times\Mm$ such that \begin{equation}\limsup_N N^{-1}\PP((\mu^N_\bullet, w^N)\not\in \K)\le -M.\end{equation} From (\ref{eq: apply UB for QP}) and applying Theorem \ref{thrm: main positive}i), \begin{equation}\begin{split} &\limsup_N \frac{1}{N}\log \PP\left((\mu^N_\bullet, w^N)\in \overline{\U}_\epsilon\right)\\ &\hs   \le \max\left(\limsup_N \frac{1}{N}\log \PP\left((\mu^N_\bullet, w^N)\in \overline{\U}_\epsilon \cap \K\right), \limsup_N \frac{1}{N}\log \PP\left((\mu^N_\bullet, w^N)\not\in \K\right)\right) \\ & \hs \le \max\left(-\inf\left\{\mathcal{I}(\nu_\bullet, w): (\nu_\bullet, w)\in \overline{\U}_\epsilon \cap \K)\right\}, -M\right). \end{split} \end{equation} Comparing against (\ref{eq: sanov 1}), we must have that \begin{equation} H(\mu|\gamma)\ge \min\left(\inf\{\mathcal{I}(\nu_\bullet, w): (\nu_\bullet, w)\in \overline{\U}_\epsilon \cap \K\}, M\right) \end{equation} and since $M>H(\mu|\gamma)$ by construction, \begin{equation}\label{eq: QP 1 pre-limit} H(\mu|\gamma)\ge\inf\{\mathcal{I}(\nu_\bullet, w): (\nu_\bullet, w)\in \overline{\U}_\epsilon \cap \K\} \end{equation}   We claim that the right-hand side converges as $\epsilon\downarrow 0$: \begin{equation}\label{eq: claimed convergence} \inf\left\{\mathcal{I}(\nu_\bullet, w): (\nu_\bullet, w)\in \overline{\U}_\epsilon \cap \K)\right\}\to \inf\{\mathcal{I}(\nu_\bullet, w): \nu_T=\mu, (\nu_\bullet, w)\in \K\}.\end{equation} It is immediate that the left-hand side is increasing as $\epsilon\downarrow  0$ and that the right-hand side is an upper bound; it is therefore sufficient to prove convergence on a subsequence.  For each $n$, pick $(\nu^{(n)}_\bullet, w^{(n)})\in \overline{\U}_{1/n}\cap\K$ with error at most $1/n$ from the infimum. Since $\K$ is compact, we can pass to a subsequence with $(\nu^{(n)}_\bullet, w^{(n)})\to (\nu_\bullet, w)$; the limit has $\nu_T=\mu$ and $(\nu_\bullet, w)\in \K$. By lower-semicontinuity from Proposition \ref{prop: ET + UB}ii), we have\begin{equation} \mathcal{I}(\nu_\bullet, w)\le \liminf_n \mathcal{I}(\nu^{(n)}_\bullet, w^{(n)}) \le \liminf_n \left(\inf\left\{\mathcal{I}(\nu_\bullet, w): (\nu_\bullet, w)\in \overline{\U}_{1/n} \cap \K)\right\} + \frac{1}{n}\right)\end{equation} so that \begin{equation} \inf\{\mathcal{I}(\nu_\bullet, w): \nu_T=\mu, (\nu_\bullet, w)\in \K\}\le \liminf_n \left(\inf\left\{\mathcal{I}(\nu_\bullet, w): (\nu_\bullet, w)\in \overline{\U}_{1/n} \cap \K)\right\} \right) \end{equation} which proves the claim (\ref{eq: claimed convergence}). Returning to (\ref{eq: QP 1 pre-limit}), we take $\epsilon\to 0$ to find \begin{equation} \begin{split} H(\mu|\gamma) & \ge \inf\left\{\mathcal{I}(\mu_\bullet, w): \nu_T=\mu, (\nu_\bullet, w)\in \K\right\} \\ & \ge \inf\left\{\mathcal{I}(\mu_\bullet, w): \nu_T=\mu\right\}  \end{split} \end{equation} and observe that the right-hand side is exactly the claimed bound in (\ref{eq: LB of quasipotential}). For the second item (\ref{eq: UB of quasipotential}), we apply the lower bound of Sanov: \begin{equation}\begin{split} \label{eq: sanov 2} \limsup_N \frac{1}{N} \log \PP\left((\mu^N_\bullet, w^N)\in \overline{\U}_\epsilon\right)&=\limsup_N \frac{1}{N} \log \PP\left(W(\mu^N_T, \mu)\le \epsilon \right) \\& \le -\inf\{H(\nu|\gamma): W(\nu, \mu)\le \epsilon\}. \end{split} \end{equation}    On the other hand,   \begin{equation}\begin{split} \limsup_N \frac{1}{N} \log \PP\left((\mu^N_\bullet, w^N)\in \overline{\U}_\epsilon\right)& \ge \liminf_N \frac{1}{N} \log \PP\left( (\mu^N_\bullet, w^N)\in \U_\epsilon\right) \\ & \ge -\inf\left\{\mathcal{I}(\nu_\bullet, w): (\nu_\bullet, w)\in \U_\epsilon\cap\mathcal{R}\right\} \\ & \ge -\inf\left\{\mathcal{I}(\nu_\bullet, w): \nu_T=\mu, (\nu_\bullet, w)\in \mathcal{R}\right\}. \end{split}\end{equation} We conclude that  \begin{equation} \inf\{H(\nu|\gamma): W(\nu,\mu)\le \epsilon\} \le \inf\left\{\mathcal{I}(\nu_\bullet, w): \nu_T=\mu, (\nu_\bullet,w)\in \mathcal{R}\right\}. \end{equation} As $\epsilon\to 0$, the left-hand side converges to $H(\mu|\gamma)$ by the lower semi-continuity of entropy (cf. Lemma \ref{lemma: semiconntinuity}), and the right-hand side is exactly the right-hand side of (\ref{eq: UB of quasipotential}), so we are done.  \end{proof}

\begin{appendix}
\section{Some Properties of Skorohod Paths} \label{sec: sk} We will now recall some facts about right-continuous, left-limited (c{\`a}dl{\`a}g) paths, and the resulting Skorohod topology. For a fixed metric space $(X,d)$ and $T$, we write $D([0,T], (X,d))$ for the set of all such functions $x_\bullet: [0,T]\to X$, which we equip with the metric  \begin{equation}\label{eq: sk met}\rho(x_\bullet, y_\bullet)=\inf\left\{\max\left(\sup_{t\le T} d(x(t), y(\iota(t))),\sup_{t\le T}|t-\iota(t)|\right): \iota \in \Lambda\right\} \end{equation} where the infimum runs over the set $\Lambda$ of increasing, continuous bijections $\iota:[0,T]\to[0,T]$. We say that $x$ has a jump of size at least $\epsilon>0$ at $t$ if $d(x(t), x(t-))\ge \epsilon$. \bigskip \\  Our first result is a replacement for uniform continuity in the context of such paths.
\begin{proposition}\label{prop: Sk continuity}  Let  $x_\bullet \in D([0,T],(X,d))$ and fix $\epsilon>0$. Then \begin{enumerate}[label=\alph*).] \item There exists at most finitely many $t\in [0,T]$ such that $d(x(t), x(t-))> \epsilon$. \item There exists $\delta>0$ such that, for all $t\in [0,T]$, \emph{either} there exists $s\in [t, t+\delta)\cap [0,T]$ with a jump discontinuity  of size at least $d(x(s-), x(s))\ge \epsilon$ \emph{or}, for all $s\in [t, t+\delta)\cap [0,T]$, we have $d(x(t), x(s))<\epsilon$. \end{enumerate} \end{proposition}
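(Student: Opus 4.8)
The plan is to prove both parts by contradiction, using compactness of $[0,T]$ together with the two defining properties of a c\`adl\`ag path: right-continuity, $x(s)\to x(t)$ as $s\downarrow t$, and the existence of the left limit $x(t-)=\lim_{s\uparrow t}x(s)$. In both cases, after extracting a convergent subsequence, one refines it to a monotone one and then splits according to whether the approach to the accumulation point is from the left or from the right.

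For part a), suppose there are infinitely many distinct times $t_n$ with $d(x(t_n),x(t_n-))>\epsilon$. By compactness, pass to a subsequence with $t_n\to t^\ast$, and refine once more so that $t_n$ is either strictly decreasing or strictly increasing. In the decreasing case, right-continuity at $t^\ast$ gives $x(t_n)\to x(t^\ast)$; choosing for each $n$ a point $s_n\in(t^\ast,t_n)$ close enough to $t_n$ that $d(x(s_n),x(t_n-))<1/n$ (possible by definition of the left limit), right-continuity at $t^\ast$ again gives $x(s_n)\to x(t^\ast)$, hence $x(t_n-)\to x(t^\ast)$. Then $d(x(t_n),x(t_n-))\to 0$, a contradiction. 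The increasing case is identical with $x(t^\ast-)$ in place of $x(t^\ast)$: both $x(t_n)$ and $x(t_n-)$ converge to $x(t^\ast-)$. The endpoint cases $t^\ast\in\{0,T\}$ use whichever one-sided limit is available.

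For part b), suppose no such $\delta$ exists. Then for each $n$ there is $t_n\in[0,T)$ such that $[t_n,t_n+1/n)$ contains no jump of size $\ge\epsilon$, yet there is $s_n\in[t_n,t_n+1/n)$ with $d(x(t_n),x(s_n))\ge\epsilon$. Pass to a subsequence with $t_n\to t^\ast$; since $0\le s_n-t_n<1/n$, also $s_n\to t^\ast$. Split into cases. If cofinally many $t_n\ge t^\ast$, then $t_n\downarrow t^\ast$ and $s_n\downarrow t^\ast$, so right-continuity forces $d(x(t_n),x(s_n))\to 0$, a contradiction. If cofinally many $t_n<t^\ast$, then $t_n\uparrow t^\ast$; refining further, either cofinally many $s_n<t^\ast$, in which case $x(t_n)$ and $x(s_n)$ both tend to $x(t^\ast-)$ and we again get a contradiction, or cofinally many $s_n\ge t^\ast$, in which case $x(t_n)\to x(t^\ast-)$ while $x(s_n)\to x(t^\ast)$ (trivially if $s_n=t^\ast$, by right-continuity if $s_n\downarrow t^\ast$), so continuity of $d$ gives $d(x(t^\ast-),x(t^\ast))\ge\epsilon$; but then $t^\ast$ is a jump of size $\ge\epsilon$ lying in $[t_n,t_n+1/n)$ for large $n$, since $t_n\le t^\ast\le s_n<t_n+1/n$, contradicting the choice of $t_n$.

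The routine parts are the passages to monotone subsequences and the handling of the endpoints. The step requiring care is the mixed case in b): there the contradiction is not with a one-sided limit but comes from $t^\ast$ itself becoming a forbidden jump inside the shrinking window $[t_n,t_n+1/n)$ — this is the only place the "jump" alternative in the statement is genuinely used, and lining up the inequalities $t_n\le t^\ast\le s_n<t_n+1/n$ correctly is the crux of the argument.
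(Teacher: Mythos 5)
Your proof is correct and follows essentially the same route as the paper's: argue by contradiction, pass to a monotone subsequence converging to an accumulation point $t^\ast$, and split on whether the approach is from the left or the right, using the existence of the left limit and right-continuity respectively. The case structure in part b) — in particular the mixed case $t_n\uparrow t^\ast$, $s_n\ge t^\ast$, where one deduces a jump of size $\ge\epsilon$ at $t^\ast$ lying in the forbidden window — is exactly the paper's argument, and your bookkeeping (tracking "no jump in $[t_n,t_n+1/n)$" rather than the paper's slightly smaller window $[t_n,s_n)$) is, if anything, a touch cleaner.
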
 \begin{proof} For the first item, suppose that we can find a countable sequence of distinct $t_n\in (0,T]$ such that $d(x(t_n-), x(t_n))> \epsilon$, and up to passing to an infinite subsequence, we can also arrange that $t_n$ converges monotonically, either increasingly or decreasingly, to a limit $t\in [0,T]$. We consider the two cases separately: \begin{enumerate} \item If $t_n\uparrow t$, we can pick $s_n\in [t_n-n^{-1}, t_n]$ such that $d(x(s_n), x(t_n))>\epsilon$, which contradicts the fact that both $x(s_n), x(t_n)\to x(t-)$ by the left-limitedness.\item If $t_n\downarrow t$, we can pick $s_n\in (t, t_n)$, still so that $d(x(s_n), x(t_n))>\epsilon$, and obtain the same contradiction by the convergence $x(t_n), x(s_n)\to x(t)$ by right-continuity.\end{enumerate} In either case, we have a contradiction, so the claim is proven. \bigskip \\ We now prove item b). Suppose, for a contradiction, that the conclusion is false, so that we can construct $t_n, s_n$, with $t_n<s_n<t_n+n^{-1}$, such that there is no jump of size $\ge \epsilon$ in $[t_n, s_n)$, but such that $d(x(t_n), x(s_n))\ge \epsilon$. As before, by passing to a infinite subsequence, we can arrange that either $t_n\downarrow  t$ or $t_n\uparrow t$. We again deal with the cases separately. \begin{enumerate} \item If $t_n\uparrow t$, we split further into cases, depending on whether $s_n>t$ infinitely often or not. \begin{enumerate} \item If $s_n>t$ infinitely often, we can pass to a further subsequence so that $t_n\uparrow t, s_n\downarrow t$, so that $d(x(t_n), x(s_n))\to d(x(t-), x(t))$. Since $d(x(t_n), x(s_n))\ge \epsilon$ for all $n$ by construction, we conclude that there is a jump discontinuity of size $\ge \epsilon$ at $t$, which contradicts the hypothesis that $[t_n, s_n)$ contains no such jumps. \item Otherwise, $s_n\le t$ eventually, so by passing to a subsequence, $t_n, s_n\uparrow t$ and $x(t_n), x(s_n)\to x(t-)$, which is a contradiction in the usual way.  \end{enumerate} \item If $t_n\downarrow t$, then $s_n\downarrow t$ and $x(t_n), x(s_n)\to x(t)$, contradicting that $d(x(t_n), x(s_n))\ge \epsilon$. \end{enumerate} Since all possible cases lead to a contradiction, the claim is proven. \end{proof} We next classify some continuity properties for the Skorokhod convergence. These results are standard and included for completeness. \begin{proposition}\label{prop: Sk continuous functions} \begin{enumerate}[label=\alph*).]\item The maps $x_\bullet \mapsto x(0)$, $x_\bullet\mapsto x(T)$ are continuous with respect to the metric $\rho$. \item If $x^n_\bullet\in D([0,T],(X,d))$ converge to $x_\bullet$ with respect to $\rho$, then for all but countably many $t\in [0,T]$, $d(x^n(t), x(t))\to 0$.  \item If, in b), the limit path $x_\bullet$ is continuous, then we additionally have the uniform convergence $\sup_t d(x^n(t), x(t))\to 0$. \end{enumerate} \begin{proof} For the first item, observe that $\iota(0)=0, \iota(T)=T$ for all $\iota\in \Lambda$, which implies that $d(x(0), y(0))\le \rho(x_\bullet, y_\bullet)$ for all $x_\bullet, y_\bullet$, and similarly at $T$. For the second item, from the previous proposition, $x_\bullet$ is continuous at all but countably many $t\in [0,T]$. For points of continuity $t$ of $x_\bullet$, fix $\epsilon>0$: there exists $\delta>0$ such that, for all $s$ with $|s-t|<\delta$, $|x(s)-x(t)|<\epsilon/2$. For all $n$ sufficiently large, we have $\rho(x^n_\bullet, x_\bullet)<\min(\delta, \epsilon/2)$ and so we can pick $\iota\in \Lambda$ such that $\sup |t-\iota(t)|<\delta$ and $\sup_t |x^n(\iota(t))-x(t)|<\epsilon/2$. We now conclude: we have $|t-\iota^{-1}(t)|<\delta$, and so \begin{equation} |x^n(t)-x(t)|\le |x^n(t)-x(\iota^{-1}(t))|+|x(\iota^{-1}(t))-x(t)| < \epsilon/2 + \epsilon/2 = \epsilon \end{equation} and we are done. The final item also follows, noting that as $x_\bullet$ is continuous, it is uniformly continuous, which implies that $\delta$, and hence $n$, can be chosen independently of $t\in [0,T]$. \end{proof} \end{proposition}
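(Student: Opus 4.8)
The plan is to prove the three items in turn, each being a short consequence of the explicit formula (\ref{eq: sk met}) for the Skorokhod metric $\rho$ and, for the latter two, of the jump-finiteness statement of Proposition \ref{prop: Sk continuity}a). I would begin with item a): any $\iota\in\Lambda$ is an increasing continuous bijection of $[0,T]$, so it necessarily fixes the endpoints, $\iota(0)=0$ and $\iota(T)=T$. Hence for every $\iota$ the quantity $\sup_{t\le T}d(x(t),y(\iota(t)))$ appearing in (\ref{eq: sk met}) is at least $d(x(0),y(\iota(0)))=d(x(0),y(0))$, and likewise at $T$; optimising over $\iota\in\Lambda$ gives $d(x(0),y(0))\le\rho(x_\bullet,y_\bullet)$ and $d(x(T),y(T))\le\rho(x_\bullet,y_\bullet)$, so both evaluation maps are $1$-Lipschitz and a fortiori continuous.

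For item b), I would first record that the limit path $x_\bullet$ is continuous off a countable set: applying Proposition \ref{prop: Sk continuity}a) with $\epsilon=1/k$ and taking the union over $k\in\mathbb{N}$ shows that $D:=\{t\in[0,T]:d(x(t-),x(t))>0\}$ is a countable union of finite sets; enlarge $D$ to contain $\{0,T\}$. Fix $t\notin D$ and $\epsilon>0$, and use continuity of $x_\bullet$ at $t$ to pick $\delta>0$ with $d(x(s),x(t))<\epsilon/2$ whenever $|s-t|<\delta$. For $n$ large enough $\rho(x^n_\bullet,x_\bullet)<\min(\delta,\epsilon/2)$, so there is $\iota_n\in\Lambda$ with $\sup_s|s-\iota_n(s)|<\delta$ and $\sup_s d(x^n(\iota_n(s)),x(s))<\epsilon/2$; evaluating the second estimate at $s=\iota_n^{-1}(t)$ gives $d(x^n(t),x(\iota_n^{-1}(t)))<\epsilon/2$, while taking $s=\iota_n^{-1}(t)$ in the first estimate gives $|\iota_n^{-1}(t)-t|<\delta$ and hence $d(x(\iota_n^{-1}(t)),x(t))<\epsilon/2$, so the triangle inequality yields $d(x^n(t),x(t))<\epsilon$. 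Thus $d(x^n(t),x(t))\to 0$ for every $t\notin D$, which is the claim.

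Finally, item c) follows from the same computation once one observes that a continuous function on the compact interval $[0,T]$ is uniformly continuous, so that in the argument above the modulus $\delta=\delta(\epsilon)$, and hence the threshold on $n$, can be chosen independently of $t$; the bound $d(x^n(t),x(t))<\epsilon$ then holds simultaneously for all $t\le T$, giving $\sup_{t\le T}d(x^n(t),x(t))\to 0$. I do not expect any serious obstacle here — this is a standard foundational fact about the $J_1$-topology. The only points that need a little care are the bookkeeping of $\iota_n$ versus its inverse in item b), where one applies the continuity of the limit path at the time-changed point $\iota_n^{-1}(t)$, which may approach $t$ from either side (this is precisely why the countable exceptional set $D$ cannot in general be dropped), and the separate treatment of the two endpoints, which is disposed of by item a).
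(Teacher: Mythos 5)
Your proposal is correct and follows essentially the same approach as the paper's own proof: item a) via $\iota(0)=0,\iota(T)=T$, item b) by restricting to continuity points of $x_\bullet$ (countable exceptional set via Proposition \ref{prop: Sk continuity}a)) and a time-change/triangle-inequality argument, and item c) by upgrading $\delta$ to a uniform modulus. Your bookkeeping of $\iota_n$ versus $\iota_n^{-1}$ is in fact slightly more explicit than the paper's, but the underlying argument is the same.
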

\section{A Singular Girsanov Theorem for Jump Processes}\label{sec: singular girsanov} We now justify the changes of measure in Proposition \ref{prop: com}. We start from a filtered probability space $(\Omega, \mathfrak{F},(\mathfrak{F}_t)_{t\ge 0},\PP)$, on which is defined a Kac process $\mu^N_\bullet$ its empirical flux $w^N_t$. We have a deterministic tilting $\varphi:\rrd\to \mathbb{R}$ of the initial data, such that $\int e^{\varphi(v)} \mu_0^\star(dv)=1$, and  $K:\cp_2^N\times E\to [0,\infty)$ be measurable, with a bound $K/(1+|v|+|v_\star|)\le C$, for some absolute constant $C$. The modification of the dynamics with therefore be random, depending on the initial value $\mu^N_0$: our new measures are given by \begin{equation}\begin{split}\label{eq: COM0'} \frac{d\mathbb{Q}}{d\PP}=&\exp\bigg(N\langle \varphi, \mu^N_0\rangle +N\langle \log K(\mu^N_0, \cdot), w^N_T\rangle \\&\hspace{2cm}- N \int_{E}(K-1)(\mu^N_0,t,v,v_\star,\sigma)\overline{m}_{\mu^N}(dt,dv,dv_\star,d\sigma)\bigg)\end{split}\end{equation} where, if $w^N$ has any point with $K=0$, then the integral $\log K(\mu^N_0, \cdot), w^N\rangle=-\infty$ and the density is understood to be $0$. \bigskip \\ We start from a disintegration of $\PP$: let $L_0$ be the law of $\mu^N_0$ on $\cp^N_2$, and for any $\nu\in \cp^N_2$, let $\PP_{\nu}$ be the law of the Kac process started from $\nu$, so that, for any $A\in \mathfrak{F}$, \begin{equation} \PP(A)=\int_{\cp^N_2} \PP_{\nu}(A)L_0(d\nu).\end{equation} We now write, again for any $A\in \mathfrak{F}$, \begin{equation} \label{eq: disintegration} \PPm(A)=\int_{\cp^N_2} \EE_{\nu}\left[Z^{\delta,\nu}_T1_A\right]\left(e^{N\langle \varphi, \nu\rangle}L_0\right)(d\nu) =\int_{\cp^N_2} \PPm_{\nu}(A)\widetilde{L}_0(d\nu) \end{equation} where we define the modified law $\widetilde{L}_0$ by \begin{equation} \widetilde{L}_0(d\nu)=e^{N\langle \varphi, \nu\rangle}L_0(d\nu) \end{equation} and modify the conditional law $\PP_\nu$  by \begin{equation} Z^{\nu}_{t}=\exp\left(N\langle \log K(\nu,\cdot), w^N_t\rangle - N\int_E 1_{s\le t}(K^\delta-1)(\nu, s,v,v_\star,\sigma)\overline{m}_{\mu^N}(ds,dv,dv_\star,d\sigma)\right) \end{equation} where we again set $Z^\nu_t=0$ if there are any point with $K=0$, and $\PPm_\nu=Z^{\nu}_{T}\PP_\nu$. \bigskip \\ For the initial law, we can formally describe $L_0$ as the pushforward of $(\mu_0^\star)^{\otimes N}$ by the map $\pi^N: (v_1,..,v_N)\to N^{-1}\sum \delta_{v_i}$, and that $\exp(N\langle \varphi, \nu\rangle)=\exp(\sum \varphi(v_i))$. It therefore follows that $\widetilde{L}_0$ is the pushforward of the measure $\exp(\sum \varphi(v_i))\prod_i \mu_0^\star(dv_i)=\prod_i e^{\varphi(v_i)}\mu_0^\star(dv_i)$ by $\pi^N$. Since $\varphi$ was chosen so that $\int e^{\varphi}d\mu_0^\star=1$, each factor is a probability measure, and so $\widetilde{L}_0$ is the probability measure on $\cp_2^N$ for the empirical measure of sampling $N$ particles independently from the probability measure $e^{\varphi}\mu_0^\star$, as claimed. \bigskip \\ We now consider the modification of each $\PP_\nu$. We observe that the conservation of energy guarantees that there exists $M=M_\nu$ such that, $\PP_\nu$-almost surely, $\mu^N_t$ is supported on $[-M,M]^d$ for all $t$, and $w^N$ is supported on $(t,v,v_\star,\sigma)\in E$ with $|v|, |v_\star|\le M$. In particular, thanks to the hypothesised bound, one finds the upper bound \begin{equation}\label{eq: upper bound for Z} \sup_{t\le T} Z^{\nu}_t \le \exp\left(CNM_\nu (1+w^N(E))\right)  \end{equation} and the right-hand side has all moments finite, since $Nw^N_t(E)$ can be dominated by a Poisson process of rate $3(1+\langle |v|^2, \nu\rangle)$, as in Section \ref{subsec: ET}. We now observe that, at collisions, $Z^{\nu}_t$ changes by \begin{equation}\label{eq: jumps of Z} Z^{\nu}_t-Z^{\delta,\nu}_{t-} = Z^{\nu}_{t-}\left(e^{\log K(\nu, t,v,v_\star,\sigma)}-1\right) = Z^{\nu}_{t-}\left( K(\nu, t,v,v_\star,\sigma)-1\right)\end{equation} which is valid whether or not $K(\nu,t,v,v_\star,\sigma)$, while in between collisions, $Z^{\nu}_t$ is differentiable, with \begin{equation}\label{eq: drift of Z} \frac{d}{dt}Z^{\nu}_t = -N\int_{\rrd\times\rrd\times\ssd}(K-1)(\nu,t,v,v_\star,\sigma)B(v-v_\star)\mu^N_t(dv)\mu^N_t(dv_\star). \end{equation} Together, we obtain \begin{equation}Z^{\nu}_t=Z^{\nu}_0+\int_E 1_{s\le t}NZ^{\nu}_{s-}\left(K(\nu,s,v,v_\star,\sigma)-1\right)(w^N-\overline{m}_{\mu^N})(ds,dv,dv_\star,d\sigma)\end{equation} which is a $\PP_\nu$-local martingale, and hence a true martingale using the upper bound (\ref{eq: upper bound for Z}), with constant mean $Z^{\nu}_0=1$. It follows that each $\PPm_\nu$ is a probability measure, and hence so is $\PPm$. \bigskip \\ Let us now describe the dynamics under each $\PPm_\nu$. Let us fix a bounded, measurable function $F:\cp^2_N\times\cp^2_N\times \Mm\to \rr$, and let $A_t$ be given by \begin{equation}\begin{split} \label{eq: time dependent generator'}A_t=&N\int_{\rrd\times\rrd\times\ssd}(F(\nu,\mu^{N,v,v_\star,\sigma},w^{N,t,v,v_\star,\sigma})-F(\nu,\mu^N, w^N)))\\ & \hs \hs \hs\hs \dots \times K(\nu,t,v,v_\star,\sigma)B(v-v_\star)\mu^N(dv)\mu^N(dv_\star)d\sigma.\end{split}\end{equation}  We now consider $Y^\nu_t:=Z^\nu_t (F(\nu,\mu^N_t,w^N_t)-\int_0^t A_s ds)$. The changes at jumps are given by \begin{equation} Y^\nu_t-Y^\nu_{t-}=Z^\nu_{t-}(K(\nu,t,v,v_\star,\sigma)F(\nu,\mu^{N,v,v_\star,\sigma}_{t-},w^{N,t,v,v_\star,\sigma}_{t-})-F(\nu, \mu^N_{t-},w^N_{t-}))\end{equation} while the drift between jumps is \begin{equation}\begin{split} \frac{d}{dt}Y^\nu_t=Z^\nu_{t-}\left(F(\nu, \mu^N_{t-},w^N_{t-})\int_{\rrd\times\rrd\times\ssd} K(\nu,t,v,v_\star,\sigma)B(v-v_\star)\mu^N_t(dv)\mu^N_t(dv_\star)d\sigma-A_t\right) \end{split} \end{equation} and together we conclude that \begin{equation}\begin{split} Y^\nu_t-Y^\nu_0 &= \int_E 1_{s\le t} (F(\nu,\mu^N_{s-}+\Delta(v,v_\star,\sigma),w^N_{s-}+N^{-1}\delta_{(s,v,v_\star,\sigma)})-F(\nu,\mu^N_{s-},w^N_{s-}))\\ & \hs \hs \hs \hs \hs \hs  ....K(\nu,t,v,v_\star,\sigma)(w^N-\overline{m}_{\mu^N})(ds,dv,dv_\star,d\sigma)\end{split}\end{equation} which is again a $\PP^\nu$-martingale, using almost sure bound on the supports of $\overline{m}_\mu, w^N$ under $\PP_\nu$ as commented above. It follows that $F(\nu,\mu^N_t,w^N_t)-\int_0^t A_s ds$ is a $\PPm_\nu$-martingale, and we conclude that $(\mu^N_0,\mu^N_t,w^N_t)$ is a $(\Omega,\mathfrak{F},(\mathfrak{F}_t)_{t\ge 0},\PPm_\nu)$-Markov process with time-dependent generator \begin{equation}\begin{split} \label{eq: time dependent generator'}\mathcal{G}_tF(\nu',\mu^N, w^N)=&N\int_{\rrd\times\rrd\times\ssd}(F(\nu',\mu^{N,v,v_\star,\sigma},w^{N,t,v,v_\star,\sigma})-F(\nu',\mu^N, w^N))\\ & \hs \hs \hs\hs \dots \times K(\nu',t,v,v_\star,\sigma)B(v-v_\star)\mu^N(dv)\mu^N(dv_\star)d\sigma. \end{split}\end{equation}  Using the same boundedness arguments as before, this generator characterises a unique semigroup $P^K_{s,t}$ of transition kernels on $\cp^N_2\times\cp^N_2\times\Mm$, so that for any $0=t_0< t_1.... <t_n$ and Borel sets $A_i\subset \cp^N_2\times\cp^N_2\times\Mm$, we have \begin{equation} \begin{split}&\PPm_\nu\left((\mu^N_0, \mu^N_{t_i},w^N_{t_i})\in A_i, i=0,...,n\right) \\& \hs\hs \hs = \int_{A_0\times...\times A_n} \delta_{(\nu,\nu,0)}(dx_0)P^K_{t_0,t_1}(x_0,dx_1)....P^K_{t_{n-1},t_n}(x_{n-1},dx_n).\end{split} \end{equation} Returning to (\ref{eq: disintegration}), we conclude \begin{equation}\begin{split} & \PPm\left((\mu^N_0, \mu^N_{t_i},w^N_{t_i})\in A_i, i=0,...,n\right) \\ & \hs= \int_{A_0\times...\times A_n} \delta_{(\nu,\nu,0)}(dx_0)P^K_{t_0,t_1}(x_0,dx_1)....P^K_{t_{n-1},t_n}(x_{n-1},dx_n)\widetilde{L}_0(d\nu) \end{split} \end{equation} which is exactly the statement that, under $\PPm$, $(\mu^N_0, \mu^N_t,w^N_t)$ is the Markov process with generator (\ref{eq: time dependent generator'}), and initial data $(\mu^N_0,\mu^N_0,0)$, with $\mu^N_0$ sampled from $\widetilde{L}_0$ as above. 
\end{appendix}
\section*{Acknowledgements}
I would like to thank Robert Patterson and Michel Renger, conversations with whom at various points sparked and renewed my interest in the topic, as well as Sergio Simonella for an interesting discussion of the problem. I would also like to 
thank my doctoral supervisor, Prof. James Norris, who pointed out ways in which the counterexample in Theorem \ref{thm: main} could be extended into its current form.

\end{document}